\documentclass[12pt,american]{amsart}
\usepackage{amsmath,amsfonts,amssymb,amscd,amsthm,amsbsy,bbm,epsf,calc,graphicx,esint}
\usepackage{color}
\usepackage{datetime}
\usepackage{mathtools}

\usepackage[pdftex,bookmarks,colorlinks,breaklinks]{hyperref}

\textwidth=6.25truein
\textheight=8.5truein
\hoffset=-0.65truein
\voffset=-.5truein


\numberwithin{equation}{section}

\newcounter{hours}\newcounter{minutes}

\theoremstyle{plain}
\newtheorem{thm}{Theorem}[section]

\newtheorem{lem}[thm]{Lemma}
\newtheorem{cor}[thm]{Corollary}
\newtheorem{prop}[thm]{Proposition}


\newtheorem{DEF}[thm]{Definition}

\theoremstyle{definition}                  
\newtheorem{rem}[thm]{Remark}

\newtheorem{Assumption}{Assumption}


\def\tr{\textnormal{tr}}
\def\dive{\textnormal{div}}

\def\Id{\mathbb{I}}
\def\astar{{a^*}}

\def\fin{f_{\textnormal{in}}}

\def\Ap{\mathcal{A}_p}
\def\A1{\mathcal{A}_1}
\def\Ainfty{\mathcal{A}_\infty}
\def\Lamp{\Lambda(\varepsilon)}
\def\Lampf{\Lambda_f(\varepsilon)}



\title{On $A_p$ weights and the Landau equation}

\begin{document}
	
\author{Maria Gualdani and Nestor Guillen}	
	
\begin{abstract}
  In this manuscript we investigate the regularization of solutions for the spatially homogeneous Landau equation. For moderately soft potentials, it is shown that weak solutions become smooth instantaneously and stay so over all times, and the estimates depend only on the initial mass, energy, and entropy. For very soft potentials we obtain a conditional regularity result, hinging on what may be described as a nonlinear Morrey space bound, assumed to hold uniformly over time. This bound always holds in the case of moderately soft potentials, and nearly holds for general potentials, including Coulomb. This latter phenomenon captures the intuition that for moderately soft potentials, the dissipative term in the equation is of the same order as the quadratic term driving the growth (and potentially, singularities). In particular, for the Coulomb case, the conditional regularity result shows a rate of regularization much stronger than what is usually expected for regular parabolic equations. The main feature of our proofs is the analysis of the linearized Landau operator around an arbitrary and possibly irregular distribution. This linear operator is shown to be a degenerate elliptic Schr\"odinger operator whose coefficients are controlled by $A_p$-weights. 
\end{abstract}

\maketitle

\baselineskip=14pt
\pagestyle{headings}		

\markboth{$A_p$ weights and the homogeneous Landau equation}{M. Gualdani, N. Guillen}

\section{Introduction}\label{section:introduction}

The Landau equation with Coulomb potential models the evolution of a particule distribution
\begin{align*}
 f(x,v,t): \Omega \times \mathbb{R}^d\times\mathbb{R}_+\to\mathbb{R},\;\; \Omega\subset\mathbb{R}^d,
\end{align*} 
and arises as a limit from the Boltzmann equation in the regime where only grazing collisions are predominant. Generally, the evolution of the particle density is described by the equation
\begin{align}\label{eqn:Landau}
  \partial_tf +v\cdot\nabla_x f =Q(f,f),
\end{align}
with initial and boundary conditions on $f$. The quadratic term $Q(f,f)$ describes how collisions affect the evolution of the particle distribution. The collision operator $Q(f,f)$ for Landau-Coulomb is given by
\begin{align}\label{eqn:Landau Collisional Term Classical Expression}
  Q(f,f) := \dive_v \left (\int_{\mathbb{R}^d} \Phi(v-w)\left (\Pi(v-w)(f(w)\nabla_vf(v)-f(v)\nabla_wf(w) \right )\;dw \right ),
  \end{align}
  with $\Pi(z)$ (for $z\neq 0$) the projection onto the orthogonal complement of $z$,
  \begin{align*}
  \Pi(z) := \Id- \frac{ z\otimes z}{|z|^{2}},
  \end{align*}
  and 
  \begin{align*}
    \Phi(z) = C_d|z|^{2-d}, \quad d\geq 3.
  \end{align*}
  A great deal of attention from the mathematical community has been devoted also to the the analysis of \eqref{eqn:Landau}-\eqref{eqn:Landau Collisional Term Classical Expression} with $\Phi$ of the more general form
  \begin{align}
    \Phi(z) = C_\gamma |z|^{2+\gamma},\;\;\;\textrm{for any $\gamma\in[-d,0]$}.	\label{gene_kernel}
  \end{align}	
  Borrowing the terminology from the Boltzmann equation, in the literature $ \Phi(z)$ is called a moderately soft potential when $\gamma \in [-2,0]$  and  a very soft potential if $\gamma < -2$.
     
  The resulting family of equations \eqref{eqn:Landau}-\eqref{gene_kernel} presents very interesting analytical questions, and their understanding has shone light on the physically relevant Landau-Coulomb equation ($\gamma=-d$).
  
  The following fact about the structure of $Q(f,f)$ has been key in a great deal of the literature on the Landau equation: for a smooth $f$, the interaction term $Q(f,f)$ can be expressed in terms of a second order elliptic operator with respect to $v$ and (this is most crucial) its coefficients are given by integral non-local operators of $f$. For $\Phi(z) $ as in \eqref{gene_kernel} these integral operators involve Riesz potentials of $f$. Indeed, for such $\Phi(z)$ the operator $Q(f,f)$ takes the form
  \begin{align*}
   Q(f,f)= \dive \left (A_{f,\gamma}\nabla f-f\nabla a_{f,\gamma} \right), \textnormal{ with } a_{f,\gamma} := \tr(A_{f,\gamma}),
  \end{align*}
  where
\begin{align}\label{Af_gamma}
  & A_{f,\gamma} := C(d,\gamma) \int_{\mathbb{R}^d} |v-w|^{2+\gamma}\Pi(v-w)f(w) \;dw.
\end{align}
The constant $C(d,\gamma)$ is such that
\begin{align*}
  a_{f,\gamma} := \tr(A_{f,\gamma}) = (-\Delta)^{-\frac{d+2+\gamma}{2}}f.
\end{align*}
In general, for any $\gamma \geq -d$, we will write
\begin{align}\label{hf_gamma}
    h_{f,\gamma}:=  -\Delta a_{f,\gamma} =  (-\Delta)^{-\frac{(d+\gamma)}{2}}(f) = \left\{  \begin{array}{cl} 
                        f, & \quad \textnormal{ if } \gamma=-d,\\ 
                         c(d,\gamma) f * |v|^{\gamma}, &  \quad \textnormal{ if } \gamma>-d.              \end{array}  \right.
\end{align}
The collision operator $Q(f,f) $ then has the ``non-divergence'' representation 
\begin{align*}
  Q(f,f) = \tr(A_{f,\gamma}D^2f)+fh_{f,\gamma}.
\end{align*}

Equation \eqref{eqn:Landau}-\eqref{gene_kernel}  therefore may be described as a nonlinear transport equation involving reaction, diffusion, and transport effects. If the initial distribution is spatially inhomogeneous, i.e. if $\fin(x,v) = \fin(v)$, one could consider a simpler equation in the velocity space only, where transport term drops. This yields the homogeneous Landau equation
\begin{align}\label{eqn:Landau homogeneous}
  \partial_tf = Q(f,f),
\end{align}
which is the equation studied hereafter.

{Let us illustrate the challenges in estimating solutions \eqref{eqn:Landau homogeneous} in the most physically important case, namely the Coulomb case $\gamma = -d$. The non-divergence expression for the equation is most helpful; for $\gamma=-d$ we have $h_{f,\gamma} = f$, and the equation becomes
\begin{align*}
  \partial_t f = \tr(A_{f,\gamma}D^2f)+f^2.
\end{align*}
The term $f^2$ could very well drive the equation to finite time blow up, as with the semilinear heat equation \cite{GigaKohn85}. However, there are not known examples of solutions blowing up in finite time, leaving open the possibility that global $L^\infty$ estimates may hold. Any attempt to obtain $L^\infty$ estimates for the equation must take into account the competition between the term $f^2$ (and $h_{f,\gamma} f$ in general) and the diffusive term $\tr(A_{f,\gamma}D^2f)$. This is discussed further near the end of Section \ref{section: contributions}. The other challenge is the behavior of $A_{f,\gamma}$ for large $v$; such term degenerates as $|v|\to +\infty$ and this weight needs to be taken into account in the coercivity estimates.

The current understanding of \eqref{eqn:Landau} is far less advanced than that of the equation \eqref{eqn:Landau homogeneous}, although a lot of recent activities on \eqref{eqn:Landau homogeneous} are changing this (this is briefly discussed in the literature overview below)}. As can be inferred, the analysis of \eqref{eqn:Landau homogeneous} can serve as a stepping stone towards understanding the full, spatially inhomogeneous system \eqref{eqn:Landau}. The question of regularity versus breakdown for \eqref{eqn:Landau homogeneous} (that is, the validity of $L^\infty$bounds discussed above) remains unresolved not only for the Coulomb case, but also for all very soft potentials $\gamma \in [-d,-2)$. 
Even more, some authors have proposed to consider $\gamma$ below $-d$. Observe that from this point on the operator $h_{f,\gamma}$ becomes a pseudo-differential operator as opposed to a convolution operator. 

In this manuscript we investigate $L^\infty$ estimates for the homogeneous Landau equation in various situations. First, for \eqref{eqn:Landau homogeneous} in the case $\gamma\in(-2,0)$ (Theorem \ref{thm:main soft potentials}) we show that weak solutions become instantaneously bounded locally in space, and remain so for all later times. The weak solutions considered are only assumed to have finite initial mass, second moment, and entropy.  For $\gamma \in [-d,-2]$ we obtain a conditional $L^\infty$ estimate for classical solutions (Theorems \ref{thm:very soft potentials estimate} and \ref{thm:Coulomb case good estimate}). The conditions imposed in this case are rather weak: in fact, as argued in Section \ref{sec:discussion of extra assumptions}, although these conditions may fail to hold in general, they appear to ``almost'' hold. Moreover, the conditional $L^\infty$ estimate we obtain for $\gamma=-d$ comes with a much faster rate of regularization than is expected for uniformly parabolic equations.

We review some of the recent relevant literature and provide some context for our results in Sections \ref{section: literature review} and \ref{section: contributions} below.

\subsection{Recent results in the regularity theory for kinetic equations}\label{section: literature review} The analysis of \eqref{eqn:Landau} and \eqref{eqn:Landau homogeneous} comprises a vast literature. Here we mention only a few of the results relevant to the question of regularity, with an emphasis on \eqref{eqn:Landau homogeneous}.

In general, the question of existence of weak solutions for \eqref{eqn:Landau homogeneous} (arbitrary $\gamma$) was addressed by Villani \cite{V98}, where the notion of $H$-solution --which exploits the $H$-theorem to make sense of the equation-- was introduced.  For the case of Maxwell potentials ($\gamma= 0$) and more general over-Maxwellian potentials, Desvillettes and Villani \cite{DesVil2000a,DesVil2000b} studied thoroughly the issues of existence, uniqueness, and convergence to equilibrium. First,  in \cite{DesVil2000a}[Theorem 5] they show that given any an initial datum $\fin$ which also lies in $L^2$, then there exists at least one weak solution to \eqref{eqn:Landau homogeneous} which becomes smooth for all positive times. The estimates depend only on the known conserved quantities. It is also shown that weak solutions are unique if $\fin$ lies in a weighted $L^2$ space  \cite{DesVil2000a}[Theorem 6]. In subsequent work \cite{DesVil2000b}, entropy dissipation estimates are used to prove  convergence of solutions to equilibrium, this being exponentially fast for over Maxwellian potentials, and with an algebraic rate for hard potentials. 

Analyzing the case of soft potentials ($\gamma \in (-2,0]$) has proved more difficult. Fournier and Guerin \cite{FournierGuerin2009} proved uniqueness of bounded weak solutions in the case of soft potentials. Later work of Fournier \cite{Fournier2010} extends this to the Coulomb potential.  Alexandre, Liao, and Lin  \cite{AleLinLia2013} obtained estimates on the growth of the $L^p$ norms of a solution, this being later extended to the case $\gamma=-2$ by Wu \cite{Wu13}. These last two results are based on the propagation of $L^p$ bounds, and thus they require initial data in $L^p$. The resulting upper bounds for the $L^p$ norm grow exponentially with time \cite{AleLinLia2013,Wu13}.

In \cite{Silvestre2015}, Silvestre obtains $L^\infty$ estimates for classical solutions to the homogeneous Landau equation for $\gamma> -2$, yielding by non divergence methods estimates for classical solutions that are similar to those in this paper (see discussion after Theorem \ref{thm:main soft potentials}). In the work of Golse, Imbert, Mouhot, and Vasseur \cite{GIMV16}  the inhomogeneous Landau equation is considered (and more generally, kinetic Fokker-Planck equations with rough coefficients) and the authors obtain H\"older regularity for bounded weak solutions for any $\gamma\ge-d$. We also point out earlier work on parabolic kinetic equations by Pascucci and Polidoro \cite{PascucciPolidoro2004}, which is a predecessor, in the Moser iteration flavor, of \cite{GIMV16}, which is based on De Giorgi's method. Subsequently, Cameron, Silvestre, and Snelson \cite{CamSilSne2017} applied the H\"older estimates from \cite{GIMV16} to obtain an priori estimates for the $L^\infty$ norm of bounded solutions in the case of moderately soft potentials, $\gamma>-2$ (see Section \ref{section: contributions} where we further discuss this result). For the Boltzmann equation, we mention recent regularity results by Silvestre \cite{Silvestre2014} by non-divergence methods (for the homogeneous case) and by Imbert and Silvestre \cite{Imb_Silv2017} using a combination of variational and non-divergence arguments (for the general case). For the (inhomogeneous) Landau near equilibrium, recently Kim, Guo, and Hwang \cite{KimGuoHwang2016} used De Giorgi style iterations following \cite{GIMV16} and showed the existence of global smooth solutions for initial data close enough to equiibrium, the closeness measured just in the $L^\infty$ norm.

For very soft potentials much less is known: this includes (i) local existence, and (ii) global existence of a classical solution for initial data close to equilibrium by Guo \cite{Guo02} (covering the spatially inhomogeneous case). Chen, Desvillettes and He \cite{CDH09},  Carrapatoso, Tristani and Wu \cite{CTW15} for $\gamma \ge -2$ showed existence of the so called H-solutions by Villani in \cite{V98}, and of weak solutions by Desvillettes \cite{Desvillettes14} for $\gamma=-d$. In a related work, Carrapatoso, Desvillettes and He \cite{CDH15} showed that any weak solution to the Landau equation with $\gamma =-3$ converges to the equilibrium function for any initial data with finite mass, energy and higher $L^1$-moments.

The question of $L^\infty$ estimates for very soft potentials $\gamma<-2$ (and notably, the Coulomb potential $\gamma=-d$) remains a difficult one. There has been partial but encouraging progress for an equation similar to \eqref{eqn:Landau homogeneous}, introduced by Krieger and Strain in \cite{KriStr2012}. In successive works, Krieger and Strain \cite{KriStr2012} and later Gressman, Krieger and Strain \cite{GreKriStr2012} consider for $d=3$ the equation 
\begin{align}\label{eqn:Krieger-Strain}
  \partial_t f = a_f \Delta f + \alpha f^2,\;\;a_f = (-\Delta)^{-1}f,
\end{align}	
and show that for  spherically symmetric and radially decreasing initial data there is a global in time solution (enjoying the same symmetries for positive time) which remains bounded, provided that $\alpha \in (0,\tfrac{74}{75})$. The restriction on the parameter $\alpha$ corresponds to the range of $\alpha$ for which solutions to \eqref{eqn:Krieger-Strain} have finite $L^{3/2}$ norm. Observe that for $\alpha=1$ the above equation may be rewritten as
\begin{align*}
  \partial_t f = \dive\left ( a_f \nabla f -f\nabla a_f\right ),
\end{align*}	
which resembles \eqref{eqn:Landau homogeneous} in the Coulomb case $\gamma=-d$. In \cite{GuGu15}, the authors of this current manuscript proved that for all $\alpha \in [0,1]$ equation \eqref{eqn:Krieger-Strain} has spherically symmetric, radially decreasing solutions that remain bounded for all positive times, assuming the initial data have high enough integrability. The proof relies on a barrier argument applied to a mass function associated to the solution. Unfortunately, this barrier argument falls short of yielding a result for the Landau-Coulomb equation. Nevertheless, the approach in \cite{GuGu15} does show that a (spherically symmetric, radially decreasing) solution to the Landau equation with $\gamma=-d$ does not blow up as long as it remains bounded in some $L^p$ for any $p>\frac{d}{2}$. In light of what is known about blow up for semilinear parabolic equations, this last result is to be expected. We note moreover, that the aforementioned work of Silvestre \cite{Silvestre2015} obtains regularization under the assumption that similar $L^p$ norms remain bounded in time.  We remark that the tools in \cite{GuGu15} and \cite{Silvestre2015} are somewhat different: while \cite{Silvestre2015} relies on methods from fully nonlinear equations, \cite{GuGu15} uses a mass function barrier argument to propagate an initial $L^p$ bound with $p>d/2$ over time, ultimately leading to an {\em non conditional } $L^\infty$ bound for spherical functions that so far have not been obtained with the methods from \cite{Silvestre2015}.  

Regarding higher regularity of the solutions, in the homogeneous case, once the $L^\infty$ norm is controlled, one can obtain higher regularity of the solution (see for instance \cite[Section 4]{GuGu15} for radially symmetric case). For the inhomogeneous case traditional parabolic tools are insufficient. In \cite{HendersonSnelson2017} Henderson and Snelson provide $L^\infty$ estimates for all the derivatives of bounded weak solutions to the inhomogeneous Landau equation, and subsequently in \cite{HendersonSnelsonTarfulea2017} Henderson, Snelson, and Tarfulea improve on the previous result by showing first that solutions spread mass instantaneously to all parts of the domain, which leads to improved estimates on the derivatives.

\subsection{Contributions of the present work}\label{section: contributions} There are two main ideas we would like to highlight in this manuscript: (i) how the theory of $A_p$ weights and weighted Sobolev and Poincare's estimates plays a role in the analysis of the Landau equation; and (ii) the observation that there are inequalities that guarantee $L^\infty$ estimates for solutions and these inequalities \textbf{hold for all solutions} in the case of moderately soft potentials and \textbf{nearly hold} in the case of very soft potentials (see discussion in Section \ref{sec:discussion of extra assumptions}). 

This last claim refers to the fact that the ``reaction'' term in \eqref{eqn:Landau homogeneous} although possibly very singular for $\gamma<-2$ is of the same order as the diffusion term in the equation (see Proposition \ref{prop:assumption 1 almost holds}). This means that \eqref{eqn:Landau homogeneous} ought to be thought of as a critical equation in some sense. This suggestion is further reinforced by the recent results in \cite{GuGu15} regarding estimates for solutions to \eqref{eqn:Krieger-Strain}. 

The main {\em non-conditional} result in this work states that weak solutions (Definition \ref{def:solution}) to the Landau equation with $\gamma\in(-2,0]$ become instantaneously bounded.
\begin{thm}\label{thm:main soft potentials}
  Let $f: \mathbb{R}^d\times \mathbb{R}_+\to\mathbb{R}$ be a weak solution of the Landau equation \eqref{eqn:Landau homogeneous} where $\gamma \in(-2,0]$ and the initial data has finite mass, energy and entropy. Then, there is a constant $C_0$ determined by $d$, $\gamma$, the mass, energy, and entropy of $\fin$, such that
  \begin{align*}
    f(v,t) \leq C_0\left ( 1 + \frac{1}{t} \right )^{\frac{d}{2}} \left (1+|v| \right)^{-\gamma\tfrac{d}{2}}.
  \end{align*}  
\end{thm}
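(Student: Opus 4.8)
The plan is to control the supremum of $f(\cdot,t)$ by treating the homogeneous Landau equation as a linear parabolic equation with rough coefficients, and applying a De Giorgi–Nash–Moser type iteration adapted to the degenerate/weighted structure highlighted in the abstract. Write the equation in divergence form as $\partial_t f = \dive(A_f\nabla f - f\nabla a_f)$, where $A_f = A_{f,\gamma}$ and $a_f = a_{f,\gamma}$ as in \eqref{Af_gamma}. The first step is to record the pointwise bounds on the matrix $A_f$: by the moment and entropy control on $f$ (which are propagated/nonincreasing for $\gamma\in(-2,0]$), the eigenvalues of $A_f(v,t)$ are bounded above and below by explicit functions of $|v|$, namely $A_f(v,t)\sim (1+|v|)^{\gamma}\Id$ in the directions transverse to $v$ and with a comparable (for $\gamma\in(-2,0]$) bound along $v$, up to constants depending only on mass, energy, entropy. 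The key point, which I expect the paper establishes via the $A_p$-weight machinery, is that $\det A_f$ and $\tr A_f$ are comparable to a fixed $A_2$ (or $A_p$) weight $\omega(v) = (1+|v|)^{\gamma}$ uniformly in time; this is what makes a quantitative Moser iteration possible despite the lack of a priori smoothness.

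Next I would set up the energy inequality. Multiplying the equation by $(f-k)_+^{p-1}$ and integrating, the diffusion term produces $\int A_f \nabla f\cdot\nabla (f-k)_+^{p-1}\,dv \gtrsim \int \omega\, |\nabla (f-k)_+^{p/2}|^2\,dv$, while the reaction/drift term $-\int f\nabla a_f\cdot\nabla(f-k)_+^{p-1}\,dv$ must be absorbed. Here is the crucial structural fact, the one the introduction flags as making the equation ``critical'': since $-\Delta a_f = h_{f,\gamma} = c\, f*|v|^{\gamma}$ (up to sign conventions), the drift term, after an integration by parts, is controlled by $\int (f-k)_+^{p}\, h_{f,\gamma}\,dv$ plus lower-order terms, and $h_{f,\gamma}$ is of the same order as the diffusion weight — more precisely $h_{f,\gamma}$ lies in the Morrey-type class that the weighted Sobolev inequality associated to $\omega$ can absorb (this is exactly where, for $\gamma\in(-2,0]$, no extra assumption is needed, in contrast to the very soft case). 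Combining with the weighted Sobolev/Poincaré inequality $\left(\int \omega\, g^{2^*}\,dv\right)^{2/2^*}\lesssim \int \omega\,|\nabla g|^2\,dv$ gives a gain of integrability at each level, i.e. a reverse-Hölder/energy estimate of De Giorgi type.

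Then I would run the iteration in two stages. Stage one: a Moser/De Giorgi iteration on levels $k_n\uparrow$ and times $t_n\uparrow t$ yields $\|f(\cdot,t)\|_{L^\infty(B_R)}\lesssim (1+1/t)^{\sigma}\big(1+\|f\|_{L^1_{t,v}}\big)^{\theta}$ on bounded balls, with $\sigma$ dictated by the scaling of the weighted Sobolev inequality — the parabolic dimension is $d$ because the weight $\omega$ is locally comparable to $1$, which produces the exponent $d/2$ in the time factor. Stage two: to get the spatial decay $(1+|v|)^{-\gamma d/2}$, rerun the iteration in an annulus around a far-away point $v_0$ with $|v_0|$ large, where $A_f\sim |v_0|^{\gamma}\Id$, i.e. the diffusion is weak; rescaling $v = v_0 + |v_0|^{-\gamma/2}\bar v$ turns the equation on that annulus into a uniformly parabolic one, so the local $L^\infty$ bound picks up a factor $|v_0|^{-\gamma/2}$ raised to the same parabolic power $d$, giving the stated $|v|^{-\gamma d/2}$. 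The main obstacle is making the absorption of the drift term fully rigorous and \emph{uniform in $t$}: one must show $h_{f,\gamma}$, built from a merely $L\log L$ and second-moment-bounded $f$, genuinely sits in the right Morrey/Kato class with constants depending only on the conserved quantities — and that the smallness needed to absorb it into the coercive term can be arranged (e.g. by splitting $h_{f,\gamma} = h_{\mathrm{small}} + h_{\mathrm{bounded}}$ via a truncation of $f$, with the bounded part handled as a zeroth-order term via Grönwall). A secondary technical point is justifying these manipulations at the level of weak solutions (Definition \ref{def:solution}) rather than classical ones, which requires an approximation/regularization argument and a proof that the estimates pass to the limit.
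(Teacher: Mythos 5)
Your proposal captures the paper's strategy essentially exactly: linearize the equation as $\partial_t f = Q(g,f)$ with $g = f$, observe that $a_{f,\gamma}$ and $h_{f,\gamma}$ are $\mathcal{A}_1$-weights (Proposition \ref{prop: Riesz potentials are A1 weights}) with $a^*_{f,\gamma} \sim \langle v\rangle^{\gamma}$ (Proposition \ref{prop:a vs a star gamma larger than -2}), absorb the reaction term into the diffusion via what the paper calls the $\varepsilon$-Poincar\'e inequality (Theorem \ref{thm:sufficient conditions for the Poincare inequality}, which is exactly the quantitative ``$h_{f,\gamma}$ in the right Morrey/Kato class'' statement you flag as the crux), feed the result through weighted space-time Sobolev inequalities (Theorem \ref{lem:Inequalities Sobolev weight aII}), and close a Moser iteration (Lemma \ref{lem:weighted Moser estimate}). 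The only real divergence is at your ``stage two'': you propose rescaling the equation in annuli around a far point $v_0$ to extract the $(1+|v|)^{-\gamma d/2}$ decay, whereas the paper keeps the weight $a^*_{f,\gamma}$ inside the iteration and uses the single pointwise inequality $1 \leq C R^{-\gamma} a^*_{f,\gamma}$ on $B_R$, so the $R^{-\gamma d/2}$ factor falls out of the iteration constants directly rather than from a change of variables — a more streamlined route to the same scaling, avoiding a second iteration. One small internal inconsistency to flag: you open with truncated levels $(f-k)_+^{p-1}$ (De Giorgi style) but then iterate on powers $p_n$ (Moser style); the paper commits to Moser, using truncated power test functions $\phi_{p,h}$ precisely because the weak-solution formulation (Definition \ref{def:solution}) is stated for sublinear $\phi$, which is the approximation/passage-to-the-limit issue you correctly anticipate at the end.
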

The result of Theorem \ref{thm:main soft potentials} is certainly expected, save for the appearance of the factor  $(1+|v|)^{-\gamma\tfrac{d}{2}}$, which only reflects a shortcoming of our arguments. Indeed, the a priori estimate obtained in \cite{Silvestre2015} and \cite{CamSilSne2017} does not contain such a factor. 
However, the barrier arguments in \cite{Silvestre2015,CamSilSne2017} although covering weak solutions to the PDE, do require these weak solutions to be bounded. 

Unlike in \cite{Silvestre2015,CamSilSne2017}, the result in Theorem \ref{thm:main soft potentials} makes no boundedness assumption and is therefore new. This subtle but non-trivial distinction would dissapear as soon as uniqueness results for unbounded weak solutions are discovered: indeed, the estimate for bounded weak solutions in \cite{Silvestre2015,CamSilSne2017} would be carried to arbitrary (that is unbounded) weak solutions by a density argument as soon as such weak solutions are known to be unique. 

The other two main results in this work are conditional estimates, Theorem \ref{thm:very soft potentials estimate} and Theorem \ref{thm:Coulomb case good estimate}, dealing with very soft potentials and the Coulomb potential respectively. These are discussed at length in the next section. However, we want to highlight the second of these two theorems here (Theorem \ref{thm:Coulomb case good estimate}), and state its assumptions in informal terms before going forward (the assumptions are Assumptions \ref{Assumption:Epsilon Poincare}-\ref{Assumption:Local Doubling} in the following section). We explain what we mean by ``$\varepsilon$-Poincar\'e inequality'' below, while the notion of ``doubling'' is reviewed in Section \ref{section: Main result}.

\begin{thm}\label{thm:main Coulomb conditional}
  Let $f: \mathbb{R}^d\times [0,T] \to\mathbb{R}$ be a weak solution of the Landau equation \eqref{eqn:Landau homogeneous} with Coulomb potential ($\gamma=-d$) and whose initial $\fin$ data has finite mass, energy and entropy.  Assume $f$ is doubling and the $\varepsilon$-Poincar\'e inequality holds uniformly over the time interval $[0,T]$. Then, for every $s\in (0,1)$ there is a constant $C_0$ determined by $s$, the initial and the $\varepsilon$-Poincar\'e and doubling constants of $f$ such that
  \begin{align*}
    f(v,t) \leq C_0(1+|v|)^s\left ( 1 + \frac{1}{t} \right )^{1+s},\;\;\forall\;t\in [0,T].
  \end{align*}  
\end{thm}

The theorem is still a conditional one, but we want to remark that within those conditions it produces a very fast regularization rate (as close to $t^{-1}$ as desired). We believe this reflects how the diffusive power of the equation is particularly strong when $f$ is singular, and that such a rate should be expected if the quadratic term does not overcome the diffusion term in general.

Of course, it could turn out that the assumptions of the theorem only hold when the resulting regularization rate becomes trivial; for instance, if $f(t)$ is assumed to be uniformly in $L^p$ when $t\in [0,T]$ for some sufficiently large $p$. However this is unclear at the moment. In any case, we note that a function $f$ can be doubling without enjoying high integrability, while the $\varepsilon$-Poincar\'e inequality ``almost'' holds for arbitrary $f \in L^1$, see Proposition \ref{prop:assumption 1 almost holds} and the discussion that follows it.

The approach taken towards all of our results starts from the tautological observation that a solution to \eqref{eqn:Landau homogeneous} solves a linear equation
\begin{align}\label{eqn:linear equation}
  \partial_t f = Q(g,f),
\end{align}
where $g = f$. Then, one aims to prove as much as possible about linear operators of the form $Q(g,\cdot)$ by only using properties of $g$ that are preserved by the equation (i.e. its mass, second moment, and entropy bounds). This is of course not a novel approach at all; the novelty here lies in the observation that the coefficients arising in the differential operator $Q(g,\cdot)$ are bounded by Muckenhoupt weights and that the strength of the term driving growth is of the same order as the term driving the dissipation. Let us elaborate on these two points. 

{For the latter point, suppose we were interested in controlling the $L^2$ norm of $f$ solving \eqref{eqn:linear equation} for some fixed $g$. Roughly speaking, this entails determining the positivity of the differential operator given by $L(\cdot) = -Q(g,\cdot)$, so
\begin{align*}
  Lf = -\dive(A_{g,\gamma} \nabla f - f \nabla a_{g,\gamma}) = -\textnormal{tr}(A_{g,\gamma}D^2f) - fh_{g,\gamma}.
\end{align*}
Integrating several times, we have that (for sufficiently smooth $f$)
\begin{align*}
  \int_{\mathbb{R}^d} fLf\;dv = \int_{\mathbb{R}^d} (A_{g,\gamma}\nabla f,\nabla f)\;dv-\tfrac{1}{2}\int_{\mathbb{R}^d}h_{g,\gamma} f^2\;dv.
\end{align*}
The ``potential'' $h_{g,\gamma}$ corresponds to the ``reactive'' part of the equation, in particular for the actual Landau equation \eqref{eqn:Landau homogeneous} with $\gamma=-d$ we would  have $g=f$ and $h_{f,\gamma} = f$. Then for general $g$, determining the positivity of $L$ would amount to the validity of the weighted Poincar\'e inequality
\begin{align*}
  \tfrac{1}{2}\int_{\mathbb{R}^d}h_{g,\gamma} f^2\;dv\leq \int_{\mathbb{R}^d} (A_{g,\gamma}\nabla f,\nabla f)\;dv.
\end{align*}
We are in fact content with bounding the growth of the $L^2$ norm instead of showing monotonicity and a slightly weaker inequality would suffice (for general $g$, monotonicity is not a reasonable expectation). Therefore, for the purposes of bounding the growth of $f$, it would suffice to show there is some $C = C(g)$ such that for all $f$,
\begin{align}\label{eqn:pre epsilon Poincare}
  \tfrac{1}{2}\int_{\mathbb{R}^d}h_{g,\gamma} f^2\;dv\leq \int_{\mathbb{R}^d} (A_{g,\gamma}\nabla f,\nabla f)\;dv + C\int_{\mathbb{R}^d}f^2\;dv.
\end{align}
Then, the validity of such an inequality depends on certain properties about the function $g$; moreover if (\ref{eqn:pre epsilon Poincare}) holds, it would mean that the diffusive term in $Q(g,\cdot)$ overcomes the growth term, represented by the potential $h_{g,\gamma}$. We will show that this inequality (actually a more general version of it) always holds when $\gamma \in (-2,0]$. When $\gamma \in [-d,-2]$ it is not clear at the moment  if this inequality holds, but we show that for such range of $\gamma$ it is always very close to holding (see Proposition \ref{prop:assumption 1 almost holds} together with Theorem \ref{thm:local weighted Sobolev inequalities}). }

This is where the theory of Muckenhoupt weights comes in. The fact that, for any positive $g \in L^1$ with finite mass, momentum and entropy, the coefficients $A_{g,\gamma}$ and $h_{g,\gamma}$ are controlled by Muckenhoupt weights facilitates the proof of certain weighted Poincar\'e and Sobolev inequalities, under conditions that vary depending on the range of $\gamma$. Concretely, we prove a refined version of \eqref{eqn:pre epsilon Poincare}, which we call the ``$\varepsilon$-Poincar\'e inequality'' and for $\varepsilon \in (0,1)$ reads as
\begin{align*}
  \int_{\mathbb{R}^d} h_{f,\gamma} \phi^2\;dv \leq \varepsilon \int_{\mathbb{R}^d} (A_{f,\gamma} \nabla \phi,\nabla \phi)\;dv + \Lambda(\varepsilon) \int_{\mathbb{R}^d} \phi^2\;dv,
\end{align*}
for all functions $\phi$ that makes the right hand side finite.
Again, we will show that such inequality always holds for any $f$ solution to the Landau equation with $\gamma \in (-2,0]$ and {\em almost holds} when $\gamma \in [-d,-2]$. 
This inequality is the decisive point in the proof. To see how, note that thanks to the above inequality, one can control the growth of the $L^p$ norms of $f$. For instance, for $p=2$, applying the inequality above with $\phi=f$, we are led to
\begin{align*}
\frac{1}{2}\frac{d}{dt} \int_{\mathbb{R}^d} f^2\;dv & =  - \int_{\mathbb{R}^d} (A_{f,\gamma} \nabla f,\nabla f)\;dv +\int_{\mathbb{R}^d} h_{f,\gamma} f^2\;dv \\
& \leq  -(1-\varepsilon) \int_{\mathbb{R}^d} (A_{f,\gamma} \nabla f,\nabla f)\;dv  +  \Lambda(\varepsilon) \int_{\mathbb{R}^d} f^2\;dv.
\end{align*}
The derivation of $L^\infty$ estimates (using the aforementioned weighted inequalities), follows to a great extent the De Giorgi-Nash-Moser theory for divergence for parabolic equations. As the respective ``parabolic'' equations have degenerate coefficients, the ideas developed in the 80's in order to deal with singular weights \cite{FKS82, GuWhe91}  has a significant role in our arguments (see, in particular, Lemma \ref{lem:weighted Moser estimate}). 

The validity of weighted inequalities like the one above is directly related to the theory of $A_p$-weights and to the positivity of Schr\"odinger operators with non-constant coefficients. In particular, for a generic distribution $f$, one can think of the linearized Landau operator $L(\cdot) = Q(g,\cdot)$ as a Schr\"odinger operator with very rough coefficients. The observation that even for a very singular $f$ the resulting coefficients of $L$ satisfy pointwise bounds with respect to certain $A_p$-weights, permits us to invoke the extensive literature on weighted integral inequalities (including, but not limited to, \cite{Fefferman1983}). We remark that for the present work it is necessary to obtain weighted inequalities corresponding to mass distributions $f$ which may be far from equilibrium, so we must assume as little about $f$ as possible.

Moreover, these inequalities are intrinsically related to the uncertainty principle and its generalization, as noted by Fefferman in \cite{Fefferman1983}. The relevance of uncertainty principles to kinetic equations has been noted before, and we refer the reader to work of Alexandre, Morimoto, Ukai, Xu and Yang \cite{AMUXY08} for further discussion. 


As already mentioned, an $\varepsilon$-Poincar\'e appears to hold for any $\gamma\geq -d$ for some universal $\varepsilon <<1$, independently of even the mass, second moment, and entropy of $f$. For moderately soft potentials, such an inequality also holds for all small $\varepsilon$. It is helpful to make an analogy with the situation seen for the linear heat equation with a Hardy potential, that is
\begin{align}\label{eqn:Heat equation Hardy potential}
  \partial_tf = \Delta f + \lambda |v|^{-2}f.
\end{align}
It was shown by Baras and Goldstein \cite{BarasGoldstein1984} that there is a critical value of $\lambda$ ( $=(d-2)/2)^2$) across which \eqref{eqn:Heat equation Hardy potential} admits or not solutions. Going back to \eqref{eqn:Landau homogeneous}, the question is whether the linear operator $Q(g,\cdot)$ lies ``on the side'' of regularization for arbitrary $g$. However, a simple calculation shows that for $g$ of the form $|v|^{-m}\chi_{B_1}$ with $m$ close to $d$, the $\varepsilon$-Poincar\'e inequality cannot hold with an arbitrary small $\varepsilon$, suggesting that an entirely different approach is needed for $\gamma<-2$.
 
\subsection{Organization of the paper} In Section \ref{section:preliminaries and main result} we discuss some preliminary concepts and state the main results. In Section \ref{section:A_p class and weighted inequalities} we review relevant facts from the theory of $A_p$ weights and in Section \ref{section:epsilon Poincare} we apply them to obtain weighted normed inequalities of relevance to the Landau equation. In Section \ref{section: Regularization in L infinity} we derive an energy inequality adapted to the Landau equation and prove a De Giorgi-Nash-Moser type estimate. Lastly, in Section \ref{section:Coulomb regularization} we focus on the case of the Coulomb potential and show a conditional but anomalous rate of rate of regularization in time.

\subsection{Notation} 

A constant will be said to be universal if it is determined by $d$, $\gamma$ and the mass, energy, and entropy of $\fin$. Universal constants will be denoted by $c,c_0,c_1,C_0,C_1,C$.

Vectors in $\mathbb{R}^d$ will be denoted by $v,w,x,y$ and so on, the inner product between $v$ and $w$ will be written $(v,w)$. $B_R(v_0)$ denotes the closed ball of radius $R$ centered at $v_0$, if $v_0=0$ we simply write $B_R$. When talking about a family of balls or a generic ball we will omit the radius and center altogether and write $B,B',B''$.
    The identity matrix will be noted by $\mathbb{I}$, the trace of a matrix $X$ will be denoted $\tr(X)$.  The initial condition for the Cauchy problem will always be denoted by $\fin$ and any constant of the form $C(\fin)$ is a constant that only depends on the mass, second moment and entropy of $\fin$.
	
 If $K$ is a convex set and $\lambda>0$, we will use $\lambda K$ to denote the set of points of the form $\lambda (v-v_c)+v_c$, where $v\in K$ and $v_c$ is the center of mass of $K$. In particular, if $K$ is a Euclidean ball or a cube of radius $r$, then $\lambda K$ is respectively a Euclidean ball or a cube of radius $\lambda r$.

\subsection{Acknowledgements} MPG is supported by NSF DMS-1412748 and DMS-1514761. NG is partially supported by NSF-DMS 1700307. NG would like to thank the Fields Institute for Research in Mathematical Sciences, where part of the work in this manuscript was carried out in the Fall of 2014. MPG would like to thank NCTS Mathematics Division Taipei for their kind hospitality. The authors thank Luis Silvestre for many fruitful communications, as well as the anonymous referee for many useful remarks that helped improve this paper.

\section{Preliminaries and main result}\label{section:preliminaries and main result}

\subsection{The matrix $A_{f,\gamma}$ and Riesz potentials}\label{section:the Matrix A and Riesz potentials}

We will be interested in how the matrix $A_{f,\gamma}$ defined in \eqref{Af_gamma} acts along a given unit direction. For that we introduce a few related functions.

\begin{DEF}\label{def: the matrix A and its functions} Let $e\in\mathbb{S}^{d-1}$, and define:
  \begin{align*}
	a^*_{f,\gamma,e}(v) & := (A_{f,\gamma}(v)e,e),\\
	a^*_{f,\gamma}(v) & := \inf \limits_{e\in\mathbb{S}^{d-1}}a^*_{f,\gamma,e}.
  \end{align*}

\end{DEF}

\begin{rem} Throughout the paper, we will often omit the subindex $f$, $\gamma$ or both and understand $A, a, h,a^*_e,a^*$ to be always determined by $f$ and $\gamma$ as in the above definitions.

\end{rem}

The following lemma is a standard lower bound for $A_{f,\gamma}$ and $a_{f,\gamma}$ in terms of the mass, energy and entropy of $f$ that will be used later in the manuscript, see Desvilletes and Villani \cite{DesVil2000a}.
\begin{lem}\label{lem: A lower bound in terms of conserved quantities}
  There is a constant $c$ determined by $\gamma$, the dimension $d$, as well as the mass, energy, and entropy of $f$, such that for all $v\in \mathbb{R}^d$ 
  \begin{align*}
  a_{f,\gamma}(v) &\geq c \langle v\rangle^{\gamma+2}, \\
    A_{f,\gamma}(v) &\geq c \langle v\rangle^\gamma\mathbb{I}.	  
  \end{align*}	  

\end{lem}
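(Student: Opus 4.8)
The plan is to prove the pointwise lower bounds for $A_{f,\gamma}$ and $a_{f,\gamma}$ by first reducing everything to a lower bound for the matrix $A_{f,\gamma}$ along an arbitrary unit direction $e$, since $a_{f,\gamma} = \tr(A_{f,\gamma}) \geq (A_{f,\gamma}e,e)$ and, conversely, controlling $(A_{f,\gamma}(v)e,e)$ uniformly in $e$ gives $A_{f,\gamma}(v)\geq c\langle v\rangle^\gamma\mathbb{I}$. So fix $v\in\mathbb{R}^d$ and $e\in\mathbb{S}^{d-1}$ and write
\begin{align*}
  (A_{f,\gamma}(v)e,e) = C(d,\gamma)\int_{\mathbb{R}^d} |v-w|^{2+\gamma}\Big(1 - \frac{((v-w)\cdot e)^2}{|v-w|^2}\Big) f(w)\,dw
  = C(d,\gamma)\int_{\mathbb{R}^d} |v-w|^{\gamma}\,|P_{e^\perp}(v-w)|^2\, f(w)\,dw,
\end{align*}
where $P_{e^\perp}$ is the orthogonal projection onto $e^\perp$. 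The integrand is nonnegative, so the strategy is to restrict the integration to a region where both the weight $|v-w|^\gamma|P_{e^\perp}(v-w)|^2$ and the mass of $f$ can be bounded below in terms of the conserved quantities.

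The key step is the region selection. First I would observe that $|v-w|^\gamma |P_{e^\perp}(v-w)|^2$ is bounded below on an annular-type region around $v$: if we restrict to $w$ with $R_1\langle v\rangle \leq |v-w|\leq R_2\langle v\rangle$ and such that the angle between $v-w$ and $e$ is bounded away from $0$ and $\pi$ (say $|P_{e^\perp}(v-w)|\geq \tfrac12 |v-w|$), then $|v-w|^\gamma|P_{e^\perp}(v-w)|^2 \gtrsim \langle v\rangle^{\gamma+2}$ with a constant depending only on $d,\gamma,R_1,R_2$ (here one uses $\gamma\geq -d$; the sign of $\gamma$ only dictates which endpoint of the annulus gives the worst constant for the $|v-w|^\gamma$ factor, but on a fixed annulus of radii comparable to $\langle v\rangle$ it is fine). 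Then it suffices to show that for a suitable choice of $R_1,R_2$ (depending only on mass, energy, entropy), the set of such $w$ carries at least a fixed fraction of the mass, i.e.
\begin{align*}
  \int_{\{R_1\langle v\rangle\le |v-w|\le R_2\langle v\rangle,\ |P_{e^\perp}(v-w)|\ge \frac12|v-w|\}} f(w)\,dw \geq c_0 > 0.
\end{align*}
This is the standard argument: the mass is $1$ (or normalized), so by Chebyshev only a small fraction of the mass lives where $|w|$ is very large (controlled by the energy bound $\int |w|^2 f\,dw$), and only a small fraction lives in any set of small Lebesgue measure (controlled by the entropy bound via $\int_A f \le \frac{1}{\log M}\int f\log f + \frac{M|A|}{e}$ type estimates, i.e. $L\log L$ integrability prevents concentration). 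Choosing $R_2$ large kills the far-field contribution, choosing the angular sector and the inner radius $R_1$ appropriately makes the excluded near-field and thin-cone regions have small measure, hence small $f$-mass; what remains must be bounded below.

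The main obstacle I expect is making the region selection genuinely uniform in $v$ and in the direction $e$ — in particular handling the regime of large $|v|$, where $\langle v\rangle\sim |v|$ and one must make sure the annulus $R_1\langle v\rangle\le|v-w|\le R_2\langle v\rangle$ still captures mass that, for large $|v|$, is necessarily concentrated near the origin relative to $v$; this is precisely why the $\langle v\rangle^{\gamma+2}$ (rather than a constant) appears and why the energy bound is essential. One clean way around the direction-uniformity issue is to note $\int_{\mathbb{S}^{d-1}}|P_{e^\perp}z|^2\,de = c_d|z|^2$, so integrating the identity for $(A_{f,\gamma}e,e)$ over $e$ and using that the infimum is at most the average gives a bound on $a^*_{f,\gamma}$ directly, then combine with the trivial upper bound $a^*_{f,\gamma,e}\le a_{f,\gamma}$; but since the statement is a lower bound, the honest route is the explicit geometric region above, and one simply checks the constants do not degenerate. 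Finally, the bound $A_{f,\gamma}\ge c\langle v\rangle^\gamma\mathbb{I}$ follows since the lower bound on $(A_{f,\gamma}(v)e,e)$ was uniform over $e\in\mathbb{S}^{d-1}$, and $a_{f,\gamma}(v)=\tr(A_{f,\gamma}(v))\ge (A_{f,\gamma}(v)e,e)\ge c\langle v\rangle^{\gamma+2}$ with the sharper exponent coming from keeping the full $|v-w|^{2+\gamma}$ weight rather than throwing away the projection factor — indeed for the trace one has $\tr(\Pi(z))=d-1$, so $a_{f,\gamma}(v)=C(d,\gamma)(d-1)\int |v-w|^{2+\gamma}f(w)\,dw$ and the same annular region argument, now with weight $|v-w|^{2+\gamma}\gtrsim\langle v\rangle^{2+\gamma}$, gives the claim. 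This last identity is in fact the cleanest path for the $a_{f,\gamma}$ bound and I would present it first, then deduce the matrix bound separately.
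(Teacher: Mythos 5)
The paper states this lemma without proof (it is quoted as ``well known''), so there is nothing to compare against; but your proposal contains a genuine gap in the argument for the $A_{f,\gamma}$ bound, precisely at the spot you yourself flagged as the main obstacle. Your region selection asks the double cone $C_\alpha(v,e):=\{w:\,|P_{e^\perp}(v-w)|<\alpha|v-w|\}$ (fixed aperture $\alpha$) to carry only a small fraction of the mass of $f$, so that on the remaining mass the weight satisfies $|v-w|^\gamma|P_{e^\perp}(v-w)|^2\gtrsim\langle v\rangle^{\gamma+2}$. This fails when $e=v/|v|$ and $|v|$ is large. Let $B_M$ carry most of the mass ($M$ fixed by the energy bound). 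For $w\in B_M$ one has $P_{e^\perp}(v-w)=-P_{e^\perp}(w)$ (since $P_{e^\perp}v=0$), so $|P_{e^\perp}(v-w)|\leq M$ while $|v-w|\geq|v|-M$, hence $|P_{e^\perp}(v-w)|/|v-w|\to 0$ as $|v|\to\infty$: for $|v|$ large enough, \emph{all} of $B_M$ lies inside $C_\alpha(v,e)$ and the mass outside the cone is not bounded below. The problem is structural, not a matter of ``checking the constants do not degenerate'': the bound $(A_{f,\gamma}(v)e,e)\gtrsim\langle v\rangle^{\gamma+2}$ that your region would yield is in fact false, since $\langle v\rangle^{\gamma+2}\geq\langle v\rangle^\gamma$ would contradict the matching upper bound $\astar_{f,\gamma}(v)\leq C\langle v\rangle^\gamma$ of Proposition~\ref{prop:a vs a star gamma larger than -2}; a direct computation with $f$ Gaussian and $e=v/|v|$ shows $(A_{f,\gamma}(v)e,e)\approx\langle v\rangle^\gamma$ at large $|v|$, one full power of $\langle v\rangle^2$ below your target.

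The correct replacement is a \emph{cylindrical} exclusion of fixed width: remove $\{w:\,|P_{e^\perp}(v-w)|<\delta\}$ for a small absolute constant $\delta$. Its intersection with $B_M$ has Lebesgue measure $\lesssim\delta^{d-1}M$ uniformly in $v$ and $e$, so the $L\log L$ estimate controls its $f$-mass. On the complement one gets only $|P_{e^\perp}(v-w)|^2\geq\delta^2$, \emph{not} $\gtrsim|v-w|^2$, and combined with $|v-w|\leq|v|+M\lesssim\langle v\rangle$ on $B_M$ (hence $|v-w|^\gamma\gtrsim\langle v\rangle^\gamma$ since $\gamma\leq 0$) this produces exactly $(A_{f,\gamma}(v)e,e)\gtrsim\langle v\rangle^\gamma$, which is the sharp order. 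Your trace route for $a_{f,\gamma}$, which has no angular factor, is sound once the annulus argument is split into cases: for $|v|\geq 2M$ the annulus $|v-w|\in[R_1\langle v\rangle,R_2\langle v\rangle]$ with $R_1<1/2<2<R_2$ already contains $B_M$, while for $|v|<2M$ one instead removes a small ball around $v$ via the entropy bound and uses $|v-w|\gtrsim 1\approx\langle v\rangle$; both give $a_{f,\gamma}(v)\gtrsim\langle v\rangle^{\gamma+2}$.
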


\subsection{Basic assumptions: weak solutions}
In all what follows we fix an initial condition $\fin$ which is a distribution on $\mathbb{R}^d$. For convenience, we will assume $\fin$ is normalized thus
\begin{align*}
  \int_{\mathbb{R}^d} \fin(v) \;dv = 1,\;\;\int_{\mathbb{R}^d} \fin(v) v\;dv =0,\;\;\int_{\mathbb{R}^d} \fin(v) |v|^2\;dv =d,	
\end{align*}	
and we shall assume the initial entropy of $\fin$ is finite
\begin{align*}
   H(\fin) =\int_{\mathbb{R}^d}\fin(v)\log(\fin(v))\;dv < \infty.
\end{align*}	
\begin{DEF} \label{def:solution} A weak solution of \eqref{eqn:Landau homogeneous} in $(0,T)$ with initial data $\fin$ is a function 
\begin{align*}
  f:\mathbb{R}^d\times [0,T)\mapsto \mathbb{R}
\end{align*}  
satisfying the following conditions:
  \begin{enumerate}  
  \item $f\geq 0$ a.e. in $\mathbb{R}^d\times (0,T)$ and	  
  \begin{align*}  
    f\in C([0,T);\mathcal{D}'),\;\; f(t) \in L^1_2\cap L\log(L)\;\;\forall\;t\in(0,T),\;f(0) = \fin.
  \end{align*}		
  \item For every $t\in (0,T)$
  \begin{align*}
     \int_{\mathbb{R}^d}f(v,t)\;dv = 1,\;\int_{\mathbb{R}^d}f(v,t)v_i\;dv = 0,\;\int_{\mathbb{R}^d}f(v,t)\;dv = d.  
  \end{align*}
  \item The equation is understood in the following weak sense: given $\eta \in C^2_c(\mathbb{R}^d)$ and $\phi\in C^2(\mathbb{R})$ such that $\phi''(s)\equiv 0$ for all large $s$ and $\lim\limits_{s\to 0}s^{-1}\phi(s) = 0$, and times $t_1<t_2$, we have
    \begin{align}
      & \int_{\mathbb{R}^d} \eta^2 \phi(f(t_2))\;dv-\int_{\mathbb{R}^d} \eta^2 \phi(f(t_1))\;dv \label{eqn:weak formulation}\\ 
      & = -\int_{t_1}^{t_2}\int_{\mathbb{R}^d} (A_{f,\gamma}\nabla f-f\nabla a_{f,\gamma}, \nabla (\eta^2 \phi'(f))) \;dvdt. \notag
    \end{align}

   \item The entropy functional $H(f(t)):=\int_{\mathbb{R}^d} f \log (f)dv$ is non increasing in $t$ and for any pair of times $0\leq t_1<t_2 <T$ we have
    \begin{align*}
      H(f(t_1))-H(f(t_2)) = \int_{t_1}^{t_2} D(f(t))\;dt \ge 0,
    \end{align*}	
    where $D(f)$ denotes the entropy production, that is
    \begin{align*}
      D(f(t)) & = \int_{\mathbb{R}^d}4(A_{f,\gamma}\nabla \sqrt{f}, \nabla \sqrt{f})-f h_{f,\gamma}\;dv = -\int_{\mathbb{R}^d}Q(f,f)\log(f)\;dv.
    \end{align*}
\end{enumerate}
  
\end{DEF}
	
\subsection{Further assumptions for very soft potentials: Local Doubling and an $\varepsilon$-Poincar\'e inequality}

For $\gamma\leq -2$ all our results are conditional, that is, the regularity estimates are obtained under two further assumptions on the solution $f$, which may not hold in general. These two assumptions are described below.

Given $\varepsilon \in (0,1)$ we say that ``$f$ satisfies the $\varepsilon$-Poincar\'e inequality'' if there is some positive constant $\Lambda>0$ such that for any Lipschitz, compactly supported $\phi$ we have
\begin{align*}
  \int_{\mathbb{R}^d} \phi^2 h_{f,\gamma}\;dv \leq \varepsilon \int_{\mathbb{R}^d} (A_{f,\gamma} \nabla\phi,\nabla\phi)\;dv + \Lambda\int_{\mathbb{R}^d} \phi^2\;dv.
\end{align*}
This can be thought of as an ``uncertainty estimate'' for the Landau equation (see the discussion regarding the uncertainty principle at the end of Section \ref{section: contributions}). We will also say that ``$f$ satisfies the strong Poincar\'e inequality'' if there is a decreasing, non-negative function $\Lambda:(0,1)\mapsto \mathbb{R}$ such that for any smooth $\phi$ and $\varepsilon \in (0,1)$ we have
\begin{align*}
  \int_{\mathbb{R}^d} \phi^2 h_{f,\gamma}\;dv \leq \varepsilon \int_{\mathbb{R}^d} (A_{f,\gamma}\nabla\phi,\nabla\phi)\;dv + \Lamp \int_{\mathbb{R}^d}\phi^2\;dv.
\end{align*}
\begin{rem}\label{rem:compactness of supports and inequalities}
  Although the inequality is stated for compactly supported $\phi$, it is clear (by density) that once it holds for compactly supported $\phi$ it also holds for any $\phi$ with
  \begin{align*}
    \int_{\mathbb{R}^d} (A_{f,\gamma} \nabla\phi,\nabla\phi)\;dv<\infty \textnormal{ and } \int_{\mathbb{R}^d} \phi^2\;dv<\infty.  
  \end{align*}	  
  Accordingly, throughout the manuscript we may apply the $\varepsilon$-Poincar\'e inequality (and other weighted Sobolev inequalities) to $\phi$ which may not be compactly supported but satisfy these two integrability conditions. 
\end{rem}

The two conditions above might be thought of as bounds on the spectral functional,
\begin{align}\label{eqn:Lambda f epsillon definition}
  \Lampf := \sup \left \{ \int_{\mathbb{R}^d}\phi^2 h_{f,\gamma} - \varepsilon( A_{f,\gamma}\nabla \phi,\nabla \phi)\;dv \;:\; \|\phi\|_{L^2(\mathbb{R}^d)} = 1,\;\; t\in (0,T) \right \}.
\end{align}
This brings us to the first of the extra Assumptions needed for very soft potentials.
\begin{Assumption}\label{Assumption:Epsilon Poincare}
  There are constants $C_p,\kappa>0$ such that for $\Lampf$ defined in \eqref{eqn:Lambda f epsillon definition},
  \begin{align}\label{eqn:Spectral Assumption on f}
    \Lampf \leq C_P\varepsilon^{-\kappa}\;\; \forall\;\varepsilon\in (0,1).
  \end{align}
\end{Assumption}

\begin{rem}\label{rem: h in Ld/2 implies epsilon Poincare} 
 If $\gamma \in (-2,0]$, any $f$ satisfies the $\varepsilon$-Poincar\'e inequality for all $\varepsilon \in (0,1)$ with universal constants (Theorem \ref{thm:sufficient conditions for the Poincare inequality}). This can be intuited from the fact that for $\gamma \in (-2,0]$ the function $ h_{f,\gamma} $ defined in \eqref{hf_gamma} belongs to $ L^{p}_{\textnormal{loc}}$ for some $p>d/2$, with a norm controlled by a universal constant. Then, at least for functions with compact support, the standard Sobolev inequality and the integrability of $h_{f,\gamma}$ can be used to prove a $\varepsilon$-Poincar\'e inequality.

 \end{rem}

The other important assumption that will be required for $\gamma \leq -2$ is the following ``local doubling'' property.
\begin{Assumption}\label{Assumption:Local Doubling}
  There is a constant $C_D>1$ such that for all times $t\in (0,T)$, we have
  \begin{align}\label{eqn:doubling local}
    \int_{B_{2r}(v_0)}f(v,t)\;dv \leq C_D\int_{B_r(v_0)}f(v,t)\;dv,\;\;\forall\;v_0\in\mathbb{R}^d,\;r\in(0,1).
  \end{align}

\end{Assumption}
  The reason we say this is ``local'' is that it only involves balls with radius less than $1$. A locally integrable function which is globally doubling (meaning the above holds for balls with any radius $r>0$) cannot belong to $L^1(\mathbb{R}^d)$. On the other hand, plenty of functions in $L^1(\mathbb{R}^d)$ can be locally doubling. For a discussion on how reasonable this assumption is, see Section \ref{sec:discussion of extra assumptions}.

\subsection{Main results}\label{section: Main result}

  The main results of this manuscript are summarized in the next theorems. First, we present sufficient conditions to guarantee that $f$ satisfies a $\varepsilon$-Poincar\'e inequality.
\begin{thm}\label{thm:sufficient conditions for the Poincare inequality}

  Let $\gamma\in (-2,0]$. Then, given any $f\geq 0$ with mass, second moment, and entropy as $\fin$, the following holds for any Lipschitz $\phi$ with compact support
  \begin{align}\label{eqn:epsilon_Poincare_inequality gamma>-2}
    \int_{\mathbb{R}^d} \phi^2 h_{f,\gamma} \;dv \leq \varepsilon \int_{\mathbb{R}^d} (A_{f,\gamma}\nabla\phi,\nabla\phi)\;dv + C(\fin,d,\gamma)\varepsilon^{\frac{\gamma}{2+\gamma}}\int_{\mathbb{R}^d}\phi^2\langle v\rangle ^\gamma\;dv.
  \end{align} 
  Let $\gamma \in [-d,-2]$. Assume $f\geq 0$ is doubling in the sense of \eqref{eqn:doubling local}, and assume there exist $s>1$ and a modulus of continuity $\eta(\cdot)$ such that for any $Q$ with side length $r\in(0,1)$
 \begin{align*}
    |Q|^{\frac{1}{d}} \left( \fint_{Q}{h_{f,\gamma}}^s\;dv\right)^{\frac{1}{2s}}\left(\fint_{Q}(a_{f,\gamma}^*)^{-s}\;dv\right)^{\frac{1}{2s}}  \leq \eta(r).
  \end{align*}	
  Then, the following Poincar\'e inequality holds for any $\varepsilon \in (0,1)$
  \begin{align}\label{eqn:epsilon_Poincare_inequality strongerII_bounded}
    \int_{\mathbb{R}^d} \phi^2h_{f,\gamma} \;dv \leq \varepsilon \int_{\mathbb{R}^d} (A_{f,\gamma}\nabla\phi,\nabla\phi)\;dv + C(\fin,d,\gamma)\tilde \eta(\varepsilon)\int_{\mathbb{R}^d}	\phi^2\;dv,
  \end{align} 
  where $\tilde \eta:(0,1)\mapsto\mathbb{R}$ is a decreasing function with $\eta(0+)=\infty$ determined by $\eta$.
\end{thm}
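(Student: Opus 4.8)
\medskip

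\noindent\textbf{Proof plan.} Both inequalities will be obtained by viewing $\int_{\mathbb{R}^d} h_{f,\gamma}\phi^2\,dv$ as a Schr\"odinger form and bounding it, in the spirit of the Fefferman--Phong inequality, against the weighted Dirichlet form on the right-hand side. A reduction valid in both regimes is to use Lemma \ref{lem: A lower bound in terms of conserved quantities} in the form $a^*_{f,\gamma}(v)=\inf_{e\in\mathbb{S}^{d-1}}(A_{f,\gamma}(v)e,e)\geq c\langle v\rangle^\gamma$ together with $(A_{f,\gamma}\nabla\phi,\nabla\phi)\geq a^*_{f,\gamma}|\nabla\phi|^2\geq c\langle v\rangle^\gamma|\nabla\phi|^2$: it then suffices to prove the stated bound with $\int(A_{f,\gamma}\nabla\phi,\nabla\phi)$ replaced by $\int\langle v\rangle^\gamma|\nabla\phi|^2$ when $\gamma\in(-2,0]$, and by $\int a^*_{f,\gamma}|\nabla\phi|^2$ when $\gamma\in[-d,-2]$.

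\emph{The case $\gamma\in(-2,0]$.} Write $h_{f,\gamma}=c(d,\gamma)\,f*|v|^\gamma$ (for $\gamma=0$ one uses instead that the critical Riesz potential maps $L^1$ to $\mathrm{BMO}$, which is analogous) and split $|v|^\gamma=|v|^\gamma\chi_{B_1}+|v|^\gamma\chi_{B_1^c}$. The far part is harmless: using $\int f\,dv=1$ and $\int|v|^2 f\,dv=d$ one checks $f*(|v|^\gamma\chi_{B_1^c})(v)\leq C\langle v\rangle^\gamma$ pointwise (it is here that $\gamma>-2$ enters), so this term is absorbed directly into $\int\langle v\rangle^\gamma\phi^2$. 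For the near part $V:=f*(|v|^\gamma\chi_{B_1})$, Young's inequality in Lorentz spaces gives $\|V\|_{L^{d/|\gamma|,\infty}(\mathbb{R}^d)}\leq\||v|^\gamma\chi_{B_1}\|_{L^{d/|\gamma|,\infty}}\|f\|_{L^1}$, a universal constant, and $d/|\gamma|>d/2$ precisely because $\gamma>-2$. I then decompose $\mathbb{R}^d$ into the ball $B_1$ and dyadic annuli $A_k=\{2^k\leq|v|<2^{k+1}\}$, on which $\langle v\rangle^\gamma$ is comparable to the constant $2^{k\gamma}$; after normalizing $A_k$ to unit scale by $v=2^k u$, the desired estimate on $A_k$ reduces to a fixed local inequality on a ball, which follows from H\"older against $L^{d/|\gamma|,\infty}$, the Lorentz refinement $\|\psi\|_{L^{2^*,2}}\lesssim\|\nabla\psi\|_{L^2}$ of Sobolev's inequality, interpolation of the intermediate Lorentz norm between $L^{2^*,2}$ and $L^2$, and Young's inequality, producing $\int V_k\psi^2\leq\delta\|\nabla\psi\|_{L^2}^2+C\delta^{-\beta}\|\psi\|_{L^2}^2$ with $\beta=|\gamma|/(2+\gamma)$ --- this is the source of the exponent $\gamma/(2+\gamma)$. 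The second-moment bound forces the relevant norm of $V$ on $A_k$ to decay like $2^{-2k}$, exactly balancing the factor $2^{2k}$ coming from the rescaling, so the constants are uniform in $k$; summing over the boundedly overlapping annuli, recombining with the far part, using $\langle v\rangle^\gamma|\nabla\phi|^2\leq c^{-1}(A_{f,\gamma}\nabla\phi,\nabla\phi)$, and choosing $\delta$ proportional to $\varepsilon$ gives \eqref{eqn:epsilon_Poincare_inequality gamma>-2}.

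\emph{The case $\gamma\in[-d,-2]$.} Put $w:=a^*_{f,\gamma}$; it suffices to show $\int h_{f,\gamma}\phi^2\leq\varepsilon\int w|\nabla\phi|^2+C\tilde\eta(\varepsilon)\int\phi^2$. The local doubling of $f\,dv$, via the representation of $a_{f,\gamma}$ (hence, with Lemma \ref{lem: A lower bound in terms of conserved quantities}, of $a^*_{f,\gamma}$) as a Riesz potential of $f\,dv$, propagates to the statement that $w$ belongs to a Muckenhoupt class with constant controlled by $C_D,d,\gamma$ and the conserved quantities --- this is where the results of Section \ref{section:A_p class and weighted inequalities} are used; in particular $w$ and $w^{-1}$ are doubling, satisfy reverse H\"older inequalities, and the Fabes--Kenig--Serapioni weighted Sobolev--Poincar\'e inequality holds on cubes with a gain exponent $\kappa>1$. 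Fix a scale $r\in(0,1)$ and cover $\mathbb{R}^d$ by a grid of cubes $Q$ of side $r$. On each $Q$ write $\phi=(\phi-\phi_{Q,w})+\phi_{Q,w}$. The oscillation part is controlled by H\"older at the exponent $s$ of the hypothesis (self-improvable to a smaller exponent via reverse H\"older for $w^{-1}$ so as to be compatible with $\kappa$) together with weighted Sobolev--Poincar\'e, which produces exactly $|Q|^{2/d}\big(\fint_Q h_{f,\gamma}^s\big)^{1/s}\big(\fint_Q w^{-s}\big)^{1/s}\leq\eta(r)^2$ as the coefficient of $\int_Q w|\nabla\phi|^2$. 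The mean part $\phi_{Q,w}^2\int_Q h_{f,\gamma}$ is handled by Jensen's inequality and the hypothesis together with the pointwise bound $w\geq c\langle v\rangle^\gamma$ of Lemma \ref{lem: A lower bound in terms of conserved quantities}, which makes $\int_Q h_{f,\gamma}$ small at small scales and turns this term into $\lesssim\eta(r)^2 r^{-2}\int_Q\phi^2$. Summing over cubes gives $\int h_{f,\gamma}\phi^2\leq C\eta(r)^2\int w|\nabla\phi|^2+C\eta(r)^2 r^{-2}\int\phi^2$; choosing $r=r(\varepsilon)\to0$ as $\varepsilon\to0$ so that $C\eta(r(\varepsilon))^2=\varepsilon$ yields \eqref{eqn:epsilon_Poincare_inequality strongerII_bounded} with $\tilde\eta(\varepsilon):=C\eta(r(\varepsilon))^2 r(\varepsilon)^{-2}$, which is decreasing with $\tilde\eta(0+)=\infty$.

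The chain of H\"older/interpolation/Young steps is routine. The genuine difficulties are: in the first regime, bookkeeping the degeneration of the weight $\langle v\rangle^\gamma$ at infinity (which is exactly why the dyadic decomposition and the second-moment bound are needed) and extracting the sharp exponent $\gamma/(2+\gamma)$ (which forces the use of the Lorentz form of Sobolev's inequality rather than plain $L^{2^*}$); in the second regime, establishing the Muckenhoupt character of $a^*_{f,\gamma}$ with quantitative control under only the local doubling of $f$, matching the hypothesis exponent $s$ with the weighted-Sobolev gain $\kappa$, and --- most delicately --- treating the ``constant part'' $\phi_{Q,w}^2\int_Q h_{f,\gamma}$ so that it contributes an \emph{unweighted} $\int\phi^2$ to the right-hand side.
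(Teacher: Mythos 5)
Your argument is essentially correct in spirit, but the two halves deserve different comments.

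\textbf{The moderately soft case $\gamma\in(-2,0]$.} Here you take a genuinely different route from the paper. The paper works through the Chanillo--Wheeden/Sawyer--Wheeden two-weight Sobolev inequality on a fixed-scale tiling of $\mathbb{R}^d$ (Theorem \ref{thm:local weighted Sobolev inequalities} combined with Lemma \ref{lem:Global weighted Sobolev from local ones}) and then estimates the Chanillo--Wheeden functional $\sigma_{2,2}(Q,h,a^*)$ by $|Q|^{(2+\gamma)/d}$ (Proposition \ref{new_pro_global_space}). You instead reduce, via $a^*_{f,\gamma}\approx\langle v\rangle^\gamma$, to a single-weight statement, bound $h_{f,\gamma}$ in weak $L^{d/|\gamma|}$ by Young's inequality in Lorentz spaces, decompose into dyadic annuli where the weight is essentially constant, rescale, and invoke the Lorentz refinement of Sobolev plus interpolation and Young; the second-moment bound supplies the $2^{-2k}$ decay of the local Lorentz norm which exactly cancels the $2^{2k}$ from rescaling. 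The exponent bookkeeping ($\theta=|\gamma|/2$, so $\theta/(1-\theta)=|\gamma|/(2+\gamma)$) reproduces $\varepsilon^{\gamma/(2+\gamma)}$ correctly. This is a more hands-on Fefferman--Phong-style argument and avoids the two-weight machinery, at the cost of being tied to the explicit power weight; the paper's approach is more uniform across the two regimes. (One minor remark: for $\gamma=0$ the function $h_{f,0}$ is the constant $\int f\,dv$, so that endpoint is trivial and the $L^1\to\mathrm{BMO}$ aside is unnecessary.) You also leave the overlap/cutoff contributions between annuli implicit; they are manageable because $|\nabla\eta_k|^2\sim 2^{-2k}\lesssim\langle v\rangle^\gamma$ exactly when $\gamma>-2$, but this should be said.

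\textbf{The very soft case $\gamma\in[-d,-2]$.} Here the structure is essentially the same as the paper's (tile by cubes of side $r$, local weighted Sobolev--Poincar\'e on each cube, handle the mean separately, choose $r=r(\varepsilon)$). However, your treatment of the mean part has a genuine gap. Writing $\phi=(\phi-\phi_{Q,w})+\phi_{Q,w}$ with the $w$-weighted average and applying Cauchy--Schwarz gives $\phi_{Q,w}^2\int_Q h\leq\bigl(\fint_Q h/\fint_Q w\bigr)\int_Q\phi^2 w$, and the hypothesis then bounds $\fint_Q h/\fint_Q w$ by $\eta(r)^2 r^{-2}$; but what you are left with is the \emph{weighted} quantity $\int_Q a^*\phi^2$, not the unweighted $\int_Q\phi^2$ the theorem requires, and $a^*_{f,\gamma}$ is not globally bounded when $\gamma<-2$. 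The pointwise bound $w\geq c\langle v\rangle^\gamma$ is a lower bound and does not resolve this. The paper avoids the issue by subtracting the \emph{unweighted} average $(\phi)_Q$ (Theorem \ref{thm:local weighted Sobolev inequalities} permits this) and then bounding $\fint_Q h$ directly from the mass of $f$: $\int_Q h\leq C(d,\gamma)|Q|^{(d+\gamma)/d}$, hence $\fint_Q h\leq C\ell^\gamma$, which multiplies an honest $\int_Q\phi^2$. Switching to the unweighted average and using this direct bound (in place of deducing smallness of $\fint_Q h$ from the hypothesis) closes the gap and gives the coefficient $\tilde\eta(\varepsilon)\sim\ell_\mu(\varepsilon)^\gamma$ the paper obtains.
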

A very important consequence of Theorem \ref{thm:sufficient conditions for the Poincare inequality} are the following $L^\infty$-estimates for solutions to \eqref{eqn:Landau homogeneous}. 

\begin{thm}\label{thm:main_1} 

Let $\gamma \in (-2,0]$ and $f:\mathbb{R}^d\times [0,\infty)\to\mathbb{R}$ be a weak solution to \eqref{eqn:Landau homogeneous}. There is a constant $C=C(\fin,d,\gamma)$ such that for any $R>1$,
  \begin{align*}
    \|f(t)\|_{L^\infty(B_R(0))}\leq CR^{-\gamma\tfrac{d}{2}}\left (1+\frac{1}{t}\right )^{d/2},\;\;\forall\;t\in(0,+\infty).	  
  \end{align*}	   	
  
  \end{thm}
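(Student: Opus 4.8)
The plan is to regard a weak solution $f$ as a solution of the \emph{linear} degenerate parabolic equation
\[
  \partial_t f=\dive\!\big(A_{f,\gamma}\grad f-f\grad a_{f,\gamma}\big),
\]
and to run a De Giorgi--Nash--Moser iteration using only those features of the coefficients $A_{f,\gamma},a_{f,\gamma},h_{f,\gamma}$ that persist along the flow: the lower bound $A_{f,\gamma}\ge c\langle v\rangle^{\gamma}\Id$ of Lemma \ref{lem: A lower bound in terms of conserved quantities}, the fact that $a^*_{f,\gamma}$ and the relevant Riesz potentials are controlled by Muckenhoupt weights with \emph{universal} characteristic (the $A_p$ theory of Section \ref{section:A_p class and weighted inequalities}), and --- decisively --- the $\varepsilon$-Poincar\'e inequality \eqref{eqn:epsilon_Poincare_inequality gamma>-2} of Theorem \ref{thm:sufficient conditions for the Poincare inequality}, which for $\gamma>-2$ holds for \emph{all} $\varepsilon\in(0,1)$ with universal constants. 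Fix $R>1$. On $B_{2R}$ the lower bound gives the degenerate uniform ellipticity $A_{f,\gamma}\ge cR^{\gamma}\Id$; since for $\partial_t u=\dive(R^{\gamma}\grad u)$ the $L^1\to L^\infty$ smoothing rate is $(R^{\gamma}t)^{-d/2}$, and (as we shall see) the reaction term does not spoil this, one should expect precisely the power $R^{-\gamma d/2}$ and the short-time rate $t^{-d/2}$ in the conclusion.

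\smallskip
\noindent\textbf{Step 1: a Landau energy inequality.} First I would test the weak formulation \eqref{eqn:weak formulation} with $\phi$ a truncation of $s\mapsto s^{p}$ (to stay inside the admissible class of Definition \ref{def:solution}) and $\eta$ a spatial cutoff supported in $B_{2R}$, then remove the truncation, to obtain for $p\ge 2$ and $0<t_1<t_2$
\begin{align*}
  \int_{\mathbb{R}^d}\eta^2 f^{p}(t_2)\,dv&+c_p\int_{t_1}^{t_2}\!\!\int_{\mathbb{R}^d}\eta^2\,(A_{f,\gamma}\grad f^{p/2},\grad f^{p/2})\,dv\,dt\\
  &\le\int_{\mathbb{R}^d}\eta^2 f^{p}(t_1)\,dv+C\int_{t_1}^{t_2}\!\!\int_{\mathbb{R}^d}\Big(|\grad\eta|^2 f^{p}+\eta^2 h_{f,\gamma}f^{p}\Big)\,dv\,dt,
\end{align*}
with $c_p\gtrsim 1/p$. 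The dangerous term is $\int \eta^2 h_{f,\gamma}f^{p}=\int h_{f,\gamma}(\eta f^{p/2})^2$; applying \eqref{eqn:epsilon_Poincare_inequality gamma>-2} to $\eta f^{p/2}$ with $\varepsilon$ a small multiple of $c_p$ absorbs its principal part into the dissipation on the left (the gradient-of-cutoff contribution being harmless), at the cost of a zeroth-order term whose constant grows only polynomially in $p$ (by the $\varepsilon^{\gamma/(2+\gamma)}$-bound in \eqref{eqn:epsilon_Poincare_inequality gamma>-2}) --- exactly what a Moser iteration tolerates.

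\smallskip
\noindent\textbf{Step 2: Moser iteration and summation.} The weight $\langle v\rangle^{\gamma}$ --- comparable on the relevant scales to $a^*_{f,\gamma}$ by the $A_p$ estimates of Section \ref{section:A_p class and weighted inequalities} --- is $A_2$ with universal characteristic, so there is a weighted Sobolev inequality with gain $\chi>1$ (the one underlying the weighted Moser estimate Lemma \ref{lem:weighted Moser estimate}) of the schematic form
\[
  \Big(\fint_{B}|u|^{2\chi}\langle v\rangle^{\gamma}\,dv\Big)^{1/\chi}\le C\Big(r^{2}\fint_{B}(A_{f,\gamma}\grad u,\grad u)\,dv+\fint_{B}u^{2}\langle v\rangle^{\gamma}\,dv\Big),
\]
with $C$ universal. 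Combining this with the energy inequality of Step 1 (with $u=\eta f^{p/2}$) and a Moser interpolation produces a reverse-H\"older self-improvement relating $L^{p\chi}$ of $f$ on a smaller space-time cylinder to $L^{p}$ on a larger one, with multiplicative constant of the form $C\big(1+(t_2-t_1)^{-1}\big)^{\sigma}R^{-\gamma\sigma'}$. Iterating over $p_k=2\chi^{k}$ along a geometric family of cylinders whose bottom time is pushed down to $0$, and summing the resulting convergent series, yields $\|f(t)\|_{L^\infty(B_R)}\le C\,R^{-\gamma d/2}t^{-d/2}\sup_{0<s\le t}\|f(s)\|_{L^1(B_{2R})}$ for $0<t\le 1$; the mass normalization turns this into $CR^{-\gamma d/2}t^{-d/2}$. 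For $t\ge 1$ one reruns the argument starting from time $t-1$, where $f$ is already bounded by the previous case, to get the time-uniform bound $CR^{-\gamma d/2}$; combining the two regimes gives $CR^{-\gamma d/2}(1+1/t)^{d/2}$.

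\smallskip
\noindent\textbf{The main obstacle.} I expect two points to require real care. First, the quantitative bookkeeping of this \emph{degenerate} Moser iteration: one must check that the gain $\chi$, the weighted Sobolev/Poincar\'e constants, and the $A_p$-characteristic of $a^*_{f,\gamma}$ depend only on $d$, $\gamma$ and the mass, energy, entropy of $\fin$ (precisely what Section \ref{section:A_p class and weighted inequalities} supplies), and that the powers of $R$ and of $(t_2-t_1)^{-1}$ accumulated over the infinitely many steps sum exactly to $-\gamma d/2$ and $d/2$. Second, a weak solution is a priori only in $L^1_2\cap L\log(L)$, so the very first step of the iteration --- raising integrability from $L\log(L)$ (effectively $L^1$) to $L^2$ --- is not covered by the scheme above and has to be handled separately, either by exploiting the entropy-dissipation bound of Definition \ref{def:solution} to start just above $L^1$, or by a regularization/limiting argument; this is the standard delicate point in De Giorgi-type arguments for merely weak solutions.
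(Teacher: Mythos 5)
Your proposal follows essentially the same route as the paper: an energy inequality for truncated powers $\phi_{p,h}(f)$ in the weak formulation, absorption of the reaction term $\int h_{f,\gamma}(\eta f^{p/2})^2$ via the $\varepsilon$-Poincar\'e inequality \eqref{eqn:epsilon_Poincare_inequality gamma>-2}, a weighted Sobolev inequality coming from the $A_p$ structure of $a^*_{f,\gamma}$, and a Moser iteration (Lemma \ref{lem:weighted Moser estimate}) whose accumulated exponents give $R^{-\gamma d/2}(1+1/T)^{d/2}$. The one point you flag as ``the main obstacle'' --- starting the iteration from mere $L^1_2\cap L\log L$ data --- is indeed where the paper does something specific: rather than passing to $L^2$, it uses the entropy-production bound \eqref{eqn:entropy production bound} together with the $\varepsilon$-Poincar\'e inequality and the weighted Sobolev inequality to produce the weighted $L^{1+2/d}L^{1+2/d}$ bound of Proposition \ref{prop:LpLp via interpolation with weight}, which is precisely the starting quantity $E_0$ of the iteration; beginning from $p_0=1+\tfrac{2}{d}$ (not $p_0=2$) is what makes the geometric sums of the exponents of $R^{-\gamma}$ and $(1+1/T)$ come out to exactly $d/2$ (an auxiliary $L^\infty_t L^{d/(d-1)}_v$ bound, Lemma \ref{lem:Ld/d-1 bound for f unif in time}, is also needed to justify the removal of the $h$-truncation in the energy inequality).
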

  
\begin{thm}\label{thm:very soft potentials estimate}  
  Let $-d< \gamma \leq -2$. Let $f:\mathbb{R}^d\times [0,T)\to\mathbb{R}$ be a classical solution to \eqref{eqn:Landau homogeneous} for which Assumptions \ref{Assumption:Epsilon Poincare} and \ref{Assumption:Local Doubling} hold. Then, there is a constant $C=C(\fin,d,\gamma,C_P,\kappa_P,C_D)$ such that for any $R>1$,
  \begin{align*}
    \|f(t)\|_{L^\infty(B_R(0))} \leq CR^{-\gamma\tfrac{d}{2}}\left (1+\frac{1}{t}\right )^{\frac{d}{2}},\;\;\forall\;t\in(0,T).
  \end{align*}	  
  
\end{thm}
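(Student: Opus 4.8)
The plan is to freeze the coefficients, view $f$ as a solution of the \emph{linear} degenerate parabolic equation $\partial_t f = \dive(A_{f,\gamma}\nabla f - f\nabla a_{f,\gamma})$, and run a weighted De Giorgi--Nash--Moser iteration on the super-level sets of $f$. Three ingredients drive the argument. Lemma \ref{lem: A lower bound in terms of conserved quantities} provides the ellipticity lower bound $A_{f,\gamma}(v)\ge c\langle v\rangle^{\gamma}\mathbb{I}$, which is ultimately responsible for the velocity weight $R^{-\gamma d/2}$. Assumption \ref{Assumption:Local Doubling} guarantees, through the Muckenhoupt theory of Section \ref{section:A_p class and weighted inequalities}, that the weight $a^*_{f,\gamma}$ governing the ellipticity is uniformly an $A_2$ weight, hence enjoys a scale-invariant weighted Sobolev--Poincar\'e inequality with a fixed gain exponent $\sigma>1$; this is what makes the classical iteration run in the presence of degeneracy. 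Finally, Assumption \ref{Assumption:Epsilon Poincare} furnishes the quantitative $\varepsilon$-Poincar\'e bound $\Lampf\le C_P\varepsilon^{-\kappa}$, needed to absorb the \emph{reaction} term $fh_{f,\gamma}$ into the dissipation at the price of a controlled zeroth-order constant --- this stands in for what, when $\gamma>-2$, is automatic by Theorem \ref{thm:sufficient conditions for the Poincare inequality}.

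\textbf{Step 1: the localized energy inequality.} For $k\ge 0$ and a spatial cutoff $\eta$, I would test the weak formulation \eqref{eqn:weak formulation} with $\eta^2\phi'(f)$ for $\phi(s)=\tfrac{1}{2}(s-k)_+^2$ (first with admissible approximations of $\phi$ as in Definition \ref{def:solution}, then passing to the limit). The dissipation $\int\eta^2(A_{f,\gamma}\nabla f,\nabla f)\chi_{f>k}\,dv$ appears with the right sign; the drift contribution $\int(f\nabla a_{f,\gamma},\nabla(\eta^2\phi'(f)))\,dv$ is rewritten, using $f\chi_{f>k}\nabla f=\nabla G(f)$ with $G(s)=\tfrac{1}{2}(s^2-k^2)_+$, as $\int\eta^2 h_{f,\gamma}G(f)\,dv$ plus cutoff errors. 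Since $0\le G(f)\le(f-k)_+^2+k(f-k)_+$, applying the $\varepsilon$-Poincar\'e inequality to $\eta(f-k)_+$ with $\varepsilon$ a small absolute fraction of the ellipticity absorbs $\int\eta^2 h_{f,\gamma}(f-k)_+^2\,dv$ into the dissipation, at the cost of the factor $C_P\varepsilon^{-\kappa}$ in a zeroth-order term; the remaining cutoff errors (those carrying $\nabla a_{f,\gamma}$ or $\nabla\eta$) are reabsorbed by a further use of the $\varepsilon$-Poincar\'e inequality and Cauchy--Schwarz against the dissipation, using the weighted pointwise bounds on the coefficients from Section \ref{section:preliminaries and main result}. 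What survives is a Caccioppoli inequality bounding $\sup_t\int\eta^2(f-k)_+^2\,dv$ and $\int\!\!\int\eta^2(A_{f,\gamma}\nabla(f-k)_+,\nabla(f-k)_+)\,dv\,dt$ by $C_P\varepsilon^{-\kappa}$ times an integral of $(f-k)_+^2+k^2\chi_{f>k}$ against $(A_{f,\gamma}\nabla\eta,\nabla\eta)+\eta^2$.

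\textbf{Step 2: iteration, rescaling, and the sharp powers.} Inserting the weighted Sobolev inequality (gain $\sigma>1$, uniform in the ball since $a^*_{f,\gamma}\in A_2$) into the Caccioppoli inequality upgrades local $L^2$ energy into higher integrability of $(f-k)_+$; iterating over the levels $k_j=M(1-2^{-j})$ and a shrinking sequence of space--time cylinders --- a weighted parabolic De Giorgi/Moser scheme (cf.\ Lemma \ref{lem:weighted Moser estimate}) --- shows $f\le M$ on the smaller cylinder once $M$ exceeds a controlled multiple of the appropriately normalized local $L^2$ norm of $f$. To produce the dependence on $v$ and $t$ I would center the cylinder at $v_0$ with $|v_0|\sim R$, where $A_{f,\gamma}\gtrsim R^{\gamma}\mathbb{I}$, and apply the parabolic rescaling $v\mapsto R^{-\gamma/2}(v-v_0)$, $t\mapsto R^{-\gamma}t$, which makes the equation uniformly parabolic at unit scale; the iteration then gives $\|f(t)\|_{L^\infty(B_1(v_0))}\lesssim (R^{\gamma}t)^{-d/2}$ times the mass, i.e.\ $R^{-\gamma d/2}t^{-d/2}$. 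The final ingredient is the global bound $\|f(t)\|_{L^2(\mathbb{R}^d)}^2\lesssim(1+t^{-1})^{d/2}$, obtained from the energy identity $\tfrac{1}{2}\tfrac{d}{dt}\|f\|_{2}^2=-\int(A_{f,\gamma}\nabla f,\nabla f)\,dv+\int h_{f,\gamma}f^2\,dv$, the $\varepsilon$-Poincar\'e inequality, a weighted Nash-type inequality (together with mass and second-moment conservation), and an ODE comparison; this in particular shows $f$ is simply bounded for $t\ge1$, producing the factor $(1+1/t)^{d/2}$.

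\textbf{Main obstacle.} The crux is the simultaneous degeneracy of $A_{f,\gamma}$ --- its decay $\sim\langle v\rangle^{\gamma}$ at infinity and its possible local vanishing --- together with the genuine singularity of $h_{f,\gamma}$, which for $\gamma<-2$ need not lie in $L^{d/2}_{\loc}$, so that no elementary Sobolev absorption of the reaction term is available and the classical De Giorgi estimates do not apply off the shelf. Both obstructions are exactly what the two standing assumptions neutralize: the weighted Sobolev/Poincar\'e inequalities attached to the $A_2$ weight $a^*_{f,\gamma}$ (needing Assumption \ref{Assumption:Local Doubling}) keep all iteration constants uniform across scales and locations, and the polynomial $\varepsilon$-Poincar\'e of Assumption \ref{Assumption:Epsilon Poincare} is precisely what prevents the zeroth-order constant $C_P\varepsilon^{-\kappa}$ from destroying the iteration. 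The remaining delicate --- though essentially routine --- point is the bookkeeping needed to extract the exact exponents $-\gamma d/2$ and $d/2$ by tracking how the weight, the parabolic rescaling, and the Nash gain interact, and the careful treatment of the lower-order (drift) terms in Step 1.
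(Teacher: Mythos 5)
Your overall architecture --- a localized Caccioppoli inequality fed into a weighted De Giorgi--Nash--Moser iteration, with the $\varepsilon$-Poincar\'e inequality absorbing the reaction term $h_{f,\gamma}f^2$ into the dissipation and the local doubling assumption making $\astar_{f,\gamma}$ a workable Muckenhoupt weight --- is indeed the paper's route, and you correctly diagnose which assumption neutralizes which obstruction. Two pieces of your Step 2, however, do not survive scrutiny.

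First, the parabolic rescaling $v\mapsto R^{-\gamma/2}(v-v_0)$, $t\mapsto R^{-\gamma}t$ does \emph{not} make the equation uniformly parabolic at unit scale. Lemma \ref{lem: A lower bound in terms of conserved quantities} gives only a \emph{lower} bound $A_{f,\gamma}\gtrsim\langle v\rangle^{\gamma}\mathbb{I}$; for $\gamma\leq -2$ there is no matching pointwise upper bound, since $a_{f,\gamma}(v)=c\int f(w)|v-w|^{2+\gamma}dw$ can be arbitrarily large near a local concentration of $f$ (the exponent $2+\gamma$ is negative). Proposition \ref{prop:a vs a star gamma below -2} gives only the comparison $a_{f,\gamma}\leq C\langle v\rangle^{\max\{-\gamma-2,2\}}\astar_{f,\gamma}$, not an absolute bound. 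So the rescaled coefficients are elliptic from below but not from above, and the "off-the-shelf" De Giorgi iteration you then invoke does not close. Moreover, you have just argued (correctly) that the weighted Sobolev inequality attached to the $A_p$ weight $\astar_{f,\gamma}$ is what makes the iteration run; the rescaling step is not an alternative to that mechanism but an attempt to dispense with it. What the paper actually does in Lemma \ref{lem:weighted Moser estimate} is keep $\astar_{f,\gamma}$ as the weight in every integral $E_n$ of the iteration, and the factor $R^{-\gamma d/2}$ arises not from a rescaling but from the pointwise inequality $1\leq CR^{-\gamma}\astar_{f,\gamma}$ on $B_R(0)$ used at each step to convert the unweighted zeroth-order term back to the weighted one; the cutoff errors $\int\phi(f)a_{f,\gamma}|\nabla\eta|^2$ are returned to $\astar_{f,\gamma}$-weighted norms via Proposition \ref{prop:a vs a star gamma below -2}, which is a second place where Assumption \ref{Assumption:Local Doubling} is required.

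Second, your proposed seed --- a global unweighted bound $\|f(t)\|_{L^2(\mathbb{R}^d)}^2\lesssim(1+t^{-1})^{d/2}$ from a weighted Nash inequality and ODE comparison --- is a nontrivial claim you do not supply and that the paper does not attempt. The paper seeds the iteration instead with the \emph{weighted} space-time bound $\int_{T_1}^{T_2}\!\int \astar_{f,\gamma}f^{1+2/d}\,dvdt\leq C$ of Proposition \ref{prop:LpLp via interpolation with weight}, obtained from the entropy production bound \eqref{eqn:entropy production bound}, the $\varepsilon$-Poincar\'e inequality applied to $\sqrt f$, and Theorem \ref{thm:Inequalities Sobolev weight gamma below -2}. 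A Nash-type argument for the plain $L^2$ norm faces the same degeneracy: $\int(A_{f,\gamma}\nabla f,\nabla f)\,dv$ does not control any unweighted $L^q$ norm of $f$ with $q>2$ without an upper bound on $A_{f,\gamma}$. Replacing these two pieces with the paper's weighted bookkeeping (and, cosmetically, swapping your De Giorgi truncations $(f-k)_+$ for the Moser power functions $\phi_{p,h}$, cf.\ Remark \ref{rem:Properties of approximations to p-th power I} and Corollary \ref{cor_iterat}) yields the paper's proof.
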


  \begin{thm}\label{thm:Coulomb case good estimate} 
  
  Let $\gamma =-d$ and $f:\mathbb{R}^d\times [0,T)\to\mathbb{R}$ be a classical solution to \eqref{eqn:Landau homogeneous} for which Assumptions \ref{Assumption:Epsilon Poincare} and \ref{Assumption:Local Doubling} hold. Given any $s \in (0,1)$ there is a constant $C=C(s, \fin,d,\gamma,C_P,\kappa_P,C_D)$ such that for any $R>1$,
  \begin{align*}
    \|f(t)\|_{L^\infty(B_R(0))} \leq C R^s\left (1+\frac{1}{t}\right )^{1+s},\;\;\forall\;t\in(0,T).
  \end{align*}	  
\end{thm}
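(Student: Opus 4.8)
The plan is to follow the same De Giorgi–Nash–Moser scheme used to prove Theorems \ref{thm:main_1} and \ref{thm:very soft potentials estimate}, but to exploit the special algebraic structure at $\gamma=-d$, where $h_{f,\gamma}=f$ itself, in order to close the iteration with an \emph{improved} dependence on the time variable. First I would set up the energy inequality for the truncated functions $f_k := (f-C_k)_+$ with levels $C_k \uparrow \mu$, using the weak (here: classical) formulation of the equation tested against $\eta^2 f_k$ for suitable cutoffs $\eta$ in space. The key structural input is the $\varepsilon$-Poincar\'e inequality guaranteed by Assumption \ref{Assumption:Epsilon Poincare}, which lets us absorb the reaction term $\int h_{f,\gamma} f_k^2 = \int f f_k^2$ into a small multiple $\varepsilon$ of the dissipation $\int (A_{f,\gamma}\nabla f_k, \nabla f_k)$, at the cost of a factor $\Lambda(\varepsilon)\le C_P\varepsilon^{-\kappa}$ in the zeroth-order term. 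Together with Lemma \ref{lem: A lower bound in terms of conserved quantities} (which bounds $A_{f,\gamma}$ below by $c\langle v\rangle^\gamma \mathbb{I}$) and the weighted Sobolev inequality for $A_p$-weights (the localized Moser-type estimate, Lemma \ref{lem:weighted Moser estimate}), one converts this into a gain of integrability $L^2 \to L^{2\chi}$ on each level set, uniformly in the relevant balls, with the local doubling Assumption \ref{Assumption:Local Doubling} ensuring that the weight $h_{f,\gamma}=f$ is an $A_\infty$-type weight at the relevant scales so the weighted inequalities apply.

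The point where the Coulomb case departs from $\gamma\in(-d,-2]$ is in the \emph{bookkeeping of constants through the iteration}. Because the reaction term is exactly of the form $\int f\cdot f_k^2$ — i.e. genuinely critical, the nonlinearity matching the dissipation order exactly — one can afford to let $\varepsilon=\varepsilon_k$ shrink along the iteration in a controlled geometric way; since $\Lambda(\varepsilon_k)\sim\varepsilon_k^{-\kappa}$ only enters polynomially, the accumulated constant in the De Giorgi recursion $Y_{k+1}\le C\,b^k Y_k^{1+\beta}$ remains summable. Tracking the homogeneity of each term under the parabolic scaling $f\mapsto \lambda f(\lambda^{?}v, \lambda^{?}t)$ natural to $\gamma=-d$ — where the equation $\partial_t f=\tr(A_{f}D^2 f)+f^2$ is invariant under $f_\lambda(v,t)=\lambda f(v,\lambda t)$ — shows that the resulting $L^\infty$ bound scales like $t^{-1}$ rather than $t^{-d/2}$; the loss of an arbitrarily small power, giving $(1+1/t)^{1+s}$ and $R^s$ in place of a cleaner statement, comes from the fact that the weighted Sobolev exponent $\chi=\chi(s)$ and the $A_p$-constants degenerate as one pushes toward the borderline, so the geometric sum of exponents across the De Giorgi steps only converges after conceding a factor $s>0$. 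Concretely I would: (i) prove a single-step ``$L^2$-to-$L^\infty$ on a smaller ball'' estimate with explicit powers of $\varepsilon$, $R$, and the dissipation; (ii) choose the level increments $C_{k+1}-C_k$ and the radii $R_k\downarrow R$ and the parameters $\varepsilon_k$ so that the recursion closes; (iii) sum and optimize in the free parameter, which forces the appearance of the $s$.

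The main obstacle, I expect, is step (ii): making the De Giorgi recursion close \emph{with a time-dependence of order $t^{-(1+s)}$} rather than the generic parabolic $t^{-d/2}$. In the standard argument the $1/t$ power enters because one runs the iteration on a shrinking time interval as well as shrinking balls, and the exponent on $t$ is dictated by how the $L^2$-norm of the solution on $[\,t/2,t\,]$ controls the $L^\infty$-norm at time $t$; here one must instead leverage that at $\gamma=-d$ the coefficient $a_{f,\gamma}$ satisfies $-\Delta a_{f,\gamma}=f$, so the diffusion matrix is not merely bounded below by $\langle v\rangle^{-d}$ but is tied pointwise to the very quantity being estimated, allowing a self-improving (nonlinear) feedback in the energy inequality. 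Handling the velocity growth — the $R^s$ factor — requires care because $A_{f,\gamma}\ge c\langle v\rangle^{-d}\mathbb{I}$ degenerates badly for large $|v|$, so the weighted Poincar\'e/Sobolev constants on $B_R$ grow with $R$; one controls this by the same $A_p$-weighted machinery but must check that the growth is at most polynomial in $R$ with an exponent that can be made $\le s$. A secondary technical point is verifying that Assumption \ref{Assumption:Local Doubling} indeed upgrades $f(\cdot,t)$ to a weight for which Lemma \ref{lem:weighted Moser estimate} is applicable with constants uniform in $t\in(0,T)$ and in the ball; this is where the ``local'' (radius $<1$) nature of the doubling hypothesis must be reconciled with the need to work on balls $B_R$ with $R>1$, presumably by a covering argument reducing to unit-scale balls.
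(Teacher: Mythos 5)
Your proposal correctly identifies the structural feature that makes $\gamma=-d$ special ($h_{f,\gamma}=f$), the scaling heuristic for the $t^{-1}$ rate (invariance under $f\mapsto\lambda f(v,\lambda t)$), and the need for weighted $A_p$-Sobolev machinery plus the doubling assumption. However, you have not identified the actual mechanism by which the paper obtains the rate $(1+1/t)^{1+s}$, and the substitute mechanism you propose does not work. Letting $\varepsilon_k$ shrink along a De Giorgi level-set iteration is already part of the standard argument for moderately soft potentials (where the result is $t^{-d/2}$), and nothing in that bookkeeping changes the homogeneity of the time factor: the product $\prod_k(1/T)^{(2/q)^k}$ accumulated by the Moser/De Giorgi recursion always produces an exponent $\sim q/(q-2) \approx d/2$, regardless of how fast $\varepsilon_k\to 0$. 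You flag this yourself (``the main obstacle, I expect, is step (ii)''), but the scaling invariance of the equation is only a heuristic for the expected rate, not an argument that the iteration actually produces it.

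The paper's route is genuinely different and supplies the missing step. It uses the Gressman--Krieger--Strain inequality
\[
\int_{\mathbb{R}^d} f^{p+1}\;dv \le \left(\tfrac{p+1}{p}\right)^2 \int_{\mathbb{R}^d} \left(A_{f,\gamma}\nabla f^{p/2},\nabla f^{p/2}\right)dv
\]
(specific to $\gamma=-d$, where $h=f$) as a replacement for the weighted Sobolev embedding inside the iteration. The energy inequality controls the dissipation term on the right by $\left(\tfrac{1}{T}+1\right)\int f^p\,dvdt$, so the integrability index rises \emph{additively}, $p\to p+1$, one step at a time (Lemma \ref{lem:LpLp iterative bound for Coulomb}). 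After $n$ steps you collect a factor $\left(\tfrac1T+1\right)^{n}$ and take an $L^{p_0+n}$-root, giving $\left(\tfrac1T+1\right)^{n/(p_0+n)}\to \left(\tfrac1T+1\right)^{1}$ as $n\to\infty$; this is what makes the exponent converge to $1$, in contrast to the geometric Moser iteration where the exponents converge to $d/2$. With $f\in L^p$ for $p>d/2$, interpolation then yields $a_{f,\gamma}=(-\Delta)^{-1}f\in L^\infty$ (Corollary \ref{a_bounded}), hence $\astar\le a$ is bounded uniformly; only at this point is the standard weighted Moser estimate (Lemma \ref{lem:weighted Moser estimate}) applied, with exponent $p$ in that lemma taken large so that its contribution $\tfrac1p\tfrac{q}{q-2}$ to the time and velocity powers is at most the prescribed $s$. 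Without the GKS inequality (or an equivalent nonlinear interpolation exploiting $h=f$), the additive $L^p$ ladder does not exist and there is no way to bound $a$ in $L^\infty$; without the $L^\infty$ bound on $a$, Lemma \ref{lem:weighted Moser estimate} does not close with an exponent that can be pushed below $d/2$. Those two ingredients are the essential content of the proof and are absent from your proposal.
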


\begin{rem}\label{rem:Coulomb case good estimate discussion} The assumptions in Theorem \ref{thm:Coulomb case good estimate}, particularly \eqref{eqn:epsilon_Poincare_inequality strongerII_bounded}, are not satisfactory from the point of view of a general theory for weak solutions. As such, Theorem \ref{thm:Coulomb case good estimate} should be seen as a conditional result which provides relatively simple conditions that guarantee strong regularization. Most interesting, however, is the rate of decay in the $L^\infty$ estimate. For any solution to an uniformly parabolic equation one expects an estimate of the form 
  \begin{align*}	
    \|f(t)\|_{L^\infty(\mathbb{R}^d)} \lesssim t^{-d/2},	
  \end{align*}
  at least for small times $t$ when the initial data belongs to $L^1(\mathbb{R}^d)$. Instead, for the case of Coulomb potential we have a much faster (even if conditional) rate, a rate which can be made to be  arbitrary close to $t^{-1}$. 
  
  This faster rate of regularization can be seen as a reflection of the fact that the ``diffusivity'' in \eqref{eqn:Landau homogeneous} (for a lack of a better term) has a strength of order comparable to $(-\Delta)^{-1}f$, therefore, near points where $f$ fails to be in $L^{\frac{d}{2}}$, the diffusivity becomes stronger, and therefore tends to drive the solution $f$ towards smoothness more dramatically. 
  
  In our proof, the specific way in which the stronger regularization arises in Theorem \ref{thm:Coulomb case good estimate} is through the fact that $h_{f,\gamma}=f$ when $\gamma=-d$ gives an extra power of $f$ in the $\varepsilon$-Poincar\'e inequality, or alternatively, via a nonlinear integral inequality found first by Gressman, Krieger, and Strain \cite{GreKriStr2012}, \eqref{eqn:GressmannKriegerStrain} in Section \ref{section:Coulomb regularization}. This inequality captures in an analytical way the fact that the diffusivity in the equation (which tends to drive the solution towards smoothness) is at least of the same order as the quadratic term (which tends to drive the solution towards blow-up).

\end{rem}
  
A side-product of our study are weighted Sobolev inequalities with weights involving $\astar_{f,\gamma}$ and $h_{f,\gamma}$, this under (relatively) mild assumptions on $f$ when $\gamma <-2$. These inequalities may be of independent interest. In this manuscript they are used in Section \ref{section: Regularization in L infinity} in deriving De Giorgi-Nash-Moser type estimates with the proper weights.

\begin{thm}\label{lem:Inequalities Sobolev weight aII}
  Let $\gamma \in [-2,0]$, there is a constant $C=C(d,\gamma,\fin)$ such that given a function $f\geq 0$ with mass, energy, and entropy equal $\fin$ then for all Lipschitz, compactly supported functions $\phi:\mathbb{R}^d\to\mathbb{R}$,
  \begin{align*}
    \left ( \int_{\mathbb{R}^d}  \phi^{\frac{2d}{d-2}}(\astar)_{f,\gamma}^{\frac{d-2}{d}} \;dv\right )^{\frac{d-2}{d}}\leq C\int_{\mathbb{R}^d} \astar_{f,\gamma} |\nabla \phi|^2\;dv+C\int_{\mathbb{R}^d}|\phi|^2\;dv.		
  \end{align*}			
  Furthermore, if we have $f:\mathbb{R}^d\times I \mapsto \mathbb{R}$ such that $f(\cdot,t)$ satisfies the previous assumptions for all $t\in I$, then for any $\phi:\mathbb{R}^d\times  I\to\mathbb{R}$ which is spatially Lipschitz and spatially compactly supported 
  \begin{align*}
    \left ( \int_{I}\int_{\mathbb{R}^d} \phi^{2(1+\frac{2}{d})} \astar_{f,\gamma} \;dvdt \right )^{\frac{1}{(1+\frac{2}{d})}} & \leq C\left ( \int_{I} \int_{\mathbb{R}^d} \astar_{f,\gamma} |\nabla \phi|^2 \;dvdt +  \sup \limits_{I} \int_{\mathbb{R}^d} \phi^{2}\;dv \right ).
  \end{align*}
  Such inequalities extend to $\phi$'s that might not be Lipschitz in the $v$ variable, as long as they can be approximated in the respective norm by Lipschitz, compactly supported $\phi$'s.
\end{thm}

\begin{thm}\label{thm:Inequalities Sobolev weight gamma below -2}
	   
   Let  $\gamma \in (-d,-2)$. Let $f$ and $\phi$ be as in the first part of the previous theorem, and assume further $f$ is locally doubling \eqref{eqn:doubling local}, then there is a constant $C = C(d,\gamma,\fin,C_D)$ (with $C_D$ as in \eqref{eqn:doubling local}) such that
   \begin{align*}
      \left ( \int_{\mathbb{R}^d}  \phi^{\frac{2d}{d-2}}(\astar_{f,\gamma})^{\frac{d-2}{d}} \;dv\right )^{\frac{d-2}{d}}\leq C\int_{\mathbb{R}^d} \astar_{f,\gamma} |\nabla \phi|^2+\phi^2\;dv.		
    \end{align*}			
    Moreover, if $f$ and $\phi$ are as in the second previous theorem, and additionally $f$ satisfies \eqref{eqn:doubling local} uniformly in time, then there is some $C=C(d,\gamma,\fin,C_D)$ such that
      \begin{align*}
     \left ( \int_{I}\int_{\mathbb{R}^d} \phi^{2(1+\frac{2}{d})} \astar_{f,\gamma} \;dvdt \right )^{\frac{1}{(1+\frac{2}{d})}} & \leq C\left ( \int_{I} \int_{\mathbb{R}^d} \astar_{f,\gamma} |\nabla \phi|^2 \;dvdt +  \sup \limits_{I} \int_{\mathbb{R}^d} \phi^{2}\;dv \right ).
    \end{align*} 
    Finally, for $\gamma=-d$ if $f,\phi$ are as in the first part of the previous theorem, $f$ satisfies \eqref{eqn:doubling local}, then for any $m\in (0,\tfrac{d}{d-2})$ there is some constant $C=C(d,m,\gamma,\fin,C_D)$ such that	 
   \begin{align*}
      \left ( \int_{\mathbb{R}^d}  \phi^{2m}(\astar_{f,\gamma})^{m} \;dv\right )^{\frac{1}{m}}\leq C\int_{\mathbb{R}^d} \astar_{f,\gamma} |\nabla \phi|^2+\phi^2\;dv.		
    \end{align*}			
    Lats but not least, if $f,\phi:\mathbb{R}^d\times I \mapsto \mathbb{R}$ are as in the second part of the previous theorem, $f$ satisfies \eqref{eqn:doubling local} uniformly in time, then for any $q\in (1,2(1+\frac{2}{d}))$ there is a constant $C = C(d,q,\gamma,\fin,C_D)$ such that
    \begin{align*}
     \left ( \int_{I}\int_{\mathbb{R}^d} \phi^{q} \astar_{f,\gamma} \;dvdt \right )^{\frac{2}{q}} & \leq C\left ( \int_{I} \int_{\mathbb{R}^d} \astar_{f,\gamma} |\nabla \phi|^2 \;dvdt +  \sup \limits_{I} \int_{\mathbb{R}^d} \phi^{2}\;dv \right ).
    \end{align*} 
\end{thm}

\subsection{Regarding the extra assumptions for the case $\gamma\leq -2$}\label{sec:discussion of extra assumptions}

Let us make a few remarks about Assumption \ref{Assumption:Epsilon Poincare} and Assumption \ref{Assumption:Local Doubling}, and make the case that although they are rather unsatisfactory, the fact that they ``almost hold'' is encouraging as it suggests that \eqref{eqn:Landau homogeneous} may enjoy regularity estimates in the regime of very soft potentials. Let us explain and elaborate on this point.

In light of Theorem \ref{thm:sufficient conditions for the Poincare inequality}, \ref{thm:very soft potentials estimate} and \ref{thm:Coulomb case good estimate}, regularity for solutions in the case of very soft potentials is guaranteed for any time interval where one can show (i) Assumption \ref{Assumption:Local Doubling}, and (ii) there are some $s>1$ and $\delta>0$ such that
\begin{align}\label{eqn:Sufficient Condition for epsilon Poincare}
  |Q|^{\frac{1}{d}}\left ( \fint_{Q}h_{f,\gamma}^s\;dv\right )^{\frac{1}{2s}}\left (\fint_{Q} (\astar_{f,\gamma})^{-s} dv\right )^{\frac{1}{2s}}  \leq  \varepsilon,
\end{align}
for some small enough $\varepsilon$ and all cubes of side length $\leq \delta$. This condition is always true for moderately soft potentials $\gamma \in (-2,0]$,  (see Proposition \ref{new_pro_global_space}).  It is therefore encouraging that a quantity with close resemblance to \eqref{eqn:Sufficient Condition for epsilon Poincare} is bounded by a constant depending only on $d$ and $\gamma$, even for $\gamma \in [-d,-2]$, as shown in the next proposition.

\begin{prop}\label{prop:assumption 1 almost holds}
   For each $\gamma \in [-d,-2]$ there is a  $C=C(d,\gamma)$ such that for any non-negative $f\in L^1(\mathbb{R}^d)$ and any cube $Q \subset \mathbb{R}^d$,
  \begin{align}\label{eqn:assumption 1 almost holds ratio}
    |Q|^{\frac{1}{d}}\left ( \fint_{Q}h_{f,\gamma}\;dv \right )^{\frac{1}{2}}\left ( \fint_{Q} (a_{f,\gamma})^{-1} dv\right )^{\frac{1}{2}}   \leq C(d,\gamma). 
  \end{align}
\end{prop}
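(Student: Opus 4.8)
The plan is to prove the estimate \eqref{eqn:assumption 1 almost holds ratio} by separately controlling the two factors $\fint_Q h_{f,\gamma}\,dv$ and $\fint_Q (a_{f,\gamma})^{-1}\,dv$ in terms of the distribution of $f$ near $Q$, and then exploiting the fact that $h_{f,\gamma}$ and $a_{f,\gamma}$ are two Riesz potentials of the \emph{same} function $f$ whose orders differ by exactly $2$, which is precisely matched to the factor $|Q|^{1/d}$. Recall $h_{f,\gamma} = (-\Delta)^{-(d+\gamma)/2}f = c\,f * |v|^{\gamma}$ (for $\gamma > -d$; for $\gamma = -d$ it is just $f$), while $a_{f,\gamma} = (-\Delta)^{-(d+2+\gamma)/2}f = c\, f * |v|^{\gamma+2}$. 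Writing $r = |Q|^{1/d}$ for the side length, the quantity to bound is $r \,(\fint_Q h_{f,\gamma})^{1/2}(\fint_Q a_{f,\gamma}^{-1})^{1/2}$, and since $a_{f,\gamma}$ is a positive potential we will only need a \emph{lower} bound on $a_{f,\gamma}$ on $Q$ together with an \emph{upper} bound on the average of $h_{f,\gamma}$ over $Q$.

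First I would establish the lower bound for $a_{f,\gamma}$. Fix $v \in Q$ (or better, fix $v_0$ the center of $Q$). Since $a_{f,\gamma}(v) = c\int |v-w|^{\gamma+2} f(w)\,dw$ and $\gamma + 2 \le 0$, the kernel $|v-w|^{\gamma+2}$ is decreasing in $|v-w|$, so $a_{f,\gamma}(v) \gtrsim r^{\gamma+2}\int_{B_{r}(v_0)} f(w)\,dw$ — more precisely one restricts the integral to $w$ with $|v-w| \le Cr$ and bounds the kernel from below there by $(Cr)^{\gamma+2}$. Thus $a_{f,\gamma}(v) \ge c(d,\gamma)\, r^{\gamma+2}\, m$, where $m := \int_{B_{Cr}(v_0)} f\,dw$ is the mass of $f$ in a fixed multiple of $Q$. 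Consequently $\fint_Q a_{f,\gamma}^{-1}\,dv \le c(d,\gamma)^{-1}\, r^{-(\gamma+2)} m^{-1}$.

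Next comes the upper bound on $\fint_Q h_{f,\gamma}\,dv$. Here I would use Fubini: $\fint_Q h_{f,\gamma}(v)\,dv = c\fint_Q \int |v-w|^{\gamma} f(w)\,dw\,dv = c\int f(w)\big(\fint_Q |v-w|^{\gamma}\,dv\big)dw$. Split the $w$-integral into the region near $Q$ (say $w$ within distance $2r\sqrt d$ of $v_0$, i.e.\ essentially $w \in 3Q$ up to constants) and the far region. For the near region, $\fint_Q |v-w|^{\gamma}\,dv \le C(d,\gamma) r^{\gamma}$ because $\gamma > -d$ makes $|v-w|^{\gamma}$ locally integrable with the singularity contributing a scale-$r^{\gamma}$ average (this is exactly where $\gamma \ge -d$, equivalently $\gamma > -d$ for the convolution case, is used; the $\gamma = -d$ case is handled trivially since $h_{f,-d} = f$ and one bounds $\fint_Q f$ directly). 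So the near contribution is $\le C r^{\gamma}\int_{3Q} f$. For the far region, $|v-w|^{\gamma} \le C|v_0 - w|^{\gamma}$ and $|v_0 - w|\ge 2r\sqrt d$, so that contribution is $\le C\int_{|v_0-w|\ge 2r\sqrt d} |v_0-w|^{\gamma} f(w)\,dw$. Because $\|f\|_{L^1}=1$ (or just $\le$ a universal constant) and $|v_0-w|^{\gamma} \le (2r\sqrt d)^{\gamma}\cdot(\text{something} \le 1)$ is bounded by... — actually here one must be careful: for $w$ far, $|v_0-w|^{\gamma}$ is \emph{smaller} than $(2r\sqrt d)^{\gamma}$ when $\gamma<0$, so the far contribution is $\le (2r\sqrt d)^{\gamma}\int f = C r^{\gamma}$. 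Combining, $\fint_Q h_{f,\gamma}\,dv \le C(d,\gamma)\, r^{\gamma}\big(1 + \int_{3Q} f\big) \le C(d,\gamma)\, r^{\gamma}(1 + m')$ where $m'$ is the mass in a larger multiple of $Q$; and since $f \ge 0$ and $\|f\|_{L^1}$ is normalized, $1 + m' \le 2$. Actually even more simply: both the near and far pieces give $\le C r^{\gamma}\|f\|_{L^1} = Cr^{\gamma}$ once one notes $\int_{3Q} f \le \|f\|_{L^1}$ and for the near piece one should NOT use the full $r^\gamma$ uniform bound but $\fint_Q|v-w|^\gamma dv \le C r^\gamma$ still holds. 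So $\fint_Q h_{f,\gamma}\,dv \le C(d,\gamma) r^{\gamma}$.

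Finally I would combine the two bounds. We get
\begin{align*}
  r\left(\fint_Q h_{f,\gamma}\,dv\right)^{1/2}\left(\fint_Q a_{f,\gamma}^{-1}\,dv\right)^{1/2}
  \le r \cdot \big(C r^{\gamma}\big)^{1/2}\cdot\big(c^{-1} r^{-(\gamma+2)} m^{-1}\big)^{1/2} = C'\, r^{1 + \gamma/2 - (\gamma+2)/2}\, m^{-1/2} = C'\, m^{-1/2},
\end{align*}
since $1 + \gamma/2 - (\gamma+2)/2 = 1 + \gamma/2 - \gamma/2 - 1 = 0$. So everything reduces to a lower bound on $m = \int_{B_{Cr}(v_0)} f$. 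This is where the main obstacle lies: a priori $m$ could be tiny (or zero) if $Q$ sits in a region where $f$ has very little mass, and then $m^{-1/2}$ blows up — so the naive argument does \emph{not} close. The fix, and the genuinely clever point, must be that the upper bound on $h_{f,\gamma}$ should itself be bounded by (a multiple of) the \emph{local} mass, not just $\|f\|_{L^1}$: when $m$ is small, $h_{f,\gamma}$ on $Q$ is dominated by contributions from far away, and those are suppressed because the far kernel is bounded by $|v_0-w|^\gamma$. Concretely, one should revisit the far estimate and keep it as $\int_{|v_0-w|\ge 2r\sqrt d}|v_0-w|^\gamma f\,dw$ without crudely bounding the kernel by $(2r\sqrt d)^\gamma$; pairing this honestly against $a_{f,\gamma}(v) \ge c\int|v-w|^{\gamma+2}f\,dw \ge c\,r^{2}\int_{|v-w|\ge r}|v-w|^{\gamma}f\,dw$ (using $|v-w|^{\gamma+2} \ge r^2 |v-w|^\gamma$ when $|v-w|\ge r$ and $\gamma+2\le 0$ fails — one needs $|v-w|^{\gamma+2}\ge |v-w|^\gamma\cdot|v-w|^2$ trivially, and then for the relevant range use a matching scale), one sees that $\fint_Q h_{f,\gamma}$ and $(\fint_Q a_{f,\gamma}^{-1})^{-1}$ differ by a factor comparable to $r^{-2}$ uniformly, regardless of where the mass sits. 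In other words, the right way to organize the proof is to show directly the pointwise/averaged inequality $\fint_Q h_{f,\gamma}\,dv \le C(d,\gamma)\, r^{-2}\, \big(\fint_Q a_{f,\gamma}^{-1}\,dv\big)^{-1}$ — equivalently $\big(\fint_Q h_{f,\gamma}\big)\big(\fint_Q a_{f,\gamma}^{-1}\big) \le C r^{-2} = C |Q|^{-2/d}$ — by a scale-by-scale (dyadic annulus) decomposition of both Riesz potentials around $Q$, matching the $j$-th annulus contribution to $h$ with the $j$-th annulus contribution to $a$ and using that their kernels differ precisely by the square of the distance. The contributions over all annuli $j \ge 0$ then sum geometrically, with the worst (largest) term coming from the annulus at scale $r$, giving the factor $r^{-2}$. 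This annular matching, and checking the geometric summability in $j$ for all $\gamma \in [-d,-2]$, is the technical heart of the argument; the rest is bookkeeping.
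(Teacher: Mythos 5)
Your first attempt does not close, and you correctly diagnose why: the lower bound $a_{f,\gamma}\gtrsim r^{\gamma+2}m$ with $m$ the local mass near $Q$, paired with the crude upper bound $\fint_Q h_{f,\gamma}\lesssim r^{\gamma}\|f\|_{L^1}$, leaves a dangling factor $m^{-1/2}$ that can be arbitrarily large. Recognizing this, and proposing instead to compare the far-field contributions to $h_{f,\gamma}$ and $a_{f,\gamma}$ honestly, scale by scale, is the right move and is morally what the paper does. But you then declare the remaining step to be ``bookkeeping'' without carrying it out, and that step is where the proof actually lives. Concretely, what is still missing is (i) the pointwise kernel comparison $\max\{r,|v_0-w|\}^{\gamma}\le r^{-2}\max\{r,|v_0-w|\}^{2+\gamma}$, which together with the averaging Lemma~\ref{lem:averages for powers of v} gives $\fint_Q h_{f,\gamma}\,dv\lesssim r^{-2}\,a_{f,\gamma}(v_1)$ for \emph{every} $v_1\in Q$, hence $\fint_Q h_{f,\gamma}\lesssim r^{-2}\inf_Q a_{f,\gamma}$; and (ii) the elementary bound $\fint_Q a_{f,\gamma}^{-1}\,dv\le(\inf_Q a_{f,\gamma})^{-1}$, which then closes the estimate. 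The paper factors this more modularly: it shows $|Q|^{2/d}\fint_Q h_{f,\gamma}\lesssim\fint_Q a_{f,\gamma}$ by the same kernel comparison, splitting into $\int_Q$ and $\int_{\mathbb{R}^d\setminus Q}$, and then invokes the fact, proved separately in Proposition~\ref{prop: Riesz potentials are A1 weights}, that $a_{f,\gamma}$ is an $\mathcal{A}_1$ weight, which yields $\left(\fint_Q a_{f,\gamma}\right)\left(\fint_Q a_{f,\gamma}^{-1}\right)\lesssim 1$ directly. Both routes work; yours would, in effect, reprove the $\mathcal{A}_1$ estimate inline, which is fine but was left unwritten, so as it stands the proposal is a correct outline with a genuine gap at its technical core.
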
  

\begin{proof}
  Let $Q$ be a cube and $v_0$ it's center. If $\gamma=-d$, then
  \begin{align*}
    \fint_{Q}a_{f,\gamma}\;dv \approx \int_{\mathbb{R}^d}f(w)\max\{|Q|^{\frac{1}{d}},|v_0-w|\}^{2-d}\;dw \gtrsim |Q|^{\frac{2-d}{d}}\int_{Q}f(w)\;dw.
  \end{align*}
  Therefore (the implied constants depending only on $d$ and $\gamma$)
  \begin{align*}
    |Q|^{\frac{2}{d}}\fint_{Q}f\;dv & \lesssim \fint_{Q}a_{f,\gamma}\;dv.
  \end{align*}	
  For the case $\gamma \in(-d,-2]$ the computations are somewhat similar, first note that
  \begin{align*}
    |Q|^{\frac{2}{d}}\fint_{Q}h\;dv & \approx |Q|^{\frac{2}{d}}\int_{\mathbb{R}^d} f(w) \max \{|Q|^{\frac{1}{d}},|v_0-w|\}^\gamma\;dw\\
    & \lesssim |Q|^{\frac{2+\gamma}{d}}\int_{Q}f(w)\;dw+\int_{\mathbb{R}^d\setminus Q} f(w)|v_0-w|^{2+\gamma}\;dw.
  \end{align*}
  Next, for $\gamma \in (-d,-2]$ we have
  \begin{align*}
    \fint_{Q} a\;dv \gtrsim |Q|^{\frac{2+\gamma}{d}}\int_{Q}f(w)\;dw+\int_{\mathbb{R}^d\setminus Q} f(w)|v_0-w|^{2+\gamma}\;dw.	
  \end{align*}	
  It follows that
  \begin{align*}
    |Q|^{\frac{2}{d}} \fint_{Q}h_{f,\gamma}\;dv  \lesssim \fint_{Q}a_{f,\gamma}\;dv.	
  \end{align*}
  We conclude that both for $\gamma=-d$ and $\gamma\in (-d,-2]$ we have,
  \begin{align*}
    |Q|^{\frac{1}{d}}\left ( \fint_{Q}h_{f,\gamma}\;dv \right )^{\frac{1}{2}}\left ( \fint_{Q} a_{f,\gamma}^{-1} dv\right )^{\frac{1}{2}}  \lesssim \left ( \fint_{Q}a_{f,\gamma}\;dv\right )^{\frac{1}{2}}\left (  \fint_{Q}(a_{f,\gamma})^{-1}\;dv \right )^{\frac{1}{2}}. 
  \end{align*}
  Finally, as we shall see later (Proposition \ref{prop: Riesz potentials are A1 weights}), $a_{f,\gamma}$ is a $\mathcal{A}_1$ with a constant determined by $d$ and $\gamma$, which means that
  \begin{align*}
    \left ( \fint_{Q}a_{f,\gamma}\;dv\right )^{\frac{1}{2}}\left (  \fint_{Q}(a_{f,\gamma})^{-1}\;dv \right )^{\frac{1}{2}} \lesssim 1,
  \end{align*}
  with the implied constants being determined by $d$ and $\gamma$.	
\end{proof}
   
  The estimate in the last proposition yields a bound similar to \eqref{eqn:Sufficient Condition for epsilon Poincare}, except $\astar$ is replaced with $a$ and there is no $s>1$. Since $a_{f,\gamma}$ is of class $\mathcal{A}_p$  for any $p\geq 1$ (see Proposition \ref{prop: Riesz potentials are A1 weights}), it is possible to show that for some $s=s(d,\gamma)>1$ we have (again with implied constants determined by $d$ and $\gamma$)
  \begin{align*}
    \left(\fint_{Q} a_{f,\gamma}^{-s} \;dv \right)^{1/s} \lesssim C \fint_{Q} a_{f,\gamma} \;dv,\;\;    \left ( \fint_Q h^s\;dv \right )^{\frac{1}{s}}\lesssim \fint_Q h\;dv,
  \end{align*}
  the second inequality holding only in the case $\gamma\neq -d$. The same observation extends to $h_{f,\gamma}$ in the case where $\gamma \neq -d$. So the lack of a power $s$ in Proposition \ref{prop:assumption 1 almost holds} is not the main obstacle to overcoming the need for Assumption \ref{Assumption:Epsilon Poincare}. On the other hand, if Assumption \ref{Assumption:Local Doubling} holds, we shall see later that for $\gamma\leq -2$, $\astar$ and $a$ are pointwise comparable in compact sets (Lemma \ref{prop:a vs a star gamma larger than -2}).  So, in Proposition \ref{prop:assumption 1 almost holds} the fact that we have $a$ instead of $\astar$ is also not a serious shortcoming. 
  
  This brings us to the crux of the problem for Assumption \ref{Assumption:Epsilon Poincare} and the limits of Proposition \ref{prop:assumption 1 almost holds}: the estimate in \eqref{eqn:assumption 1 almost holds ratio} is in some sense sharp for generic $f$, since there are $f$ for which this quantity remains of order $1$ even as the size of the cube $Q$ goes to zero. This is what underlines the idea mentioned in the Introduction that the strength of the nonlinearity for the Landau equation is of the same order as the diffusion, and therefore obtaining unconditional estimates for the Landau equation (if they at all exist) will require looking in a more precise way at the relationship between these two terms. 
  
  Lastly, let us make two more points regarding Assumption \ref{Assumption:Local Doubling}. The first one is that it is worthwhile to investigate whether solutions to \eqref{eqn:Landau homogeneous} gain a local doubling property similar to that in Assumption \ref{Assumption:Local Doubling}, since this property is not dissimilar to the weak Harnack inequality for supersolutions to parabolic equations. The second point regarding the local doubling assumption is related to the isotropic equation \eqref{eqn:Krieger-Strain}, and we record it as a Remark.
\begin{rem}
  Exchanging $\astar_{f,\gamma}$ with $a_{f,\gamma}$, the estimates in Section \ref{section:A_p class and weighted inequalities} can be used to show inequalities akin to those theorems \ref{thm:sufficient conditions for the Poincare inequality} and \ref{thm:Inequalities Sobolev weight gamma below -2} \emph{without assuming $f$ is locally doubling}. In particular, one always has the inequality for every $\gamma \in (-d,-2]$,
  \begin{align*}
    \int_{\mathbb{R}^d}|\phi|^2h_{f,\gamma}\;dv \leq C(d,\gamma)\int_{\mathbb{R}^d}a_{f,\gamma}|\nabla \phi|^2\;dv.
  \end{align*}
 The above inequality is still far from an $\varepsilon$-Poincar\'e inequality, and it's not clear that for a generic $f$ one can do better than some positive constant. In any case, we still have the corresponding weighted Sobolev inequality, that is, for $\gamma\in(-d,-2]$ there is a constant $C(d,\gamma)$ such that
  \begin{align*}
    \left ( \int_{\mathbb{R}^d} |\phi|^{\frac{2d}{d-2}} a_{f,\gamma}^{\frac{d}{d-2}}\;dv\right )^{\frac{d-2}{d}} \leq C(d,\gamma) \int_{\mathbb{R}^d}a_{f,\gamma}|\nabla \phi|^2\;dv.
  \end{align*}
\end{rem}

As we are done with the preliminaries and the statement of the main results, let us say a word about the location of the proofs throughout the rest of the paper: Theorem \ref{thm:sufficient conditions for the Poincare inequality} is proved in Section \ref{section:A_p class and weighted inequalities},  Theorem \ref{lem:Inequalities Sobolev weight aII} in Section \ref{Space time inequalities}, Theorem \ref{thm:Coulomb case good estimate} and  Theorem \ref{thm:very soft potentials estimate}  are proved at the end of  Section \ref{section: Regularization in L infinity}, lastly, Theorem \ref{thm:main_1} in Section \ref{section:Coulomb regularization}.

\section{The $\Ap$ class and weighted inequalities for the Landau equation}\label{section:A_p class and weighted inequalities}

  We recall necessary facts from the theory of Muckenhoupt weights. The interested reader should consult the book of Garcia-Cuerva and Rubio De Francia \cite[Chapter IV]{GarciaCuervaRubioDeFrancia1985} and \cite{Turesson} for a thorough discussion.  
  
  \begin{DEF}
Given $p\in(1,\infty)$ and a ball $B_0$, a nonnegative locally integrable function $\omega:\mathbb{R}^d\to\mathbb{R}$ is said to be a {\bf{$\Ap(B_0)$-weight}} if 
    \begin{align}\label{def_Ap}
      \sup \limits_{B \subset 8B_0}\left( \fint_{B} \omega(v) \;dv \right) \left(\fint_{B} \omega(v)^{-\frac{1}{p-1}} \;dv \right)^{p-1}<\infty.
    \end{align}
   The above supremum will be called the $\Ap$ constant of $\omega$.
    
     The class $\Ainfty(B_0)$ will refer to the union of all the classes $\mathcal{A}_p(B_0)$, $p>1$. Finally, the class $\A1$ refers to weights such that for some $C>0$ one has for any $B\subset 8B_0$ and almost every $v_0 \in B$,
    \begin{align*}
      \fint_{B}\omega(v) \;dv \leq C \omega(v_0).
    \end{align*}
	The infimum of the $C$'s for which the above holds will be called the $\mathcal{A}_1$ constant of $\omega$.
  \end{DEF}
  The class $\Ap$ appears in many important questions in analysis, here we are mostly interested in their properties relevant to weighted Sobolev inequalities. As we will see below, natural weights associated to the Landau equation will be in this class. 
  
  \begin{DEF}A non-negative locally integrable function $w$ is said to satisfy the {\em reverse H\"older inequality} with exponent $m\ge 1 $ and constant $C$ if for any ball $B$ we have
   \begin{align*}
     \left (\fint_{B}w^m\;dv \right )^{\frac{1}{m}} \leq C\fint_{B}w\;dv.
   \end{align*}  
  \end{DEF}
  As it turns out, $\mathcal{A}_p$ weights satisfy a reverse H\"older inequality.
  
  \begin{lem}\label{lem:A_p implies reverse Holder}
    \emph{($\mathcal{A}_p$ weights satisfy a reverse H\"older inequality.)} Let $\omega$ be a $\mathcal{A}_p$ weight, $1\leq p< +\infty$. There are $\varepsilon >0$ and $C>1$ --determined by $p$ and the $\mathcal{A}_p$ constant for $\omega$-- such that $\omega$ satisfies the reverse H\"older inequality with exponent $1+\varepsilon$ and constant $C$.
  \end{lem}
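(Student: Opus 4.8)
This is the classical self-improvement (reverse H\"older, or Gehring--Coifman--Fefferman) property of Muckenhoupt weights; see \cite[Chapter IV]{GarciaCuervaRubioDeFrancia1985}. The plan is to run the Calder\'on--Zygmund stopping-time proof. A few harmless reductions first: one may assume $p>1$, since $\A1(B_0)\subset\mathcal{A}_2(B_0)$ with $[\omega]_{\mathcal{A}_2}\le[\omega]_{\A1}$ directly from the definitions; one may work with cubes rather than balls throughout, the cube and ball versions of the $\mathcal{A}_p$ condition being equivalent with comparable constants; and, since both the $\mathcal{A}_p$ condition and the reverse H\"older inequality are invariant under $\omega\mapsto c\omega$, one may fix a cube $Q$ (inside the region where the $\mathcal{A}_p$ hypothesis is available) and normalize $\fint_Q\omega\,dv=1$, i.e.\ $\omega(Q)=|Q|$. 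The goal is then to produce $\varepsilon>0$ and $C$, depending only on $d$, $p$ and $[\omega]_{\mathcal{A}_p}$, with $\int_Q\omega^{1+\varepsilon}\,dv\le C|Q|$.

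The first ingredient is a quantitative absolute-continuity (``weak $\mathcal{A}_\infty$'') estimate read off from the definition: there is $\beta_0=\beta_0(d,p,[\omega]_{\mathcal{A}_p})\in(0,1)$ such that for every cube $R\subseteq Q$ and every measurable $E\subseteq R$ with $|E|\le\tfrac12|R|$ one has $\omega(E)\le\beta_0\,\omega(R)$. Indeed, H\"older's inequality with exponents $p$ and $p'=p/(p-1)$ gives $|E|\le\omega(E)^{1/p}\,\bigl(\int_R\omega^{-1/(p-1)}\,dv\bigr)^{(p-1)/p}$, and inserting the rearranged $\mathcal{A}_p$ bound $\bigl(\fint_R\omega^{-1/(p-1)}\,dv\bigr)^{p-1}\le[\omega]_{\mathcal{A}_p}\bigl(\fint_R\omega\,dv\bigr)^{-1}$ yields $(|E|/|R|)^p\le[\omega]_{\mathcal{A}_p}\,\omega(E)/\omega(R)$; applying this with $R\setminus E$ in place of $E$ and passing to complements gives the claim with $\beta_0=1-2^{-p}[\omega]_{\mathcal{A}_p}^{-1}$.

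The second ingredient is an iteration across dyadic scales. For each $\lambda\ge1$ perform the Calder\'on--Zygmund stopping-time decomposition of $\omega$ relative to the dyadic subcubes of $Q$ (legitimate because $\fint_Q\omega=1<\lambda$): this produces the maximal dyadic subcubes $\{Q^\lambda_j\}_j$ with $\lambda<\fint_{Q^\lambda_j}\omega\le2^d\lambda$, and, setting $\Omega_\lambda:=\bigcup_j Q^\lambda_j$, one has $\omega\le\lambda$ a.e.\ off $\Omega_\lambda$, so $\psi(\lambda):=\omega(\Omega_\lambda)$ dominates $\int_{\{\omega>\lambda\}\cap Q}\omega\,dv$. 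These decompositions nest: if $\lambda'>2^d\lambda$, the height-$\lambda'$ stopping cubes lying inside a fixed $Q^\lambda_j$ are exactly the stopping cubes of $Q^\lambda_j$ at height $\lambda'$, so their total measure is at most $\tfrac1{\lambda'}\omega(Q^\lambda_j)\le\tfrac{2^d\lambda}{\lambda'}|Q^\lambda_j|$. Choosing $\lambda'=2^{d+1}\lambda$ makes this $\le\tfrac12|Q^\lambda_j|$, whence the weak $\mathcal{A}_\infty$ estimate applied on $Q^\lambda_j$ gives $\omega(\Omega_{\lambda'}\cap Q^\lambda_j)\le\beta_0\,\omega(Q^\lambda_j)$; summing over $j$, $\psi(2^{d+1}\lambda)\le\beta_0\,\psi(\lambda)$ for all $\lambda\ge1$. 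Iterating from $\psi(1)\le\omega(Q)=|Q|$ gives $\psi(\lambda)\le C|Q|\lambda^{-\delta}$ with $\delta=\log(1/\beta_0)/\log(2^{d+1})>0$, hence $\int_{\{\omega>\lambda\}\cap Q}\omega\,dv\le C|Q|\lambda^{-\delta}$ for $\lambda\ge1$. Finally, the layer-cake identity $\int_Q\omega^{1+\varepsilon}\,dv=\varepsilon\int_0^\infty\lambda^{\varepsilon-1}\bigl(\int_{\{\omega>\lambda\}\cap Q}\omega\,dv\bigr)d\lambda$, split at $\lambda=1$ and using $\int_{\{\omega>\lambda\}\cap Q}\omega\le\omega(Q)=|Q|$ for $\lambda\le1$, converges and is $\le C'|Q|$ provided $\varepsilon<\delta$; taking $\varepsilon=\delta/2$ gives $\bigl(\fint_Q\omega^{1+\varepsilon}\,dv\bigr)^{1/(1+\varepsilon)}\le C''\fint_Q\omega\,dv$ with $C''$ depending only on $d$, $p$, $[\omega]_{\mathcal{A}_p}$, which (undoing the normalization) is the asserted reverse H\"older inequality.

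I do not expect a real obstacle, since this is a textbook argument; the step demanding the most care is the bookkeeping in the cross-scale iteration --- the nesting of the stopping cubes and the resulting functional inequality $\psi(2^{d+1}\lambda)\le\beta_0\psi(\lambda)$ --- together with the routine but necessary observation that the $\mathcal{A}_p$ hypothesis is only ever invoked on subcubes of the fixed reference cube $Q$, so that (as long as $Q\subset 8B_0$) those subcubes remain in the admissible region and all constants are controlled purely by $d$, $p$, and the $\mathcal{A}_p$ constant of $\omega$.
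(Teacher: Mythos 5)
Your proof is correct and complete; the paper itself offers no argument but simply cites \cite[Chapter IV, Lemma 2.5]{GarciaCuervaRubioDeFrancia1985}, where the proof given is precisely this Calder\'on--Zygmund stopping-time argument (quantitative $\mathcal{A}_\infty$ absolute continuity from the $\mathcal{A}_p$ condition, nesting of stopping cubes across dyadic scales to get the decay $\psi(2^{d+1}\lambda)\le\beta_0\psi(\lambda)$, then the layer-cake integration). In particular, you have correctly flagged the two points that actually require care given the paper's slightly nonstandard localized definition of $\mathcal{A}_p(B_0)$: that all subcubes invoked stay inside the region $8B_0$ where the hypothesis is available, and that the cube/ball formulations and the normalization $\fint_Q\omega=1$ are harmless by scale- and dilation-invariance of both the $\mathcal{A}_p$ constant and the reverse H\"older conclusion.
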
	  
  \begin{proof}
    See \cite[Chapter 4, Lemma 2.5]{GarciaCuervaRubioDeFrancia1985} for a proof.  
  \end{proof}  

  The next proposition shows that Riesz potentials are indeed $\mathcal{A}_1$ weights.
  
  \begin{prop}\label{prop: Riesz potentials are A1 weights}
  For $m \in [0,d)$ and $f\in L^1(\mathbb{R}^d)$, $f\geq 0$, the function $F:=f* |v|^{-m}$ is in the class $\mathcal{A}_1$ with an $\mathcal{A}_1$ constant that only depends on $m$ and $\|f\|_{L^1(\mathbb{R}^d)}$. Consequently the function 
  \begin{align*}
    h_{f,\gamma}(v)= \int f(w)|v-w|^\gamma \;dw,
  \end{align*}
  belongs to the  class $\mathcal{A}_1$ for each $-d<\gamma \le 0$, $d\ge3$ and 
  \begin{align*}
  a_{f,\gamma}(v)= \int f(w)|v-w|^{2+\gamma}\; dw,
  \end{align*}
  belongs to $\mathcal{A}_1$ for each $-d\le \gamma \le -2$, $d\ge3$.
  
\end{prop}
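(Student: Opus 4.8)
The plan is to prove the core statement --- that $F := f * |v|^{-m}$ lies in $\mathcal{A}_1$ with constant depending only on $m$ and $\|f\|_{L^1}$ --- and then deduce the two consequences for $h_{f,\gamma}$ and $a_{f,\gamma}$ by choosing $m = -\gamma \in [0,d)$ and $m = -(2+\gamma) \in [0,d-2] \subset [0,d)$ respectively. So the work is entirely in the first claim. By definition of $\mathcal{A}_1$, I need to show that for every ball $B = B_r(v_0)$ and a.e. $v_1 \in B$,
\begin{align*}
  \fint_B F(v)\,dv \leq C\, F(v_1),
\end{align*}
with $C = C(m, \|f\|_{L^1})$. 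The natural approach is to bound the left-hand side from above and $F(v_1)$ from below, both in terms of the same elementary quantity built from $f$ and the geometry of $B$.

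First I would establish the pointwise two-sided estimate that is the real engine here: for any $v_1$,
\begin{align*}
  F(v_1) = \int f(w)\,|v_1 - w|^{-m}\,dw \;\approx\; \int f(w)\,\max\{r,\,|v_0 - w|\}^{-m}\,dw =: I(B),
\end{align*}
uniformly over $v_1 \in B_r(v_0)$, with implied constants depending only on $m$ and $d$. The lower bound $F(v_1) \gtrsim I(B)$ for $v_1 \in B$ follows by splitting the integral into $\{|v_0 - w| \le 2r\}$, where $|v_1 - w| \le 3r \lesssim \max\{r,|v_0-w|\}$ so $|v_1-w|^{-m} \gtrsim \max\{r,|v_0-w|\}^{-m}$, and $\{|v_0 - w| > 2r\}$, where $|v_1 - w| \approx |v_0 - w| = \max\{r, |v_0-w|\}$ by the triangle inequality. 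The upper bound is symmetric. This comparison is a standard fact about Riesz kernels and I would state it as the key lemma. To control the average of $F$ over $B$, I would apply Fubini:
\begin{align*}
  \fint_B F(v)\,dv = \int f(w) \left( \fint_B |v - w|^{-m}\,dv \right) dw,
\end{align*}
and then show that the inner average satisfies $\fint_B |v-w|^{-m}\,dv \lesssim \max\{r, |v_0 - w|\}^{-m}$, again with a constant depending only on $m$ and $d$ (here one needs $m < d$ so that $|v|^{-m}$ is locally integrable and its average over $B_r$ is comparable to $r^{-m}$ when $w$ is near $B$, while for $w$ far from $B$ the average is just comparable to $|v_0-w|^{-m}$). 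Combining, $\fint_B F \lesssim I(B) \lesssim F(v_1)$, which is exactly $\mathcal{A}_1$.

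The main obstacle --- really the only delicate point --- is verifying the inner-average bound $\fint_{B_r(v_0)} |v-w|^{-m}\,dv \lesssim \max\{r, |v_0-w|\}^{-m}$ uniformly in $w$, and in particular handling the worst case where $w$ lies inside or very near $B$, so that the singularity of the kernel sits in the domain of integration. There the estimate reduces to the elementary computation $\fint_{B_r(w)} |v-w|^{-m}\,dv = c_{d,m}\, r^{-m}$, finite precisely because $m < d$; for $w$ merely close to $B$ one enlarges the ball by a fixed factor and uses monotonicity, and for $w$ far away the kernel is essentially constant on $B$. Once this geometric lemma is in hand the rest is bookkeeping. For the consequences: $h_{f,\gamma} = c(d,\gamma)\, f * |v|^{\gamma}$ with $\gamma \in (-d, 0]$ means $m = -\gamma \in [0, d)$, so $h_{f,\gamma} \in \mathcal{A}_1$; and $a_{f,\gamma} = $ (up to constant) $f * |v|^{2+\gamma}$ with $\gamma \in [-d, -2]$ gives $m = -(2+\gamma) \in [0, d-2]$, which is again in $[0,d)$ since $d \ge 3$, so $a_{f,\gamma} \in \mathcal{A}_1$. (The edge case $m = 0$, i.e. $\gamma = 0$ for $h$ or $\gamma = -2$ for $a$, is trivial since then $F \equiv \|f\|_{L^1}$ is constant.) In all cases the $\mathcal{A}_1$ constant depends only on $m$, $d$, and $\|f\|_{L^1}$, as claimed.
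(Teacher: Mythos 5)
Your proposal is correct and takes essentially the same approach as the paper: show that the single-point kernel $|v-w|^{-m}$ is $\mathcal{A}_1$ with a constant uniform in $w$ (the average estimate $\fint_{B_r(v_0)}|v-w|^{-m}\,dv \lesssim \max\{r,|v_0-w|\}^{-m} \lesssim |v_1-w|^{-m}$ for $v_1\in B_r(v_0)$, valid because $m<d$), then pass to the convolution by Fubini and positivity of $f$. Your intermediate quantity $I(B)$ is a cosmetic repackaging of the same two estimates the paper uses, and the deduction of the consequences for $h_{f,\gamma}$ and $a_{f,\gamma}$ by choosing $m=-\gamma$ and $m=-(2+\gamma)$ matches the paper's reasoning.
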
	  

\begin{proof}
 Fix $m \in [0,d)$ and $w \in \mathbb{R}^d$, we analyze the function
  \begin{align*}	
    v \to |v-w|^{-m},\;\;v\in\mathbb{R}^d.	
  \end{align*}
   Lemma \ref{lem:averages for powers of v} and Remark \ref{rem:averages for powers of v} imply that  
  \begin{align*}
    \fint_{B_r(v_0)}|v-w|^{-m}\;dv \leq C_m|v_1-w|^{-m},\;\;\forall\; v_1 \in B_r(v_0),\;w\in\mathbb{R}^d.
  \end{align*}
  Thus, $|v-w|^{-m}$ belongs to the class $\mathcal{A}_1$. Since the class $\mathcal{A}_1$ is convex, it follows that the convolution of $|v|^{-m}$ with any nonnegative $L^1$ function is also in the class $\mathcal{A}_1$. Indeed, let $f\in L^1(\mathbb{R}^d)$, $f\geq 0$ and consider the function
  \begin{align*}
    F(v) = \int_{\mathbb{R}^d}f(w)|v-w|^{-m}\;dw.	  
  \end{align*}	  
  It is evident that $F\in L^1_{\textnormal{loc}}(\mathbb{R}^d)$ by Fubini's theorem. Moreover, if $v_0\in\mathbb{R}^d$ and $r>0$,
  \begin{align*}
    \fint_{B_r(v_0)} F(v)\;dv & = \int_{\mathbb{R}^d}f(w)\fint_{B_r(v_0)}|v-w|^{-m}\;dvdw\\
	  & \leq C_m\int_{\mathbb{R}^d}f(w)|v_0-w|^{-m}\;dw = C_m F(v_1),\;\;\forall\;v_1\in B_r(v_0).
  \end{align*}	  
  \end{proof}
  Proposition \ref{prop: Riesz potentials are A1 weights} says that the function $a_{f,\gamma}$ (respectively $h_{f,\gamma}$) for $\gamma \in [-d,-2]$ (resp. for $\gamma \in (-d,0]$) is an $\mathcal{A}_1$-weight and satisfies a reverse H\"older inequality for some exponent $q=1+\varepsilon$ (Lemma \ref{lem:A_p implies reverse Holder}).  However, we can obtain a better, explicit exponent for which these functions satisfy a reverse H\"older inequality. The following result illustrates what can be expected for the special weights we are dealing with. 
  \begin{lem}\label{lem:a is reverse Holder with a nice constant}
    Let {{$\gamma \in [-d,-2]$}} and $m \in (0,\frac{d}{|2+\gamma|})$. There is a universal constant $C_{m,\gamma}$ such that $a_{f,\gamma}$ satisfies the reverse H\"older inequality with exponent $m$ and constant $C_{m,\gamma}$. In other words, for any ball $B$ (or cube $Q$) we have		
    \begin{align*}
      \left ( \fint_{B} a_{f,\gamma}^m \;dv \right )^{\frac{1}{m}} \leq C_{m,\gamma} \fint_{B}a_{f,\gamma}\;dv.	
    \end{align*}	 
  \end{lem}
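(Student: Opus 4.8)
The plan is to prove the reverse Hölder inequality with the explicit exponent $m \in (0, \frac{d}{|2+\gamma|})$ by reducing, as in the proof of Proposition \ref{prop: Riesz potentials are A1 weights}, to the single-point kernel $v \mapsto |v-w|^{2+\gamma}$ and then using the integrability of powers of this kernel. First I would observe that it suffices to bound the $L^m$-average of $a_{f,\gamma}$ on a fixed ball $B = B_r(v_0)$. Writing $a_{f,\gamma}(v) = \int_{\mathbb{R}^d} f(w)|v-w|^{2+\gamma}\,dw$ and using Minkowski's integral inequality (for the $L^m(B, \frac{dv}{|B|})$ norm in $v$, integrating over $w$) when $m \geq 1$, one reduces to estimating, for each fixed $w$,
\begin{align*}
  \left( \fint_{B_r(v_0)} |v-w|^{(2+\gamma)m}\,dv \right)^{1/m}.
\end{align*}
The key point is that since $2+\gamma \in [2-d, 0]$ is negative (or zero), the exponent $(2+\gamma)m$ is negative, and the condition $m < \frac{d}{|2+\gamma|}$ is exactly the condition $|(2+\gamma)m| < d$ guaranteeing that $|v-w|^{(2+\gamma)m}$ is locally integrable, so this average is finite and — this is the crucial quantitative statement — is bounded by $C_{m,\gamma} \fint_{B_r(v_0)} |v-w|^{2+\gamma}\,dv$ uniformly in $w$ and in the ball. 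This is precisely a reverse Hölder inequality for the single kernel $|v-w|^{2+\gamma}$, which I expect to be contained in (or a direct consequence of) Lemma \ref{lem:averages for powers of v} and Remark \ref{rem:averages for powers of v} referenced earlier; the point is that $\fint_{B_r(v_0)}|v-w|^{s}\,dv$ is comparable to $\max\{r, |v_0-w|\}^{s}$ for any $s \in (-d, 0]$, so the $L^m$ average raised to $1/m$ and the $L^1$ average are both comparable to $\max\{r,|v_0-w|\}^{2+\gamma}$, with constants depending only on $d$, $\gamma$, $m$.

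Granting that pointwise-in-$w$ reverse Hölder bound, the proof concludes quickly: for $m \geq 1$,
\begin{align*}
  \left( \fint_{B} a_{f,\gamma}^m\,dv \right)^{1/m}
  \leq \int_{\mathbb{R}^d} f(w) \left( \fint_B |v-w|^{(2+\gamma)m}\,dv \right)^{1/m} dw
  \leq C_{m,\gamma} \int_{\mathbb{R}^d} f(w) \fint_B |v-w|^{2+\gamma}\,dv\,dw = C_{m,\gamma} \fint_B a_{f,\gamma}\,dv,
\end{align*}
using Fubini in the last step. For $m \in (0,1)$ the statement is weaker and follows from the case $m=1$ (or any exponent $\geq 1$ below the threshold) by Jensen's inequality, $\left(\fint_B a^m\right)^{1/m} \leq \fint_B a$, so in fact no work is needed there; one only needs to produce \emph{some} exponent in $(1, \frac{d}{|2+\gamma|})$ for which the argument runs, which requires $\frac{d}{|2+\gamma|} > 1$, i.e. $d > |2+\gamma| = d - 2 - \gamma$, i.e. $\gamma > -2$ — wait, that fails at $\gamma = -d$.

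So the honest situation is: when $\gamma = -2$ the threshold is $+\infty$ and all $m$ work; when $\gamma < -2$ one has $|2+\gamma| = -2-\gamma \in (0, d-2]$, so $\frac{d}{|2+\gamma|} \geq \frac{d}{d-2} > 1$, and the threshold is always strictly bigger than $1$, so there is always room to pick $m > 1$ below it — the Minkowski-inequality argument applies and then Jensen handles $m \le 1$. Thus the main obstacle is really just establishing the uniform-in-$w$ reverse Hölder estimate for the elementary kernel $|v-w|^{2+\gamma}$ with the sharp exponent cutoff $|(2+\gamma)m| < d$; this is a direct computation comparing $\fint_{B_r(v_0)} |v - w|^{s}\,dv$ to $\max\{r, |v_0 - w|\}^{s}$ for $s \in (-d,0]$, splitting into the cases $|v_0 - w| \leq 2r$ (where one integrates the singularity $|v-w|^s$ over a ball, finite precisely because $s > -d$, giving $\sim r^s$) and $|v_0 - w| > 2r$ (where $|v - w| \sim |v_0 - w|$ uniformly on $B_r(v_0)$, giving $\sim |v_0-w|^s$). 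I expect this is exactly what Lemma \ref{lem:averages for powers of v} provides, so the proof is essentially an application of that lemma followed by Minkowski's inequality and Fubini.
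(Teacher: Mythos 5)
Your proposal is correct and follows essentially the same route as the paper. The paper's proof simply packages your Minkowski--Fubini step as a stand-alone general statement (Proposition \ref{prop:reverse Holder is a convex condition}: convolving with a nonnegative $L^1$ density preserves the reverse H\"older inequality with the same exponent and constant), applies it with $\Phi = |v|^{2+\gamma}$ and $g = f$, and sources the base reverse H\"older estimate for the kernel $|v|^{2+\gamma}$ from Remark \ref{rem:reverse Holder for inverse powers of v} (itself resting on the $\mathcal{A}_1$ property of $|v|^{-q}$ from Lemma \ref{lem:averages for powers of v}) --- exactly as you do. One small clean-up of your mid-proof detour: the worry about whether $\frac{d}{|2+\gamma|} > 1$ resolves correctly as you finally state, since for $\gamma \in [-d,-2]$ one has $|2+\gamma| \le d-2 < d$; and for $m \le 1$ the claim is indeed immediate from Jensen, a case the paper's Proposition \ref{prop:reverse Holder is a convex condition} formally excludes (it assumes $m>1$) but which the lemma statement covers trivially.
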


  \begin{proof}[Proof of Lemma \ref{lem:a is reverse Holder with a nice constant}] 
    The Lemma follows immediately applying Remark \ref{rem:reverse Holder for inverse powers of v} and Proposition  \ref{prop:reverse Holder is a convex condition} (proved below) taking $\Phi=|v|^{2+\gamma}$,  and $g = f$.
  \end{proof}

  \begin{prop}\label{prop:reverse Holder is a convex condition}
   Let $g\in L^1(\mathbb{R}^d)$ be non-negative.  Let $\Phi(x)$ be a non-negative, locally integrable function that satisfies the reverse H\"older inequality with exponent $m>1$ and constant $C>0$. Then, the convolution $g* \Phi$ also satisfies the reverse H\"older inequality with same exponent $m$ and same constant $C$. That is, for any ball $B$ we have
    \begin{align*}
      \left (\fint_{B} (g * \Phi (v))^m \;dv \right )^{\frac{1}{m}} \leq C \fint_{B} (g *\Phi)(v)\;dv.
    \end{align*}	
  \end{prop}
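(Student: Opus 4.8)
The plan is to exploit the fact that the reverse Hölder inequality with exponent $m$ and constant $C$ is, after clearing denominators, a \emph{linear} (hence convex and translation-invariant) condition on the weight, and that convolution with a nonnegative $L^1$ function is an averaging of translates. Concretely, fix a ball $B$. The reverse Hölder inequality for $\Phi$ states
\begin{align*}
  \left( \fint_B \Phi(v)^m\,dv \right)^{1/m} \leq C \fint_B \Phi(v)\,dv.
\end{align*}
Since $\Phi$ satisfies this for \emph{every} ball, it satisfies it in particular for every translate $B - w$ of $B$, which is to say
\begin{align*}
  \left( \fint_B \Phi(v - w)^m\,dv \right)^{1/m} \leq C \fint_B \Phi(v - w)\,dv, \qquad \forall\, w \in \mathbb{R}^d.
\end{align*}
Now write $(g * \Phi)(v) = \int_{\mathbb{R}^d} g(w)\, \Phi(v-w)\,dw$ and apply Minkowski's integral inequality in $L^m(B, \tfrac{dv}{|B|})$ with the outer integration in $w$ against the measure $g(w)\,dw$:
\begin{align*}
  \left( \fint_B (g*\Phi)(v)^m\,dv \right)^{1/m}
  &= \left\| \int_{\mathbb{R}^d} g(w)\, \Phi(\cdot - w)\,dw \right\|_{L^m(B,\,dv/|B|)} \\
  &\leq \int_{\mathbb{R}^d} g(w)\, \left\| \Phi(\cdot - w) \right\|_{L^m(B,\,dv/|B|)}\,dw \\
  &= \int_{\mathbb{R}^d} g(w)\left( \fint_B \Phi(v-w)^m\,dv \right)^{1/m}\,dw.
\end{align*}
Insert the reverse Hölder bound for each translate, then use Fubini to recognize the resulting expression:
\begin{align*}
  \left( \fint_B (g*\Phi)(v)^m\,dv \right)^{1/m}
  &\leq C \int_{\mathbb{R}^d} g(w) \fint_B \Phi(v-w)\,dv\,dw
  = C \fint_B \int_{\mathbb{R}^d} g(w)\, \Phi(v-w)\,dw\,dv \\
  &= C \fint_B (g*\Phi)(v)\,dv,
\end{align*}
which is the claim. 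The finiteness of all integrals appearing here follows from $g \in L^1$, $\Phi \in L^1_{\textnormal{loc}}$ (so $g * \Phi \in L^1_{\textnormal{loc}}$ by Fubini), and the reverse Hölder hypothesis, which makes $\Phi \in L^m_{\textnormal{loc}}$ and hence $g*\Phi \in L^m_{\textnormal{loc}}$; one may first carry out the argument assuming $g$ is compactly supported and $\Phi$ truncated, then pass to the limit by monotone convergence to justify the Fubini swap in full generality.

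The only genuine point requiring care — the ``main obstacle,'' such as it is — is the application of Minkowski's integral inequality, which demands that the map $w \mapsto \|\Phi(\cdot - w)\|_{L^m(B, dv/|B|)}$ be measurable and that the outer integral against $g(w)\,dw$ be finite; both are handled by the truncation/monotone-convergence remark above. Everything else is bookkeeping: the reverse Hölder condition is preserved because it is an averaged statement, translation invariance of Lebesgue measure makes ``for all balls'' equivalent to ``for all translates of a fixed ball,'' and convolution is precisely a continuous average of translates, so convexity of the reverse Hölder condition under (possibly continuous) convex combinations does the rest.
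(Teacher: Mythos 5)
Your proof is correct and follows essentially the same route as the paper's: Minkowski's integral inequality to pull the convolution outside the $L^m(B)$ norm, the reverse H\"older inequality applied to each translate $\Phi(\cdot - w)$, and then Fubini (using $g \geq 0$) to reassemble the convolution. The extra remarks on measurability and the truncation/monotone-convergence justification of the Fubini swap are sound but not spelled out in the paper, which states the argument more tersely.
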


  \begin{proof}
    It is clear that	  
    \begin{align*}
     \left (\fint_{B} (g * \Phi (v))^m \;dv \right )^{\frac{1}{m}} =\left ( \fint_B \left | \int_{\mathbb{R}^d} \Phi(v-w) g(w)\;dw \right |^mdv \right )^{\frac{1}{m}}.
    \end{align*}			
    Minkowski's integral inequality says that
    \begin{align*}
     \left ( \fint_B \left | \int_{\mathbb{R}^d} \Phi(v-w) g(w)\;dw \right |^mdv \right )^{\frac{1}{m}} \leq \int_{\mathbb{R}^d} \left (\fint_B  |\Phi(v-w)|^m\;dv \right )^{\frac{1}{m}}g(w)\;dw. 	 		
    \end{align*}			
    By the reverse H\"older assumption for $\Phi$, we have for every $w\in \mathbb{R}^d$,
    \begin{align*}
     \left (\fint_B  |\Phi(v-w)|^m\;dv \right )^{\frac{1}{m}} \leq C\fint_{B}|\Phi(v-w)|\;dv = C\fint_{B}\Phi(v-w)\;dv.
    \end{align*}			
    Lastly, we use the positivity of $g$ and apply Fubini's theorem, which yields,
    \begin{align*}
     \left (\fint_{B} (g * \Phi (v))^m \;dv \right )^{\frac{1}{m}} & \leq C\int_{\mathbb{R}^d} g(w)\fint_{B}\Phi(v-w)\;dvdw = C\fint_B (g* \Phi)(v)\;dv.	 	 	 		
    \end{align*}
  \end{proof}

  \begin{rem}\label{rem:reverse Holder for inverse powers of v}
    As shown in the Appendix, $|v|^{-q}$ is in the class $\mathcal{A}_1$ for all $0<q<d$. In particular, if $m\ge 0$ and $p \ge 0$ are such that $0\le mp<d$, we have
    \begin{align*}
      \fint_{B}|w|^{-mp}\;dw \leq C_{mp}|v|^{-mp},\;\;\forall\;v\in B.		
    \end{align*}			
    This may be rewritten as
    \begin{align*}
      \left ( \fint_{B}|w|^{-mp}\;dw \right )^{\frac{1}{m}}\leq (C_{mp})^{\frac{1}{m}}|v|^{-p},\;\;\forall\;v\in B.		
    \end{align*}			
    Taking the average for $v\in B$, it follows that	
    \begin{align*}
      \left (\fint_{B}|w|^{-mp}\;dw \right )^{\frac{1}{m}}\leq (C_{mp})^{\frac{1}{m}}\fint_{B}|w|^{-p}\;dw.		
    \end{align*}			
  \end{rem}

\subsection{Averages of $\astar_{f,\gamma}$ and $a_{f,\gamma}$}
  Recall the definition of $\astar_{f,\gamma}$ and $a_{f,\gamma}$, 
  \begin{align*}
    \astar_{f,\gamma} &= \inf \limits_{e\in\mathbb{S}^{d-1}} C_\gamma \int_{\mathbb{R}^d} |v-w|^{2+\gamma}\langle \Pi(v-w)e,e\rangle f(w)\;dw,\\
    a_{f,\gamma} &= c(d,\gamma)\int_{\mathbb{R}^d}f(w)|v-w|^{-2-\gamma}\;dw.
  \end{align*}
  Since $|\langle \Pi(v-w)e,e\rangle|\leq 1$ it is clear that $\astar_{f,\gamma} (v)\le a_{f,\gamma}(v)$ for each $v\in \mathbb{R}^d$. 
  
  We briefly recall bounds on the functions $a_{f,\gamma}$ and $A_{f,\gamma}$ that will be used later. These estimates are now standard (see Desvilletes and Villani \cite{DesVil2000a} or Silvestre \cite[Lemma 3.2]{Silvestre2015}) but we include a proof for the sake of completeness. 
  
\begin{prop}\label{prop:a vs a star gamma larger than -2}
  If $\gamma \in [-2,0]$ there exist constants that only depend on $d$, $\fin$ and $\gamma$ such that for any $f\geq 0$ and $v\in\mathbb{R}^d$ we have
  \begin{align*}
    c_{\fin,d,\gamma}\langle v\rangle^{2+\gamma} & \leq a_{f,\gamma}(v) \;\leq C_{d,\gamma}\langle v\rangle^{2+\gamma},\\ 
    c_{\fin,d,\gamma}\langle v\rangle^\gamma & \leq \astar_{f,\gamma}(v) \leq C_{\fin,d,\gamma}\langle v\rangle^\gamma.
  \end{align*}
  Moreover, for any $v\neq 0$, we have $(A_{f,\gamma}(v)e,e) \leq C\astar_{f,\gamma}$ for $e=v/|v|$.
\end{prop}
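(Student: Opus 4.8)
The plan is as follows. The two lower bounds require no new work: the estimate $A_{f,\gamma}(v)\ge c\langle v\rangle^\gamma\Id$ from Lemma~\ref{lem: A lower bound in terms of conserved quantities} gives $(A_{f,\gamma}(v)e,e)\ge c\langle v\rangle^\gamma$ for every unit vector $e$, hence $\astar_{f,\gamma}(v)=\inf_e(A_{f,\gamma}(v)e,e)\ge c\langle v\rangle^\gamma$, while $a_{f,\gamma}(v)\ge c\langle v\rangle^{2+\gamma}$ is exactly the first line of that lemma. So everything comes down to the two upper bounds and the ``moreover'' claim. Throughout I will use that for $\gamma\in[-2,0]$ one has $2+\gamma\in[0,2]$ and $a_{f,\gamma}(v)=c(d,\gamma)\int_{\mathbb{R}^d}|v-w|^{2+\gamma}f(w)\,dw$.

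For the upper bound on $a_{f,\gamma}$ I would use the elementary inequality $|v-w|^{2+\gamma}\le(|v|+|w|)^{2+\gamma}\le C(|v|^{2+\gamma}+|w|^{2+\gamma})$ (valid since the exponent lies in $[0,2]$), integrate against $f$, and use the normalization $\int f\,dw=1$ together with the second moment bound in the form $\int|w|^{2+\gamma}f\,dw\le\int(1+|w|^2)f\,dw=1+d$. This yields $a_{f,\gamma}(v)\le C(d,\gamma)(1+|v|^{2+\gamma})\le C(d,\gamma)\langle v\rangle^{2+\gamma}$, using only the mass and energy, which accounts for the constant being of the form $C_{d,\gamma}$.

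The main work is the bound $\astar_{f,\gamma}(v)\le C\langle v\rangle^\gamma$, and this will also give the ``moreover'' claim. Since $\astar_{f,\gamma}(v)\le(A_{f,\gamma}(v)e,e)$ for any unit $e$, I would test with $e=\hat v:=v/|v|$ (for $v\neq0$). The key computation is the identity
\[
  (\Pi(v-w)\hat v,\hat v)=1-\frac{(v-w,\hat v)^2}{|v-w|^2}=\frac{|w_\perp|^2}{|v-w|^2},
\]
where $w_\perp:=w-(w,\hat v)\hat v$ is the component of $w$ orthogonal to $v$; geometrically $-w_\perp$ is the component of $v-w$ orthogonal to $v$, so $|w_\perp|\le|v-w|$, and trivially $|w_\perp|\le|w|$. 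Hence
\[
  (A_{f,\gamma}(v)\hat v,\hat v)=c(d,\gamma)\int_{\mathbb{R}^d}|v-w|^{\gamma}|w_\perp|^2 f(w)\,dw.
\]
I would then estimate this by splitting in $|w|$. For $|v|\le1$ the crude bound $|w_\perp|^2\le|v-w|^2$ gives $(A_{f,\gamma}(v)\hat v,\hat v)\le c(d,\gamma)\int|v-w|^{2+\gamma}f\,dw\le C(d,\gamma)$, which is $\le C(d,\gamma)\langle v\rangle^\gamma$ since $\langle v\rangle^\gamma$ is bounded below on $\{|v|\le1\}$. For $|v|>1$ split the integral at $|w|=|v|/2$: on $\{|w|\le|v|/2\}$ use $|w_\perp|\le|w|$ and $|v-w|\ge|v|/2$, so $|v-w|^\gamma\le(|v|/2)^\gamma$, and $\int|w|^2f\,dw=d$ bounds this piece by $C(d,\gamma)|v|^\gamma$; on $\{|w|>|v|/2\}$ use $|w_\perp|\le|v-w|$ and $|v-w|\le|v|+|w|\le3|w|$, so $|v-w|^{\gamma}|w_\perp|^2\le|v-w|^{2+\gamma}\le3^{2+\gamma}|w|^{\gamma}|w|^2\le C(d,\gamma)|v|^\gamma|w|^2$ (using $|w|^\gamma\le(|v|/2)^\gamma$ there), and again $\int|w|^2f\,dw=d$ closes the estimate. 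Hence $(A_{f,\gamma}(v)\hat v,\hat v)\le C(d,\gamma)\langle v\rangle^\gamma$, which gives $\astar_{f,\gamma}(v)\le C(d,\gamma)\langle v\rangle^\gamma$ and, combined with the lower bound $\astar_{f,\gamma}(v)\ge c_{\fin,d,\gamma}\langle v\rangle^\gamma$ recalled above, the inequality $(A_{f,\gamma}(v)\hat v,\hat v)\le C\,\astar_{f,\gamma}(v)$.

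The step I expect to be the main obstacle is precisely this last estimate for $\astar_{f,\gamma}$: one must extract two full powers of decay beyond those of $a_{f,\gamma}$, and the naive bound $|w_\perp|^2\le|v-w|^2$ loses exactly those two powers (it only recovers $C\langle v\rangle^{2+\gamma}$). The gain comes only by playing off $|w_\perp|\le|w|$ against $|w_\perp|\le|v-w|$ according to whether $w$ lies in the bulk of the mass ($|w|\lesssim|v|$) or in the far/near-diagonal region, with the finiteness of the second moment $\int|w|^2 f\,dw$ paying for the missing powers of $|v|$ in each regime. Everything else — the lower bounds and the bound on $a_{f,\gamma}$ — is routine.
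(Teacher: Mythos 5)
Your proof is correct and follows essentially the same route as the paper: both test $A_{f,\gamma}$ in the radial direction $e=v/|v|$, use the geometric identity $(\Pi(v-w)e,e)\,|v-w|^2=|w_\perp|^2$ (the squared distance of $w$ to the line $[e]$), and pay for the extra two powers of decay with the finite second moment, by splitting the integral into a near region and a far region. The only cosmetic difference is that you split at $|w|=|v|/2$ (a ball around the origin, using $|w_\perp|\le|w|$ inside and $|w_\perp|\le|v-w|$ outside) whereas the paper splits at $|v-w|=\langle v\rangle/2$ (a ball around $v$, using the trivial bound inside and $|w_\perp|\le|w|$ outside); either decomposition works.
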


\begin{proof}
  We drop the subindices $f,\gamma$. The lower bounds were already stated in Lemma \ref{lem: A lower bound in terms of conserved quantities}, and therefore here we only need to prove the upper bounds for $a$ and $\astar$. 
  
  Since $2+\gamma\geq 0$, for any $v,w\in\mathbb{R}^d$
  \begin{align*}
    |v-w|^{2+\gamma} \leq \left ( |v|+|w| \right )^{2+\gamma} \leq 4 \left ( |v|^{2+\gamma}+|w|^{2+\gamma}\right ).
  \end{align*}
  Using conservation of mass and energy, and that $2+\gamma\leq 2$, we have 
  \begin{align*}
    a_{f,\gamma}(v) = C(d,\gamma)\int_{\mathbb{R}^d} f(w)|v-w|^{2+\gamma}\;dw & \leq 4C(d,\gamma)\int_{\mathbb{R}^d}f(w)\left( |v|^{2+\gamma}+|w|^{2+\gamma} \right )\;dw\\
	  & = 4C(d,\gamma)|v|^{2+\gamma}+4C(d,\gamma)\int_{\mathbb{R}^d}f(w)|w|^{2+\gamma}\;dw\\
	  & \le C(\fin,d,\gamma) \langle v\rangle^{2+\gamma}.
  \end{align*}
  In conclusion, for some constant $C(d,\gamma)$
  \begin{align*}
    a_{f,\gamma}(v) \leq  C(d,\gamma)\langle v\rangle^{2+\gamma}.
  \end{align*}
  It remains to prove the upper bound for $\astar$. Let $v\in\mathbb{R}^d$, $|v|\geq 1$ and $e = {v}/|v|$. We estimate the ``tail'' of the integral defining $a^*_{f,\gamma,e}$, noting that
  \begin{align*}
    & \int_{B_{\langle v\rangle}(v)^c}f(w)|v-w|^{2+\gamma}(\Pi(v-w)e,e)\;dw\\
    & = \int_{B_{\langle v\rangle}(v)^c}f(w)|v-w|^{\gamma}(|v-w|^2-(v-w,e)^2 )\;dw.  		  
  \end{align*}	  
  Since $e$ and $v$ are parallel, it follows that the difference of squares in the last integral is just the square of the distance from $w$ to the line spanned by $e$, therefore
  \begin{align*}
      |v-w|^2-(v-w,e)^2 \leq |w|^2,\;\;\forall\;w\in\mathbb{R}^d.
  \end{align*}
  With this inequality and using that $\gamma\leq 0$, we conclude that
  \begin{align*}
    \int_{B_{\langle v\rangle/2}(v)^c}f(w)|v-w|^{2+\gamma}(\Pi(v-w)e,e)\;dw \leq 2^{-\gamma}\langle v\rangle^\gamma \int_{B_{\langle v\rangle/2}(v)^c}f(w)|w|^2\;dw.  		  
  \end{align*}	  
  On the other hand, since $2+\gamma\geq 0$, we have
  \begin{align*}
    \int_{B_{\langle v\rangle/2}(v)}f(w)|v-w|^{2+\gamma}(\Pi(v-w)e,e)\;dw & \leq \langle v\rangle^{2+\gamma}\int_{B_{\langle v\rangle/2}(v)}f(w)\;dw\\
	  & \leq C(\gamma) \langle v\rangle^\gamma\int_{B_{\langle v\rangle/2}(v)}f(w)\langle w\rangle^2\;dw.
  \end{align*}	  
  This shows that
  \begin{align*}
    (A_{f,\gamma}(v)e,e)\leq C(d,\gamma)\langle v\rangle^\gamma,\;\;e=\frac{v}{|v|},  
  \end{align*}
  from where it follows that $\astar_{f,\gamma}(v)\leq C\langle v\rangle^\gamma$ and $(A_{f,\gamma}(v)e,e)\leq C\astar_{f,\gamma}(v)$ for all $v\neq 0$.
\end{proof}

\begin{prop}\label{prop:a vs a star gamma below -2}
  Let $\gamma \in [-d,-2)$, then, under Assumption \ref{Assumption:Local Doubling}, there is a $C$ such that 
  \begin{align*}  
    a_{f,\gamma}(v)\leq C\langle v\rangle^{\max\{-\gamma-2,2\}}\astar_{f,\gamma}(v),\;\; C=C(\fin,d,\gamma,C_D).
  \end{align*}
  Moreover, if one  assumes $f$ has a uniformly bounded moment of order $-\gamma-2$, we have
  \begin{align*}  
    a_{f,\gamma}(v)\leq C\langle v\rangle^{2}\astar_{f,\gamma}(v),
  \end{align*}
  with the $C$ this time depending also on the bound on the $-\gamma-2$ moment of $f$.
\end{prop}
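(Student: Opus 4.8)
The plan is to prove the (formally stronger, but equivalent) pointwise statement: for every $e\in\mathbb{S}^{d-1}$,
\begin{align*}
 (A_{f,\gamma}(v)e,e)\geq c\,\langle v\rangle^{-p}\,a_{f,\gamma}(v),\qquad p:=\max\{-\gamma-2,2\},\quad c=c(\fin,d,\gamma,C_D),
\end{align*}
and then take the infimum over $e$. Dropping the subscripts, the starting point is the elementary identity
\begin{align*}
 |v-w|^{2+\gamma}(\Pi(v-w)e,e)=|v-w|^{\gamma}\,\mathrm{dist}(w,\ell_e)^{2},\qquad \ell_e:=\{v+te:t\in\mathbb{R}\},
\end{align*}
so that $(A(v)e,e)=C_\gamma\int f(w)|v-w|^{\gamma}\mathrm{dist}(w,\ell_e)^{2}\,dw$ while $a(v)\simeq\int f(w)|v-w|^{2+\gamma}\,dw$; thus the degeneracy of $A(v)$ in the direction $e$ is measured by how much of the weighted mass $f(w)|v-w|^{2+\gamma}\,dw$ sits in a thin cone about the line $\ell_e$. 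For a parameter $\alpha\in(0,1)$ set $\mathcal{C}=\mathcal{C}_e(\alpha):=\{w:\mathrm{dist}(w,\ell_e)<\alpha|v-w|\}$, the double cone with vertex $v$, axis $\ell_e$ and opening $\alpha$. On $\mathbb{R}^d\setminus\mathcal{C}$ one has $(\Pi(v-w)e,e)\geq\alpha^2$, so
\begin{align*}
 \int_{\mathbb{R}^d\setminus\mathcal{C}}f(w)|v-w|^{2+\gamma}\,dw\ \leq\ \alpha^{-2}C_\gamma^{-1}(A(v)e,e).
\end{align*}
Hence it suffices to produce $\alpha$ with $\alpha^{-2}\lesssim\langle v\rangle^{p}$ for which $\int_{\mathcal{C}}f|v-w|^{2+\gamma}\,dw\leq\tfrac12 a(v)$ (up to harmless constants); combining the two displays then gives $a(v)\lesssim\alpha^{-2}(A(v)e,e)\lesssim\langle v\rangle^{p}(A(v)e,e)$.

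\textbf{Estimating the cone contribution.} The cone integral is split dyadically in $r=|v-w|$. For the \emph{far} scales ($r\geq K\langle v\rangle$, $K$ a large universal constant) conservation of mass and energy gives $\int_{|v-w|\geq r}f\,dw\lesssim\langle v\rangle^{2}r^{-2}$, and summing the geometric series with factor $r^{2+\gamma}$ (recall $2+\gamma<0$) bounds this contribution by $C K^{2+\gamma}\langle v\rangle^{2+\gamma}$; since $a(v)\geq c\langle v\rangle^{\gamma+2}$ by Lemma \ref{lem: A lower bound in terms of conserved quantities}, choosing $K$ large makes the far part $\leq\tfrac14 a(v)$. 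For the \emph{near} scales ($r\leq K\langle v\rangle$) one must invoke Assumption \ref{Assumption:Local Doubling} together with the finite entropy of $f$. Local doubling implies, first, that $f>0$ a.e.\ and, more usefully, a reverse doubling inequality $\int_{B_{2r}(v)}f\geq(1+\delta_0)\int_{B_r(v)}f$ with $\delta_0=\delta_0(C_D,d)>0$ for $r<\tfrac14$; the entropy bound gives the complementary smallness $\int_{E}f\lesssim C(\fin)\,\big/\log(1/|E|)$ for $|E|$ small. Using that $\mathcal{C}_e(\alpha)\cap\{r\simeq 2^{-k}\}$ is covered by $O(\alpha^{-1})$ balls of radius $\simeq\alpha 2^{-k}$ centered on $\ell_e$, these two inputs are combined to show that the thin cone captures at most a quarter of the (weighted) mass at each dyadic scale provided $\alpha$ is of order $\langle v\rangle^{-p/2}$; this forces $\alpha^{-2}\lesssim\langle v\rangle^{p}$, as required. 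An alternative route to this last step is to localize to a cube $Q\ni v$, compare the averages $\fint_Q\astar_{f,\gamma}$ and $\fint_Q a_{f,\gamma}$ — the latter comparable to $a_{f,\gamma}(v)$ since $a_{f,\gamma}$ is an $\mathcal{A}_1$ weight (Proposition \ref{prop: Riesz potentials are A1 weights}) — and then transfer back to pointwise values of $\astar_{f,\gamma}$ using the local doubling of $f$; in this variant the exponent $-\gamma-2$ appears through the reverse Hölder exponent $d/|2+\gamma|$ available for $a_{f,\gamma}$ (Lemma \ref{lem:a is reverse Holder with a nice constant}).

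\textbf{The main obstacle, and the refinement.} The delicate point is precisely the near-field cone estimate: the weight $|v-w|^{2+\gamma}$ grows like $2^{k|2+\gamma|}$ as the dyadic scales approach $v$, so one must show the thin-cone portion of $f$ decays fast enough along these scales to beat it. Here both hypotheses are genuinely needed — reverse doubling (from local doubling) prevents $f$ from vanishing on concentric balls around $v$, while the entropy bound prevents $f$ from collapsing onto the axis $\ell_e$ — and I expect the careful bookkeeping of the dyadic sum (balancing the number $O(\alpha^{-1})$ of covering balls, the reverse-doubling and entropy decays, and the growth of the weight, all uniformly in $e$) to be the technical heart of the proof; everything preceding it is soft. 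Finally, the improvement to the exponent $2$ under a uniform bound on the moment of order $-\gamma-2$ comes from replacing the crude far-field estimate (which, through the lower bound $(A(v)e,e)\geq c\langle v\rangle^{\gamma}$, is what produces the factor $\langle v\rangle^{\max\{|2+\gamma|,2\}}$ in the first assertion) by one that uses the moment to control the genuinely far contribution directly, decoupling the exponent from $|2+\gamma|$; this mirrors, and slightly sharpens, the argument already used for $\gamma\in[-2,0]$ in Proposition \ref{prop:a vs a star gamma larger than -2}.
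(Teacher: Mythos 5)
Your cone decomposition is a genuinely different route from the paper's, but as written it has two gaps that stop it from closing. The paper instead splits the integral defining $a_{f,\gamma}(v)$ into three radial shells — $B_1(v)$, $B_{\langle v\rangle/2}(v)\setminus B_1(v)$, and the exterior of $B_{\langle v\rangle/2}(v)$ — bounded by, respectively, $C\,a^*_{f,\gamma,e}(v)$ (using local doubling at each dyadic scale below $1$ to push annular mass into the wide cone $K_e=\{w:(\Pi(v-w)e,e)\geq \tfrac12\}$), $C\langle v\rangle^{-2}$ (second moment), and $C\langle v\rangle^{2+\gamma}$ (mass), the last two converted into multiples of $\astar_{f,\gamma}$ through the universal lower bound $\astar_{f,\gamma}\geq c\langle v\rangle^\gamma$. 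Your far-field bound agrees with theirs; the problems are in the other two ranges.

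The first gap is the intermediate range $1\leq|v-w|\leq K\langle v\rangle$, which you lump into your "near scales'': Assumption \ref{Assumption:Local Doubling} only constrains scales $r<1$, so it says nothing about how the weighted mass at these intermediate distances distributes around the axis $\ell_e$, and no choice of $\alpha$ makes $\int_{\mathcal{C}\cap\{1\leq|v-w|\leq K\langle v\rangle\}}f\,|v-w|^{2+\gamma}\,dw\leq\tfrac14\,a_{f,\gamma}(v)$ a consequence of doubling. That region must instead be estimated directly against $\astar_{f,\gamma}$ via the second moment (or via the $(-\gamma-2)$-moment for the improved bound), and it is precisely this region — not the far field, as you claim — that produces the exponent $-\gamma-2$ in $\max\{-\gamma-2,2\}$: the far field contributes only $\langle v\rangle^{2}$. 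The second gap is your $r<1$ estimate, which you flag as "the technical heart'' but do not carry out; it is not clear the account balances, since the weight grows like $2^{k|2+\gamma|}$ across dyadic scales and the decay available from doubling alone need not beat it. Note also that entropy and reverse doubling, which you single out as essential to this step, play no role in the paper's near-field argument — entropy enters only through the lower bound $\astar_{f,\gamma}\geq c\langle v\rangle^\gamma$. The paper's mechanism is a per-scale comparison with $K_e$ that bounds $\int_{B_1(v)}f\,|v-w|^{2+\gamma}\,dw$ directly by $a^*_{f,\gamma,e}(v)$ with no thin-cone estimate and no accumulation across scales; you should either supply the combinatorics you defer, or adopt the shell decomposition, where every piece is soft.
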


\begin{proof}
  Let $r>0$, then we may write
  \begin{align*}
    a_{f,\gamma}(v) = C(d,\gamma)\int_{B_r(v)}f(w)|v-w|^{2+\gamma}\;dw+C(d,\gamma)\int_{\mathbb{R}^d\setminus B_r(v)}f(w)|v-w|^{2+\gamma}\;dv. 
  \end{align*}
  Choosing $r = \frac{1}{2}\langle v\rangle$ and using that $2+\gamma\leq 0$ it follows that
  \begin{align*}
    a_{f,\gamma}(v) \leq C\int_{B_{\langle v\rangle/2}(v)}f(w)|v-w|^{2+\gamma}\;dw+C\langle v\rangle^{2+\gamma}.
  \end{align*}
  Again, since $2+\gamma\leq 0$, we have $|v-w|^{2+\gamma}\leq 1$ in $\mathbb{R}^d\setminus B_1(v)$, therefore
  \begin{align*}
    \int_{B_{\langle v\rangle/2}(v) \setminus B_1(v)}f(w)|v-w|^{2+\gamma}\;dw \leq \int_{B_{\langle v\rangle/2}(v)}f(w)\;dw . 
  \end{align*}
  Now, in $B_{\langle v\rangle/2}(v)$ we have 
  \begin{align*}
    \int_{B_{\langle v\rangle/2}(v) \setminus B_1(v)}f(w)|v-w|^{2+\gamma}\;dw & \leq \langle v\rangle^{-2}\int_{\mathbb{R}^d}f(w)|w|^{2}\;dv \leq C\langle v\rangle^{-2-\gamma}\astar. 
  \end{align*}
  By the local doubling property of $f$, for any $e\in\mathbb{S}^{d-1}$ we have 
  \begin{align*}
    \int_{B_1(v)}f(w)|v-w|^{2+\gamma}\;dw & \leq C\int_{B_1(v)}f(w)|v-w|^{2+\gamma}(\Pi(v-w)e,e)\;dw \leq a_{f,\gamma,e}(v).	
  \end{align*}	  
  Taking the infimum over $e$, and using the estimates above, 
  \begin{align*}
    a_{f,\gamma}(v) \leq C\langle v\rangle^2 \astar_{f,\gamma}+C\langle v\rangle^{-2-\gamma}\astar_{f,\gamma}(v) \leq C\langle v\rangle^{\max\{2,-\gamma-2\} }\astar_{f,\gamma}(v).	  
  \end{align*}	  
  Lastly, if $f$ has a finite $-\gamma-2$ moment, then above one has the bound
  \begin{align*}
    \int_{B_{\langle v\rangle/2}(v) \setminus B_1(v)}f(w)|v-w|^{2+\gamma}\;dw & \leq \langle v\rangle^{2-\gamma}\int_{\mathbb{R}^d}f(w)|w|^{-\gamma-2}\;dw \\
	  & \leq C\langle v\rangle^2 \astar_{f,\gamma}(v),
  \end{align*}
  from where the respective estimate follows.
\end{proof}

\begin{rem}
  Observe that if $d=3$, then $\max\{-\gamma-2,2\} =2$ for all $\gamma \geq -4$, so the above Proposition yields the pointwise  relation $a_{f,\gamma}\leq C\langle v\rangle^2 \astar$ for the case of Coulomb potential when $d=3$ under Assumption \ref{Assumption:Local Doubling}.
\end{rem}

We now show a result that will be used later to prove Theorem \ref{thm:sufficient conditions for the Poincare inequality}. 
\begin{prop}\label{new_pro_global_space}
  If $\gamma \in [-2,0]$ there is $s=s(d,\gamma) >1$ such that for any cube $Q \subset \mathbb{R}^d$ with sides of length $\leq 1$, we have
  \begin{align*}
    |Q_r|^{\frac{1}{d}} \left( \fint_{Q_r}{h_{f,\gamma}}^{s}\;dv\right)^{\frac{1}{2s}}\left(\fint_{Q_r}(a_{f,\gamma}^*)^{-s}\;dv\right)^{\frac{1}{2s}} \le C(\fin,d,\gamma) |Q|^{\frac{2+\gamma}{d}}.
  \end{align*}	
\end{prop}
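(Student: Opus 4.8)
The plan is to estimate the two averaged factors separately using the structure of $h_{f,\gamma}$ and $\astar_{f,\gamma}$ for $\gamma \in [-2,0]$, and then combine. First I would use Proposition \ref{prop:a vs a star gamma larger than -2}, which gives the two-sided bound $c_{\fin}\langle v\rangle^\gamma \le \astar_{f,\gamma}(v) \le C_{\fin}\langle v\rangle^\gamma$. Hence on any cube $Q$ of side length $r \le 1$, centered say at $v_0$, the weight $(\astar_{f,\gamma})^{-1}$ is pointwise comparable to $\langle v\rangle^{-\gamma} \approx \langle v_0\rangle^{-\gamma}$ (up to universal constants, since $r\le 1$ and $-\gamma \in [0,2]$). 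Therefore for \emph{any} $s\ge 1$,
\begin{align*}
  \left(\fint_Q (\astar_{f,\gamma})^{-s}\,dv\right)^{\frac{1}{2s}} \le C(\fin,d,\gamma)\,\langle v_0\rangle^{-\frac{\gamma}{2}}.
\end{align*}
So the reverse-Hölder issue on this factor is trivial; the only exponent constraint comes from the $h$-factor.

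Next I would handle $\left(\fint_Q h_{f,\gamma}^s\,dv\right)^{1/(2s)}$. Here $h_{f,\gamma} = c(d,\gamma)\, f * |v|^\gamma$ with $\gamma \in (-d,0]$ (and $h_{f,-2} = c\, f*|v|^{-2}$ etc.), so by Proposition \ref{prop: Riesz potentials are A1 weights} $h_{f,\gamma}$ is an $\mathcal{A}_1$ weight with universal constant; by Proposition \ref{prop:reverse Holder is a convex condition} together with Remark \ref{rem:reverse Holder for inverse powers of v} (reverse Hölder for $|v|^\gamma$, valid with exponent $s$ as long as $s|\gamma| < d$, i.e. $s < d/|\gamma|$, which for $\gamma\in[-2,0]$ allows some $s=s(d,\gamma)>1$), the convolution $h_{f,\gamma}$ satisfies the reverse Hölder inequality with that exponent $s$ and a universal constant. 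Thus
\begin{align*}
  \left(\fint_Q h_{f,\gamma}^s\,dv\right)^{\frac{1}{2s}} \le C(d,\gamma)\left(\fint_Q h_{f,\gamma}\,dv\right)^{\frac{1}{2}}.
\end{align*}
Then I would bound $\fint_Q h_{f,\gamma}\,dv$ from above: as in the proof of Proposition \ref{prop:assumption 1 almost holds},
\begin{align*}
  \fint_Q h_{f,\gamma}\,dv \approx \int_{\mathbb{R}^d} f(w)\max\{|Q|^{1/d},|v_0-w|\}^\gamma\,dw \lesssim \langle v_0\rangle^\gamma,
\end{align*}
using conservation of mass together with $\gamma\le 0$ (for $|v_0-w|$ not too small one has $\max\{\cdot,\cdot\}^\gamma \le \langle w\rangle^\gamma$ or $\le \langle v_0\rangle^\gamma$ after splitting; near $w\approx v_0$ one controls $|Q|^{\gamma/d}\int_Q f \le |Q|^{\gamma/d}$, which is where the factor $|Q|^{(2+\gamma)/d}$ will ultimately come from rather than something worse—wait, one must be careful here).

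Actually the cleaner route for the final power of $|Q|$ is to \emph{not} take the worst local bound but to keep the $|Q|^{1/d}$ prefactor honest. Collecting the three pieces, with $|Q_r|^{1/d} = r = |Q|^{1/d}$,
\begin{align*}
  |Q|^{\frac{1}{d}}\left(\fint_Q h_{f,\gamma}^s\right)^{\frac{1}{2s}}\left(\fint_Q (\astar_{f,\gamma})^{-s}\right)^{\frac{1}{2s}} \le C\, |Q|^{\frac1d}\,\langle v_0\rangle^{\frac{\gamma}{2}}\,\langle v_0\rangle^{-\frac{\gamma}{2}} \cdot (\text{correction}),
\end{align*}
and the $\langle v_0\rangle$ powers cancel, leaving $C|Q|^{1/d}$. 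To upgrade $|Q|^{1/d}$ to the claimed $|Q|^{(2+\gamma)/d}$ one uses the finer estimate $\fint_Q h_{f,\gamma}\,dv \lesssim |Q|^{\gamma/d}\langle v_0\rangle^2$ near the origin region versus $\lesssim \langle v_0\rangle^\gamma$ far out, balanced against the lower bound $\astar_{f,\gamma}\gtrsim \langle v_0\rangle^\gamma$; combining gives the product $\lesssim |Q|^{1/d}\cdot |Q|^{(1+\gamma)/d}\cdot(\ldots) = |Q|^{(2+\gamma)/d}$ once the mass constraint $\int_Q f \le 1$ and $2+\gamma\ge 0$ are used. \textbf{The main obstacle} I anticipate is exactly this last bookkeeping: tracking the correct power of $|Q|^{1/d}$ (getting $2+\gamma$ and not merely $1$ or $0$), which requires carefully splitting the convolution defining $h_{f,\gamma}$ into the near part (bounded using $\int_Q f\le 1$ and the singularity $|v-w|^\gamma \lesssim |Q|^{\gamma/d}$ on $Q$) and the far part (bounded using the second moment of $f$), and checking the $\langle v_0\rangle$ powers from $h$ and from $(\astar)^{-1}$ cancel so that the estimate is genuinely uniform in $v_0$. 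The reverse-Hölder step, the choice of $s=s(d,\gamma)>1$, and the $\mathcal{A}_1$ bounds are all immediate from the already-established Propositions \ref{prop: Riesz potentials are A1 weights}, \ref{prop:reverse Holder is a convex condition}, \ref{prop:a vs a star gamma larger than -2} and Remark \ref{rem:reverse Holder for inverse powers of v}.
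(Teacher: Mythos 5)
Your overall strategy matches the paper's: bound the $\astar$-factor pointwise via Proposition \ref{prop:a vs a star gamma larger than -2}, bound the $h$-factor via the $\mathcal{A}_1$/reverse-H\"older machinery, then control $\fint_Q h_{f,\gamma}\,dv$ by splitting the convolution into a near part and a far part. The gap is exactly in the place you flag as ``the main obstacle'': the upgrade step is not carried out correctly, and the quantities you wrote down there don't survive a careful computation. Your claimed near-origin bound $\fint_Q h_{f,\gamma}\,dv \lesssim |Q|^{\gamma/d}\langle v_0\rangle^{2}$ has a spurious factor $\langle v_0\rangle^2$; the estimate you actually need (obtained by splitting at the ball $B_{|v_0|/2}(v_0)$, using conservation of mass, the bounded second moment, and $\fint_Q|v-w|^\gamma dv \lesssim (\max\{r,|v_0-w|\})^\gamma$ from Lemma \ref{lem:averages for powers of v}) is the clean, $v_0$-uniform bound
\begin{align*}
  \fint_Q h_{f,\gamma}\,dv \;\leq\; C(\fin,d,\gamma)\,\langle v_0\rangle^\gamma\,|Q|^{\gamma/d},
\end{align*}
which is precisely the paper's inequality \eqref{hwithr=1}. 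Similarly, the factor $|Q|^{(1+\gamma)/d}$ in your final product is not what falls out: plugging the estimate above together with the reverse-H\"older step and $(\fint_Q (\astar)^{-s})^{1/(2s)} \lesssim \langle v_0\rangle^{-\gamma/2}$ into the left side \emph{as written} (with exponents $1/(2s)$) cancels the $\langle v_0\rangle$ powers and gives $C\,|Q|^{1/d}\cdot|Q|^{\gamma/(2d)} = C\,|Q|^{(2+\gamma)/(2d)}$, not $|Q|^{(2+\gamma)/d}$.

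Be aware that this is also where the paper itself is loose: its proof actually concludes with the \emph{squared} quantity (exponents $1/s$, with prefactor $|Q|^{2/d}$) bounded by $C|Q|^{(2+\gamma)/d}$, and this is the version that feeds into $\sigma_{2,2}(Q,h,\astar) = C|Q|^{2/d}\fint_Q h\,/\,\fint_Q \astar$ in the proof of Theorem \ref{thm:sufficient conditions for the Poincare inequality}. So either read the displayed inequality in the Proposition with $1/s$-exponents, or accept $|Q|^{(2+\gamma)/(2d)}$ on the right side; both yield a positive power of $|Q|$, which is what is actually used. In any case, replace your ad hoc ``near-origin'' bound by \eqref{hwithr=1} and carry the $|Q|^{\gamma/d}$ factor from start to finish, rather than first dropping it to get $|Q|^{1/d}$ and then trying to reinstate it.
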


\begin{proof}
  This estimate will follow by a kind of interpolation, estimating an integral respectively outside and inside a ball, using in each case respectively either that $\gamma> -2$ or the mass and second moment of $f$. 
  
   Fix a cube $Q$ with center $v_0 \in \mathbb{R}^d$ and sides of length $r$ with $2r\leq 1 $. By Proposition \ref{prop:a vs a star gamma larger than -2} for any $s>1$ we have,
    \begin{align*}
      \fint_{Q}{(a_{f,\gamma}^*)^{-s}}\;dv \leq C(\fin,d,\gamma,s)\langle v_0\rangle^{-s\gamma}.
    \end{align*}
  Above we have used that the diameter of $Q$ was at most $1$ along with the lower estimate for $a^*_{f,\gamma}$ from Lemma \ref{lem: A lower bound in terms of conserved quantities}. Now, let us bound the integral of $h_{f,\gamma}$. To do this, we make use of the identity 
    \begin{align*}
    \fint_{Q}h_{f,\gamma}\;dv  = C(d,\gamma)\int_{\mathbb{R}^d}\fint_{Q}f(w)|v-w|^\gamma\;dvdw,  
  \end{align*}
  and of the estimate
  \begin{align*}
     \fint_{Q} |v-w|^\gamma\;dv \leq C(d,\gamma) \left ( \max\{r,|v_0-w|\} \right )^\gamma,
  \end{align*}	 
 (see Lemma \ref{lem:averages for powers of v}). We also use the fact that $h_{f,\gamma}$ is an $\mathcal{A}_1$ weight and hence satisfies a reverse H\"older inequality 
 \begin{align*}
   \left(  \fint_{Q}h_{f,\gamma}^s\;dv\right)^{1/s} \le C  \fint_{Q}h_{f,\gamma}\;dv,
 \end{align*}
 for some $s=s(d,\gamma)>1$.

 Next, we split $\mathbb{R}^d$ as the union of the ball  $B_{\langle v_0\rangle/2}(v_0)$ and its complement. Using conservation of mass, the second moment of $f$, and that $-\gamma\leq 2$, it follows that
   \begin{align*}
  \int_{B_{|v_0|/2}(v_0)} f(w) \int_Q |v-w|^{\gamma}\;dv\;dw  & \leq |Q|^{1+\frac{\gamma}{d}}\langle v_0 \rangle^{\gamma} \int_{\mathbb{R}^d}f(w)\langle w\rangle^{-\gamma}\;dw\\
	 & \leq C(d,\gamma,\fin)|Q|^{1+\frac{\gamma}{d}} \langle v_0\rangle^{\gamma}.
  \end{align*}
  For the second integral we distinguish two cases depending on whether $|v_0|$ is greater or less than one. 
  If $|v_0| \ge 1$ we have 
  \begin{align*}
  \int_{B_{|v_0|/2}^c(v_0)} f(w) \int_Q |v-w|^{\gamma}\;dv\;dw &\le C(d,\gamma) |Q|   \int_{B_{|v_0|/2}^c(v_0)}f(w)|v_0-w|^{\gamma}\;dw \\
  & \leq C(d,\gamma)|Q| | v_0|^{\gamma} \int_{B_{|v_0|/2}(v_0)^c}f(w)\;dw\\
	  & \leq  C(\fin,d,\gamma)\langle v_0\rangle^{\gamma}|Q|.
  \end{align*}
  On the other hand, if $|v_0| < 1$ then $1\leq \langle v_0\rangle \le 2$ and 
   \begin{align*}
  \int_{B_{|v_0|/2}^c(v_0)} f(w) \int_Q |v-w|^{\gamma}\;dv\;dw  & \leq  r^\gamma |Q| \int_{B_{|v_0|/2}} f(w)\;dw \leq   C(\fin,d,\gamma) |Q|^{1+\frac{\gamma}{d}} \langle v_0\rangle^{\gamma}.
  \end{align*}
  Combining these inequalities, we see that 
  \begin{align}\label{hwithr=1}
   \fint_{Q}h\;dv  \le C(\fin,d,\gamma)\langle v_0\rangle^{\gamma} |Q|^{\frac{\gamma}{d}} 
  \end{align}
  and  we conclude that
   \begin{align*}
    |Q|^{\frac{2}{d}} \left( \fint_{Q}{h_{f,\gamma}}^s\;dv\right)^{\frac{1}{s}}\left(\fint_{Q}(a_{f,\gamma}^*)^{-s}\;dv\right)^{\frac{1}{s}} \leq C(\fin,d,\gamma)   |Q|^{\frac{2+\gamma}{d}}. 
    \end{align*}
\end{proof}

The following Lemma, dealing with the cases where $\gamma \in [-d,-2]$, is one of the places where Assumption \ref{Assumption:Local Doubling} is key. It guarantees that $\astar$ is (locally) a $\mathcal{A}_1$ weight with a constant depending on the constant $C_D$ from Assumption \ref{Assumption:Local Doubling}.

\begin{lem}\label{lem:a star is A1 if f is doubling}
 Let $\gamma \in [-d,-2)$ and assume $f$ satisfies Assumption \ref{Assumption:Local Doubling}, then, for any $v_0 \in \mathbb{R}^d$ and $r\in(0,1/2)$ we have
  \begin{align*}
    \fint_{B_r(v_0)}a^*_{f,\gamma}(v) \;dv\leq Ca^*_{f,\gamma}(v_1), \;\;\forall\;v_1 \in B_r(v_0),
  \end{align*}	  
  with a constant $C$ determined by $\gamma$, $d$, and the constant $C_D$ in Assumption \ref{Assumption:Local Doubling}.
\end{lem}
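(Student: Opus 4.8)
The plan is to reduce the $\mathcal{A}_1$ inequality to a one‑direction statement and then split the relevant integral into a near and a far piece. Fix $v_0$, $r\in(0,1/2)$, $v_1\in B_r(v_0)$, and let $e_1\in\mathbb{S}^{d-1}$ attain the infimum defining $a^*_{f,\gamma}(v_1)$, so that $a^*_{f,\gamma}(v_1)=a^*_{f,\gamma,e_1}(v_1)$ while $a^*_{f,\gamma}(v)\le a^*_{f,\gamma,e_1}(v)$ for every $v$. It then suffices to establish the single‑direction bound $\fint_{B_r(v_0)}a^*_{f,\gamma,e_1}(v)\,dv\le C\,a^*_{f,\gamma,e_1}(v_1)$, with $C=C(d,\gamma,C_D)$. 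I would split the integral defining $a^*_{f,\gamma,e_1}(v)$ into a near piece ($w\in B_{\rho}(v_0)$, with $\rho\simeq r$) and a far piece ($w\notin B_\rho(v_0)$), choosing $\rho$ (and all enlarged balls below) so that every radius that appears stays below $1$, where Assumption \ref{Assumption:Local Doubling} is available.

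For the near piece I would drop the angular factor $\langle\Pi(v-w)e_1,e_1\rangle\le1$ and use $\fint_{B_r(v_0)}|v-w|^{2+\gamma}\,dv\le Cr^{2+\gamma}$ for $w\in B_\rho(v_0)$ (local integrability of $|z|^{2+\gamma}$, valid since $\gamma\ge-d$), which bounds the $v$‑average of the near piece by $Cr^{2+\gamma}\int_{B_\rho(v_0)}f$. On the other hand, restricting the integral for $a^*_{f,\gamma,e_1}(v_1)$ to the $w\in B_{2\rho}(v_1)$ lying at distance $\ge\rho$ from the line $\ell_1:=v_1+\mathbb{R}e_1$ --- where $|v_1-w|^{2+\gamma}\gtrsim r^{2+\gamma}$ and $\langle\Pi(v_1-w)e_1,e_1\rangle=\mathrm{dist}(w,\ell_1)^2/|v_1-w|^2\gtrsim1$ --- yields $a^*_{f,\gamma,e_1}(v_1)\gtrsim r^{2+\gamma}\int_{B_{2\rho}(v_1)\setminus T_\rho(\ell_1)}f$, where $T_\sigma(\ell)$ denotes the $\sigma$‑neighborhood of $\ell$. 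The near piece is then controlled once one knows $\int_{B_{2\rho}(v_1)\setminus T_\rho(\ell_1)}f\gtrsim\int_{B_{2\rho}(v_1)}f\ge\int_{B_\rho(v_0)}f$; proving the first inequality is the one genuine use of Assumption \ref{Assumption:Local Doubling}. Since $d\ge2$, $B_{2\rho}(v_1)$ contains a subball $B_{c\rho}(y_0)$ disjoint from $T_\rho(\ell_1)$, and finitely many applications of \eqref{eqn:doubling local} give $\int_{B_{c\rho}(y_0)}f\gtrsim\int_{B_{2\rho}(v_1)}f$.

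For the far piece, for $w\notin B_\rho(v_0)$ and $v\in B_r(v_0)$ one has $|v-w|\simeq|v_1-w|\simeq|v_0-w|$ and the unit vectors $(v-w)/|v-w|$ and $(v_1-w)/|v_1-w|$ differ by at most $Cr/|v_0-w|$, so $\langle\Pi(v-w)e_1,e_1\rangle\le2\langle\Pi(v_1-w)e_1,e_1\rangle+Cr^2|v_0-w|^{-2}$. Integrating, the far piece of $a^*_{f,\gamma,e_1}(v)$ is at most $C\,a^*_{f,\gamma,e_1}(v_1)$ plus the error $E:=Cr^2\int_{\mathbb{R}^d\setminus B_\rho(v_0)}|v_0-w|^{\gamma}f(w)\,dw$. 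To absorb $E$ I would decompose $\mathbb{R}^d\setminus B_\rho(v_0)$ into dyadic annuli $\{2^k\rho\le|v_0-w|<2^{k+1}\rho\}$ and, within each, separate the $w$ at distance $<2^k\rho$ from $\ell_1$ from those farther. For $w$ away from $\ell_1$ one has $\langle\Pi(v_1-w)e_1,e_1\rangle\gtrsim r^2|v_0-w|^{-2}$, so that part of $E$ is directly dominated by the corresponding part of $a^*_{f,\gamma,e_1}(v_1)$; for $w$ near $\ell_1$ one again uses \eqref{eqn:doubling local} and a ``mass outside a tube'' estimate (now for balls of radius $\simeq2^k\rho$) to replace the mass of $f$ near $\ell_1$ by mass on a region where the angular factor is of order one, again dominated by $a^*_{f,\gamma,e_1}(v_1)$. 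In each case the bound comes with a factor $\simeq4^{-k}$; summing over $k$ gives $E\le C\,a^*_{f,\gamma,e_1}(v_1)$, which finishes the proof.

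The hard part will be the error term $E$ in the far piece. Because it carries no angular weight, $E$ sees the mass of $f$ concentrated near the worst line $\ell_1$, which is exactly where $a^*_{f,\gamma}(v_1)$ is degenerate; a crude estimate only returns $a_{f,\gamma}(v_1)$, which exceeds $a^*_{f,\gamma}(v_1)$ by an unbounded power of $\langle v_0\rangle$ (cf. Proposition \ref{prop:a vs a star gamma below -2}). Controlling $E$ forces one to use the local doubling of $f$ quantitatively and scale‑by‑scale, redistributing the mass near $\ell_1$ onto regions where $\langle\Pi(v_1-w)e_1,e_1\rangle\simeq1$, in such a way that the gain in scale precisely compensates the extra factor $r^2$ carried by $E$.
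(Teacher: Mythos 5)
Your overall route --- pick $e_1$ attaining the infimum at $v_1$, reduce to the single-direction comparison $\fint_{B_r(v_0)}a^*_{f,\gamma,e_1}\,dv\le C\,a^*_{f,\gamma,e_1}(v_1)$, split the convolution into a near piece on $B_\rho(v_0)$ and a far piece, and use Assumption~\ref{Assumption:Local Doubling} to move mass off the worst line --- is the same route as the paper's. The paper reaches the near/far split via the kernel-average asymptotics of Proposition~\ref{prop:kernel averages}, packaged in Proposition~\ref{prop:weight a_e is almost A1} as $\fint_{B_r(v_0)}a^*_e\,dv\le C\,a^*_e(v_1)+r^2\int_{T_{2r}(\ell_1)}f(w)\max\{2r,|v_1-w|\}^\gamma\,dw$; your pointwise bound $\langle\Pi(v-w)e_1,e_1\rangle\le2\langle\Pi(v_1-w)e_1,e_1\rangle+Cr^2|v_0-w|^{-2}$ is a clean substitute and lands at a comparable error. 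Your near-piece treatment (drop the angular factor, find a subball disjoint from $T_\rho(\ell_1)$, apply doubling a bounded number of times at scale $\simeq r<1$) is essentially the paper's argument with the set $K_e(v_0)=\{\langle\Pi(v_0-w)e,e\rangle\ge1/2\}$.

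There is, however, a real gap in your treatment of the far-piece error $E$. You propose, for $w$ in the dyadic shell $A_k=\{2^k\rho\le|v_0-w|<2^{k+1}\rho\}$ that lie near $\ell_1$, to redistribute mass onto a region where $\langle\Pi(v_1-w)e_1,e_1\rangle\simeq 1$ using \emph{balls of radius $\simeq2^k\rho$}, and to harvest a per-shell gain $\simeq4^{-k}$. Moving mass from $\mathrm{dist}(w,\ell_1)\lesssim r$ to $\mathrm{dist}(w,\ell_1)\simeq 2^k\rho$ requires comparing $f$-integrals over sets of diameter $\simeq2^k\rho$, which for $2^k\rho\ge1$ is exactly outside the reach of Assumption~\ref{Assumption:Local Doubling}; and chaining balls of radius $<1$ across that distance costs $C_D^{\Theta(2^k)}$, which destroys any $4^{-k}$. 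The workable mechanism --- the one the paper's terse ``analogous argument'' must also invoke --- moves the tube mass a distance only $\simeq r$: cover $A_k\cap T_r(\ell_1)$ by $O(2^k)$ balls of radius $\simeq r/2$ centered on $\ell_1$, translate each by $\simeq3r$ perpendicular to $\ell_1$, and apply \eqref{eqn:doubling local} a bounded number of times at radii strictly below $1$. In the translated region the angular factor is \emph{not} of order $1$ but only $\simeq r^2|v_1-w|^{-2}\simeq4^{-k}$, which exactly cancels the factor $r^2|v_0-w|^{-2}$ carried by $E$, so there is no per-shell geometric gain. Summability in $k$ comes not from $4^{-k}$ but from bounded overlap: over all $k$, the translated families tile the shell $T_{4r}(\ell_1)\setminus T_{2r}(\ell_1)$ with bounded multiplicity, whose total contribution to $a^*_{e_1}(v_1)$ is $\lesssim a^*_{e_1}(v_1)$. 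Your closing paragraph correctly identifies the far-piece error as the crux, but the stated mechanism (redistribute to $\langle\Pi\rangle\simeq1$, gain $4^{-k}$) would not survive a rigorous write-up; with the small-shift version substituted in, the proof closes and the constant depends only on $d$, $\gamma$, and $C_D$ as required.
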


\begin{proof}
  Fix $e\in\mathbb{S}^{d-1}$, $v_0 \in \mathbb{R}^d$ and $r\in (0,1)$. 
  We are first going to prove that 
  \begin{align}\label{eqn:weight a_e A1c center}
    \fint_{B_r(v_0)}a_e^*(v)\;dv\leq Ca_e^*(v_0).	  
  \end{align}	
  We define  the set 
  \begin{align*}
    S(v_0,r,e) := \{ w \in\mathbb{R}^d \mid \textnormal{dist}(v_0-w,[e]) \geq 2r\}.
  \end{align*}
  We make use of the inequality	
  \begin{align*}
    \fint_{B_r(v_0)}a^*_e(v)\;dv  & \leq Ca^*_e(v_0)+r^{2+\gamma}\int_{B_{2r}(v_0)}f(w)\;dw\\
	  & \;\;\;\;+r^2\int_{S(v_0,r,e)^c \cap B_{2r}(v_0)^c}f(w)|v_0-w|^\gamma\;dw,
  \end{align*}
  proved in Proposition \ref{prop:weight a_e is almost A1}
  
  Next, let $K_e(v_0) = \{ w\mid (\Pi(v_0-w)e,e)\geq \tfrac{1}{2} \}$, applying (repeatedly) Assumption \ref{Assumption:Local Doubling} to a cube contained in $B_{2r}(v_0) \cap K_e(v_0)$ it can be seen that
  \begin{align*}
    \int_{B_{2r}(v_0)}f(w)\;dw \leq C\int_{B_{2r}(v_0) \cap K_e(v_0) }f(w)\;dw,\;\; C = C(C_D).
  \end{align*}	  
  Then, since $2+\gamma\leq 0$, it follows that
  \begin{align*}
    r^{2+\gamma}\int_{B_{2r}(v_0)}f(w)\;dw \leq C\int_{B_{2r}(v_0)}f(w)|v_0-w|^{2+\gamma}(\Pi(v_0-w)e,e)\;dw, 	  
  \end{align*}	   
  and thus
  \begin{align*}
    r^{2+\gamma}\int_{B_{2r}(v_0)}f(w)\;dw \leq Ca_e^*(v_0),\;C=C(C_D).
  \end{align*}	   
  An analogous argument using the local doubling property also yields  
  \begin{align*}
    r^2\int_{S(v_0,r,e)^c \cap B_{2r}(v_0)^c}f(w)|v_0-w|^\gamma\;dw \leq Ca^*_e(v_0),\;\;C=C(C_D).
  \end{align*}	  
  These last two estimates prove \eqref{eqn:weight a_e A1c center}. 
  
  Taking the minimum over $e\in\mathbb{S}^{d-1}$ on both sides of \eqref{eqn:weight a_e A1c center} it is immediate that
  \begin{align*}
    \fint_{B_r(v_0)}\astar(v_0)\;dv\leq C\astar(v_0).	  
  \end{align*}
  To finish the proof, assume now that $r\in (0,1/2)$ and note that if $v_1 \in B_{r}(v_0)$ then $B_{r}(v_0) \subset B_{2r}(v_1)$, thus, we can apply the above estimate in the ball $B_{2r}(v_1)$, so that
  \begin{align*}
    \fint_{B_r(v_0)}\astar(v)\;dv	\leq 2^d \fint_{B_{2r}(v_1)}\astar(v)\;dv \leq C2^d\astar(v_1).
  \end{align*}	  
\end{proof}

The following lemma uses the estimates on $a^*_{f,\gamma}$ and the kernel averages in Proposition \ref{prop:kernel averages} to show that $a^*_{f,\gamma}$ satisfies a certain condition that will be used later in Proposition \ref{prop:LpLp via interpolation with weight}  for a weighted Sobolev inequality.
\begin{lem} \label{this_is_new_cond_Sob}
  Let $\gamma \in [-d,-2)$, $s>1$, and $m \in (0, \tfrac{d}{|2+\gamma|})$.  If Assumption \ref{Assumption:Local Doubling} holds, then for any $t\in (0,T)$, $v_0 \in \mathbb{R}^d$, and $r\in(0,1/2)$, we have 
  \begin{align*}
    \left ( \fint_{Q}{(a^*_{f,\gamma})}^m\;dv\right )^{\frac{1}{m}} \left ( \fint_{Q}(a^*_{f,\gamma})^{-s}\;dv \right )^{\frac{1}{s}} \leq C.
  \end{align*}
  Here $Q=Q_{r}(v_0)$, $C=C(d,\gamma,m,\fin,C_D)$,  where $C_D$ is the constant from Assumption \ref{Assumption:Local Doubling}.
\end{lem}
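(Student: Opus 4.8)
The plan is to combine three ingredients already established: (a) $a^*_{f,\gamma}$ is locally an $\mathcal{A}_1$ weight (Lemma \ref{lem:a star is A1 if f is doubling}), (b) the pointwise two-sided comparison $a^*_{f,\gamma}(v) \le a_{f,\gamma}(v) \le C\langle v\rangle^{\max\{-\gamma-2,2\}}a^*_{f,\gamma}(v)$ on any fixed compact set (Proposition \ref{prop:a vs a star gamma below -2}), and (c) the explicit reverse Hölder inequality for $a_{f,\gamma}$ with exponent $m \in (0,\tfrac{d}{|2+\gamma|})$ and constant $C_{m,\gamma}$ (Lemma \ref{lem:a is reverse Holder with a nice constant}). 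The claim is a product of a ``forward'' average and an ``inverse'' average of $a^*_{f,\gamma}$, which is exactly the $\mathcal{A}_{p}$-type quantity one expects to control once local $\mathcal{A}_1$-membership and a reverse Hölder bound are in hand.

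First I would handle the second factor, $\left(\fint_{Q}(a^*_{f,\gamma})^{-s}\,dv\right)^{1/s}$. Since $Q=Q_r(v_0)$ with $r<1/2$, Lemma \ref{lem:a star is A1 if f is doubling} gives $\fint_{Q}a^*_{f,\gamma}\,dv \le C a^*_{f,\gamma}(v_1)$ for all $v_1\in Q$; in particular $a^*_{f,\gamma}$ is an $\mathcal{A}_1$ weight on $Q$ with constant depending on $d,\gamma,\fin,C_D$. By Lemma \ref{lem:A_p implies reverse Holder} (applied to the $\mathcal{A}_1$ weight $a^*$ — or, more directly, to the negative power, using that $\mathcal{A}_1$ weights satisfy a reverse Hölder inequality and that $w^{-1}$ is then $\mathcal{A}_\infty$), the inverse average is comparable to the reciprocal of the average:
\begin{align*}
  \left(\fint_{Q}(a^*_{f,\gamma})^{-s}\,dv\right)^{1/s} \le C\left(\fint_{Q}a^*_{f,\gamma}\,dv\right)^{-1},
\end{align*}
provided $s$ is small enough relative to the $\mathcal{A}_1$ constant; for larger $s$ one first reduces the exponent by Hölder in the cube $Q$. (The fact that this forces a restriction of $s$ to a universal range, rather than all $s>1$, is fine: the statement only asserts existence of the bound for the given $s$, and one can always apply Jensen/Hölder in $Q$ to pass from any $s$ down to the admissible exponent, at the cost of enlarging $C$.) Multiplying by the first factor then reduces the whole claim to showing
\begin{align*}
  \left(\fint_{Q}(a^*_{f,\gamma})^{m}\,dv\right)^{1/m} \le C \fint_{Q}a^*_{f,\gamma}\,dv,
\end{align*}
i.e.\ a reverse Hölder inequality for $a^*_{f,\gamma}$ itself with exponent $m$.

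To get this last inequality I would localize: on the ball $B_1(v_0)$ the weight $\langle v\rangle^{\max\{-\gamma-2,2\}}$ is bounded above and below by universal constants (since $|v_0|$ enters only through $\langle v_0\rangle$, and on a ball of radius $<1/2$ this varies by a bounded factor), so Proposition \ref{prop:a vs a star gamma below -2} gives $c\,a_{f,\gamma}(v) \le a^*_{f,\gamma}(v) \le a_{f,\gamma}(v)$ for $v\in Q\subset B_1(v_0)$, with $c=c(\langle v_0\rangle,\ldots)$. Hmm — the issue is that $c$ depends on $\langle v_0\rangle$ and degenerates as $|v_0|\to\infty$; to avoid that, I would instead note that the $\mathcal{A}_1$ bound from Lemma \ref{lem:a star is A1 if f is doubling} \emph{already} gives a reverse Hölder inequality for $a^*_{f,\gamma}$ for \emph{some} exponent $1+\delta$, with $\delta,C$ depending only on $d,\gamma,\fin,C_D$, via Lemma \ref{lem:A_p implies reverse Holder}. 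If the target $m$ satisfies $m\le 1+\delta$ we are done directly; if $m>1+\delta$, interpolate — write $a^*\le a$ pointwise on $Q$, apply the reverse Hölder for $a$ with the good exponent $m$ (Lemma \ref{lem:a is reverse Holder with a nice constant}), and then bound $\fint_Q a\,dv$ by $\fint_Q a^*\,dv$ using \emph{again} the $\mathcal{A}_1$-doubling of $a^*$ together with the same $|v_0|$-localized comparison, now only needed for the \emph{average} (where the $\langle v_0\rangle$ factors cancel between numerator and denominator of the eventual product). The main obstacle is precisely this bookkeeping: making sure every appeal to Proposition \ref{prop:a vs a star gamma below -2} enters in a ratio so that the $\langle v\rangle^{\max\{-\gamma-2,2\}}$ growth in velocity cancels, leaving a constant genuinely independent of $v_0$; and keeping track that the exponent $m<\tfrac{d}{|2+\gamma|}$ is exactly the range for which Lemma \ref{lem:a is reverse Holder with a nice constant} is available, so no loss occurs there. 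Once those cancellations are arranged, assembling the two factors gives the stated bound with $C=C(d,\gamma,m,\fin,C_D)$.
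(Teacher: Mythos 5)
Your reduction is on the right track: the second factor is harmless, and the whole claim boils down to a reverse H\"older estimate
\begin{align*}
  \left(\fint_Q (a^*_{f,\gamma})^m\,dv\right)^{1/m}\le C\,\inf_Q a^*_{f,\gamma},
\end{align*}
with $C$ independent of $v_0$; indeed, once that is in hand the product is $\leq C$ because $\left(\fint_Q (a^*)^{-s}\,dv\right)^{1/s}\le(\inf_Q a^*)^{-1}$ pointwise, for \emph{every} $s>0$, so your worry about needing $s$ small is unfounded.

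The gap is in your plan to obtain that reverse H\"older for $a^*_{f,\gamma}$ by routing through $a_{f,\gamma}$ via Proposition \ref{prop:a vs a star gamma below -2}. That comparison is genuinely one-sided: $a^*\le a$ always, but $a\le C\langle v\rangle^{\alpha}a^*$ with $\alpha=\max\{-\gamma-2,2\}>0$. Once you write $\fint_Q (a^*)^m\le\fint_Q a^m$, apply Lemma \ref{lem:a is reverse Holder with a nice constant} to $a$, and then try to come back to $a^*$, you inevitably pick up a factor $\langle v_0\rangle^{\alpha}$ in the chain $\fint_Q a\le C\langle v_0\rangle^{\alpha}\inf_Q a^*$, and there is no reciprocal factor anywhere in the product for it to cancel against: both $\left(\fint_Q (a^*)^m\right)^{1/m}$ and $\left(\fint_Q (a^*)^{-s}\right)^{1/s}$ sit on the same side, and the one time you used Proposition \ref{prop:a vs a star gamma below -2} it went the bad way. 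The interpolation fallback does not help either: Lemma \ref{lem:A_p implies reverse Holder} only supplies some small exponent $1+\delta$, and you cannot interpolate upward from $1+\delta$ to an arbitrary $m<\tfrac{d}{|2+\gamma|}$ — reverse H\"older gets \emph{harder} as the exponent grows, not easier.

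The paper never passes through $a_{f,\gamma}$. It works directly with the directional quadratic forms $a^*_e(v)=(A_{f,\gamma}(v)e,e)$: fix $e$, apply Minkowski's inequality to $\left(\fint_Q (a^*_e)^m\,dv\right)^{1/m}$ so the kernel $K_e(v)=|v|^{2+\gamma}(\Pi(v)e,e)$ appears inside, and invoke Proposition \ref{prop:kernel averages} to compute $\left(\fint_Q K_e(v-w)^m\,dv\right)^{1/m}$. This splits $w$-space into the thin cone $S(v_1,r,e)$ around the axis $[e]$ (where the kernel average degenerates to $r^2\max\{2r,|v_1-w|\}^\gamma$) and its complement (where the average is pointwise comparable to $K_e(v_1-w)$ itself). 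The complement contributes $Ca^*_e(v_1)$. The cone contribution is exactly where Assumption \ref{Assumption:Local Doubling} enters, in the same way as in Lemma \ref{lem:a star is A1 if f is doubling}: doubling lets you trade mass near the axis for mass off the axis, which is again controlled by $a^*_e(v_1)$. Taking the infimum over $e$ then delivers $\left(\fint_Q (a^*)^m\,dv\right)^{1/m}\le Ca^*(v_1)$ for all $v_1\in Q$, with a constant depending only on $d,\gamma,m,\fin,C_D$ and no $\langle v_0\rangle$ loss. That uniform $\mathcal{A}_1$-type bound for the $m$-th power is the content your approach cannot reach, and it is the actual heart of the lemma.
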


\begin{proof}
  By Minkowski's inequality, if $K_e(v) := |v|^{2+\gamma} \left( \Pi(v)e,e\right)$ ($e\in\mathbb{S}^{d-1}$ fixed), we have
  \begin{align*}
    \left ( \fint_{Q}(a^*_{e})^m\;dv\right )^{\frac{1}{m}} \leq \int_{\mathbb{R}^d}f(w)\left ( \fint_{Q}K_e(v-w)^m\;dv\right )^{\frac{1}{m}}dw.
  \end{align*}
  From this inequality and Proposition \ref{prop:kernel averages}, we obtain for any $v_1 \in Q$,
  \begin{align*}
   \left ( \fint_{Q}(a^*_e)^m\;dv \right )^{\frac{1}{m}}& \leq C\int_{S(v_1,r,e)^c}f(w)|v_1-w|^{2+\gamma}(\Pi(v_1-w)e,e)\;dw\\
   & \;\;\;\;+Cr^2\int_{S(v_1,r,e)}f(w)\max\{2r,|v_1-w|\}^{\gamma }\;dw.
  \end{align*}
  We observe that the first term is controlled by $a^*_e(v_1)$ (cf. Proposition \ref{prop:weight a_e is almost A1}), therefore
  \begin{align*}
   \left ( \fint_{Q}(a^*_e)^m\;dv\right )^{\frac{1}{m}} & \leq Ca^*_e(v_1)+r^2\int_{S(v_1,r,e)}f(w)\max\{2r,|v_1-w|\}^\gamma\;dw,\;\;\forall\;v_1\in Q.
  \end{align*}
  Next, arguing using Assumption \ref{Assumption:Local Doubling} just as in Lemma \ref{lem:a star is A1 if f is doubling}, and taking the minimum with respect to $e$, we conclude that
  \begin{align*}
   \left ( \fint_{Q}(\astar)^m\;dv\right )^{\frac{1}{m}} & \leq C\astar(v_1),\;\;\forall\;v_1\in Q,
  \end{align*}
  where $C=C(d,\gamma,m,\fin,C_D)$. This proves the Lemma, since for any $s>1$ we have
  \begin{align*}
    \left ( \fint_{Q}(\astar)^m\;dv\right )^{\frac{1}{m}} \left ( \fint_{Q}(\astar)^{-s}\;dv \right )^{\frac{1}{s}} \leq C\left ( \inf \limits_{Q} \astar \right )\left ( \fint_{Q}(\astar)^{-s}\;dv \right )^{\frac{1}{s}}\leq C.	
  \end{align*}
\end{proof}

\subsection{Global weighted inequalities} In this section we make us of known results on Sobolev-type inequalities with weights, namely those of the form
  \begin{align}\label{eqn:weighted Sobolev}
    \left( \int_{Q}|\phi|^qw_1\;dv\right)^{1/q}\leq C\left( \int_{Q} |\nabla \phi|^p\;w_2dv\right)^{1/p},
  \end{align} 
  where $q\leq p$, $C$ is independent of $\phi$, and $\phi$ has either compact support in $Q$ or its average over $Q$ vanishes. The literature on inequalities of this form is much too vast to discuss properly here, so we shall limit ourselves to highlighting the work of Chanillo and Wheeden \cite{ChanilloWheeden1985,ChanilloWheeden1985II} and work of Sawyer and Wheeden \cite{SawWhe1992}, since they are the ones that we invoke directly here.
  
  These results say --roughly speaking-- that \eqref{eqn:weighted Sobolev} is guaranteed to hold if certain averages involving $w_1$ and $w_2$ are bounded over all cubes contained in a cube slightly larger than $Q$. For $p=2$ and $2\leq q<\infty$, $s>1$, a cube $Q$, weights $w_1$ and $w_2$, we define
\begin{align*}
  \sigma_{q,2,s}(Q,w_1,w_2) := |Q|^{\frac{1}{d}-\frac{1}{2}+\frac{1}{q}}\left (\fint_{Q}w_1^s\;dv \right )^{\frac{1}{q s}}\left (\fint_{Q}w_2^{-s}\;dv \right )^{\frac{1}{2 s}}.
\end{align*}
If $p=q=2$ and $w_1$ is in the class $\mathcal{A}_p$ for \emph{some} $p$ and $w_2$ satisfies a reverse H\"older inequality\footnote{so, $w_2$ also being a $\mathcal{A}_p$ weight would suffice, per Lemma \ref{lem:A_p implies reverse Holder}} then an alternative to $\sigma$ is (see \cite[Theorem 1.2]{ChanilloWheeden1985}),
  \begin{align*}
    \sigma_{2,2}(Q,w_1,w_2):=  C|Q|^{\frac{2}{d}} \frac{\fint_{Q}w_1\;dv}{\fint_{Q}w_2\;dv}. 
  \end{align*}
  The theorem we will be using is stated below. For its proof the reader is referred to \cite[Theorem 1]{SawWhe1992}, \cite[Theorem 1.5]{ChanilloWheeden1985II}, and \cite[Theorem 1.2]{ChanilloWheeden1985}.
\begin{thm}\label{thm:local weighted Sobolev inequalities} 
  Consider two weights $w_1$, $w_2$, a cube $Q$, $2\leq q<\infty$ and $s>1$. Then, for any Lipschitz function $\phi$ which has compact support in the interior of a cube $Q$ or is such that $\int_Q \phi\;dv = 0$, we have
  \begin{align}\label{eqn:Local Weighted Sobolev Inequality General Weights}
    \left( \int_{Q}|\phi|^qw_1\;dv\right)^{1/q}\leq   \mathcal{C}_{q,2,s}(Q,w_1,w_2)\left( \int_{Q}|\nabla \phi|^2\; w_2dv\right)^{1/2},
  \end{align} 
  where for some constant $C(d,s,q)$,
  \begin{align*}
    \mathcal{C}_{q,2,s}(Q,w_1,w_2) := C(d,s,q)\sup \limits_{Q' \subset 8Q} \sigma_{q,2,s}(Q',w_1,w_2).
  \end{align*}
  
\end{thm}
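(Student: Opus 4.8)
The statement is classical — the paper itself refers to \cite{SawWhe1992,ChanilloWheeden1985II,ChanilloWheeden1985} — but here is how I would organize a proof. The plan is to reduce \eqref{eqn:Local Weighted Sobolev Inequality General Weights} to a two–weight bound for a localized Riesz potential of order one, and then to establish that bound by a dyadic decomposition together with a stopping-time argument.

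First I would record a representation formula. For Lipschitz $\phi$ which is either compactly supported in the interior of $Q$ or satisfies $\int_Q\phi\,dv=0$, one has the pointwise bound
\begin{align*}
  |\phi(v)| \le C(d)\int_{Q}\frac{|\nabla\phi(w)|}{|v-w|^{d-1}}\;dw =: C(d)\,I_1^Q(|\nabla\phi|)(v),\qquad v\in Q.
\end{align*}
In the compactly supported case this comes from representing $\phi$ against the gradient of the Newtonian potential and using $|\nabla E(v-w)|\lesssim|v-w|^{1-d}$; in the mean-zero case it is the Poincar\'e representation $\phi(v)-\fint_Q\phi = \int_Q K(v,w)\cdot\nabla\phi(w)\,dw$ with $|K(v,w)|\lesssim|v-w|^{1-d}$ on $Q$, together with $\fint_Q\phi=0$. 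Hence it suffices to show, for $g:=|\nabla\phi|\,\chi_Q\ge 0$,
\begin{align*}
  \left(\int_Q (I_1^Q g)^q\,w_1\;dv\right)^{1/q}\le C(d,s,q)\Big(\sup_{Q'\subset 8Q}\sigma_{q,2,s}(Q',w_1,w_2)\Big)\left(\int_Q g^2 w_2\;dv\right)^{1/2}.
\end{align*}

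Next I would pass to a dyadic model. Using that $|v-w|\simeq\ell(R)$ when $R$ is the smallest dyadic cube (of a fixed grid) containing both $v$ and $w$, and summing over finitely many shifted grids so that every cube that appears is comparable to a subcube of $8Q$, one obtains the pointwise domination $I_1^Q g(v)\le C(d)\sum_{R\in\mathcal{D}(Q)}\ell(R)\big(\fint_R g\,dv\big)\chi_R(v)$, reducing the problem to an $L^2(w_2)\to L^q(w_1)$ bound for this positive dyadic operator. To prove that bound I would run a Calder\'on--Zygmund stopping-time on the dyadic cubes relative to the averages of $g\,w_2^{1/2}$, group the cubes under their principal cubes, and on each block apply Cauchy--Schwarz in the $w_2$-average and H\"older with exponent $s$ to extract the averages $(\fint_R w_1^s)^{1/(qs)}$ and $(\fint_R w_2^{-s})^{1/(2s)}$; the scale $\ell(R)=|R|^{1/d}$ together with the powers of $|R|$ produced by Cauchy--Schwarz and by the $L^q(w_1)$-norm on each cube combine into the $|R|^{1/d-1/2+1/q}$ factor of $\sigma_{q,2,s}$. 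The geometric decay from the stopping-time (here one uses $q\ge 2$) makes the sum over dyadic generations converge, bounded by $\sup_{Q'\subset 8Q}\sigma_{q,2,s}(Q',w_1,w_2)$. Equivalently, finiteness of this supremum is exactly what verifies the testing hypotheses of \cite[Theorem 1]{SawWhe1992} and \cite[Theorem 1.5]{ChanilloWheeden1985II}, and the conclusion follows from there.

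For the variant using $\sigma_{2,2}(Q,w_1,w_2)=C|Q|^{2/d}\,\frac{\fint_Q w_1\,dv}{\fint_Q w_2\,dv}$ when $w_1\in\Ainfty$ and $w_2$ satisfies a reverse H\"older inequality, one invokes the reverse H\"older inequalities (Lemma \ref{lem:A_p implies reverse Holder}) for $w_1$ and for the relevant power of $w_2$ to replace the $L^s$ averages appearing in $\sigma_{q,2,s}$ by ordinary $L^1$ averages, which is \cite[Theorem 1.2]{ChanilloWheeden1985}; the dyadic summation then applies verbatim. The main obstacle throughout is this summation step: one must arrange the dyadic decomposition so that only subcubes of a fixed dilate of $Q$ enter, and so that the sum over dyadic scales is controlled by a \emph{plain supremum} of $\sigma$ rather than by a stronger joint (for instance $A_2$-type) condition on the pair $(w_1,w_2)$. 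Balancing these two requirements is precisely the content of the Sawyer--Wheeden stopping-time argument with the $L^s$ ``bump'', and it is the delicate part of the proof.
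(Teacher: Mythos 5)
The paper itself does not prove this theorem: immediately before the statement it writes ``For its proof the reader is referred to \cite[Theorem 1]{SawWhe1992}, \cite[Theorem 1.5]{ChanilloWheeden1985II}, and \cite[Theorem 1.2]{ChanilloWheeden1985}.'' Your sketch — local subrepresentation of $\phi$ by $I_1^Q(|\nabla\phi|)$, dyadic discretization of the localized Riesz potential, and a Sawyer--Wheeden-style principal-cube/stopping-time summation using the $L^s$ power bump (with the $\sigma_{2,2}$ variant delegated to Chanillo--Wheeden) — is a correct account of exactly the machinery those references deploy, so you have reconstructed the implicit proof rather than diverged from it.
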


In the next lemma we make use of Theorem \ref{thm:local weighted Sobolev inequalities} to obtain an inequality over $\mathbb{R}^d$. 
\begin{lem}\label{lem:Global weighted Sobolev from local ones}
  For $\ell\in(0,1)$, $q\geq 2$ and some $s>1$, define
  \begin{align*}
    \mu(\ell) := \sup \{   \mathcal{C}_{q,2,s}(Q,w_1,w_2) \;:\;	Q \textnormal{ cube of side length } \leq \ell \} .	  \end{align*}	  
  Then, there is a $C=C(q)$ such that any compactly supported Lipschitz $\phi$ satisfies 
  \begin{align*}
    \left ( \int_{\mathbb{R}^d}|\phi|^q w_1\;dv\right )^{\frac{1}{q}} \leq C(q)\mu(8\ell)\left ( \int_{\mathbb{R}^d}  |\nabla \phi|^2\; w_2 dv \right )^{\frac{1}{2}}+C(q)\left (\int_{\mathbb{R}^d}E_\ell(w_1)|\phi|^2\;dv \right )^{\frac{1}{2}}.
  \end{align*}
  Here, $E_{\ell}(w_1)$ denotes the function
  \begin{align*}
    E_\ell(w_1) = \sum \limits_{Q\in\mathcal{Q}_\ell} \frac{1}{|Q|}\left ( \int_{Q}w_1\;dv \right )^{\frac{2}{q}}\chi_{Q},  
  \end{align*}
  and $\mathcal{Q}_{\ell}$ denotes the class of cubes having the form $[0,\ell]^d+ k\ell$ with $k\in \mathbb{Z}^d$.
\end{lem}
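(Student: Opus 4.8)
The plan is to deduce the global inequality from the local weighted Sobolev inequality in Theorem \ref{thm:local weighted Sobolev inequalities} by a partition-of-unity argument over the grid $\mathcal{Q}_\ell$, carefully tracking the overlap of the dilated cubes $8Q$. First I would fix a compactly supported Lipschitz $\phi$ and write $\phi = \sum_{Q\in\mathcal{Q}_\ell}\phi\,\psi_Q$, where $\{\psi_Q\}$ is a partition of unity subordinate to a covering of $\mathbb{R}^d$ by the cubes in $\mathcal{Q}_\ell$ (possibly slightly enlarged so that the supports overlap), normalized so that $\sum_Q \psi_Q \equiv 1$ and $|\nabla \psi_Q| \lesssim \ell^{-1}$. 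The crucial structural point is that $\mathcal{Q}_\ell = [0,\ell]^d + \ell\mathbb{Z}^d$ is a tiling with bounded overlap of the supports once we fatten the cubes by a fixed factor, so only a dimensional constant number of the $\psi_Q$ are nonzero at any point.

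Next I would estimate $\left(\int_{\mathbb{R}^d}|\phi|^q w_1\,dv\right)^{1/q}$ by breaking the integral over each cube $Q$ and using that on $Q$ only finitely many $\psi_{Q'}$ are supported, so $|\phi|^q \lesssim \sum_{Q' \sim Q}|\phi\psi_{Q'}|^q$; by the bounded-overlap property and the embedding $\ell^q \hookrightarrow \ell^2$ (or simply summing, since $q\ge 2$ and one can pass through an $\ell^2$ estimate) this reduces matters to bounding $\sum_{Q}\left(\int_{\mathbb{R}^d}|\phi\psi_Q|^q w_1\,dv\right)^{2/q}$ raised to the power $1/2$. For each fixed $Q$, the function $\phi\psi_Q$ is Lipschitz and compactly supported in (a fixed dilate of) $Q$, so Theorem \ref{thm:local weighted Sobolev inequalities} applies — after rescaling $Q$ to its dilate, which only changes constants — giving
\begin{align*}
  \left(\int |\phi\psi_Q|^q w_1\,dv\right)^{1/q} \leq \mathcal{C}_{q,2,s}(cQ,w_1,w_2)\left(\int_{cQ} w_2|\nabla(\phi\psi_Q)|^2\,dv\right)^{1/2},
\end{align*}
and $\mathcal{C}_{q,2,s}(cQ,w_1,w_2)\le \mu(8\ell)$ by definition of $\mu$, since $cQ$ has side length comparable to $\ell$ (the $8$ absorbs the dilation and the $8Q$ appearing inside $\mathcal{C}_{q,2,s}$).

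Then I would expand $\nabla(\phi\psi_Q) = \psi_Q\nabla\phi + \phi\nabla\psi_Q$, so that $w_2|\nabla(\phi\psi_Q)|^2 \lesssim w_2\psi_Q^2|\nabla\phi|^2 + w_2\phi^2|\nabla\psi_Q|^2$. Summing the first piece over $Q$ and using bounded overlap and $\sum\psi_Q^2\le 1$ gives $\lesssim \int_{\mathbb{R}^d} w_2|\nabla\phi|^2\,dv$, which yields the first term $C(q)\mu(8\ell)(\int w_2|\nabla\phi|^2)^{1/2}$. The second piece is where the $E_\ell(w_1)$ term must be produced: here I would \emph{not} leave a $w_2$ weight but rather recognize that the product $\mu(8\ell)^2 \cdot (\ell^{-2}\int_{cQ} w_2\phi^2)$ should, after using $\sigma_{q,2,s}$ and the definition of $\mathcal{C}_{q,2,s}$, collapse into something controlled by $\frac{1}{|Q|}(\int_Q w_1)^{2/q}$ times $\int_Q \phi^2$ — indeed $\sigma_{q,2,s}(Q,w_1,w_2)^2\cdot \ell^{-2}$ carries a factor $\ell^{2/d - 1 + 2/q}\cdot(\fint w_1^s)^{2/(qs)}(\fint w_2^{-s})^{1/s}\cdot \ell^{-2}$ and the $w_2$ factors cancel against the $w_2$ inside the gradient term by Hölder, leaving a power of $|Q|$ and $(\fint_Q w_1^s)^{\cdots}$, which a reverse-Hölder step turns into $(\fint_Q w_1)^{2/q}$, matching $E_\ell(w_1)$ on $Q$. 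Summing over $Q$ and using bounded overlap once more gives $C(q)\left(\int_{\mathbb{R}^d}E_\ell(w_1)\phi^2\,dv\right)^{1/2}$.

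The main obstacle I expect is precisely this last bookkeeping step: making the cancellation between the $w_2$-weight appearing in the local Sobolev constant $\mathcal{C}_{q,2,s}$ and the $w_2$-weight multiplying $\phi^2|\nabla\psi_Q|^2$ completely explicit, so that the cut-off error term is $w_2$-free and reduces cleanly to the claimed $E_\ell(w_1)$; one has to be careful that the exponents in $\sigma_{q,2,s}$ (the $1/d - 1/2 + 1/q$ scaling power and the $s$-averages) combine with the $\ell^{-2}$ from $|\nabla\psi_Q|^2$ to exactly the $|Q|^{-1}(\int_Q w_1)^{2/q}$ appearing in $E_\ell$, and that the reverse Hölder inequality (available since $w_1\in\mathcal{A}_p$ for some $p$, via Lemma \ref{lem:A_p implies reverse Holder}) is invoked with a compatible exponent. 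A secondary technical point is handling the boundary cubes of $\mathrm{supp}(\phi)$ and verifying the partition of unity can be chosen with the stated gradient bound on the half-open tiling $[0,\ell]^d+\ell\mathbb{Z}^d$; both are routine but must be stated.
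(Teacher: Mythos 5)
Your proposal takes a genuinely different route from the paper, but the route does not close. The paper's proof exploits the fact that Theorem~\ref{thm:local weighted Sobolev inequalities} applies \emph{either} to compactly supported functions \emph{or} to functions with vanishing mean on $Q$. The paper decomposes each $\int_Q|\phi|^qw_1\,dv$ by Young's inequality into $|\phi-(\phi)_Q|^q$ plus $|(\phi)_Q|^q$, applies the mean-zero version of the local inequality to the first piece, and handles the second piece by Jensen's inequality
\begin{align*}
  |(\phi)_Q|^q\int_Q w_1\,dv \le \left(\fint_Q\phi^2\,dv\right)^{q/2}\int_Q w_1\,dv,
\end{align*}
which after summation in $Q$ (using $q/2\ge 1$ so that $\sum x_k^{q/2}\le(\sum x_k)^{q/2}$) is exactly $\left(\int E_\ell(w_1)\phi^2\right)^{q/2}$. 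No cut-off gradients ever appear, so no $w_2$ dependence enters the ``error'' term.

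Your approach, by contrast, localizes $\phi$ via a partition of unity and applies the compactly-supported version of the local Sobolev inequality to $\phi\psi_Q$. This unavoidably produces the cut-off error $\int_{cQ}w_2\phi^2|\nabla\psi_Q|^2\,dv$, and the step you flag as the main obstacle --- cancelling the $w_2$ in this integral against the $(\fint w_2^{-s})^{1/s}$ inside $\mathcal{C}_{q,2,s}$ --- does not go through. After the dimensional bookkeeping (which you do correctly: the $\ell^{-2}$ from $|\nabla\psi_Q|^2$ cancels $|Q|^{2/d}$ from $\sigma_{q,2,s}^2$), you are left needing
\begin{align*}
  \left(\fint_Q w_2^{-s}\,dv\right)^{1/s}\int_Q w_2\,\phi^2\,dv \lesssim \int_Q\phi^2\,dv,
\end{align*}
which is false for general $\phi$ and $w_2$. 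The $\mathcal{A}_p$ condition on $w_2$ only gives $\left(\fint w_2^{-s}\right)^{1/s}\fint w_2\lesssim 1$, i.e.\ the case $\phi\equiv\mathrm{const}$; for non-constant $\phi$ the weight $w_2$ can be large precisely where $\phi$ is concentrated, and H\"older in the direction you indicate either reintroduces a power of $w_2$ or trades $\phi^2$ for a higher power of $\phi$. So the cut-off error cannot be made $w_2$-free by that mechanism, and the claimed reduction to $\frac{1}{|Q|}(\int_Q w_1)^{2/q}\int_Q\phi^2$ is unsubstantiated. The mean-zero decomposition is precisely what sidesteps this difficulty, and this is the key idea your blind attempt is missing.
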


\begin{proof}
  The idea is to decompose the integral over $\mathbb{R}^d$ as integrals over the family of cubes $\mathcal{Q}_{\ell}$, which are non overlapping and cover all of $\mathbb{R}^d$. Then, we apply the weighted Sobolev inequality from Theorem \ref{thm:local weighted Sobolev inequalities} in each cube --making sure to control the extra term involving the average of the function over each cube. 
  
  Denote by $(\phi)_{Q}$ the average of $\phi$ over $Q$. Young's inequality yields
  \begin{align*}	
    \int_{\mathbb{R}^d}|\phi|^qw_1\;dv & = \sum \limits_{Q\in\mathcal{Q}_\ell} \int_{Q}|\phi|^qw_1\;dv  = \sum \limits_{n} \int_{Q}|\phi-(\phi)_{Q}+(\phi)_{Q} |^qw_1\;dv \\
	  & \leq C(q)\sum \limits_{Q\in\mathcal{Q}_{\ell}} \int_{Q}|\phi-(\phi)_{Q}|^qw_1\;dv+C(q)\sum \limits_{Q\in\mathcal{Q}_\ell} |(\phi)_{Q}|^q \int_{Q}w_1\;dv. 	
  \end{align*}
  On the other hand, for each $Q\in\mathcal{Q}_\ell$ we apply Theorem \ref{thm:local weighted Sobolev inequalities}, and get 
  \begin{align*}
    \int_{Q}|\phi-(\phi)_{Q}|^qw_1\;dv & \leq \mathcal{C}_{q,2,s}(Q,w_1,w_2)^q\left ( \int_{Q} |\nabla \phi|^2\;w_2 dv \right )^{\frac{q}{2}}\\
	& \leq \mu(8\ell)^q \left ( \int_{Q} |\nabla \phi|^2\;w_2 dv \right )^{\frac{q}{2}}.
  \end{align*}	
  Since $q\geq 2$,  
  \begin{align*}
    \sum \limits_{Q\in\mathcal{Q}_\ell}\left ( \int_{Q} |\nabla \phi|^2 w_2\;dv \right )^{\frac{q}{2}} & \leq C \left (\sum\limits_{Q\in\mathcal{Q}_\ell} \int_{Q}|\nabla \phi|^2\;w_2 dv \right )^{\frac{q}{2}} = C \left (\int_{\mathbb{R}^d}|\nabla \phi|^2\;w_2 dv \right )^{\frac{q}{2}}.	  
  \end{align*}	
  It follows that
  \begin{align*}
    \int_{\mathbb{R}^d}|\phi|^qw_1\;dv\leq C(q)\mu(8\ell)^q\left (\int_{\mathbb{R}^d}|\nabla \phi|^2\;w_2 dv \right )^{\frac{q}{2}}+C\sum \limits_{Q\in\mathcal{Q}_\ell} |(\phi)_{Q}|^q \int_{Q}w_1\;dv .	  	  
  \end{align*}	  
  To deal with the second term, we apply Jensen's inequality
  \begin{align*}  
|(\phi)_{Q}|^q   \int_{Q}w_1\;dv  \leq \left( \frac{1}{|Q|}\int_{Q}|\phi|^2\;dv \right )^{\frac{q}{2}}\int_{Q}w_1\;dv ,\;\;\forall\;Q\in\mathcal{Q}_\ell.
  \end{align*}
  Adding all these inequalities, it follows that 
  \begin{align*}  
    \sum \limits_{Q\in\mathcal{Q}_\ell} \int_{Q}w_1\;dv |(\phi)_{Q}|^q \leq \left ( \int_{\mathbb{R}^d}E_{\ell}(w_1)|\phi|^2\;dv \right )^{\frac{q}{2}}.
  \end{align*}
  In conclusion,
  \begin{align*}	
    \int_{\mathbb{R}^d}|\phi|^qw_1\;dv \leq C \mu(8\ell)^q \left (\int_{\mathbb{R}^d}|\nabla \phi|^2\;w_2 dv \right )^{\frac{q}{2}}+C\left ( \int_{\mathbb{R}^d}E_{\ell}(w_1)|\phi|^2\;dv \right )^{\frac{q}{2}},
  \end{align*}  	
  raising both sides to the power $1/q$, the Lemma follows.
\end{proof}

\section{The $\varepsilon$-Poincar\'e inequality}\label{section:epsilon Poincare}  We are now ready to prove Theorem  \ref{thm:sufficient conditions for the Poincare inequality}. For simplicity, throughout this section we drop the subindices in $h_{f,\gamma}$ and $\astar_{f,\gamma}$.

\begin{proof}[Proof of Theorem  \ref{thm:sufficient conditions for the Poincare inequality}] We consider the cases $\gamma \in (-2,0]$ and $\gamma \in [-d,-2]$ separately.
	
\emph{Case $\gamma \in (-2,0]$}:  Proposition \ref{new_pro_global_space} says that for any $Q$ with sides less or equal $1$ we have
  \begin{align*}
  \sigma_{2,2}(Q,h,a^*) \leq C(d,\gamma,\fin)|Q|^{\frac{2+\gamma}{d}},
  \end{align*}
  and therefore  $\mu(8\ell) \leq  C(d,\gamma,\fin)\ell^{2+\gamma}$ for $\ell \in (0,1/8)$.
  
  Then, applying Lemma \ref{lem:Global weighted Sobolev from local ones} with $w_1 = h$ and $w_2 = \astar$,
  \begin{align*}
    \int_{\mathbb{R}^d}|\phi|^2 h\;dv \leq C(d,\gamma,\fin)\ell^{2+\gamma}\int_{\mathbb{R}^d} \astar|\nabla \phi|^2\;dv+C(2)\int_{\mathbb{R}^d}E_\ell(h)|\phi|^2\;dv. 
  \end{align*}
  We may use the estimate \eqref{hwithr=1} to bound $E_{\ell}(h)$, indeed, for any $Q\in \mathcal{Q}_\ell$ we have
  \begin{align*}
    \fint_{Q} h\;dv &\leq C(d,\gamma,\fin)\ell^\gamma \langle v_Q\rangle^\gamma ,
  \end{align*}
  where $v_Q$ denotes the center of $Q$. Since each $Q$ has sides $\leq 1$, it follows that $\langle v\rangle \approx \langle v_Q\rangle$ for all $v\in Q$, therefore
  \begin{align*}
    E_{\ell}(h) \leq C\ell^\gamma\langle v\rangle^\gamma	  .
  \end{align*}	  
  Taking $\ell = (\min\{C(d,\gamma,\fin)^{-1},2^{-|2+\gamma|}\}\varepsilon )^{\frac{1}{2+\gamma}}$ for $\varepsilon \in(0,1)$, it follows that for any $\phi$ with compact support we have
  \begin{align*}	
    \int_{\mathbb{R}^d} \phi^2 h\;dv \leq \varepsilon \int_{\mathbb{R}^d}a^*|\nabla \phi|^2\;dv + C\varepsilon^{\frac{\gamma}{2+\gamma}}\int_{\mathbb{R}^d} \phi^2\langle v\rangle^\gamma\;dv,\;\;C=C(d,\gamma,\fin).
  \end{align*}	
  \noindent \emph{Case $\gamma \in [-d,-2]:$} As before we apply Lemma \ref{lem:Global weighted Sobolev from local ones}. This time, however, for $\varepsilon \in (0,1)$ we let $\ell = \ell_\mu(\varepsilon)$ be given as the largest number in $(0,1/10]$ such that $C(2)\mu(8 \ell) \leq \varepsilon$. Observe that 
  \begin{align*}
    \int_{Q}h\;dv & \leq C(d,\gamma)\int_{\mathbb{R}^d}  f(w) \int_{Q}|v-w|^{\gamma} \; dvdw \leq  C(d,\gamma)|Q |^{\frac{d+\gamma}{d}}.
  \end{align*} 
  The above inequality allows us to bound $E_{\ell_\mu(\varepsilon)}(h)$ in this case, since
  \begin{align*}
    \fint_{Q} h\;dv\leq C(d,\gamma,\fin)\ell_\mu(\varepsilon)^\gamma,\;\;\forall\;Q\in\mathcal{Q}_{\ell_\mu(\varepsilon)}.
  \end{align*}  
  Then, back to Lemma \ref{lem:Global weighted Sobolev from local ones}, we conclude that for any compactly supported, Lipschitz $\phi$,
  \begin{align*}	
    \int_{\mathbb{R}^d} \phi^2 h\;dv \leq  \varepsilon\int_{\mathbb{R}^d}a|\nabla \phi|^2\;dv + C(d,\gamma,\fin)\ell_\mu(\varepsilon)^\gamma\int_{\mathbb{R}^d} \phi^2\;dv.	
  \end{align*}	
      
\end{proof}

  \subsection{Space-time inequalities}\label{Space time inequalities}

  \begin{lem}\label{lem:Sobolev weight astar astar to the m}
    Let $\gamma \in [-d,-2)$. If Assumption \ref{Assumption:Local Doubling} holds, then there is a $C=C(d,\gamma,\fin,C_D)$  such that for any Lipschitz $\phi$ with compact support we have
    \begin{align*}
      \left ( \int_{\mathbb{R}^d}  \phi^{2m}(\astar_{f,\gamma})^m \;dv\right )^{\frac{1}{m}}\leq C\int_{\mathbb{R}^d} a_{f,\gamma}^*|\nabla \phi|^2\;dv+C\int_{\mathbb{R}^d}\phi^2\;dv,		
    \end{align*}	        
    Here, $m = \frac{d}{d-2}$ if $\gamma>-d$, otherwise, $m$ can be chosen to be any $m<\frac{d}{d-2}$ when $\gamma=-d$.
  \end{lem}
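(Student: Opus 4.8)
The plan is to read the inequality off the machinery already assembled. I would apply the global weighted Sobolev inequality of Lemma~\ref{lem:Global weighted Sobolev from local ones} with the weights $w_1=(\astar_{f,\gamma})^m$, $w_2=\astar_{f,\gamma}$ and exponent $q=2m$, and then verify that, for a conveniently fixed small $\ell$, the two quantities appearing on the right-hand side of that lemma --- the multiplier $\mu(8\ell)$ and the error function $E_\ell(w_1)$ --- are each controlled by a universal constant. The key input throughout is Lemma~\ref{this_is_new_cond_Sob}, which is precisely where Assumption~\ref{Assumption:Local Doubling} enters. To set up parameters: fix $m$ as in the statement, so that $1\le m\le \tfrac{d}{d-2}$ and hence $q=2m\ge 2$ as required by Theorem~\ref{thm:local weighted Sobolev inequalities}. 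In every case $m|2+\gamma|<d$ --- this being exactly why one must take $m$ strictly below $\tfrac{d}{d-2}=\tfrac{d}{|2+\gamma|}$ when $\gamma=-d$ --- so one may fix $s=s(d,\gamma,m)>1$ with $ms<\tfrac{d}{|2+\gamma|}$. Finally put $\alpha:=\tfrac1d-\tfrac12+\tfrac1{2m}$, and note $\alpha\ge 0$, with $\alpha=0$ exactly when $m=\tfrac{d}{d-2}$.

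For the multiplier: given a cube $Q'$ of side $<\tfrac12$, by definition $\sigma_{2m,2,s}(Q',w_1,w_2)=|Q'|^{\alpha}\bigl(\fint_{Q'}(\astar_{f,\gamma})^{ms}\,dv\bigr)^{\frac1{2ms}}\bigl(\fint_{Q'}(\astar_{f,\gamma})^{-s}\,dv\bigr)^{\frac1{2s}}$. Applying Lemma~\ref{this_is_new_cond_Sob} with the exponent $ms$ in place of $m$ (legitimate since $ms<d/|2+\gamma|$) and taking a square root bounds the product of the two averages by $C(d,\gamma,m,\fin,C_D)$, so $\sigma_{2m,2,s}(Q',w_1,w_2)\le C|Q'|^{\alpha}$. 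Since $\alpha\ge 0$ and all cubes involved have side $\le 1$, fixing $\ell$ to be a sufficiently small universal constant (small enough that every cube entering Lemma~\ref{lem:Global weighted Sobolev from local ones} has side $<\tfrac12$, e.g.\ $\ell=2^{-8}$) yields $\mu(8\ell)\le C(d,\gamma,m,\fin,C_D)$.

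For the error term: for $Q\in\mathcal{Q}_\ell$ the coefficient of $\chi_Q$ in $E_\ell(w_1)$ equals $\ell^{\,d/m-d}\bigl(\fint_Q(\astar_{f,\gamma})^m\,dv\bigr)^{1/m}$, so it suffices to bound $\bigl(\fint_Q(\astar_{f,\gamma})^m\,dv\bigr)^{1/m}$ by a constant. I would chain three estimates. First, Lemma~\ref{this_is_new_cond_Sob} with the exponent $m$ itself gives $\bigl(\fint_Q(\astar_{f,\gamma})^m\bigr)^{1/m}\le C\bigl(\fint_Q(\astar_{f,\gamma})^{-s}\bigr)^{-1/s}$. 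Second, the power-mean inequality and the harmonic--arithmetic mean inequality give $\bigl(\fint_Q(\astar_{f,\gamma})^{-s}\bigr)^{-1/s}\le\fint_Q\astar_{f,\gamma}$. Third, $\astar_{f,\gamma}\le a_{f,\gamma}$ together with Lemma~\ref{lem:averages for powers of v} (using $2+\gamma<0$ and $\int_{\mathbb{R}^d}f=1$) gives $\fint_Q\astar_{f,\gamma}\le\fint_Q a_{f,\gamma}\le C\ell^{2+\gamma}$. Since $\ell$ is now fixed, this shows $E_\ell(w_1)\le C(d,\gamma,m,\fin,C_D)$ pointwise, hence $\int_{\mathbb{R}^d}E_\ell(w_1)|\phi|^2\,dv\le C\int_{\mathbb{R}^d}|\phi|^2\,dv$. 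Feeding these two bounds into Lemma~\ref{lem:Global weighted Sobolev from local ones} (with $q=2m$, $w_1=(\astar_{f,\gamma})^m$, $w_2=\astar_{f,\gamma}$) and squaring both sides produces the claimed inequality.

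The step I expect to be the main obstacle is the control of $E_\ell(w_1)$. It is essential that this error term be bounded by a \emph{universal} constant rather than by a function growing in $v$, and the only route I see to a genuinely uniform bound is the doubling-driven Lemma~\ref{this_is_new_cond_Sob}, which lets one convert averages of positive powers of $\astar_{f,\gamma}$ into averages of $\astar_{f,\gamma}$ itself, after which the crude but global estimate $\astar_{f,\gamma}\le a_{f,\gamma}$ together with $\int f=1$ closes the argument. A second delicate point is the borderline exponent $m=\tfrac{d}{d-2}$ arising for $\gamma\in(-d,-2)$: there $\alpha=0$, so $\mu(8\ell)$ gains no smallness as $\ell\to0$ and must be controlled uniformly in the cube size, which is exactly what the bound $\sigma_{2m,2,s}\le C|Q'|^{\alpha}$ with $\alpha=0$ delivers; it is this same borderline that forces the restriction $m<\tfrac{d}{d-2}$ in the Coulomb case $\gamma=-d$, so that $ms<d/|2+\gamma|$ can still be arranged with $s>1$.
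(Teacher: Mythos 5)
Your argument is correct and essentially identical to the paper's: both apply Lemma~\ref{lem:Global weighted Sobolev from local ones} with $w_1=(\astar_{f,\gamma})^m$, $w_2=\astar_{f,\gamma}$, $q=2m$, control the local Sobolev constant via Lemma~\ref{this_is_new_cond_Sob} applied with exponent $ms$, and bound $E_\ell$ by combining the reverse-H\"older type control of $(\fint_Q(\astar)^m)^{1/m}$ from Lemma~\ref{this_is_new_cond_Sob} with the pointwise inequality $\astar\le a$ and the $L^1$-normalization of $f$. The only cosmetic difference is that you make the Jensen/harmonic-mean steps explicit before invoking $\astar\le a$, whereas the paper quotes the reverse-H\"older bound $(\fint_Q(\astar)^m)^{1/m}\le C\fint_Q\astar$ directly.
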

  
  \begin{proof}
  Let us take first $\gamma \in (-d,-2)$. Then, let $m =d/(d-2)$. Such $m$ solves
  \begin{align}\label{eqn:exponent relation for d over d-2}
    \frac{1}{d}-\frac{1}{2}+\frac{1}{2m} = 0.	  
  \end{align}	
  With this $m$, and with $\gamma>-d$ there is always some $s=s(d,\gamma)>1$ such that $m s |\gamma+2|<d$. Then Lemma \ref{this_is_new_cond_Sob} with $q=ms$, $w_1 = {a^*}^{ms}$, and $w_2 = (a^*)^{-s}$ yields 
 \begin{align*}
  \left( \fint_Q (a_*)^{m s}\;dv \right)^{\frac{1}{ms}}\left( \fint_Q (a_*)^{-s}\;dv\right)^{\frac{1}{s}} \leq C(d,\gamma,\fin,C_D)
 \end{align*}
  for all $Q$ with side length smaller than $1/2$. Taking the square root of both sides, and using \eqref{eqn:exponent relation for d over d-2}, we conclude that for some $C=C(d,\gamma,\fin,C_D)$
  \begin{align*}
   |Q|^{\frac{1}{d}-\frac{1}{2}+\frac{1}{2m}}\left( \fint_Q (a_*)^{ms}\;dv \right)^{\frac{1}{2ms}}\left( \fint_Q (a_*)^{-s}\;dv\right)^{\frac{1}{2s}} \leq C.
  \end{align*}
  This again being for all cubes $Q$ with side length smaller than $1/2$. Then, we apply the covering argument of Lemma \ref{lem:Global weighted Sobolev from local ones} with $\ell= 1/16$, and obtain
  \begin{align*}
    \left ( \int_{\mathbb{R}^d}|\phi|^{2m}(\astar)^m\;dv\right )^{\frac{1}{m}} \leq C\int_{\mathbb{R}^d}\astar |\nabla \phi|^2\;dv+C\int_{\mathbb{R}^d}E_{\frac{1}{16}((\astar)^m)}|\phi|^2\;dv.
  \end{align*}
  It remains to bound $E_{\frac{1}{16}((\astar)^m)}$, for which it suffices to bound $\frac{1}{|Q|}\left (\int_{Q}(\astar)^m\;dv \right )^{\frac{1}{m}}$ for any cube $Q$ with side length $\leq 1/2$. On the other hand, for any cube $Q$ with side length less than $1$ we have
  \begin{align*}
    \frac{1}{|Q|}\left (\int_{Q}(\astar)^m\;dv \right )^{\frac{1}{m}} & = \frac{1}{|Q|^{1-\frac{1}{m}}}\left (\fint_{Q}(\astar)^m\;dv \right )^{\frac{1}{m}} \leq C\frac{1}{|Q|^{1-\frac{1}{m}}}\fint_{Q}\astar\;dv.
  \end{align*}
  In particular, if $Q$ has side length exactly equal to $1/16$, then
  \begin{align*}
    \frac{1}{|Q|}\left (\int_{Q}(\astar)^m\;dv \right )^{\frac{1}{m}} \leq C(d,\gamma,\fin,C_D).
  \end{align*}
  This implies that $\|E_\ell((\astar)^m)\|_{L^\infty(\mathbb{R}^d)}\leq C$, which finishes the proof for $\gamma \in (-d,-2)$. Now, for the case $\gamma=-d$, we let $m$ be any number in $(1,\frac{d}{d-2})$. In this case, we have
  \begin{align*}
    \frac{1}{d}-\frac{1}{2}+\frac{1}{2m} \geq 0.	  
  \end{align*}	
  and, we also have that there is some $s=s(d,m)>1$ such that $sm<\frac{d}{d-2}$. Then, in light of the above inequality involving $m$, we have for all cubes of side length $\leq 1$,
  \begin{align*}
   |Q|^{\frac{1}{d}-\frac{1}{2}+\frac{1}{2m}}\left( \fint_Q (a_*)^{ms}\;dv \right)^{\frac{1}{2ms}}\left( \fint_Q (a_*)^{-s}\;dv\right)^{\frac{1}{2s}} \leq C.
  \end{align*}
  From here on, we apply Lemma \ref{lem:Global weighted Sobolev from local ones} with $\ell=1/16$ and bound $\| E_{1/16}(\astar)^m \|_{L^\infty(\mathbb{R}^d)}$ just as before, finishing the proof.
  
  \end{proof}

  \begin{rem}\label{rem:Sobolev weight astar to the m case gamma bigger -2}
  For $\gamma\in [-2,0]$, without requiring any doubling assumption on $f$, we always have the inequality
  \begin{align*}
    \left ( \int_{\mathbb{R}^d}  \phi^{2m}(\astar_{f,\gamma})^m \;dv\right )^{\frac{1}{m}}\leq C\int_{\mathbb{R}^d} a_{f,\gamma}^*|\nabla \phi|^2\;dv+C\int_{\mathbb{R}^d}|\phi|^2\;dv,\;\; m = \frac{d}{d-2}.
  \end{align*}
  In fact, recall that $m$ as above satisfies \eqref{eqn:exponent relation for d over d-2}. Then using Lemma \ref{lem:averages for powers of bracket v} one can see there are $C=C(d,\gamma)>1$ and $s=s(d,\gamma)>1$  such that for any cube $Q$ with side length $\leq 1$, 
  \begin{align*}
    \left ( \fint_{Q}\langle v \rangle^{\gamma m s} \;dv \right )^{\frac{1}{2m s}}\left (\fint_{Q}\langle v\rangle^{-\gamma s} \;dv \right )^{\frac{1}{2 s}} \leq C.
  \end{align*}
  Then, Lemma \ref{lem:Global weighted Sobolev from local ones} with $q=2m$, $w_1 = \langle v\rangle^{m\gamma}$, $w_2 = \langle v\rangle^\gamma$, and $\ell = 1/2$ yields the basic weighted inequality,
  \begin{align*}
    \left ( \int_{\mathbb{R}^d}|\phi|^{\frac{2d}{d-2}}\langle v\rangle^{\gamma m}\;dv\right )^{\frac{d-2}{d}} \leq C_{d,\gamma}\int_{\mathbb{R}^d}\langle v\rangle^\gamma|\nabla \phi|^2\;dv+C_{d,\gamma}\int_{\mathbb{R}^d}|\phi|^2\;dv.	  
  \end{align*}
  Finally, by Proposition \ref{prop:a vs a star gamma larger than -2}, for $\gamma\in [-2,0]$ we have
  \begin{align*}
    \astar \approx \langle v\rangle^\gamma,
  \end{align*}
  substituting this pointwise bound in the weighted inequality above, the estimate follows.
\end{rem}

 With the above tools in hand, we prove now Theorems \ref{lem:Inequalities Sobolev weight aII} and \ref{thm:Inequalities Sobolev weight gamma below -2}. We combine both proofs since they are basically identical. 

  \begin{proof}[Proof of Theorem \ref{lem:Inequalities Sobolev weight aII} and Theorem \ref{thm:Inequalities Sobolev weight gamma below -2} ] 
 In the following $m = \frac{d}{d-2}$ if $\gamma>-d$, otherwise, $m$ can be chosen to be any $m<\frac{d}{d-2}$ when $\gamma=-d$. The first part on both theorems has been shown respectively in Remark \ref{rem:Sobolev weight astar to the m case gamma bigger -2} and Lemma \ref{lem:Sobolev weight astar astar to the m}. For the second part  let $\phi: \mathbb{R}^d \times I \mapsto \mathbb{R}$ and fix $t\in I$. Let $q = 4-\frac{2}{m}$, and note that $2<q<2m$. Interpolation yields 
  \begin{align*}
    \int_{\mathbb{R}^d}\phi^q \astar \;dv & =  \int_{\mathbb{R}^d} \phi^{q\theta + q(1-\theta)} \astar \;dv \\
      & \leq \left ( \int_{\mathbb{R}^d} \phi^{2m} (\astar)^{\frac{2m}{(1-\theta)q}} \;dv \right )^{\frac{(1-\theta)q}{2m}} \left (\int_{\mathbb{R}^d} \phi^{2}\;dv \right )^{\frac{q \theta}{2}},
  \end{align*}
  where $\frac{1}{q} = \frac{\theta}{2}+\frac{1-\theta}{2m}$. An elementary calculation shows that
  \begin{align*}
    q\theta = 2\left (1-\frac{1}{m} \right ),\;\;(1-\theta)q= 2. 
  \end{align*}	  
  Therefore, the last inequality turns out to be
  \begin{align*}
    \int_{\mathbb{R}^d} \phi^q \astar \;dv \leq \left (  \int_{\mathbb{R}^d} \phi^{2m} (\astar)^m\;dv\right )^{\frac{1}{m}} \left (\int_{\mathbb{R}^d} \phi^2\;dv \right )^{1-\frac{1}{m}}.
  \end{align*}
 This inequality holds for each $t\in I$, integrating in time we arrive at
  \begin{align*}
    \int_I\int_{\mathbb{R}^d}\phi^q \astar \;dvdt & \leq \sup \limits_{I} \left(\int_{\mathbb{R}^d} \phi^{2}\;dv \right )^{1-\frac{1}{m}} \int_{I}\left (  \int_{\mathbb{R}^d} \phi^{2m} (\astar)^{m} \;dv \right )^{\frac{1}{m}}\;dt.
   \end{align*}
   Then, by Lemma \ref{lem:Sobolev weight astar astar to the m} or Remark \ref{rem:Sobolev weight astar to the m case gamma bigger -2},
   \begin{align*}
    \int_{I}\int_{\mathbb{R}^d}\phi^q \astar \;dvdt & \leq C \left(\sup \limits_{I} \int_{\mathbb{R}^d} \phi^{2}\;dv \right )^{1-\frac{1}{m}}  \int_{I}\int_{\mathbb{R}^d}\astar |\nabla \phi|^2 +\phi^2\;dvdt.
   \end{align*}	
   From the above, it is elementary to see that
   \begin{align*}	
    \int_{I}\int_{\mathbb{R}^d}\phi^q \astar \;dvdt  & \leq C'\left (  \int_{I}\int_{\mathbb{R}^d}\astar |\nabla \phi|^2\;dvdt +\sup \limits_{I}\int_{\mathbb{R}^d}\phi^2\;dv\right )^{2-\frac{1}{m}},
    \end{align*}
    then, recalling that $q = 4-\frac{2}{m}$, this inequality becomes
    \begin{align*}
    \int_{I}  \int\phi^q \astar \;dvdt & \leq C'\left ( \int_{I} \int \astar |\nabla \phi|^2 \;dvdt + \sup \limits_{I} \int \phi^{2}\;dv \right )^{\frac{q}{2}},
    \end{align*}  	
    Finally, we note that if $m=\frac{d}{d-2}$ then $q=2\left (1+\frac{2}{d}\right )$.	
  \end{proof}

\section{$L^\infty$-Estimates}\label{section: Regularization in L infinity}

  \subsection{From entropy production to $L^pL^p$ estimates}
  
  For the purposes of $L^\infty$ estimates, it is important to have a weighted $L^pL^p$ bound for $f$ for some $p>1$ and a proper weight. With this aim, we recall the standard and well known fact that through the entropy production one can obtain a space-time integral bound for a derivative of $f$ (see, for instance \cite{Desvillettes14}). Also in what follows, recall that the notion of weak solution used here is described in Definition \ref{def:solution}.

  Let $f$ be a solution to \eqref{eqn:Landau homogeneous}, this being understood as a weak solution if $\gamma \in (-2,0]$ or as a classical solution in the case $\gamma \in [-d,-2]$. Moreover, let $\rho_{in}$ denote the Maxwellian with same mass, center of mass, and energy as $\fin$. We have
  \begin{align}\label{eqn:entropy production bound}
    \int_{t_1}^{t_2}\int_{\mathbb{R}^d}4(A_{f,\gamma}\nabla f^{1/2},\nabla f^{1/2})-f^2\;dvdt \leq H(\fin) - H(\rho_{\fin}) = C(\fin),
  \end{align}	
  and $C(\fin)$ only depends on $\fin$ through it's mass, second moment, and entropy. Moreover the estimate \eqref{eqn:entropy production bound} implies 
  \begin{align}\label{eqn:entropy production bound_II}
  \| f \langle v \rangle^{\gamma}\|_{L^1(0,T,L^{d/d-2}(\mathbb{R}^d))} \le C(T,\fin),
  \end{align}
(see Theorem 3 and Lemma 3 in \cite{Desvillettes14}).

   The inequality \eqref{eqn:entropy production bound_II}, combined with the $\varepsilon$-Poincar\'e inequality and the conserved quantities yields a weighted $L^pL^p$ bound. This is the content of the following proposition.

\begin{prop}\label{prop:LpLp via interpolation with weight}  
  If $\gamma \in (-2,0]$, and $f$ is a weak solution, then for any pair of times $T_1,T_2 \in (0,T)$ with $0\leq T_1<T_2\leq T_1+1$ we have
  \begin{align*}
    \int_{T_1}^{T_2}\int_{\mathbb{R}^d} f^{1+\frac{2}{d}}a^*_{f,\gamma}\;dvdt \leq C(d,\gamma,\fin).
  \end{align*}	  
  Meanwhile, if $\gamma\in (-d,-2]$, $f$ is a classical solution, and Assumptions \ref{Assumption:Epsilon Poincare} and \ref{Assumption:Local Doubling} hold, then for $T_1$ and $T_2$ as before
  \begin{align*}
    \int_{T_1}^{T_2}\int_{\mathbb{R}^d} f^{1+\frac{2}{d}}a^*_{f,\gamma}\;dvdt \leq C(d,\gamma,\fin,C_D,C_P,\kappa_P).
  \end{align*}	  
  The constants $C_D,C_P$, and $\kappa_P$  being as in Assumptions \ref{Assumption:Epsilon Poincare} and \ref{Assumption:Local Doubling}.
\end{prop}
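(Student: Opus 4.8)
The plan is to test the $\varepsilon$-Poincar\'e inequality and the space-time weighted Sobolev inequality of Section \ref{Space time inequalities} against $\phi=\sqrt{f(\cdot,t)}$, and to absorb the resulting weighted Dirichlet energy against the entropy-production bound. Throughout set $I:=[T_1,T_2]$, so $|I|\le 1$, and abbreviate $a^*=a^*_{f,\gamma}$, $h=h_{f,\gamma}$, $A=A_{f,\gamma}$.

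\emph{Step 1: a space-time Dirichlet bound.} I would first invoke item (4) of Definition \ref{def:solution}: since $t\mapsto H(f(t))$ is nonincreasing and a Maxwellian minimizes the entropy among densities with the prescribed mass, momentum and energy, one has $\int_{T_1}^{T_2}D(f(t))\,dt=H(f(T_1))-H(f(T_2))\le H(\fin)-H(\rho_{\fin})=C(\fin)$; recalling the formula for $D(f)$ there (this is \eqref{eqn:entropy production bound}) this yields
\begin{align*}
  \int_I\!\int_{\mathbb{R}^d} 4(A\nabla\sqrt f,\nabla\sqrt f)\,dvdt \;\le\; C(\fin)+\int_I\!\int_{\mathbb{R}^d} h\,f\,dvdt .
\end{align*}
Next I would bound $\int h f\,dv$ by the $\varepsilon$-Poincar\'e inequality applied to $\phi=\sqrt f$ with the fixed choice $\varepsilon=1$. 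For $\gamma\in(-2,0]$ this is Theorem \ref{thm:sufficient conditions for the Poincare inequality}, whose error term is $C(\fin,d,\gamma)\int\langle v\rangle^\gamma f\,dv\le C(\fin,d,\gamma)$ since $\langle v\rangle^\gamma\le1$ and $\int f\,dv=1$; for $\gamma\in(-d,-2]$ it is Assumption \ref{Assumption:Epsilon Poincare}, whose error term is $\Lambda_f(1)\int f\,dv\le C_P$. In either case, integrating over $I$ and using $|I|\le1$,
\begin{align*}
  \int_I\!\int_{\mathbb{R}^d} h\,f\,dvdt \;\le\; \int_I\!\int_{\mathbb{R}^d}(A\nabla\sqrt f,\nabla\sqrt f)\,dvdt + C,
\end{align*}
and this absorbs into the left-hand side of the previous display (legitimate since $1<4$). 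Since $a^*(v)\,|\xi|^2\le (A(v)\xi,\xi)$ by the definition of $a^*$, I conclude
\begin{align*}
  \int_I\!\int_{\mathbb{R}^d} a^*\,|\nabla\sqrt f|^2\,dvdt \;\le\; C,
\end{align*}
with $C$ universal for $\gamma\in(-2,0]$ and $C=C(d,\gamma,\fin,C_P,\kappa_P)$ for $\gamma\in(-d,-2]$.

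\emph{Step 2: closing the estimate.} By conservation of mass, $\sup_{t\in I}\int_{\mathbb{R}^d} f\,dv=1$. I would then feed $\phi=\sqrt f$ on the interval $I$ into the space-time weighted Sobolev inequality — Theorem \ref{lem:Inequalities Sobolev weight aII} for $\gamma\in[-2,0]$ (no doubling needed) and Theorem \ref{thm:Inequalities Sobolev weight gamma below -2} for $\gamma\in(-d,-2)$, where Assumption \ref{Assumption:Local Doubling} supplies the required ``locally doubling uniformly in time'' hypothesis. Since $(\sqrt f)^{2(1+2/d)}=f^{\,1+2/d}$, that inequality together with Step 1 gives
\begin{align*}
  \left(\int_I\!\int_{\mathbb{R}^d} a^*\, f^{\,1+\frac{2}{d}}\,dvdt\right)^{\frac{1}{1+2/d}} \le C\left(\int_I\!\int_{\mathbb{R}^d} a^*|\nabla\sqrt f|^2\,dvdt+\sup_{t\in I}\int_{\mathbb{R}^d} f\,dv\right)\le C,
\end{align*}
and raising both sides to the power $1+\tfrac{2}{d}$ yields the assertion with the stated dependence of the constant. (For $\gamma=-2$ both theorems apply; following the dichotomy in the statement it is placed with the very-soft range, which is harmless.)

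\emph{Main obstacle.} The chain above is routine once one knows that $\phi=\sqrt{f(\cdot,t)}$ is an admissible test function, and this is the one substantive point: the weighted inequalities are stated for Lipschitz $\phi$, whereas for a weak solution we only know, for a.e.\ $t$, that $\int_{\mathbb{R}^d}(A\nabla\sqrt f,\nabla\sqrt f)\,dv<\infty$ (from the entropy dissipation) and $\int_{\mathbb{R}^d} f\,dv=1$. For $\gamma\le-2$ the solution is classical and there is nothing to check; for $\gamma\in(-2,0]$ one runs a standard approximation — truncate and mollify $f$, apply the inequalities to the regularizations, and pass to the limit via monotone/dominated convergence — which is exactly what Remark \ref{rem:compactness of supports and inequalities} is meant to cover. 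The only remaining care is bookkeeping: every constant invoked — in Theorem \ref{thm:sufficient conditions for the Poincare inequality}, Theorems \ref{lem:Inequalities Sobolev weight aII}--\ref{thm:Inequalities Sobolev weight gamma below -2}, and Assumptions \ref{Assumption:Epsilon Poincare}--\ref{Assumption:Local Doubling} — depends only on $d$, $\gamma$, the conserved quantities, and (for $\gamma\le-2$) on $C_P,\kappa_P,C_D$, so no new dependence enters.
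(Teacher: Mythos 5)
Your proposal is correct and follows the same strategy as the paper --- absorbing the $\int\!\int h f$ term in the entropy-dissipation bound \eqref{eqn:entropy production bound} via the $\varepsilon$-Poincar\'e inequality applied to $\phi=\sqrt f$ to obtain $\int_I\!\int a^*|\nabla\sqrt f|^2\,dvdt\le C$, and then feeding $\phi=\sqrt f$ into the weighted Sobolev machinery --- with the cosmetic difference that you invoke the space--time version of the weighted Sobolev inequality directly, whereas the paper re-runs the underlying H\"older interpolation $\int a^*f^{1+2/d}\,dv\le\bigl(\int (a^*)^m f^m\,dv\bigr)^{1/m}$ (using $\|f(t)\|_{L^1}=1$) together with the pointwise-in-time Sobolev bound, which is the same content. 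The one small slip is the choice $\varepsilon=1$: Assumption \ref{Assumption:Epsilon Poincare} only controls $\Lambda_f(\varepsilon)$ for $\varepsilon\in(0,1)$, so one should take some fixed $\varepsilon\in(0,1)$ (the paper uses $\varepsilon=1/8$); since the absorption against the factor $4$ in the entropy dissipation only needs $\varepsilon<4$, this is immaterial.
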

  
\begin{proof}  
  Let $p$ denote $1+\frac{2}{d}$. Since  $p \in (1,\frac{d}{d-2})$ there is a $\theta \in (0,1)$ such that
  \begin{align*} 
    1 = \theta p + \frac{(1-\theta)p}{m},\;\; m = \frac{d}{d-2}.    
  \end{align*}	
  Note that $\theta$ entirely determines $p$, and vice versa. H\"older's inequality then yields,
  \begin{align*}
    \int_{\mathbb{R}^d}  f^p a^*\;dv & \leq \left ( \int_{\mathbb{R}^d} f \;dv \right )^{\theta p}	\left (\int_{\mathbb{R}^d} f^{m} (a^*)^{\frac{m}{(1-\theta)p}}\;dv \right )^{\frac{(1-\theta)p}{m}} = \left (\int_{\mathbb{R}^d} f^{m} (a^*)^{\frac{m}{(1-\theta)p}}\;dv \right )^{\frac{(1-\theta)p}{m}}.	  
  \end{align*}
  The fact that $p=1+\frac{2}{d}$ guarantees that $(1-\theta) p =1$, and therefore (since $\|f\|_{L^1(\mathbb{R}^d)}\equiv1$)
  \begin{align*}
    \int_{\mathbb{R}^d}  f^p a^*\;dv \leq \left ( \int_{\mathbb{R}^d} f^{m} (a^*)^{m}\;dv \right )^{\frac{1}{m}}.	  
  \end{align*}	  
  Integrating in $(T_1,T_2)$, it follows that
  \begin{align*}
    \int_{T_1}^{T_2}\int_{\mathbb{R}^d} f^pa^*\;dvdt \leq C \int_{T_1}^{T_2}\left ( \int_{\mathbb{R}^d} f^{m}(a^*)^{m} \;dv \right )^{\frac{1}{m}}\;dt.	  
  \end{align*}	  
  In this case, the weighted Sobolev inequality in Theorem \ref{lem:Inequalities Sobolev weight aII} (if $\gamma\in [-2,0]$) or in Theorem \ref{thm:Inequalities Sobolev weight gamma below -2} (if $\gamma\in [-d,-2)$) guarantees that
\begin{align*}
  & \left( \int_{\mathbb{R}^d} f^{m} (a^*)^{m} \;dv \right )^{\frac{1}{m}} \leq C\int_{\mathbb{R}^d} a^*|\nabla f^{\frac{1}{2}}|^2\;dv+C \int_{\mathbb{R}^d}f\;dv.
\end{align*}
  Then, 
 \begin{align*}
    \int_{T_1}^{T_2}\int_{\mathbb{R}^d}  f^p a^*\;dvdt \leq C \int_{T_1}^{T_2}\int_{\mathbb{R}^d} a^*|\nabla f^{\frac{1}{2}}|^2\;dvdt+C(T_2-T_1).
  \end{align*}
  It remains to bound the right hand side. For this we make use of the bound, 
  \begin{align*}
    4\int_{T_1}^{T_2}\int_{\mathbb{R}^d}(A_{f,\gamma}\nabla f^{1/2},\nabla f^{1/2})\;dvdt-\int_{T_1}^{T_2}\int_{\mathbb{R}^d}hf\;dvdt \leq H(\fin)-H(\rho_{\fin}).
  \end{align*}
  Now, the $\varepsilon$-Poincar\'e inequality applied with $\phi = \sqrt{f}$ and $\varepsilon = 1/8$ says that
  \begin{align*}
    \int_{T_1}^{T_2}\int_{\mathbb{R}^d}f h_{f,\gamma}\;dvdt \leq 2\int_{T_1}^{T_2}\int_{\mathbb{R}^d}(A_f\nabla f^{1/2},\nabla f^{1/2})\;dvdt+\Lambda_f(1/8)\int_{T_1}^{T_2}\int_{\mathbb{R}^d}f\;dvdt.
  \end{align*}
  In which case, we have
  \begin{align*}
    2\int_{T_1}^{T_2}\int_{\mathbb{R}^d} (A_{f,\gamma}\nabla f^{\frac{1}{2}},\nabla f^{\frac{1}{2}}) \;dvdt 
	  & \leq H(\fin)-H(\rho_{\fin})+\Lambda_f(1/8).
  \end{align*}
\end{proof}  

We shall make use of one further $L^pL^p$ bound, but this time there is no weight.

 \begin{prop}\label{prop: LpLp bound via interpolation1}
  Let $f$ be a solution of \eqref{eqn:Landau homogeneous}. Then, 
  \begin{align*}
    \|f\|_{L^{p_{\gamma}}(0,T,L^{p_\gamma}(\mathbb{R}^d))} & \leq C(\fin,d,\gamma,T).
  \end{align*}	
  where 
  \begin{align*}
    p_\gamma := \min \left \{\frac{d(2-\gamma)}{2(d-2)-d\gamma}\;, 1+\frac{2}{d}  \right \},
  \end{align*}
  and $C$ does not depend on $T$ if $\gamma \in [-\tfrac{4}{d},0]$ (observe this corresponds to $p_\gamma = 1+\frac{2}{d}$).
  \end{prop}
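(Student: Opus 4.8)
The plan is to deduce the unweighted bound from the weighted space--time estimates already proved, interpolating them against the conserved mass and energy. First I would collect the seed estimates. Covering $(0,T)$ by $\lceil T\rceil$ intervals of length at most $1$ and summing Proposition \ref{prop:LpLp via interpolation with weight}, then using the pointwise lower bound $a^*_{f,\gamma}(v)\ge c\langle v\rangle^{\gamma}$ of Lemma \ref{lem: A lower bound in terms of conserved quantities}, one obtains
\[
  \int_0^T\!\!\int_{\mathbb{R}^d}\langle v\rangle^{\gamma}\,f^{1+\frac2d}\,dv\,dt\ \le\ C(1+T).
\]
Independently, the entropy--production inequality \eqref{eqn:entropy production bound} combined with the $\varepsilon$-Poincar\'e inequality (applied with $\phi=\sqrt f$ and $\varepsilon=\tfrac18$), exactly as in the proof of Proposition \ref{prop:LpLp via interpolation with weight}, gives $\int_0^T\!\int(A_{f,\gamma}\nabla\sqrt f,\nabla\sqrt f)\,dv\,dt\le C(1+T)$; feeding this into the weighted Sobolev inequalities of Theorems \ref{lem:Inequalities Sobolev weight aII} and \ref{thm:Inequalities Sobolev weight gamma below -2} (via $(A_{f,\gamma}\,\cdot\,,\cdot)\ge a^*_{f,\gamma}\,|\cdot|^2$) produces a second space--time bound on $f$ in a weighted Lebesgue space with spatial exponent $\tfrac{d}{d-2}$. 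Here, for $\gamma$ close to $0$, one wants to retain the full quadratic form rather than only its smallest eigenvalue: the angular part of $A_{f,\gamma}$ has strength $\langle v\rangle^{2+\gamma}$, and a Sobolev embedding on the sphere converts this into a genuinely larger spatial exponent, which is the origin of the first term in $p_\gamma$. Finally, conservation of mass and energy gives $\sup_{t}\int_{\mathbb{R}^d}(1+|v|^2)f(t)\,dv\le C$, hence $\sup_t\int\langle v\rangle^{2\mu}f(t)\,dv\le C$ for all $\mu\in[0,1]$ by Jensen.

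Next I would interpolate. At a fixed time I would write $f^{p}$ as a product of powers of $\langle v\rangle^{\gamma}f^{1+2/d}$ (or, in the range $\gamma>-4/d$, of the stronger weighted quantity coming from the entropy term), of $\langle v\rangle^{2}f$, and of $f$, apply H\"older in $v$ with exponents reciprocal to those powers, and choose the powers so that the power of $\langle v\rangle$ cancels and the power of $f$ equals $p$; this determines all the H\"older exponents as explicit rational functions of $d$, $\gamma$, $p$. Integrating in $t$ and applying H\"older in time against the $L^1_t$ (respectively $L^{(d-2)/d}_t$) control from the first step, the scheme closes provided every spatial H\"older exponent is $\ge 1$ and the temporal one is $\le 1$. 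These two requirements yield two competing upper bounds on $p$: one is $1+\tfrac2d$ and the other is $\tfrac{d(2-\gamma)}{2(d-2)-d\gamma}$, whence $p_\gamma$ is their maximum. When $\gamma\ge-4/d$ the velocity moments enter only through their $T$-independent suprema, so the resulting constant does not depend on $T$.

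The main obstacle I expect is exactly this exponent bookkeeping, together with extracting the angular (anisotropic) gain of $A_{f,\gamma}$ cleanly: one must verify that the spatial and temporal H\"older exponents can be chosen simultaneously admissible, identify which of the two constraints is active for a given $\gamma$, and see that the sphere--Sobolev improvement does kick in precisely for $\gamma>-4/d$. Everything else --- summing over the unit time intervals, the applications of Jensen's and H\"older's inequalities, and re-expressing the weighted Sobolev bounds in terms of $\langle v\rangle^{\gamma}$ --- is routine.
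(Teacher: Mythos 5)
Your overall strategy --- interpolating between the conserved second moment $\sup_t\int f\langle v\rangle^2\,dv$ and an entropy--production--driven weighted space--time bound, then closing with H\"older/Jensen in time --- is indeed the paper's approach, and your identification of the two seed estimates is sound. The gap is in the mechanism you propose for producing the exponent $p_\gamma$. You argue that the first term of $p_\gamma$, namely $\tfrac{d(2-\gamma)}{2(d-2)-d\gamma}$, arises from ``retaining the full quadratic form'' and applying a Sobolev embedding on the sphere to exploit the anisotropy $(A_{f,\gamma}(v)e,e)\sim\langle v\rangle^{2+\gamma}$ in the radial direction, and that this ``sphere--Sobolev improvement kicks in precisely for $\gamma>-4/d$.'' That is not what happens and, as far as the tools in the paper go, is not available: the paper's proof uses only the isotropic lower bound $A_{f,\gamma}\ge c\langle v\rangle^{\gamma}\mathbb{I}$ (Lemma \ref{lem: A lower bound in terms of conserved quantities}) together with the standard $\langle v\rangle^{\gamma}$-weighted Sobolev inequality to get, after a single application, $\int_0^T\|f\langle v\rangle^\gamma\|_{L^{d/(d-2)}}\,dt\le C$. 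The exponent $p_\gamma$ then comes entirely from the explicit H\"older bookkeeping when this is interpolated against $\sup_t\|f\langle v\rangle^2\|_{L^1}$, and the threshold $\gamma=-4/d$ is precisely where the resulting time exponent $\tfrac{2d}{2(d-2)-d\gamma}$ crosses $1$, i.e.\ it is a Jensen-in-time threshold, not an anisotropic one. There is no angular gain anywhere in the argument; moreover the energy inequality only controls $\int a^*_{f,\gamma}|\nabla\sqrt f|^2$, not $\int a_{f,\gamma}|\nabla\sqrt f|^2$, so you could not feed the radial strength $\langle v\rangle^{2+\gamma}$ into a Sobolev inequality even if you wanted to.

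Two smaller issues. First, taking Proposition \ref{prop:LpLp via interpolation with weight} as a seed is both circular and lossy: that proposition is itself derived from the same entropy bound and weighted Sobolev inequality, and it throws away exactly the $L^1_t L^{d/(d-2)}_v(\langle v\rangle^{\gamma d/(d-2)}\,dv)$ control you actually need --- so you end up invoking the entropy estimate twice, the first time to no effect. It also imports Assumptions \ref{Assumption:Epsilon Poincare}--\ref{Assumption:Local Doubling} for $\gamma\le-2$, which are not hypotheses of the statement you are trying to prove. Second, your sentence ``two competing upper bounds on $p$ \dots whence $p_\gamma$ is their maximum'' is logically backwards as phrased: if both bounds were simultaneous constraints from a single interpolation scheme, the admissible $p$ would be their minimum. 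What actually occurs in the paper is that two \emph{different} interpolation schemes are used on the two sides of $\gamma=-4/d$, each proving the estimate at its own exponent, and $p_\gamma$ is then the one that applies in the relevant range.
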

  
  \begin{proof}
  The proof is analogous to the previous one. As before, with $p=p_\gamma$ we have
  \begin{align*}
      \int_{\mathbb{R}^d} f^p\;dv  & \le \left(\int_{\mathbb{R}^d} \left(f^{p(1-\theta)} \langle v \rangle^{-m}\right)^q dv\right)^{1/q} \left(\int_{\mathbb{R}^d} \left(f^{p\theta} \langle v \rangle^{m}\right)^{\frac{1}{p\theta}} dv\right)^{p\theta},
\end{align*}
  with $ q = \frac{1}{(1-p)\theta}$. The aim is to estimate $ \int f^p\;dv $ with the quantities $ \int f\langle v \rangle^{2}\;dv$ and $ \int f^{\frac{d}{d-2}}\langle v \rangle^{\frac{d}{d-2}\gamma}\;dv$. First, suppose that $p_\gamma = d(2-\gamma)/ (2(d-2)-d\gamma)$, then we choose
  \begin{align*}
      \frac{m}{p\theta} = 2, \quad {p(1-\theta)q}{} = \frac{d}{d-2}, \quad  mq = -\gamma\frac{d}{d-2}.
  \end{align*}
  and it follows that for some $C=C(d,\gamma)$ we have
  \begin{align*}
      \int_{\mathbb{R}^d} f^{p_\gamma}\;dv  & \leq C\left(\int_{\mathbb{R}^d} f^{\frac{d}{d-2}}\langle v \rangle^{{\frac{d}{d-2}}\gamma}\;dv\right)^{\frac{2(d-2)}{2(d-2)-d\gamma}}\left(\int_{\mathbb{R}^d} f\langle v \rangle^{2}\;dv\right)^{\frac{d\gamma}{d\gamma-2(d-2)}}.
  \end{align*}
  Now we use the above inequality for $f$ and integrate for $t\in (0,T)$: 
  \begin{align*}
    \int_0^T\left(\int_{\mathbb{R}^d} f^p \;dv\right)\;dt & \leq \int_0^T \|f \langle v\rangle^\gamma\|_{L^{\frac{d}{d-2}}}^{\frac{2d}{2(d-2)-d\gamma}} \|f \langle v\rangle^2\|_{L^1}^{\frac{d\gamma}{d\gamma-2(d-2)}}	\;dt \\
    & \leq   \sup_{(0,T)} \|f \langle v\rangle^2\|_{L^1}^{\frac{d\gamma}{d\gamma-2(d-2)}}\int_0^T \|f \langle v\rangle^\gamma\|_{L^{\frac{d}{d-2}}}^{\frac{2d}{2(d-2)-d\gamma}}\;dt \\
    & \leq C(\fin) T^{1- {\frac{2d}{2(d-2)-d\gamma}}}\left(\int_0^T \|f \langle v\rangle^\gamma\|_{L^{\frac{d}{d-2}}}\;dt\right)^{\frac{2d}{2(d-2)-d\gamma}},
  \end{align*}	
  thanks to \eqref{eqn:entropy production bound_II} and the conservation of the second momentum of $f$. In the last inequality we applied Jensen's inequality since $\frac{2d}{2(d-2)-d\gamma} <1$ if $\gamma <-\frac{4}{d}$ (which is the case when $p_\gamma>1+\frac{2}{d}$).
    
  Second, suppose that  $p_\gamma = 1+\frac{2}{d}$, then, we take
  \begin{align*}
    q = \frac{d}{d-2}, \; p = 1+\frac{2}{d},\; \theta = \frac{2}{2+d},
  \end{align*}
  and $m$ such that 
  \begin{align*}
    mq \ge -\frac{\gamma d}{d-2},\quad \frac{m}{p\theta} \leq 2
  \end{align*}	
  and obtain 
  \begin{align*}
    \int_0^T  \int_{\mathbb{R}^d} f^p\;dvdt  & \le  \int_0^T\left(\int_{\mathbb{R}^3} f^{\frac{d}{d-2}}\langle v \rangle^{{\frac{\gamma d}{d-2}}}\;dv\right)^{\frac{d-2}{d}}\left(\int_{\mathbb{R}^d} f\langle v \rangle^{2}\;dv\right)^{\frac{2}{d}}\;dt  \leq  C(\fin,d,\gamma),
  \end{align*}
 using again \eqref{eqn:entropy production bound_II} and conservation of second moment.    
    
\end{proof}

\subsection{Energy inequality}\label{sec: energy ineq}
  At the center of the iteration argument are integrals of functions of the distribution $f$. That is, integrals of the form
  \begin{align}\label{eqn:energy_inequality basic integrals}
    \int_{\mathbb{R}^d} \eta^2 \phi(f) \;dv,
  \end{align}
  with $\eta$ a generic $C^2_c(\mathbb{R}^d)$-function and $\phi:\mathbb{R}\mapsto\mathbb{R}$ a $C^\infty$ function such that
  \begin{align}\label{eqn:properties of phi}
    \phi''(s) \geq 0 \;\;\forall\;s \textnormal{ and } \phi''(s) = 0 \textnormal{ for all large } s, \textnormal{ and } \lim\limits_{s\to 0} s^{-1}\phi(s) = 0.
  \end{align}
  The integrals \eqref{eqn:energy_inequality basic integrals} will be understood using an energy inequality derived from the equation, understood in the sense of Definition \ref{def:solution}. We need some preliminaries on the type of functions $\phi$ that will be used, and certain functions to them that will appear in the computations below.
  \begin{rem}\label{rem:definition of overline phi and underline phi}
    Let $\phi:\mathbb{R}\mapsto\mathbb{R}$ be as in \eqref{eqn:properties of phi}, for $u\geq 0$ define functions $\overline{\phi}$ and $\underline{\phi}$ by
    \begin{align}\label{eqn:overline phi and underline phi}
      \overline{\phi}(u) := \int_0^u (\phi''(s))^{\frac{1}{2}}\;ds,\;\;\underline{\phi}(u) = \int_0^u s\phi''(s)\;ds.
    \end{align}
    Observe that for every $s$, we have
    \begin{align*}
      (s\phi'(s)-\underline{\phi}(s)-\phi(s))' = \phi'(s)+s\phi''(s)-s\phi''(s)-\phi'(s) = 0,\;\;\forall\;s,
    \end{align*}	  
    thus $s\phi'(s)-\underline{\phi}(s)-\phi(s) = -\underline{\phi}(0)-\phi(0) = 0$ for all $s$, which means that
    \begin{align*}
      s\phi'(s)-\underline{\phi}(s) = \phi(s).
    \end{align*}	   
  \end{rem}

  In all that follows, we drop the subindices $f,\gamma$ from $A_{f,\gamma}$, $a_{f,\gamma}$, $h_{f,\gamma}$ and $a_{f,\gamma}^*$. The first Proposition aims to provide a more workable expression for the right hand side of the integral formulation for the equation (Definition \ref{def:solution}).

  \begin{prop}\label{prop:basic energy inequality}
    Let $f$ be a weak solution (if $\gamma\in [-2,0]$) or a classical solution (if $\gamma \in [-d,-2)$), $\eta \in C^2_c(\mathbb{R}^d)$ and $\phi$ as \eqref{eqn:properties of phi}. Then, we have the identity
    \begin{align*}
      -\int_{t_1}^{t_2}\int_{\mathbb{R}^d}(A\nabla f-f\nabla a,\nabla (\eta^2\phi'(f)))\;dvdt = (\textnormal{I})+(\textnormal{II})+(\textnormal{III})+(\textnormal{IV}),
    \end{align*}
    where, with $\overline{\phi}$ and $\underline{\phi}$ as in \eqref{eqn:overline phi and underline phi}, we have
    \begin{align*}
      (\textnormal{I}) & = -\int_{t_1}^{t_2}\int_{\mathbb{R}^d}(A\nabla (\eta \overline{\phi}(f)),\nabla (\eta \overline{\phi})(f))\;dvdt+2\int_{t_1}^{t_2}\int_{\mathbb{R}^d}\overline{\phi}(f)(A\nabla (\eta \overline{\phi}(f)),\nabla \eta)\;dvdt,\\
      (\textnormal{II}) & = -\int_{t_1}^{t_2}\int_{\mathbb{R}^d}\overline{\phi}(f)^2(A\nabla \eta,\nabla \eta)\;dvdt+\int_{t_1}^{t_2}\int_{\mathbb{R}^d}\phi(f)\tr(AD^2(\eta^2))\;dvdt,\\
      (\textnormal{III}) & = \int_{t_1}^{t_2}\int_{\mathbb{R}^d} \eta^2\underline{\phi}(f) h\;dvdt,\\
      (\textnormal{IV}) & = 2\int_{t_1}^{t_2}\int_{\mathbb{R}^d}\phi(f)(\nabla a,\nabla \eta^2)\;dvdt.	
    \end{align*}
  
  \end{prop}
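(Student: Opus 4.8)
The plan is to start from the weak formulation with test function $\eta^2\phi'(f)$ and expand the integrand $(A\nabla f - f\nabla a, \nabla(\eta^2\phi'(f)))$ by the product rule, then regroup terms so that the ``diffusion'' part is organized into a perfect square of $\nabla(\eta\overline\phi(f))$ plus lower-order corrections, and the ``reaction'' part $f\nabla a$ contributes the $h\eta^2\underline\phi(f)$ and the $\nabla a\cdot\nabla\eta^2$ terms. Concretely, first I would compute
\begin{align*}
  \nabla(\eta^2\phi'(f)) = 2\eta\phi'(f)\nabla\eta + \eta^2\phi''(f)\nabla f,
\end{align*}
so that the $A\nabla f$ piece splits as
\begin{align*}
  (A\nabla f,\nabla(\eta^2\phi'(f))) = \eta^2\phi''(f)(A\nabla f,\nabla f) + 2\eta\phi'(f)(A\nabla f,\nabla\eta).
\end{align*}
For the first summand, since $\nabla\overline\phi(f) = \sqrt{\phi''(f)}\,\nabla f$ (by definition of $\overline\phi$ in \eqref{eqn:overline phi and underline phi}), we have $\eta^2\phi''(f)(A\nabla f,\nabla f) = \eta^2(A\nabla\overline\phi(f),\nabla\overline\phi(f))$, and then completing the square against $\nabla\eta$ via $\nabla(\eta\overline\phi(f)) = \overline\phi(f)\nabla\eta + \eta\nabla\overline\phi(f)$ turns $\eta^2(A\nabla\overline\phi(f),\nabla\overline\phi(f))$ into $(A\nabla(\eta\overline\phi(f)),\nabla(\eta\overline\phi(f)))$ minus a cross term and a $\overline\phi(f)^2(A\nabla\eta,\nabla\eta)$ term; these are exactly the terms appearing in (I) and (II).

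Next I would handle the remaining diffusion cross-term $2\eta\phi'(f)(A\nabla f,\nabla\eta)$ together with the full $-f\nabla a$ contribution. The trick is to convert everything involving $\nabla f$ acting on a ``test-like'' quantity into a divergence, i.e. to recognize $\phi'(f)\nabla f = \nabla(\phi(f))$ and $f\nabla a$ paired against $\nabla\eta^2$-type objects. Writing $2\eta\phi'(f)(A\nabla f,\nabla\eta) = (A\nabla\phi(f),\nabla\eta^2)$ and then integrating by parts in $v$ (legitimate since $\eta$ is compactly supported and $A$ is smooth enough in the classical case, or by the density remark and approximation in the weak case) gives
\begin{align*}
  \int (A\nabla\phi(f),\nabla\eta^2)\,dv = -\int \phi(f)\,\dive(A\nabla\eta^2)\,dv = -\int \phi(f)\big[\tr(AD^2\eta^2) + (\dive A,\nabla\eta^2)\big]\,dv.
\end{align*}
Here one uses the identity $\dive A_{f,\gamma} = \nabla a_{f,\gamma}$ (which follows from the structure \eqref{Af_gamma} and the definition of $a_{f,\gamma}$ as its trace — this is standard for the Landau matrix), so the term $(\dive A,\nabla\eta^2) = (\nabla a,\nabla\eta^2)$ combines with the genuine $-f\nabla a$ term. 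For the $f\nabla a$ part, $(f\nabla a,\nabla(\eta^2\phi'(f))) = (f\nabla a, 2\eta\phi'(f)\nabla\eta + \eta^2\phi''(f)\nabla f)$; the second summand is $\eta^2 (f\phi''(f))(\nabla a,\nabla f) = \eta^2(\nabla a,\nabla\underline\phi(f))$ by the definition of $\underline\phi$, and integrating by parts moves the gradient off $\underline\phi(f)$, producing $-\int \eta^2\underline\phi(f)\Delta a\,dv - \int\underline\phi(f)(\nabla a,\nabla\eta^2)\,dv = \int h\eta^2\underline\phi(f)\,dv - \int\underline\phi(f)(\nabla a,\nabla\eta^2)\,dv$ since $h = -\Delta a$ by \eqref{hf_gamma}. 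The first summand $2\eta\phi'(f)f(\nabla a,\nabla\eta)$ together with the $-\underline\phi(f)(\nabla a,\nabla\eta^2)$ term and the $-\phi(f)(\nabla a,\nabla\eta^2)$ term from the previous paragraph should collapse, using the pointwise identity $s\phi'(s) = \underline\phi(s) + \phi(s)$ recorded in Remark \ref{rem:definition of overline phi and underline phi}, to exactly $2\int\phi(f)(\nabla a,\nabla\eta^2)\,dv$, which is (IV); note $2\eta\phi'(f)f\nabla\eta$ paired with $\nabla a$ is $f\phi'(f)(\nabla a,\nabla\eta^2)$, and $f\phi'(f) - \underline\phi(f) = \phi(f)$, leaving precisely $\phi(f)(\nabla a,\nabla\eta^2)$, and adding the $-\phi(f)\tr(AD^2\eta^2)$ together with the $+\phi(f)\tr(AD^2\eta^2)$ from the $A$ side cancels one copy while the bookkeeping on the $\nabla a\cdot\nabla\eta^2$ coefficients yields the factor $2$.

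The main obstacle I anticipate is purely the bookkeeping: tracking the several $(\nabla a,\nabla\eta^2)$ and $\tr(AD^2\eta^2)$ contributions arising from the two separate integrations by parts and verifying that the algebraic identities $\sqrt{\phi''}\,\nabla f = \nabla\overline\phi(f)$, $s\phi''(s) $ integrates to $\underline\phi$, and $s\phi'(s)-\underline\phi(s)=\phi(s)$ make all the spurious terms cancel, leaving exactly (I)--(IV). A secondary technical point is justifying the integration by parts and the use of $\dive A = \nabla a$ at the level of regularity available: in the classical case ($\gamma\le -2$) this is immediate, while in the weak case ($\gamma\in(-2,0]$) one argues by the density/approximation observation in Remark \ref{rem:compactness of supports and inequalities} together with the smoothing properties of the Riesz potentials defining $A_{f,\gamma}$ and $a_{f,\gamma}$ (which are locally regular for $f\in L^1\cap L\log L$), plus the restriction on $\phi$ in \eqref{eqn:properties of phi} (namely $\phi''$ compactly supported and $s^{-1}\phi(s)\to 0$) that guarantees all the integrands are integrable. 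No genuinely hard estimate is needed here — the statement is an exact identity, and the content is the correct reorganization.
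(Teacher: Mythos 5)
Your argument follows the paper's proof essentially step by step: same product-rule expansion of $\nabla(\eta^2\phi'(f))$, same completion of the square producing $\nabla(\eta\overline{\phi}(f))$, same use of $\phi'(f)\nabla f=\nabla\phi(f)$, $\dive A_{f,\gamma}=\nabla a_{f,\gamma}$, $f\phi''(f)\nabla f=\nabla\underline{\phi}(f)$, $-\Delta a=h$, and the telescoping identity $s\phi'(s)-\underline{\phi}(s)=\phi(s)$. The one slip is the closing clause about a cancellation of $\tr(AD^2\eta^2)$: there is only one such term (from integrating $(A\nabla\phi(f),\nabla\eta^2)$ by parts), nothing cancels, and it simply becomes the $\tr$ term in $(\textnormal{II})$ — the factor $2$ appears exclusively in the $(\nabla a,\nabla\eta^2)$ coefficient, as you otherwise correctly track via $f\phi'(f)-\underline{\phi}(f)=\phi(f)$.
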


  \begin{proof}
    The proof amounts to keeping track of various elementary pointwise identities and a couple of integration by parts. First, it is clear that
    \begin{align*}
      & \int_{\mathbb{R}^d}(A\nabla f,\nabla (\eta^2 \phi'(f)))\;dv = \int_{\mathbb{R}^d}\eta^2 \phi''(f)(A\nabla f,\nabla f)+\phi'(f)(A\nabla f,\nabla \eta^2)\;dv. 		  
    \end{align*}
    A couple of elementary identities: from the definition of $\overline{\phi}$ it follows that
    \begin{align*}
      & \eta^2 \phi''(f)(A\nabla f,\nabla f) \\ 
      & = (A\nabla (\eta \overline{\phi}(f)),\nabla (\eta \overline{\phi}(f))) -2\eta \overline{\phi}(f)(A\nabla \eta,\nabla \overline{\phi}(f))-\overline{\phi}(f)^2(A\nabla \eta,\nabla \eta),
    \end{align*}	  
    at the same time,
    \begin{align*}
      -2\eta \overline{\phi}(f)(A\nabla \eta,\nabla \overline{\phi}(f)) = -2 \overline{\phi}(f)(A\nabla \eta,\nabla (\eta \overline{\phi}(f)) )+2\overline{\phi}(f)^2(A\nabla \eta,\nabla \eta).
    \end{align*}
    These two formulas yield
    \begin{align*}
      & \int_{\mathbb{R}^d}\eta^2 \phi''(f)(A\nabla f,\nabla f) \;dv\\
      & = \int_{\mathbb{R}^d} (A\nabla (\eta \overline{\phi}(f)),\nabla (\eta \overline{\phi}(f))) -2 \overline{\phi}(f)(A\nabla \eta,\nabla (\eta \overline{\phi}(f)))+\overline{\phi}(f)^2(A\nabla \eta,\nabla \eta) \;dv.		  
    \end{align*}
    On the other hand, the identity $\phi'(f)\nabla f = \nabla \phi(f)$ and integration by parts yield
    \begin{align*}
     \int_{\mathbb{R}^d}\phi'(f)(A\nabla f,\nabla \eta^2)\;dv & = -\int_{\mathbb{R}^d}\phi(f)(\tr(AD^2(\eta^2))+(\nabla a,\nabla (\eta^2)))\;dv.
    \end{align*}
    Integrating these identities in $(t_1,t_2)$ (and recalling definition of $\textnormal{(I)}$ and $\textnormal{(II)}$ leads to	
    \begin{align}\label{eqn:energy identity first half}
      & -\int_{t_1}^{t_2}\int_{\mathbb{R}^d}(A\nabla f,\nabla (\eta^2 \phi'(f)))\;dvdt = \textnormal{(I)}+\textnormal{(II)}+\int_{t_1}^{t_2}\int_{\mathbb{R}^d}\phi(f)(\nabla a,\nabla (\eta^2))\;dvdt.	  
    \end{align}	
    To deal with the other term in the integral, we make use of the identity
    \begin{align*}
      f \nabla (\eta^2 \phi'(f)) & = \eta^2 f\phi''(f)\nabla f+f\phi'(f)\nabla (\eta^2)\\
  	& = \eta^2 \nabla \underline{\phi}(f)+f\phi'(f)\nabla (\eta^2)\\
  	& = \nabla (\eta^2 \underline{\phi}(f) )- \underline{\phi}(f)\nabla (\eta^2)+f\phi'(f)\nabla (\eta^2).	  
    \end{align*}	  
    Integrating, this leads to
    \begin{align*}
      \int_{\mathbb{R}^d}(f\nabla a,\nabla (\eta^2 \phi'(f)) )\;dv = \int_{\mathbb{R}^d} \eta^2\underline{\phi}(f) h\;dvdt+\int_{\mathbb{R}^d}(f\phi'(f)-\underline{\phi}(f))(\nabla a,\nabla (\eta^2))\;dv.		
    \end{align*}
    According to Remark \ref{rem:definition of overline phi and underline phi}, $f\phi'(f)-\underline{\phi}(f) = \phi(f)$, therefore
    \begin{align*}
      \int_{t_1}^{t_2}\int_{\mathbb{R}^d}(f\nabla a,\nabla (\eta^2 \phi'(f)) )\;dvdt = \textnormal{(III)}+\int_{t_1}^{t_2}\int_{\mathbb{R}^d}\phi(f)(\nabla a,\nabla (\eta^2))\;dvdt.		
    \end{align*}
    This identity combined with \eqref{eqn:energy identity first half} proves the Proposition.
  \end{proof}

  The following remark deals with an approximation to the power function $\phi(s) = s^p/p$ via functions $\phi$ which are sublinear as $s\to\infty$.
  \begin{DEF}\label{def:Definitions of approximations to p-th power I}
    Fix $p>1$ and $h>0$. Let $\chi:\mathbb{R}\mapsto\mathbb{R}$ be a $C^\infty$ function such that 
    \begin{align*}
      & 0\leq \chi(s) \leq 1 \textnormal{ and } 0 \leq -\chi'(s)\leq 2 \;\textnormal{ for all } \;s,\\
      &\chi(s) =1 \textnormal{ if } s<0,\;\chi(s) = 0 \textnormal{ if } s>1.
    \end{align*}
    Then, for $u\geq 0$ we define
    \begin{align}\label{eqn:def of approximations to the pth power}
      \chi_h(u) := \int_{0}^u\chi(s-h)\;ds,\;\;\; \phi_{p,h}(u) = \phi(u) :=  \int_0^u\chi_h(s)^{p-1}\;ds.
    \end{align}
  \end{DEF}

  One should think of $\chi_h(u)$ as a smooth approximation to $\min\{u,h+1\}$.  The following Lemma contains useful and straightforward inequalities regarding $\phi_{p,h}$ and the associated functions $\overline{\phi}_{p,h}$ and $\underline{\phi}_{p,h}$. 

  \begin{lem}\label{lem:properties of approximations to p-th power}
    Let $p>1$ and let $h(p)$ be chosen such that 	  
    \begin{align*}
      h\geq h(p) \Rightarrow p h^{-1}(1+h^{-1})^{p-1} \leq 1.
    \end{align*}   
    Then, for $h\geq h(p)$ the function $\phi = \phi_{p,h}$ satisfies the following inequalities for all $u\geq 0$
    \begin{align}
      & \underline{\phi}(u)  \leq \frac{p}{2}\overline{\phi}(u)^2,\;\; \overline{\phi}(u)^2 \leq \frac{8(p-1)}{p} \phi(u), \label{eqn:upper bounds for under and over bar phi}\\
      & |\phi(u)- \frac{p}{4(p-1)}(\overline{\phi}(u))^2| \leq \frac{p+4}{4} h^{p-1}(u-h)_+. \label{eqn:bound difference phi and overline phi}
    \end{align}
	
  \end{lem}

  \begin{proof}
    We fix $p$ and $h$ as in the statement of the Lemma and take $\phi = \phi_{p,h}$ throughout the proof. Note that,
    \begin{align*}
      & \chi_h(u) = u \textnormal{ if } u<h,\\
      & \chi_h(u) \leq h+1 \;\;\forall\;u \geq 0,\\
      &\chi_h(u) \equiv \chi_h(h+1)\;\; \forall\;u\geq h+1.		
    \end{align*}
    Moreover, we have the identities
  \begin{align*}
    \phi'(u) = \chi_h(u)^{p-1},\;\;\phi''(u) = (p-1)\chi_h(u)^{p-2}\chi(u-h).
  \end{align*}
  Now, let us see what the corresponding functions $\overline{\phi}$ and $\underline{\phi}$ from \eqref{eqn:overline phi and underline phi} look like. If $u\leq h$, we have
  \begin{align*}
    \overline{\phi}(u) & = (p-1)^{\frac{1}{2}}\int_{0}^u \chi_h(s)^{\frac{p}{2}-1}\chi(s-h)\;ds  = (p-1)^{\frac{1}{2}}\int_{0}^u s^{\frac{p}{2}-1}\;ds,\\	
    \underline{\phi}(u) & = (p-1)\int_{0}^u s \chi_h(s)^{p-2}\chi(s-h)\;ds = (p-1)\int_{0}^u s^{p-1}\;ds.	
  \end{align*}	
  Then, for $u\leq h$,
  \begin{align*}
    \overline{\phi}(u) = \tfrac{2}{p}(p-1)^{\frac{1}{2}}u^{\frac{p}{2}},\;\; \underline{\phi}(u) = \tfrac{1}{p}(p-1)u^p \Rightarrow \underline{\phi}(u) = \frac{p}{4}\overline{\phi}(u)^2,\;\;\forall\;u\leq h.	
  \end{align*}	
  While, for $u\geq h$,
  \begin{align*}
    \underline{\phi}(u) & \leq \tfrac{p}{4}\overline{\phi}(h)^2+ (p-1)\int_h^{h+1}(h+1)^{p-1}\;ds\\
      & \leq \tfrac{p}{4}\overline{\phi}(h)^2+ \frac{p^2}{4} \overline{\phi}(h)^2h^{-1} (1+h^{-1})^{p-1}\\	
    \overline{\phi}(u) & \leq (p-1)^{\frac{1}{2}}(h+1)^{\frac{p}{2}},
  \end{align*}
  and, using that $\overline{\phi}(u)^2 \geq \overline{\phi}(h)^2 = \tfrac{4}{p^2}(p-1)h^p$ for $u\geq h$ we conclude that
  \begin{align*}
    \underline{\phi}(u) \leq \frac{p}{2}\overline{\phi}(u)^2,\;\; \overline{\phi}(u)^2 \leq \frac{8(p-1)}{p} \phi(u)\;\;\forall\;u\geq 0,
  \end{align*}
  provided that $h$ is larger than $h(p)$, where $h(p)$ is chosen such that by
  \begin{align*}
    h\geq h(p) \Rightarrow p h^{-1}(1+h^{-1})^{p-1} \leq 1. 
  \end{align*} 
  It remains to prove the second inequality. From the previous observations, it follows that
  \begin{align*}
    \phi(u) - \frac{p}{4(p-1)}(\overline{\phi}(u))^2 = 0 \textnormal{ for } u \in [0,h].
  \end{align*}
  For $u\geq h$, we shall differentiate the difference with respect to $u$, so that
  \begin{align*}
    & \phi'(u) - \frac{p}{4(p-1)}\overline{\phi}(u)\overline{\phi}'(u)\\
    & = \chi_h(u)^{p-1}-\frac{p}{4(p-1)}\overline{\phi}(u)(p-1)^{\frac{1}{2}}\chi_{h}(u)^{\frac{p}{2}-1}\left (\chi(u-h) \right )^{\frac{1}{2}}.
  \end{align*}
  Therefore, 
  \begin{align*}
    |\phi'(u) - \frac{p}{4(p-1)}\overline{\phi}(u)\overline{\phi}'(u)| \leq \frac{p+4}{4}(h+1)^{p-1}
  \end{align*}

  \begin{align*}
    & |\phi(u)- \frac{p}{4(p-1)}\overline{\phi}(u)^2|\\
    & = \left |\phi(u)-\frac{p}{4(p-1)}\overline{\phi}(u)^2-\left ( \phi(h)- \frac{p}{4(p-1)}\overline{\phi}(h)^2 \right ) \right |\\
    & \leq \frac{p+4}{4}(h+1)^{p-1}|u-h|.
  \end{align*}
  In conclusion, 
  \begin{align*}
    |\phi(u)- \frac{p}{4(p-1)}(\overline{\phi}(u))^2| \leq \frac{p+4}{4} h^{p-1}(u-h)_+,
  \end{align*}
  and the second inequality is proved.
  \end{proof}
  
  \begin{prop}\label{prop:energy inequality term IV}
    Let $\phi=\phi_{p,h}$ as in Definition \ref{def:Definitions of approximations to p-th power I}. Then, using the notation from the previous proposition, we have
    \begin{align*}
      (\textnormal{IV}) & = (\textnormal{IV})_1+(\textnormal{IV})_2,
    \end{align*}
    where, with $c_p=\frac{p}{4(p-1)}$,
    \begin{align*}
      (\textnormal{IV})_1 & = 2c_p\int_{t_1}^{t_2}\int_{\mathbb{R}^d}\overline{\phi}(f)^2 (4(A\nabla \eta,\nabla \eta)-\tr(AD^2\eta^2))\;dvdt\\
      &\;\;\;\;-8c_p\int_{t_1}^{t_2}\int_{\mathbb{R}^d}\overline{\phi}(f) (A\nabla (\eta \overline{\phi}(f)),\nabla \eta)\;dvdt,\\
      |(\textnormal{IV})_2| & \leq \frac{p+4}{2}\int_{t_1}^{t_2}\int_{\{f >h\} }(h+1)^{p-1}(f-h)|\nabla a| |\nabla \eta^2|\;dvdt.		  
    \end{align*}	  
  \end{prop}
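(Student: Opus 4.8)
The strategy is to isolate the ``quadratic part'' $c_p\overline{\phi}(f)^2$ of $\phi(f)$, where $c_p=\frac{p}{4(p-1)}$, and to handle it and the remainder separately. Writing $\phi(f)=c_p\overline{\phi}(f)^2+\big(\phi(f)-c_p\overline{\phi}(f)^2\big)$ inside the integral defining $(\textnormal{IV})$ in Proposition~\ref{prop:basic energy inequality}, set
\begin{align*}
  (\textnormal{IV})_1 := 2c_p\int_{t_1}^{t_2}\!\int_{\mathbb{R}^d}\overline{\phi}(f)^2(\nabla a,\nabla\eta^2)\;dvdt, \qquad (\textnormal{IV})_2 := 2\int_{t_1}^{t_2}\!\int_{\mathbb{R}^d}\big(\phi(f)-c_p\overline{\phi}(f)^2\big)(\nabla a,\nabla\eta^2)\;dvdt,
\end{align*}
so that $(\textnormal{IV})=(\textnormal{IV})_1+(\textnormal{IV})_2$. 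The bound on $(\textnormal{IV})_2$ is then immediate from \eqref{eqn:bound difference phi and overline phi} in Remark~\ref{rem:Properties of approximations to p-th power II}: the factor $\phi(f)-c_p\overline{\phi}(f)^2$ vanishes on $\{f\le h\}$ and is bounded there by $\tfrac{p+4}{4}h^{p-1}(f-h)_+\le\tfrac{p+4}{4}(h+1)^{p-1}(f-h)_+$, so after using $|(\nabla a,\nabla\eta^2)|\le|\nabla a|\,|\nabla\eta^2|$ one obtains the stated estimate for $|(\textnormal{IV})_2|$, with the integral restricted to $\{f>h\}$.

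The substance of the argument is rewriting $(\textnormal{IV})_1$, and here the only ingredient beyond calculus is the identity $\dive A_{f,\gamma}=\nabla a_{f,\gamma}$ — the same identity used in the proof of Proposition~\ref{prop:basic energy inequality} to integrate by parts in $\int\phi'(f)(A\nabla f,\nabla\eta^2)\;dv$. Using $(\nabla a)_i=\sum_j\partial_j A_{ij}$ and integrating by parts in $v$ (legitimate since $\eta\in C^2_c$, so $\nabla\eta^2$ is compactly supported), one obtains
\begin{align*}
  \int_{\mathbb{R}^d}\overline{\phi}(f)^2(\nabla a,\nabla\eta^2)\;dv = -\int_{\mathbb{R}^d}\big(A\nabla(\overline{\phi}(f)^2),\nabla\eta^2\big)\;dv - \int_{\mathbb{R}^d}\overline{\phi}(f)^2\,\tr(AD^2\eta^2)\;dv.
\end{align*}
Then I would expand $\nabla(\overline{\phi}(f)^2)=2\overline{\phi}(f)\nabla\overline{\phi}(f)$ and $\nabla\eta^2=2\eta\nabla\eta$, and insert the product-rule identity $\eta\nabla\overline{\phi}(f)=\nabla(\eta\overline{\phi}(f))-\overline{\phi}(f)\nabla\eta$, which turns the first term on the right-hand side into $-4\int_{\mathbb{R}^d}\overline{\phi}(f)\big(A\nabla(\eta\overline{\phi}(f)),\nabla\eta\big)\;dv+4\int_{\mathbb{R}^d}\overline{\phi}(f)^2(A\nabla\eta,\nabla\eta)\;dv$. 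Collecting the three terms, multiplying through by $2c_p$, and integrating over $(t_1,t_2)$ gives precisely the claimed formula for $(\textnormal{IV})_1$.

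The only point needing care is the legitimacy of the integration by parts at the regularity at hand. When $\gamma\in[-d,-2)$ the solution $f$ is classical and there is nothing to check; when $\gamma\in[-2,0]$ one proceeds as in Proposition~\ref{prop:basic energy inequality}, using that $\phi=\phi_{p,h}$ is globally Lipschitz with bounded first derivative and that $\nabla a$ is controlled (it is the gradient of a Riesz potential of an $L^1$ function), so all the integrands are locally integrable and the cutoff $\eta$ removes any difficulty at infinity. I expect this to be the only mildly delicate point — the identity for $(\textnormal{IV})_1$ itself is a routine integration-by-parts computation once $\dive A=\nabla a$ is in hand.
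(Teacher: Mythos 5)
Your proof is correct and follows essentially the same approach as the paper: the same split of $(\textnormal{IV})$ into the ``quadratic'' part $2c_p\int\overline{\phi}(f)^2(\nabla a,\nabla\eta^2)$ and a remainder controlled pointwise by \eqref{eqn:bound difference phi and overline phi}, and the same integration by parts using $\nabla a=\dive A$ followed by the product-rule expansion of $\nabla(\overline{\phi}(f)^2)$ and $\eta\nabla\overline{\phi}(f)=\nabla(\eta\overline{\phi}(f))-\overline{\phi}(f)\nabla\eta$. The added remarks on the legitimacy of the integration by parts are a reasonable (and harmless) elaboration the paper leaves implicit.
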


  \begin{proof}
   Firstly, let us write
  \begin{align*}
    (\textnormal{IV}) = 2c_p\int_{\mathbb{R}^d}\overline{\phi}(f)^2(\nabla a,\nabla \eta^2)\;dv+2\int_{\mathbb{R}^d}(\phi(f)-c_p\overline{\phi}(f)^2)(\nabla a,\nabla \eta^2)\;dv,
  \end{align*}
  and set
  \begin{align*}
    (\textnormal{IV})_1 = 2c_p\int_{\mathbb{R}^d}\overline{\phi}(f)^2(\nabla a,\nabla \eta^2)\;dv,\;\;(\textnormal{IV})_2  = 2\int_{\mathbb{R}^d}(\phi(f)-c_p\overline{\phi}(f)^2)(\nabla a,\nabla \eta^2)\;dv.   
  \end{align*}
  Then, for $(\textnormal{IV})_1$, we integrate by parts, obtaining
  \begin{align*}
    (\textnormal{IV})_1 & = 2c_p\int_{\mathbb{R}^d}\dive(A \overline{\phi}(f)^2 (\nabla \eta^2 ) ) - \tr(AD(\overline{\phi}(f)^2\nabla \eta^2)) \;dv	\\
    & = -2c_p\int_{\mathbb{R}^d}\overline{\phi}(f)^2 \tr(AD^2\eta^2)\;dv-2c_p\int_{\mathbb{R}^d}(A\nabla \overline{\phi}(f)^2,\nabla \eta^2)\;dv.
  \end{align*}	
  Furthermore, using the elementary identity,
  \begin{align*}
    (A\nabla \overline{\phi}(f)^2,\nabla \eta^2 ) & = 4\overline{\phi}(f)\eta (A\nabla \overline{\phi}(f),\nabla \eta)\\
    & = 4\overline{\phi}(f) (A\nabla (\eta \overline{\phi}(f)),\nabla \eta)-4\overline{\phi}(f)^2  (A\nabla \eta,\nabla \eta),
  \end{align*}
  it follows that
  \begin{align*}
    (\textnormal{IV})_1 & = -2c_p\int_{\mathbb{R}^d}\overline{\phi}(f)^2 \tr(AD^2\eta^2)\;dv\\
    &\;\;\;\;-8c_p\int_{\mathbb{R}^d}\overline{\phi}(f) (A\nabla (\eta \overline{\phi}(f)),\nabla \eta)\;dv+8c_p\int_{\mathbb{R}^d}\overline{\phi}(f)^2  (A\nabla \eta,\nabla \eta)\;dv.
  \end{align*}  
  As for $(\textnormal{IV})_2$, one has
  \begin{align*}
    |(\textnormal{IV})_2|\leq 2\int_{t_1}^{t_2}\int_{\mathbb{R}^d}|\phi(f)-c_p\overline{\phi}(f)^2||\nabla a||\nabla \eta^2|\;dvdt.
  \end{align*}
  Then, using the pointwise inequality \eqref{eqn:bound difference phi and overline phi} from Lemma \ref{lem:properties of approximations to p-th power}, we obtain
  \begin{align*}
    |(\textnormal{IV})_2|\leq \frac{p+4}{2}\int_{t_1}^{t_2}\int_{\{f>h\}} (h+1)^{p-1}(f-h)|\nabla a||\nabla \eta^2|\;dvdt.
  \end{align*}
  \end{proof}

  In the following Lemma we intend to use Propositions \ref{prop:basic energy inequality} and \ref{prop:energy inequality term IV} with $\phi = \phi_{p,h}$ as above to prove an energy inequality. The $\varepsilon$-Poincar\'e inequality is used as well.
  \begin{lem}\label{lem:energy inequality with bad term} 
    Let $f$ be a weak solution in $(0,T)$ (if $\gamma \in (-2,0]$) or else a classical solution satisfying Assumption \ref{Assumption:Epsilon Poincare} if $\gamma \in [-d,-2]$. Let, $T_1,T_2,T_3$ be times such that $0<T_1<T_2<T_3<T$ and $\phi=\phi_{p,h}$ be as in Definition \ref{def:Definitions of approximations to p-th power I} with $p\geq p_0$ for some $p_0>1$. Then, we have that the quantity
    \begin{align*}
      \sup \limits_{(T_2,T_3)}\int_{\mathbb{R}^d}\eta^2 \phi(f(t))\;dv+\frac{1}{4}\int_{T_2}^{T_3}\int_{\mathbb{R}^d}(A\nabla (\eta \overline{\phi}(f)),\nabla (\eta \overline{\phi}(f) )\;dvdt,
    \end{align*}
    is no larger than
    \begin{align*}
      & \frac{1}{T_2-T_1}\int_{t_1}^{t_2}\int_{\mathbb{R}^d}\eta^2 \phi(f)(t)\;dvdt +C(p_0)\left (\|\nabla \eta\|_\infty^2+\|D^2\eta^2\|_\infty \right )\int_{T_1}^{T_3}\int_{\textnormal{spt}(\eta)}\phi(f)a\;dvdt\\
      & +\tfrac{p}{2}\Lambda_f\left (\tfrac{1}{2p}\right)\int_{T_1}^{T_3}\int_{\mathbb{R}^d}(\eta \overline{\phi}(f))^2\;dvdt+\frac{p+4}{2}\int_{T_1}^{T_3}\int_{\{f>h\}} (h+1)^{p-1}(f-h)|\nabla a||\nabla \eta^2|\;dvdt.	
    \end{align*}
  \end{lem}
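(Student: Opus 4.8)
The plan is to test the equation, in the integral form of Definition \ref{def:solution}, against $\eta^2\phi'(f)$ with $\phi=\phi_{p,h}$ as in Remark \ref{rem:Properties of approximations to p-th power I} (an admissible choice: $\phi''$ vanishes for large arguments and $\phi(s)/s\to\phi'(0)=0$ as $s\to0$), which gives for $t_1<t_2$ in $(0,T)$
\begin{align*}
  \int_{\mathbb{R}^d}\eta^2\phi(f(t_2))\;dv-\int_{\mathbb{R}^d}\eta^2\phi(f(t_1))\;dv = -\int_{t_1}^{t_2}\int_{\mathbb{R}^d}(A\nabla f-f\nabla a,\nabla(\eta^2\phi'(f)))\;dvdt.
\end{align*}
I would then rewrite the right-hand side via Proposition \ref{prop:basic energy inequality} as $(\mathrm I)+(\mathrm{II})+(\mathrm{III})+(\mathrm{IV})$, and split $(\mathrm{IV})=(\mathrm{IV})_1+(\mathrm{IV})_2$ using Proposition \ref{prop:energy inequality term IV}; the piece $(\mathrm{IV})_2$ is already bounded by the ``bad term'' in the statement. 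Throughout, set $\psi:=\eta\,\overline{\phi}(f)$, which is compactly supported with $\int(A\nabla\psi,\nabla\psi)\,dv<\infty$ and $\int\psi^2\,dv<\infty$, hence an admissible input to the $\varepsilon$-Poincar\'e inequality by Remark \ref{rem:compactness of supports and inequalities}.

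The heart of the matter is then a careful sign count. Term $(\mathrm I)$ produces the coercive $-\int(A\nabla\psi,\nabla\psi)$ together with a cross term $2\int\overline{\phi}(f)(A\nabla\psi,\nabla\eta)$; $(\mathrm{IV})_1$ produces a second cross term $-8c_p\int\overline{\phi}(f)(A\nabla\psi,\nabla\eta)$, with $c_p=\tfrac{p}{4(p-1)}$, as well as terms of the form $\int\overline{\phi}(f)^2(A\nabla\eta,\nabla\eta)$ and $\int\overline{\phi}(f)^2\tr(AD^2\eta^2)$; and $(\mathrm{II})$ produces $-\int\overline{\phi}(f)^2(A\nabla\eta,\nabla\eta)\le 0$ and $\int\phi(f)\tr(AD^2\eta^2)$. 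I would absorb the two cross terms using Young's inequality in the $A^{1/2}$ pairing, $2|\overline{\phi}(f)(A\nabla\psi,\nabla\eta)|\le\delta(A\nabla\psi,\nabla\psi)+\delta^{-1}\overline{\phi}(f)^2(A\nabla\eta,\nabla\eta)$, with $\delta=\delta(p_0)$ (legitimate since $c_p\le c_{p_0}$) chosen so that they together draw off $\tfrac12\int(A\nabla\psi,\nabla\psi)$. For $(\mathrm{III})=\int h\eta^2\underline{\phi}(f)$, I would use $\underline{\phi}(f)\le\tfrac p2\overline{\phi}(f)^2$ from \eqref{eqn:upper bounds for under and over bar phi} and then apply the $\varepsilon$-Poincar\'e inequality to $\psi$ with $\varepsilon=\tfrac1{2p}$, which contributes exactly $\tfrac14\int(A\nabla\psi,\nabla\psi)+\tfrac p2\Lambda_f(\tfrac1{2p})\int\psi^2$. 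A total of $\tfrac12+\tfrac14$ of the coercive term is thus cancelled against $(\mathrm I)$, leaving $\tfrac14\int(A\nabla\psi,\nabla\psi)$ on the good side. All remaining lower-order pieces are then bounded using $\overline{\phi}(f)^2\le 8\phi(f)$ (again \eqref{eqn:upper bounds for under and over bar phi}) together with $(A\nabla\eta,\nabla\eta)\le a|\nabla\eta|^2$ and $|\tr(AD^2\eta^2)|\le\|D^2\eta^2\|_\infty\, a$ (valid since $A\ge0$ and $\tr A=a$), so that they all collapse into the term $C(p_0)(\|\nabla\eta\|_\infty^2+\|D^2\eta^2\|_\infty)\int_{\mathrm{spt}(\eta)}\phi(f)\,a$ of the statement.

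Collecting everything yields, for all $T_1\le t_1\le t_2\le T_3$,
\begin{align*}
  \int_{\mathbb{R}^d}\eta^2\phi(f(t_2))\;dv+\tfrac14\int_{t_1}^{t_2}\int_{\mathbb{R}^d}(A\nabla\psi,\nabla\psi)\;dvdt\le\int_{\mathbb{R}^d}\eta^2\phi(f(t_1))\;dv+\mathcal R,
\end{align*}
with $\mathcal R$ the sum of the last three terms of the statement, integrated over the larger interval $(T_1,T_3)$ (which only increases them, all integrands being nonnegative). Averaging $t_1$ over $(T_1,T_2)$ turns $\int\eta^2\phi(f(t_1))$ into the first term on the right of the statement; since $\int_{t_1}^{t_2}\ge\int_{T_2}^{t_2}$ whenever $t_1<T_2$, discarding $\int\eta^2\phi(f(t_2))\ge0$ and letting $t_2\to T_3$ controls $\tfrac14\int_{T_2}^{T_3}\int(A\nabla\psi,\nabla\psi)$, while keeping $t_2$ free controls $\sup_{(T_2,T_3)}\int\eta^2\phi(f)$; adding gives the stated bound, up to a universal constant that I would absorb into $C(p_0)$. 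The main obstacle is precisely this sign count: one must verify that after spending $\tfrac14$ of the coercivity on $(\mathrm{III})$ through $\varepsilon$-Poincar\'e and $\tfrac12$ on the two cross terms through Young's inequality, a strictly positive multiple of $\int(A\nabla\psi,\nabla\psi)$ still survives, and that \emph{every} contribution carrying $\nabla\eta$ or $D^2\eta^2$ is dominated by $\int\phi(f)\,a$ and not by anything involving $\nabla a$. The one term that genuinely resists absorption is $(\mathrm{IV})_2$, with its factor $|\nabla a|$; it is simply carried along as the ``bad term,'' to be tamed at the next stage of the iteration.
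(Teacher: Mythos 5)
Your proposal follows the paper's own proof essentially step for step: decompose via Propositions \ref{prop:basic energy inequality} and \ref{prop:energy inequality term IV}, absorb the two cross terms in $(\mathrm{I})$ and $(\mathrm{IV})_1$ by Young's inequality in the $A^{1/2}$ pairing with parameters depending only on $p_0$ (the paper's $\delta_1=1/4$, $\delta_2=(16c_p)^{-1}$), bound $(\mathrm{III})$ via $\underline\phi\le\tfrac p2\overline\phi^2$ and the $\varepsilon$-Poincar\'e inequality at $\varepsilon=1/(2p)$ applied to $\psi=\eta\overline\phi(f)$, retaining exactly $\tfrac14$ of the coercivity, then average over $t_1\in(T_1,T_2)$. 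Your accounting of the coercivity budget, your identification of $(\mathrm{IV})_2$ with the ``bad term,'' and your absorption of all $\nabla\eta$, $D^2\eta^2$ contributions into $C(p_0)(\|\nabla\eta\|_\infty^2+\|D^2\eta^2\|_\infty)\int\phi(f)\,a$ via $\overline\phi(f)^2\lesssim\phi(f)$ all match the paper, so the argument is correct and not a genuinely different route.
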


  \begin{proof}
    Fix times $t_1,t_2$. From the definition of weak solution, and Proposition \ref{prop:basic energy inequality},
    \begin{align}\label{eqn:energy inequality basic expression}	
      \int_{\mathbb{R}^d}\eta^2 \phi(f(t_2))\;dv-\int_{\mathbb{R}^d}\eta^2 \phi(f(t_1))\;dv & = \textnormal{(I)}+\textnormal{(II)}+\textnormal{(III)}+\textnormal{(IV)}.	
    \end{align}	
    Let us bound each of the terms $\textnormal{(I)}-\textnormal{(IV)}$.  For every $\delta>0$ we have the elementary inequality	
    \begin{align*}
      |\overline{\phi}(f)(A\nabla (\eta \overline{\phi}(f)),\nabla \eta)| \leq \frac{\delta}{2}(A\nabla (\eta \overline{\phi}(f)),\nabla (\eta \overline{\phi}(f)))+ \frac{1}{2\delta}\overline{\phi}(f)^2(A\nabla \eta,\nabla \eta),
    \end{align*}
    which we shall use twice to bound the terms $(\textnormal{I})$ and $(\textnormal{IV})_1$ defined in Proposition \ref{prop:basic energy inequality} and Proposition \ref{prop:energy inequality term IV}. Then, for positive $\delta_1$ and $\delta_2$ (to be determined later), we have
    \begin{align*}
      (\textnormal{I}) & \leq -(1-\delta_1)\int_{t_1}^{t_2}\int_{\mathbb{R}^d}(A\nabla (\eta \overline{\phi}(f)),\nabla (\eta \overline{\phi}(f)))\;dvdt+\frac{1}{\delta_1}\int_{t_1}^{t_2}\int_{\mathbb{R}^d}\overline{\phi}(f)^2(A\nabla \eta,\nabla \eta)\;dvdt\\
      & \leq -(1-\delta_1)\int_{t_1}^{t_2}\int_{\mathbb{R}^d}(A\nabla (\eta \overline{\phi}(f)),\nabla (\eta \overline{\phi}(f)))\;dvdt+\frac{1}{\delta_1}\|\nabla \eta\|_{L^\infty}^2 \int_{t_1}^{t_2}\int_{\textnormal{spt}(\eta)}\overline{\phi}(f)^2a\;dvdt.	
    \end{align*}
    and,
    \begin{align*}  
      (\textnormal{IV})_1 & \leq 4c_p\delta_2 \int_{t_1}^{t_2}\int_{\mathbb{R}^d}(A\nabla (\eta \overline{\phi}(f)),\nabla (\eta \overline{\phi}(f)) )\;dvdt\\
  	&\;\;\;\; +4c_p(2+\delta_2^{-1}) \int_{t_1}^{t_2}\int_{\mathbb{R}^d}\overline{\phi}(f)^2(A\nabla \eta,\nabla \eta)\;dvdt-2c_p\int_{t_1}^{t_2}\int_{\mathbb{R}^d}\overline{\phi}(f)^2\tr(AD^2\eta^2)\;dvdt\\
      & \leq 4c_p \delta_2 \int_{t_1}^{t_2}\int_{\mathbb{R}^d}(A\nabla (\eta \overline{\phi}(f)),\nabla (\eta \overline{\phi}(f)) )\;dvdt\\
      & \;\;\;\;+4c_p\left  ( 2+\delta_2^{-1}\right )\left (\|\nabla \eta\|_{L^\infty}^2+\|D^2\eta^2\|_{L^\infty} \right )\int_{t_1}^{t_2}\int_{\textnormal{spt}(\eta)}\overline{\phi}(f)^2a\;dvdt.
    \end{align*}
    On the other hand, applying inequality \eqref{eqn:upper bounds for under and over bar phi} from Lemma \ref{lem:properties of approximations to p-th power}, it follows that for $h$ large enough
    \begin{align*}
      (\textnormal{II}) \leq 8\left ( \|\nabla \eta\|_{L^\infty}^2 + \|D^2\eta^2\|_{L^\infty} \right )\int_{t_1}^{t_2}\int_{\textnormal{spt}(\eta)}\phi(f)a\;dvdt.	  
    \end{align*}	  
    Then, choosing $\delta_1 = 1/4$ and $\delta_2 = (4c_p)^{-1}/4$, it follows that
    \begin{align*}
      (\textnormal{I})+(\textnormal{II})+(\textnormal{IV}) & \leq -\frac{1}{2}\int_{t_1}^{t_2}\int_{\mathbb{R}^d}(A\nabla (\eta \overline{\phi}(f)),\nabla (\eta \overline{\phi}(f) )\;dvdt\\
      & \;\;\;\;+8c_p(6+16 c_p )\left (\|\nabla \eta\|_{L^\infty}^2+\|D^2\eta^2\|_{L^\infty} \right )\int_{t_1}^{t_2}\int_{\textnormal{spt}(\eta)}\phi(f)a\;dvdt+(\textnormal{IV})_2.			
    \end{align*}
    Again by Lemma \ref{lem:properties of approximations to p-th power}, provided $h$ is sufficiently large, we have
    \begin{align*}
      \int_{t_1}^{t_2}\int_{\mathbb{R}^d}\eta^2 \underline{\phi}(f)h\;dvdt \leq \frac{p}{2}\int_{t_1}^{t_2}\int_{\mathbb{R}^d} (\eta \overline{\phi}(f))^2h\;dvdt.
    \end{align*}
    Next, we apply the $\varepsilon$-Poincar\'e with $\varepsilon = \frac{1}{2p}$,
    \begin{align*}
      (\textnormal{III}) \leq \frac{1}{4}\int_{t_1}^{t_2}\int_{\mathbb{R}^d}(A\nabla (\eta \overline{\phi}(f)),\nabla (\eta \overline{\phi}(f)))\;dvdt+\tfrac{p}{2}\Lambda_f \left ( \tfrac{1}{2p} \right )\int_{t_1}^{t_2}\int_{\mathbb{R}^d}(\eta \overline{\phi}(f))^2\;dvdt.
    \end{align*}
    Going back to \eqref{eqn:energy inequality basic expression}, we conclude that for any pair of times $t_1<t_2$, we have 
    \begin{align*}
      & \int_{\mathbb{R}^d}\eta^2 \phi(f(t_2))\;dv + \frac{1}{4}\int_{t_1}^{t_2}\int_{\mathbb{R}^d}(A\nabla (\eta \overline{\phi}(f)),\nabla (\eta \overline{\phi}(f) )\;dvdt \\
      & \leq \int_{\mathbb{R}^d}\eta^2 \phi(f(t_1))\;dv+\tfrac{p}{2}\Lambda_f (\tfrac{1}{2p})\int_{t_1}^{t_2}\int_{\mathbb{R}^d}(\eta \overline{\phi}(f))^2\;dvdt\\
      & \;\;\;\;+C(p_0)\left (\|\nabla \eta\|_{L^\infty}^2+\|D^2\eta^2\|_{L^\infty} \right )\int_{t_1}^{t_2}\int_{\textnormal{spt}(\eta)}\phi(f)a\;dvdt\\
      &\;\;\;\;+\frac{p+4}{2}\int_{t_1}^{t_2}\int_{\{f>h\}} (h+1)^{p-1}(f-h)|\nabla a||\nabla \eta^2|\;dvdt,
    \end{align*}
    where $C(p_0) := 8c_{p_0}(6+16c_{p_0}) \geq 8c_{p}(6+16c_{p})$ for all $p\geq p_0$. Observe that all terms appearing on both sides of the inequality are non-negative. Fix $t_2 \in (T_2,T_3)$ and take the average of both sides of this inequality with respect to $t_1 \in (T_1,T_2)$, the resulting inequality says then that the quantity
    \begin{align*}
      & \int_{\mathbb{R}^d}\eta^2 \phi(f(t_2))\;dv + \frac{1}{4}\int_{T_2}^{t_2}\int_{\mathbb{R}^d}(A\nabla (\eta \overline{\phi}(f)),\nabla (\eta \overline{\phi}(f) )\;dvdt
    \end{align*}
    is no larger than	
    \begin{align*}		  
      & \leq \frac{1}{T_2-T_1}\int_{T_1}^{T_2}\int_{\mathbb{R}^d}\eta^2 \phi(f)\;dvdt+\tfrac{p}{2}\Lambda_f(\tfrac{1}{2p})\int_{T_1}^{t_2}\int_{\mathbb{R}^d}(\eta \overline{\phi}(f))^2\;dvdt\\
      & \;\;\;\;+C(p_0)\left (\|\nabla \eta\|_{L^\infty}^2+\|D^2\eta^2\|_{L^\infty} \right )\int_{T_1}^{t_2}\int_{\textnormal{spt}(\eta)}\phi(f)a\;dvdt\\
      &\;\;\;\;+\frac{p+4}{2}\int_{T_1}^{t_2}\int_{\{f>h\}} (h+1)^{p-1}(f-h)|\nabla a||\nabla \eta^2|\;dvdt.
    \end{align*}
  
  \end{proof}
  
 
  Let us note that the last term in the previous Lemma ought to go to zero as $h\to \infty$ when $f$ has enough integrability (the integrability requirement being higher the larger $p$ is). Since our intention is to follow Moser's approach to De Giorgi-Nash-Moser estimates, we shall show alternate between obtaining some integrability of $f$, to showing that the expression above goes to zero for some $p$, which in turn yields better integrability, and so on. The next Lemma gives a starting point in the case $\gamma \in (-2,0]$ (for the remaining $\gamma$ we will assume $f$ is bounded and treat the result as an a priori estimate). 
  \begin{lem}\label{lem:Ld/d-1 bound for f unif in time}
  Let $\gamma \in (-2,0]$. Consider times $0<\tau<\tau'$. Then, 
   \begin{align*}
      \sup \limits_{(\tau,\tau')} \int_{\mathbb{R}^d} (f(t))^{\frac{d}{d-1}}\;dv < \infty.
    \end{align*}	
  \end{lem}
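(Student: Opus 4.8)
The plan is to use the energy inequality of Lemma \ref{lem:energy inequality with bad term} with the choice $p = \frac{d}{d-1}$ (note $\frac{d}{d-1} < 1+\frac{2}{d}$ fails in general, so more precisely one takes $p_0$ slightly below $\frac{d}{d-1}$ and works with $\phi = \phi_{p,h}$, $p=\frac{d}{d-1}$, sending $h\to\infty$ at the end), together with a space-time weighted estimate that is already available. First I would fix times $\tau < \tau'$ and choose an auxiliary interval $(T_1,T_3)$ with $0 < T_1 < \tau < \tau' < T_3 < T$, together with a cutoff $\eta \equiv 1$ on a large ball $B_R$ and supported in $B_{2R}$. Applying Lemma \ref{lem:energy inequality with bad term} with $T_2 = \tau$ (say) gives, after letting $h\to\infty$ (which is legitimate once we know $f \in L^{p_\gamma}_{t,v}$ with $p_\gamma = 1+\frac{2}{d} > \frac{d}{d-1}$ for $d\geq 2$, so the last ``bad'' term in that Lemma vanishes in the limit by dominated convergence, using $|\nabla a_{f,\gamma}|\lesssim \langle v\rangle^{1+\gamma}$ and the decay of $f$ in the $L^{p_\gamma}$ norm),
\begin{align*}
  \sup_{(\tau,\tau')}\int_{\mathbb{R}^d}\eta^2 f^p\,dv + \int_{\tau}^{\tau'}\int_{\mathbb{R}^d}(A_{f,\gamma}\nabla(\eta f^{p/2}),\nabla(\eta f^{p/2}))\,dvdt \lesssim \int_{T_1}^{T_3}\int_{\mathbb{R}^d}\eta^2 f^p\,dv\,dt + (\textnormal{cutoff terms}),
\end{align*}
where the cutoff terms involve $\int_{T_1}^{T_3}\int_{\mathrm{spt}(\eta)} f^p\, a_{f,\gamma}\,dv\,dt$ and $\tfrac{p}{2}\Lambda_f(\tfrac{1}{2p})\int (\eta f^{p/2})^2$. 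Since $\gamma\in(-2,0]$, the $\varepsilon$-Poincar\'e inequality holds with universal constants by Theorem \ref{thm:sufficient conditions for the Poincare inequality}, so $\Lambda_f(\tfrac{1}{2p})$ is universal.

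The main point is then to control the right-hand side. The term $\int_{T_1}^{T_3}\int_{\mathbb{R}^d}\eta^2 f^p\,dv\,dt$ with $p = \frac{d}{d-1}$ is bounded by interpolation between $\|f\|_{L^1} = 1$ and the space-time weighted bound $\int_{T_1}^{T_3}\int a^*_{f,\gamma} f^{1+\frac{2}{d}}\,dv\,dt \leq C(d,\gamma,\fin)$ from Proposition \ref{prop:LpLp via interpolation with weight}, using that $a^*_{f,\gamma} \gtrsim \langle v\rangle^\gamma$ by Lemma \ref{lem: A lower bound in terms of conserved quantities} and $p = \frac{d}{d-1}$ sits strictly between $1$ and $1+\frac{2}{d}$. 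More precisely one writes $f^{d/(d-1)} = f^{\alpha}\cdot (a^*_{f,\gamma}f^{1+2/d})^{\beta}\cdot (a^*_{f,\gamma})^{-\beta}$ for suitable $\alpha+\beta(1+\tfrac2d)=\tfrac{d}{d-1}$, $\alpha+\beta\cdot 1 \cdot(\text{weight exponent})$ chosen so that all powers of $\langle v\rangle$ combine into something integrable against $f$ (using the second moment bound), and applies H\"older in $v$ and then in $t$; the resulting bound is finite and depends only on $\fin, d,\gamma$ and the length of $(T_1,T_3)$. Similarly $\int_{T_1}^{T_3}\int f^p a_{f,\gamma}\,dv\,dt$ is handled the same way since $a_{f,\gamma}\lesssim\langle v\rangle^{2+\gamma}$ by Proposition \ref{prop:a vs a star gamma larger than -2}, after which $f^p\langle v\rangle^{2+\gamma}$ interpolates between $f\langle v\rangle^2\in L^\infty_tL^1_v$ and $a^*_{f,\gamma}f^{1+2/d}\in L^1_{t,v}$. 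Finally $\int(\eta f^{p/2})^2\,dv = \int\eta^2 f^{d/(d-1)}\,dv$ is the same quantity again.

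The hard part, and the step I expect to require the most care, is the passage $h\to\infty$ in the ``bad'' term $\frac{p+4}{2}\int_{T_1}^{T_3}\int_{\{f>h\}}(h+1)^{p-1}(f-h)|\nabla a_{f,\gamma}||\nabla\eta^2|\,dv\,dt$ and the legitimacy of using Lemma \ref{lem:energy inequality with bad term} before we even know $f^{d/(d-1)}$ is locally integrable in the first place. The clean way around this is a bootstrap: one starts with the a priori integrability $f\in L^{p_\gamma}_{t,v}$ with $p_\gamma = 1+\tfrac2d$ guaranteed by Proposition \ref{prop: LpLp bound via interpolation1} (no $h$-truncation issue there, since that proof is a pure interpolation of conserved quantities and entropy production), observes $(h+1)^{p-1}(f-h)\mathbf 1_{\{f>h\}} \leq f^{p}\mathbf 1_{\{f>h\}}$ with $p = \frac{d}{d-1} < 1+\tfrac2d$, so this term is dominated by $\int\int_{\{f>h\}} f^{p}|\nabla a_{f,\gamma}||\nabla\eta^2| \to 0$ as $h\to\infty$ by dominated convergence (the dominating function $f^{p}|\nabla a_{f,\gamma}||\nabla\eta^2|$ is integrable since $|\nabla a_{f,\gamma}|\lesssim\langle v\rangle^{1+\gamma}$ is bounded on $\mathrm{spt}(\nabla\eta)$ and $f^{p}\in L^1_{t,v}$ on that compact region by the interpolation just described). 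Once the bad term is removed, the inequality closes and taking $R\to\infty$ (using $\eta\to 1$ and monotone convergence) yields $\sup_{(\tau,\tau')}\int_{\mathbb{R}^d}f^{d/(d-1)}\,dv < \infty$, as claimed.
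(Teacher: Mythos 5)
Your overall strategy (energy inequality with $\phi=\phi_{p,h}$, spatial cut-off, control of the ``bad'' term, passage to the limit) is the same as the paper's, but there is a genuine gap in your treatment of the bad term for $\gamma\in(-2,-1)$, coming from an incorrect estimate and compounded by your choice of order of limits. You claim $|\nabla a_{f,\gamma}|\lesssim\langle v\rangle^{1+\gamma}$, hence that $|\nabla a_{f,\gamma}|$ is bounded on $\textnormal{spt}(\nabla\eta)$. This is correct for $\gamma\in[-1,0]$, where $1+\gamma\geq 0$ and the convolution $f*|v|^{1+\gamma}$ is controlled by mass and second moment. For $\gamma\in(-2,-1)$, however, $1+\gamma<0$: the kernel $|v|^{1+\gamma}$ is singular at the origin and $|\nabla a_{f,\gamma}|\sim f*|v|^{1+\gamma}$ need not be pointwise bounded when $f$ is only an $L^1\cap L\log L$ weak solution. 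The paper treats this range separately: it splits $|\nabla a|\leq\tilde a_1+\tilde a_2$ (near field and far field), observes $\tilde a_2\in L^\infty$ while $\tilde a_1\in L^{q_\gamma}$ with $q_\gamma=\tfrac{d}{-\gamma-1}>d$, and then shows $\int f|\nabla a|<\infty$ by Young's inequality $f\tilde a_1\leq f^{q_\gamma'}+\tilde a_1^{q_\gamma}$ using $q_\gamma'=\tfrac{q_\gamma}{q_\gamma-1}<\tfrac{d}{d-1}\leq p_\gamma$.

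Your order of limits makes this worse. The paper sends $R\to\infty$ first with $h$ fixed, so the bad term is $\lesssim(h+1)^{p-1}\|\nabla\eta_R^2\|_\infty\int f|\nabla a|$ over a region escaping to infinity, which only needs $\int f|\nabla a|<\infty$. You send $h\to\infty$ first with $\eta$ fixed, which requires dominated convergence with majorant $Cf^p|\nabla a||\nabla\eta^2|$, $p=\tfrac{d}{d-1}$, i.e.\ local integrability of $f^{d/(d-1)}|\nabla a|$. Via H\"older, with $\tilde a_1\in L^{q_\gamma}$, this would require $p\,q_\gamma'\leq p_\gamma$, and that fails for $\gamma$ close to $-2$: at $\gamma=-2$ one has $q_\gamma'=p_\gamma=\tfrac{d}{d-1}$, so $p\,q_\gamma'=\tfrac{d^2}{(d-1)^2}>p_\gamma$ (for $d=3$ the threshold is roughly $\gamma\approx -1.3$). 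Reversing the order of limits as in the paper, and supplying the $\tilde a_1,\tilde a_2$ case analysis for $\gamma<-1$, is what is needed to close the argument. (Also, as a minor point, your parenthetical ``$\tfrac{d}{d-1}<1+\tfrac{2}{d}$ fails in general'' is backwards: the inequality holds for all $d>2$, which is the regime of interest; and the paper works with $p=p_\gamma$ throughout and only at the end uses $p_\gamma\geq \tfrac{d}{d-1}$, which is cleaner than interpolating against the weighted $L^{1+2/d}$ bound.)
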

  
  \begin{proof}
  Let $\phi = \phi_{p,h}$, be as in Definition \ref{def:Definitions of approximations to p-th power I} with $p = p_\gamma$ as in Proposition \ref{prop: LpLp bound via interpolation1}. For $R>1$ let $\eta_R$ be a smooth function such that $0\leq \eta_R \leq 1$ in $\mathbb{R}^d$ and such that
  \begin{align*}
    \eta_R \equiv 1 \textnormal{ in } B_R, \;\eta_R \equiv 0 \textnormal{ in } \mathbb{R}^d\setminus B_{2R},\;\;|\nabla \eta_R|^2+|D^2\eta_R| \leq CR^{-2}.			  
  \end{align*}	  
  Lemma \ref{lem:energy inequality with bad term} says that 
  \begin{align*}
      & \int_{\mathbb{R}^d}\eta_R^2 \phi(f(t_2))\;dv + \frac{1}{4}\int_{T_2}^{t_2}\int_{\mathbb{R}^d}(A\nabla (\eta_R \overline{\phi}(f)),\nabla (\eta_R \overline{\phi}(f) )\;dvdt \\
      & \leq \frac{2}{T_2-T_1}\int_{T_1}^{T_2}\int_{\mathbb{R}^d}\eta_R^2 \phi(f)\;dvdt+p \Lambda_f  (\tfrac{1}{2p})\int_{T_1}^{T_3}\int_{\mathbb{R}^d}(\eta_R \overline{\phi}(f))^2\;dvdt\\
      & \;\;\;\;+2C(p_\gamma)\left (\|\nabla \eta_R\|_{L^\infty}^2+\|D^2\eta_R^2\|_{L^\infty} \right )\int_{T_1}^{T_3}\int_{\textnormal{spt}(\nabla \eta_R)}\phi(f)a\;dvdt\\
      &\;\;\;\;+(p+4)\int_{T_1}^{T_3}\int_{\{f>h\}} (h+1)^{p-1}(f-h)|\nabla a||\nabla \eta_R^2|\;dvdt. 
  \end{align*}
  Since $\gamma \in [-2,0]$, Proposition \ref{prop:a vs a star gamma larger than -2} yields that $R^{-2}a \leq C(d,\gamma,\fin) \astar$ in $B_{2R}\setminus B_R$, it follows that the term
  \begin{align*}
  \left (\|\nabla \eta_R\|_{L^\infty}^2+\|D^2\eta_R^2\|_{L^\infty} \right )\int_{T_1}^{T_3}\int_{\textnormal{spt}(\nabla \eta_R)}\phi(f)a\;dvdt,
  \end{align*}
  is bounded by
  \begin{align*}
    C(d,\gamma,\fin)\int_{T_1}^{T_3} \int_{\textnormal{spt}(\nabla \eta_R)} \phi(f) \astar \;dvdt.
  \end{align*}
  From the integrability of $f\astar$ over the whole space (which follows  from the upper bound for $\astar$ and the second moment bound for $f)$ it follows that the above integral converges to zero as $R\to \infty$, therefore 
  \begin{align*}
    \lim\limits_{R\to\infty }\left (\|\nabla \eta_R\|_{L^\infty}^2+\|D^2\eta_R^2\|_{L^\infty} \right )\int_{T_1}^{T_3}\int_{\textnormal{spt}(\nabla \eta_R)}\phi(f)a\;dvdt = 0.
  \end{align*}
  Next, we consider the term
  \begin{align*}
    \int_{T_1}^{T_3} \int_{\textnormal{spt}(\nabla \eta_R)} (h+1)^{p-1}(f-h)_+|\nabla a||\nabla \eta_R^2|\;dvdt.
  \end{align*}
  Always keeping in mind that
  \begin{align*}
    |\nabla a[f] | \le \int \frac{f(y)}{|v-y|^{-\gamma-1}}\;dy.
  \end{align*}
  If $\gamma \geq -1$, then arguing as in Proposition \ref{prop:a vs a star gamma larger than -2} (using the second moment bound for $f$) it is not easy to see that $|\nabla a[f]|\leq C(d,\gamma,\fin) \langle v\rangle^{1+\gamma}$ for all $v$. Since $1+\gamma\leq 2$ in any case, using again the second moment bound for $f$  it follows that (with $h$ fixed)
  \begin{align*}
    \lim \limits_{R \to \infty}\int_{T_1}^{T_3} \int_{\textnormal{spt}(\nabla \eta_R)}(h+1)^{p-1}(f-h)_+|\nabla a||\nabla \eta_R^2|\;dvdt = 0.
  \end{align*}
  Next, we prove the same limit as above in the case $-2 <\gamma<-1$. Let us show that
  \begin{align*}
    \int_{T_1}^{T_3}\int_{\mathbb{R}^d}f|\nabla a| \;dvdt<\infty.
  \end{align*}
  From the definition of $\nabla a$, we have $|\nabla a(v)| \leq \tilde a_1+\tilde a_2$, where
  \begin{align*}
    \tilde a_1 & := C(d,\gamma)\int_{B_1(v)}f(w)|v-w|^{1+\gamma}\;dw,\\
    \tilde a_2 & := C(d,\gamma)\int_{\mathbb{R}^d\setminus B_1(v)}f(w)|v-w|^{1+\gamma}\;dw.
  \end{align*}
  Since $1+\gamma\leq 0$ and $|v-w|\geq 1$, we have for all times
  \begin{align*}
    \tilde a_2(v,t) = C(d,\gamma)\int_{\mathbb{R}^d\setminus B_1(v)}f(w,t)|v-w|^{1+\gamma}\;dw \leq C(d,\gamma)\|f\|_{L^1(\mathbb{R}^d)}. 
  \end{align*}	  
  From here, it is immediate that
  \begin{align*}
    \int_{T_1}^{T_3}\int_{\mathbb{R}^d}f \tilde a_2 \;dvdt<\infty.
  \end{align*}
  Let us now deal with the term $\tilde a_1$. Given that $\gamma \in (-2,-1)$, we have
  \begin{align*}
    C(d,\gamma)\chi_{B_1(0)}|v|^{1+\gamma} \in L^{q_\gamma}(\mathbb{R}^d),\;\; q_\gamma := \frac{d}{-\gamma-1}    . 
  \end{align*}
  Therefore, by the preservation of the $L^1(\mathbb{R}^d)$ norm of $f(t)$, we conclude that
  \begin{align*}
    \| \tilde a_1 \|_{L^\infty(T_1,T_3,L^{q_\gamma}(\mathbb{R}^d) )} <\infty. 
  \end{align*}
  With this in mind, we apply Young's inequality, which yields 
  \begin{align*}
    f\tilde a_1 \leq f^{\frac{q_\gamma}{q_\gamma-1}} + \tilde a_1^{q_\gamma}.
  \end{align*}
  Therefore, 
  \begin{align*}
    & \int_{T_1}^{T_3} \int_{\mathbb{R}^d} f \tilde a_1 \;dvdt \leq  \int_{T_1}^{T_3} \int_{\mathbb{R}^d} f^{\frac{q_\gamma}{q_\gamma-1}} \;dvdt + \int_{T_1}^{T_3} \int_{\mathbb{R}^d} \tilde a_1^{q_\gamma }\;dvdt.
  \end{align*}
  Now, recall that by Proposition \ref{prop: LpLp bound via interpolation1} we know that $\int_{T_1}^{T_3}\int_{\mathbb{R}^d} f^{p_\gamma}\;dvdt$ is finite. Moreover, from the definition of $p_\gamma$ and the fact that $q_\gamma>d$ we have  $\frac{q_\gamma}{q_\gamma-1}<p_\gamma$, it follows that
  \begin{align*}
    \int_{T_1}^{T_3} \int_{\mathbb{R}^d} f^{\frac{q_\gamma}{q_\gamma-1}} \;dvdt  <\infty.
  \end{align*}
  At the same time, from the bound for $\tilde a_1$, we have
  \begin{align*}
    \int_{T_1}^{T_3}\int_{\mathbb{R}^d}\tilde a_1^{q_\gamma} \;dvdt  = \int_{T_1}^{T_3} \|\tilde a_1(t)\|_{q_\gamma}^{g_\gamma}\;dt <\infty.
  \end{align*}
  In either case, for $\gamma \in [-2,0]$ and with $h>0$ fixed, we have
  \begin{align*}
    \lim\limits_{R\to\infty} \int_{T_1^{t_2}}\int_{\textnormal{spt}(\nabla \eta_R)}(h+1)^{p-1}(f-h)_+|\nabla a||\nabla \eta_R^2|\;dvdt = 0.
  \end{align*}
  Therefore,
  \begin{align*}
      \int_{\mathbb{R}^d}\phi(f(t_2))\;dv \leq \frac{2}{T_2-T_1}\int_{T_1}^{T_2}\int_{\mathbb{R}^d} \phi(f)\;dvdt+p\Lambda_f(\tfrac{1}{2p})\int_{T_1}^{T_3}\int_{\mathbb{R}^d}\phi(f)\;dvdt.
  \end{align*}  
  Taking the sup with respect to $t_2 \in (T_2,T_3)$ we get 
   \begin{align*}
    \sup_{t\in(T_2,T_3)}  \int_{\mathbb{R}^d}\phi(f)(t)\;dv  \leq \left( \frac{2}{T_2-T_1} \right)\int_{T_1}^{T_3}\int_{\mathbb{R}^d}\phi(f)\;dvdt+p\Lambda_f(\tfrac{1}{2p})\int_{T_1}^{T_3}\int_{\mathbb{R}^d} \phi(f)\;dvdt
    \end{align*}
  It remains to take the limit as $h\to 0^+$, recalling that $\phi = \phi_{p_\gamma,h}$. Using Proposition \ref{prop: LpLp bound via interpolation1} and Lemma \ref{lem:properties of approximations to p-th power}, we conclude that 
  \begin{align*}
    & \limsup \limits_{h\to 0^+} \left \{ \left( \frac{2}{T_2-T_1} \right)\int_{T_1}^{T_3}\int_{\mathbb{R}^d}\phi_{p,\gamma}(f)\;dvd+p\Lambda_f(\tfrac{1}{2p})\int_{T_1}^{T_3}\int_{\mathbb{R}^d} \phi_{p_\gamma,h}(f)\;dvdt  \right \} \\
    & \leq \left ( \frac{2}{T_2-T_1}+p\Lambda_{f}(\tfrac{1}{2p})\right ) \int_{T_1}^{T_3} \int_{\mathbb{R}^d}f^{p_\gamma}\;dvdt \leq C(d,\gamma,\fin)  \left ( \frac{1}{T_2-T_1}+p\Lambda_{f}(\tfrac{1}{2p})\right ) 
  \end{align*}
  At the same time, 
  \begin{align*}
    \sup_{t\in(T_2,T_3)} & \int_{\mathbb{R}^d}\frac{1}{p_\gamma}f^{p_\gamma}(t)\;dv  \leq \limsup\limits_{h\to 0^+} \sup_{t\in(T_2,T_3)}  \int_{\mathbb{R}^d}\phi_{p_\gamma,h}(f)(t)\;dv.
  \end{align*}
  In conclusion, 
  \begin{align*}
    \sup_{t\in(T_2,T_3)} & \int_{\mathbb{R}^d}f^{p_\gamma}(t)\;dv  \leq C(d,\gamma,\fin)  \left ( \frac{1}{T_2-T_1}+p\Lambda_{f}(\tfrac{1}{2p})\right ) < \infty. 
  \end{align*}
  It is not hard to see that $p_\gamma \geq \frac{d}{d-1}$ with $p_\gamma$ as in Proposition \ref{prop: LpLp bound via interpolation1}, in the case $\gamma \in [-2,0]$. From here, the proposition follows by interpolation with the $L^1$ norm. 
  \end{proof}

  \begin{prop}\label{prop:energy inequality negligible term}
    Let $p=1+\tfrac{2}{d}$, $\eta \in C^2_c(\mathbb{R}^d)$ and times $\tau<\tau'$. Suppose that $f$ is a weak solution and either 1) $\gamma \in (-2,0]$ or 2) $\gamma \in [-d,-2]$ and $f$ is bounded.  Then,
    \begin{align*}
      \lim\limits_{h\to\infty}\int_{\tau}^{\tau'}\int_{\{f>h\}}(h+1)^{p-1}(f-h)_+|\nabla a||\nabla \eta^2|\;dvdt = 0.
    \end{align*}

  \end{prop}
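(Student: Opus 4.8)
The plan is to treat the two alternatives separately. The case $\gamma\in[-d,-2]$ with $f$ bounded is essentially immediate: if $M:=\|f\|_{L^\infty((\tau,\tau')\times\mathbb{R}^d)}<\infty$, then $\{f>h\}$ has Lebesgue measure zero in $(\tau,\tau')\times\mathbb{R}^d$ for every $h>M$, so the integral in the statement is identically zero once $h>M$, and hence its limit is $0$. (Measurability and finiteness of the integrand pose no problem: $\nabla a_{f,\gamma}$ is locally integrable, being up to a constant the convolution of $f\in L^1$ with the locally integrable kernel $|v|^{1+\gamma}$, and $\nabla\eta^2\in C_c$.) So the substance is the case $\gamma\in(-2,0]$.

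For $\gamma\in(-2,0]$ I would fix $R$ with $\textnormal{spt}(\eta)\subset B_R$, set $p=1+\tfrac2d$, and begin from the elementary pointwise estimate: for $h\geq 1$ one has $h+1\leq 2f$ and $(f-h)_+\leq f$ on $\{f>h\}$, so
\begin{align*}
  (h+1)^{p-1}(f-h)_+\leq 2^{p-1}f^{p}\qquad\text{on }\{f>h\},\ h\geq 1.
\end{align*}
Thus for $h\geq1$ the integrand is dominated by $2^{p-1}\|\nabla\eta^2\|_{L^\infty}\,f^{p}\,|\nabla a_{f,\gamma}|\,\chi_{B_R}\,\chi_{\{f>h\}}$. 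It then suffices to prove \emph{(a)} $\int_{\tau}^{\tau'}\int_{B_R}f^{p}\,dv\,dt<\infty$ and \emph{(b)} $\nabla a_{f,\gamma}$ is bounded on $B_R$, uniformly in $t\in(\tau,\tau')$. Granting these, $f^{p}|\nabla a_{f,\gamma}|\chi_{B_R}$ is a fixed $L^1((\tau,\tau')\times\mathbb{R}^d)$ majorant while $\chi_{\{f>h\}}\downarrow 0$ a.e.\ (since $f<\infty$ a.e.), so dominated convergence gives the limit $0$.

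Claim \emph{(a)} follows from Proposition~\ref{prop: LpLp bound via interpolation1}: since $p=1+\tfrac2d\leq p_\gamma$ and $|B_R|<\infty$, H\"older in space–time bounds $\|f\|_{L^p((\tau,\tau')\times B_R)}$ by a multiple of $\|f\|_{L^{p_\gamma}((0,T)\times\mathbb{R}^d)}<\infty$ (alternatively, combine Proposition~\ref{prop:LpLp via interpolation with weight} with $a^*_{f,\gamma}\geq c\langle v\rangle^\gamma$ and $\gamma\leq0$). For \emph{(b)} I would use $|\nabla a_{f,\gamma}(v)|\leq C(d,\gamma)\int f(w)|v-w|^{1+\gamma}\,dw$. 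If $\gamma\in[-1,0]$ then $1+\gamma\in[0,1]$, and splitting over $\{|v-w|\leq1\}$ and $\{|v-w|>1\}$ and using $\|f\|_{L^1}=1$ together with the conserved second moment yields $|\nabla a_{f,\gamma}(v,t)|\leq C(d,\gamma)\langle v\rangle^2$ uniformly in $t$, hence bounded on $B_R$. If $\gamma\in(-2,-1)$, I would split $|\nabla a_{f,\gamma}|\leq\tilde a_1+\tilde a_2$ as in Lemma~\ref{lem:Ld/d-1 bound for f unif in time}, with $\tilde a_2\leq C\|f\|_{L^1}=C$ (using $1+\gamma<0$) and $\tilde a_1(v,t)=C\int_{B_1(v)}f(w)|v-w|^{1+\gamma}\,dw$; invoking the time–uniform bound $\sup_t\int f(t)^{p_\gamma}\,dv<\infty$ from Lemma~\ref{lem:Ld/d-1 bound for f unif in time} and H\"older with exponents $p_\gamma$ and $p_\gamma'=p_\gamma/(p_\gamma-1)$ on $B_1(v)$,
\begin{align*}
  \tilde a_1(v,t)\leq C\Big(\int_{B_1(v)}f^{p_\gamma}\Big)^{1/p_\gamma}\Big(\int_{|u|\leq1}|u|^{(1+\gamma)p_\gamma'}\,du\Big)^{1/p_\gamma'},
\end{align*}
and the last factor is finite because $p_\gamma\geq 1+\tfrac2d$ forces $(1+\gamma)p_\gamma'>-d$ for every $\gamma\in(-2,-1)$. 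Hence $\nabla a_{f,\gamma}$ is bounded uniformly in $(v,t)$, proving \emph{(b)}.

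The expected main obstacle is \emph{(b)} in the range $\gamma\in(-2,-1)$, where $\nabla a_{f,\gamma}$ carries the locally singular kernel $|v-w|^{1+\gamma}$: the decisive point is that the \emph{time-uniform} integrability exponent $p_\gamma$ furnished by Lemma~\ref{lem:Ld/d-1 bound for f unif in time} is large enough — precisely $p_\gamma>\tfrac{d}{d+1+\gamma}$, equivalently $p_\gamma'<\tfrac{d}{|1+\gamma|}$ — for its H\"older conjugate to absorb this singularity on unit balls; I expect verifying this numerical inequality (which holds since $\gamma>-2>-\tfrac{3d+2}{d+2}$, so that $p_\gamma\geq\tfrac{d+2}{d}>\tfrac{d}{d+1+\gamma}$) to be the only nontrivial computation. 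Everything else is a routine dominated-convergence argument.
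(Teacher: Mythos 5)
Your proof is correct and follows essentially the same route as the paper's: dominate the integrand on $\{f>h\}$ by a fixed multiple of $f^{p}|\nabla a_{f,\gamma}||\nabla\eta^2|$, establish that $\nabla a_{f,\gamma}$ is bounded on $\textnormal{spt}(\nabla\eta^2)$ uniformly in $t$ via the kernel split and the time-uniform $L^{p_\gamma}$ (or $L^{d/(d-1)}$) bound from Lemma~\ref{lem:Ld/d-1 bound for f unif in time}, and conclude by dominated convergence; the trivial case is handled identically. Your write-up is somewhat more explicit than the paper's at two points it leaves implicit — naming the $L^1$ majorant that licenses dominated convergence, and spelling out the H\"older computation (with the numerical check $(1+\gamma)p_\gamma' > -d$) behind the asserted $L^\infty$ control of $\nabla a_{f,\gamma}$ — but the strategy is the same.
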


  \begin{proof}
    The second case is trivial, since $\{f>h\}$ is empty for large enough $h$, so the integral is exactly zero for all large $h$. Let us prove the proposition in the first case. Taking $h\geq 1$ (without loss of generality), we have
    \begin{align*}
      \int_{\tau}^{\tau'}\int_{\{f>h\}}(h+1)^{p-1}(f-h)_+|\nabla a||\nabla \eta^2|\;dvdt \leq 2^p\|\nabla a\|_{L^\infty}\int_{\tau}^{\tau'}\int_{\{f>h\}}f^p|\nabla \eta^2|\;dvdt
    \end{align*}
    The $L^\infty$ norm being over $(\tau,\tau')\times \textnormal{spt}(
    \nabla \eta^2)$. We observe that the Proposition will be proved once we show that this $L^\infty$ norm of $\nabla a$ is finite. Since we are in the case $\gamma \in (-2,0]$, for any $t\in (\tau,\tau')$ we have for any ball $B_r$,
    \begin{align*}
      \|\nabla a(t)\|_{L^\infty(B_r)} \leq C\| f * |v|^{1+\gamma}\|_{L^\infty(B_r)},
    \end{align*}
    from this inequality, and the $L^{\frac{d}{d-1}}$ bound from Lemma \ref{lem:Ld/d-1 bound for f unif in time}, the proposition follows.
  \end{proof}

  \begin{cor}\label{cor_iterat}
    Let $f$ be a weak solution in $(0,T)$, $p\geq 1+\tfrac{2}{d}$, and $\eta \in C^2_c(\mathbb{R}^d)$. Let us assume that either $\gamma \in (-2,0]$ and
    \begin{align*}
      \sup \limits_{T_1\leq t\leq T_3} \int_{\textnormal{spt}(\eta)} (f(t))^p\;dv < \infty,	  
    \end{align*}	  
    or that $\gamma \in [-d,-2]$ and $f$ is a bounded solution for which Assumption \ref{Assumption:Epsilon Poincare} holds. Then,
    \begin{align*}
      \sup \limits_{(T_2,T_3)}\int_{\mathbb{R}^d}\eta^2 f^p\;dv+\frac{p-1}{p}\int_{T_2}^{T_3}\int_{\mathbb{R}^d}(A\nabla (\eta f^{\frac{p}{2}}),\nabla (\eta f^{\frac{p}{2}})\;dvdt,
    \end{align*}
    is no larger than
    \begin{align*}
      & \left (\frac{1}{T_2-T_1}+\tfrac{p}{2} \Lambda_f (\tfrac{1}{2p}) \right )\int_{T_1}^{T_3}\int_{\mathbb{R}^d}\eta^2 f^p\;dvdt +C(d)\left (\|\nabla \eta\|_\infty^2+\|D^2\eta^2\|_\infty \right )\int_{T_1}^{T_3}\int_{\textnormal{spt}(\eta)}f^pa\;dvdt.	
    \end{align*}
  
  \end{cor}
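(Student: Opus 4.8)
The plan is to apply Lemma~\ref{lem:energy inequality with bad term} with the specific choice $\phi = \phi_{p,h}$ of Remark~\ref{rem:Properties of approximations to p-th power I}, taking $h$ larger than the threshold $h(p)$ so that all the pointwise bounds of that remark (in particular \eqref{eqn:upper bounds for under and over bar phi}) are available, and then to let $h\to\infty$. Recall that for $u\leq h$ one has \emph{exactly} $\phi_{p,h}(u)=u^p/p$, $\overline{\phi}(u)=\tfrac2p(p-1)^{1/2}u^{p/2}$, and $\underline{\phi}(u)=\tfrac1p(p-1)u^p$; hence $\phi_{p,h}(u)\uparrow u^p/p$ as $h\to\infty$, and on the set $\{f<h\}$ the function $\eta\,\overline{\phi}(f)$ coincides identically with $\tfrac2p(p-1)^{1/2}\eta f^{p/2}$. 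Choosing $p_0=1+\tfrac2d$ in Lemma~\ref{lem:energy inequality with bad term}, so that $C(p_0)=C(d)$, the Corollary follows once we pass to the limit $h\to\infty$ in the inequality of that lemma (whose hypotheses are met in either of the two cases considered) and identify the limits of each term.

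On the right-hand side, the term $\int\int_{\{f>h\}}(h+1)^{p-1}(f-h)_+|\nabla a|\,|\nabla\eta^2|$ tends to $0$. In case (2) this is trivial since $\{f>h\}=\emptyset$ for $h>\|f\|_{L^\infty}$; in case (1) one bounds $(h+1)^{p-1}(f-h)_+\leq 2^p f^p$ and uses that $\|\nabla a\|_{L^\infty}$ over $\textnormal{spt}(\nabla\eta^2)$ is finite (via $\nabla a = c\,f*(\,\cdot\,)|v|^{1+\gamma}$ together with the local $L^p$ hypothesis on $f$ and Lemma~\ref{lem:Ld/d-1 bound for f unif in time} where needed), after which $\int_{\{f(t)>h\}}f(t)^p\,dv\to0$ pointwise in $t$ and dominated convergence on the finite interval $(T_1,T_3)$ closes the estimate; this is exactly the argument of Proposition~\ref{prop:energy inequality negligible term}, extended from $p=1+\tfrac2d$ to general $p\geq 1+\tfrac2d$ by the extra hypothesis. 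The surviving right-hand side terms converge by monotone convergence, $\phi_{p,h}(f)\uparrow\tfrac1p f^p$ and (since $a\geq0$) $\phi_{p,h}(f)a\uparrow\tfrac1p f^p a$, and by dominated convergence for $\int(\eta\,\overline{\phi}(f))^2$, using $\overline{\phi}(f)^2\leq\tfrac{8(p-1)}{p}\phi_{p,h}(f)\leq\tfrac{8(p-1)}{p^2}f^p$ from \eqref{eqn:upper bounds for under and over bar phi} (and $\phi_{p,h}(u)\leq u^p/p$), so that $\int(\eta\,\overline{\phi}(f))^2\to\tfrac{4(p-1)}{p^2}\int\eta^2 f^p$. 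Multiplying the resulting inequality through by $p$, the coefficient of $\Lambda_f(\tfrac1{2p})$ becomes $\tfrac{2(p-1)}{p}$, which is $\leq\tfrac p2$ (this is $(p-2)^2\geq0$), and since $\int_{T_1}^{T_2}\leq\int_{T_1}^{T_3}$ on the nonnegative integrand $\eta^2 f^p$, both are absorbed into the single coefficient $\tfrac1{T_2-T_1}+\tfrac p2\Lambda_f(\tfrac1{2p})$ of the statement; likewise $C(p_0)=C(d)$.

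On the left-hand side one uses lower semicontinuity. On $\{f<h\}$ the identity $\eta\,\overline{\phi}(f)=\tfrac2p(p-1)^{1/2}\eta f^{p/2}$ forces the weak gradients to agree a.e., so $\int_{T_2}^{T_3}\int_{\mathbb{R}^d}(A\nabla(\eta\,\overline{\phi}(f)),\nabla(\eta\,\overline{\phi}(f)))\,dvdt$ is bounded below by $\tfrac{4(p-1)}{p^2}\int_{T_2}^{T_3}\int_{\{f<h\}}(A\nabla(\eta f^{p/2}),\nabla(\eta f^{p/2}))\,dvdt$, and the latter increases to $\tfrac{4(p-1)}{p^2}\int_{T_2}^{T_3}\int_{\mathbb{R}^d}(A\nabla(\eta f^{p/2}),\nabla(\eta f^{p/2}))\,dvdt$ by monotone convergence over the increasing sets $\{f<h\}\uparrow\{f<\infty\}$; after the factor $\tfrac14$ in the lemma and the overall multiplication by $p$, this yields the coefficient $\tfrac{p-1}{p}$ of the statement. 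Similarly, for each fixed $t$, $\int\eta^2\phi_{p,h}(f)(t)\,dv\geq\int_{\{f(t)<h\}}\tfrac1p\eta^2 f(t)^p\,dv$, and since the supremum over $t$ of an increasing sequence of functions of $t$ converges to the supremum of the limit, $\liminf_{h\to\infty}\sup_{(T_2,T_3)}\int\eta^2\phi_{p,h}(f)\,dv\geq\tfrac1p\sup_{(T_2,T_3)}\int\eta^2 f^p\,dv$. Combining, $p$ times the left side of Lemma~\ref{lem:energy inequality with bad term} dominates, in the limit, exactly the left side of the Corollary, which completes the proof.

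The step requiring the most care is the passage to the limit in the Dirichlet term on the left: one must know that $f^{p/2}$ possesses a weak gradient against which $(A\nabla\,\cdot\,,\nabla\,\cdot\,)$ is integrable and that on $\{f<h\}$ it genuinely equals $\nabla(\eta\,\overline{\phi}(f))$. The chain-rule identity is routine where $f$ is smooth (case (2)) or, in case (1), where \eqref{eqn:entropy production bound} already controls $\nabla f^{1/2}$ in the weighted $L^2$ sense and $f\in L^\infty_t L^{d/(d-1)}_{v,\mathrm{loc}}$; the finiteness of $\int\int(A\nabla(\eta f^{p/2}),\nabla(\eta f^{p/2}))$ itself need not be verified in advance, as it is delivered a posteriori by the finiteness of the right-hand side.
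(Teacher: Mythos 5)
Your argument follows the same route as the paper's proof: apply Lemma~\ref{lem:energy inequality with bad term} with $\phi=\phi_{p,h}$, show the boundary term vanishes as $h\to\infty$ (as in Proposition~\ref{prop:energy inequality negligible term}), pass to the limit in the remaining terms, and multiply by $p$. Your handling of the Dirichlet term via restriction to $\{f<h\}$ and monotone convergence over the increasing sets is a welcome, slightly more explicit substitute for the paper's appeal to weak lower semicontinuity of the weighted $\dot H^1$ seminorm.

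There is, however, an arithmetic slip in your tracking of the $\Lambda_f$-coefficient. Before multiplying by $p$, the Lemma's term $\tfrac{p}{2}\Lambda_f(\tfrac1{2p})\int(\eta\overline{\phi}(f))^2\,dvdt$ converges (using $\overline{\phi}(f)^2\to\tfrac{4(p-1)}{p^2}f^p$) to $\tfrac{2(p-1)}{p}\Lambda_f(\tfrac1{2p})\int\eta^2 f^p\,dvdt$. Multiplying \emph{this} by $p$ gives the coefficient $2(p-1)$, not $\tfrac{2(p-1)}{p}$ as you write; you appear to have applied the factor $p$ to the left-hand side only. The inequality $\tfrac{2(p-1)}{p}\leq\tfrac{p}{2}$ you invoke (equivalent to $(p-2)^2\geq 0$) is correct but applied to the wrong quantity. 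Since $2(p-1)>\tfrac{p}{2}$ for every $p>\tfrac43$ (and here $p\geq 1+\tfrac2d$, e.g. $p\geq\tfrac53$ for $d=3$), the attempted absorption into the stated coefficient $\tfrac{p}{2}\Lambda_f(\tfrac1{2p})$ does not go through; the correct coefficient delivered by your argument (and by the paper's Lemma) is $2(p-1)\Lambda_f(\tfrac1{2p})$.

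For fairness, this imprecision is already present in the paper: its proof of the Corollary silently substitutes $\eta^2\phi(f)$ for $(\eta\overline{\phi}(f))^2$ in the $\Lambda_f$-term without the conversion factor $\tfrac{4(p-1)}{p}$, so the stated $\tfrac{p}{2}\Lambda_f$ should really read $2(p-1)\Lambda_f$. The discrepancy is harmless downstream, since the Moser iteration in Lemma~\ref{lem:weighted Moser estimate} only requires the coefficient to grow at most polynomially in $p$ (and in fact writes $p_n\Lambda_f$, which covers $2(p_n-1)$ up to a factor of $2$). But as written, your proof does not establish the Corollary with the coefficient that appears in its statement, and the fix is simply to record the coefficient as $2(p-1)\Lambda_f(\tfrac1{2p})$ rather than attempt to compress it below $\tfrac{p}{2}\Lambda_f(\tfrac1{2p})$.
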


  \begin{proof}
    Let $\tau \in (T_2,T_3)$, then by Lemma \ref{lem:energy inequality with bad term}
    \begin{align*}
      \int_{\mathbb{R}^d}\eta^2 \phi(f)(\tau)\;dv+\frac{1}{4}\int_{T_2}^{T_3}\int_{\mathbb{R}^d}(A\nabla (\eta \overline{\phi}(f) ) ,\nabla (\eta \overline{\phi}(f) ) )\;dvdt,
    \end{align*}
    is bounded from above by
    \begin{align*}
      & \left ( \frac{1}{T_2-T_1} +\tfrac{p}{2}\Lambda_f(\tfrac{1}{2p})\right )\int_{T_1}^{T_3}\int_{\mathbb{R}^d}\eta^2 \phi(f)(t)\;dvdt +C(d)\left (\|\nabla \eta\|_\infty^2+\|D^2\eta^2\|_\infty \right )\int_{T_1}^{T_3}\int_{\textnormal{spt}(\eta)}\phi(f)a\;dvdt\\
      & +\frac{p+4}{2}\int_{T_1}^{T_3}\int_{\{f>h\}} (h+1)^{p-1}(f-h)|\nabla a||\nabla \eta^2|\;dvdt.	
    \end{align*}
    Applying monotone convergence, the lower semicontinuity of the (weighted) $\dot H^1$ seminorm, and the same argument used in Proposition \ref{prop:energy inequality negligible term}, we obtain in the limit $h\to \infty$ the inequality
    \begin{align*}
      & \frac{1}{p}\int_{\mathbb{R}^d}\eta^2 f^p(\tau)\;dv+\frac{p-1}{4p^2}\int_{T_2}^{T_3}\int_{\mathbb{R}^d}(A\nabla (\eta f^{\frac{p}{2}} ) ,\nabla (\eta f^{\frac{p}{2}} ) )\;dvdt\\
      & \leq \left ( \frac{1}{T_2-T_1} +\tfrac{p}{2}\Lambda_f (\tfrac{1}{2p})\right )\frac{1}{p}\int_{T_1}^{T_3}\int_{\mathbb{R}^d}\eta^2 f^p\;dvdt \\
    & \;\;\;\;+C(d)\left (\|\nabla \eta\|_\infty^2+\|D^2\eta^2\|_\infty \right )\frac{1}{p} \int_{T_1}^{T_3}\int_{\textnormal{spt}(\eta)}f^p a\;dvdt.
    \end{align*}  
    To finish the proof, we multiply both sides of the resulting inequality by $p$ and take the supremum of the left hand side with respect to $\tau \in (T_2,T_3)$.
  \end{proof}
  
 \subsection{De Giorgi-Nash-Moser iteration} Fix $R>1$, we define the sequences
  \begin{align}\label{eqn:iteration times and radii}
    T_n = \frac{1}{4}\left ( 2- \frac{1}{2^n} \right )T,\;\;R_n = \frac{1}{2}\left(1 + \frac{1}{2^n}\right)R.
  \end{align}
  Associated to $R_n$, we will denote by $\eta_n$ a sequence of  functions which are such that
  \begin{align}
    &  \eta_n \in C^2_c(B_{R_n}(0)), \;\;0\leq \eta_n \leq 1 \textnormal{ in } \mathbb{R}^d,\;\; \eta_n \equiv 1 \textnormal{ in } B_{R_{n+1}}(0), \notag\\
    & \|\nabla \eta_n\|_{L^\infty} \leq CR^{-1}2^n,\;\;\|D^2\eta_n\|_{L^\infty} \leq CR^{-2}4^n.	\label{eqn:iteration cut off functions}			  
  \end{align}

\begin{lem}\label{lem:weighted Moser estimate}{(Moser's Iteration)}
 Let $f$ be a weak solution in $(0,T)$ if $\gamma \in (-2,0]$, or let $f$ be a bounded weak solution satisfying Assumptions \ref{Assumption:Epsilon Poincare} and \ref{Assumption:Local Doubling} if $\gamma \in [-d,-2]$.	Then with $p=1+\tfrac{2}{d}$ and $R>1$ there is a constant $C$ with $C=C(d,\gamma,\fin)$ or $C=C(d,\gamma,\fin,C_D,C_P,\kappa_P)$ accordingly, such that
\begin{align*}
   \|f\|_{L^\infty(B_{R/2}(0)\times (T/2,T))} \leq C\left \{ R^{-\gamma}\left (\frac{1}{T}+1 \right ) \right \}^{\frac{1}{p}\frac{q}{q-2}}\left (\int_{T/4}^T \int_{B_R(0)} f^{p}\astar_{f,\gamma} \;dvdt \right )^{\frac{1}{p}}.
\end{align*}
Here, $q = 2+\frac{4}{d}$ if $\gamma>-d$ or $q$ can be any exponent in $(2,2+\frac{4}{d})$ if $\gamma=-d$.
\end{lem}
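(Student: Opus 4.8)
The plan is to run Moser's iteration on the weighted energy quantities
\begin{align*}
  V_n := \int_{T_n}^{T}\int_{B_{R_n}(0)} \astar_{f,\gamma}\, f^{p_n}\,dvdt, \qquad p_n := p\Big(\tfrac{q}{2}\Big)^n, \quad p=1+\tfrac2d,
\end{align*}
where $T_n,R_n,\eta_n$ are the times, radii and cut-offs fixed in \eqref{eqn:iteration times and radii}--\eqref{eqn:iteration cut off functions}; since $\tfrac q2=1+\tfrac2d$ when $\gamma>-d$ and $\tfrac q2$ is a fixed number in $(1,1+\tfrac2d)$ when $\gamma=-d$, in every case $p_n\to\infty$. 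The starting quantity $V_0=\int_{T/4}^T\int_{B_R(0)}\astar_{f,\gamma}f^{p}\,dvdt$ is finite, and bounded by the constants in the statement, by Proposition \ref{prop:LpLp via interpolation with weight} when $\gamma\in(-2,0]$ and trivially when $\gamma\in[-d,-2]$ (there $f$ is assumed bounded). The goal is to bound $\limsup_n V_n^{1/p_n}$ by the right-hand side of the lemma; the conclusion then follows because $\astar_{f,\gamma}$ is bounded above and below on $B_{R/2}(0)$ by constants depending on $R$, so $V_n^{1/p_n}\ge c_R^{1/p_n}\|f\|_{L^{p_n}(B_{R/2}(0)\times(T/2,T))}$ and the latter converges to $\|f\|_{L^\infty(B_{R/2}(0)\times(T/2,T))}$.

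The first thing I would prove is the one-step inequality $V_{n+1}^{2/q}\le C\,\Gamma_n V_n$. Applying Corollary \ref{cor_iterat} with exponent $p_n$, cut-off $\eta_n$ and times $T_n<T_{n+1}<T$, and using $(A_{f,\gamma}\xi,\xi)\ge \astar_{f,\gamma}|\xi|^2$ together with $\tfrac{p_n-1}{p_n}\ge\tfrac{2}{d+2}$, one bounds
\begin{align*}
  \sup_{(T_{n+1},T)}\int \eta_n^2 f^{p_n}\,dv+\int_{T_{n+1}}^{T}\int \astar_{f,\gamma}\,|\nabla(\eta_n f^{p_n/2})|^2\,dvdt \le \Gamma_n\, V_n,
\end{align*}
where $\Gamma_n$ collects $\tfrac1{T_{n+1}-T_n}$, $\tfrac{p_n}{2}\Lambda_f(\tfrac1{2p_n})$, and $C(d)(\|\nabla\eta_n\|_\infty^2+\|D^2\eta_n^2\|_\infty)\,\|a_{f,\gamma}/\astar_{f,\gamma}\|_{L^\infty(B_{R_n}(0))}$. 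Here one passes from the unweighted $\int\eta_n^2 f^{p_n}$ to $V_n$ using $\astar_{f,\gamma}\ge cR^{\gamma}$ on $B_R(0)$, controls $\Lambda_f(\tfrac1{2p_n})\le C_P(2p_n)^{\kappa}$ via Assumption \ref{Assumption:Epsilon Poincare} (or Theorem \ref{thm:sufficient conditions for the Poincare inequality} when $\gamma\in(-2,0]$), and bounds $a_{f,\gamma}\le C\langle v\rangle^{\alpha}\astar_{f,\gamma}$ by Proposition \ref{prop:a vs a star gamma larger than -2} (resp.\ Proposition \ref{prop:a vs a star gamma below -2}, which uses Assumption \ref{Assumption:Local Doubling}), with $\alpha=2$ for $\gamma\ge-2$ and $\alpha=\max\{2,-\gamma-2\}$ for $\gamma<-2$. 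Since $T_{n+1}-T_n=2^{-(n+3)}T$, $\|\nabla\eta_n\|_\infty^2+\|D^2\eta_n^2\|_\infty\le CR^{-2}4^n$, $\langle v\rangle^{\alpha}\le CR^{\alpha}$ on $B_R(0)$, and $\alpha-2\le-\gamma$ in every case, this gives $\Gamma_n\le C\,b^n(1+\tfrac1T)R^{-\gamma}$ for a fixed base $b>1$ depending only on $d,\gamma,\kappa$. Feeding this energy bound into the weighted parabolic Sobolev inequality of Theorem \ref{lem:Inequalities Sobolev weight aII} (for $\gamma\in[-2,0]$) or Theorem \ref{thm:Inequalities Sobolev weight gamma below -2} (for $\gamma\in[-d,-2)$, under Assumption \ref{Assumption:Local Doubling}), applied to $\phi=\eta_n f^{p_n/2}$ and using $\eta_n\equiv1$ on $B_{R_{n+1}}(0)$, yields $V_{n+1}^{2/q}\le C\Gamma_n V_n$.

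Next I would close the loop. Raising $V_{n+1}^{2/q}\le C\Gamma_n V_n$ to the power $1/p_{n+1}$ and using $\tfrac{q}{2p_{n+1}}=\tfrac1{p_n}$ gives $V_{n+1}^{1/p_{n+1}}\le (C\Gamma_n)^{1/p_n}V_n^{1/p_n}$, hence by induction
\begin{align*}
  V_{n+1}^{1/p_{n+1}} \le \prod_{k=0}^{n}(C\Gamma_k)^{1/p_k}\cdot V_0^{1/p_0}.
\end{align*}
Since $\sum_k 1/p_k=\tfrac1p\sum_k(2/q)^k=\tfrac1p\tfrac{q}{q-2}$ and $\sum_k k/p_k<\infty$, the product converges and equals $\big(C(1+\tfrac1T)R^{-\gamma}\big)^{\frac1p\frac{q}{q-2}}$ up to a multiplicative constant depending only on $d,\gamma$ (and on $C_D,C_P,\kappa$ when $\gamma<-2$); together with the observation at the end of the first paragraph this is the claim. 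For $\gamma\in(-2,0]$ the qualitative hypothesis of Corollary \ref{cor_iterat} at each exponent $p_n$ (finiteness of $\sup_t\int_{B_{R_n}(0)} f^{p_n}$) is obtained inductively, the base case coming from the uniform-in-time $L^{p}$ bound underlying Lemma \ref{lem:Ld/d-1 bound for f unif in time}; for $\gamma\in[-d,-2]$ it is automatic from boundedness of $f$.

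The main obstacle is the bookkeeping that produces \emph{exactly} the prefactor $\{R^{-\gamma}(1+1/T)\}^{\frac1p\frac{q}{q-2}}$: one must verify that replacing $a_{f,\gamma}$ by $\astar_{f,\gamma}$, and unweighted local $L^{p_n}$ norms by $\astar_{f,\gamma}$-weighted ones, costs only powers of $R$ absorbed by $R^{-\gamma}$; that the geometric factors $2^n$, $4^n$ and $p_n^{1+\kappa}$ merge into a single $b^n$ whose contribution $\prod_k b^{k/p_k}$ converges; and --- crucially for $\gamma<-2$ --- that the constants in Theorems \ref{lem:Inequalities Sobolev weight aII}--\ref{thm:Inequalities Sobolev weight gamma below -2} are uniform in $n$, which is precisely where Assumption \ref{Assumption:Local Doubling} enters. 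When $\gamma=-d$ one must also fix at the outset an admissible exponent $q\in(2,2+\tfrac4d)$ in Theorem \ref{thm:Inequalities Sobolev weight gamma below -2}, and the exponent $\tfrac1p\tfrac{q}{q-2}$ in the bound is the one dictated by that choice.
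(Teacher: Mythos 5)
Your proposal is correct and follows essentially the same Moser-iteration scheme as the paper: the quantity $V_n$ you iterate is (up to the choice of hard cut-off versus the paper's smooth $\eta_n^q$) the same as the paper's $E_n^{p_n}$, the one-step inequality is obtained from the same ingredients (Corollary \ref{cor_iterat}, the weighted Sobolev inequalities of Theorems \ref{lem:Inequalities Sobolev weight aII}/\ref{thm:Inequalities Sobolev weight gamma below -2}, the comparisons $1\le CR^{-\gamma}\astar$ and $a\le C\langle v\rangle^\alpha\astar$), and the product $\prod_k(C\Gamma_k)^{1/p_k}$ is summed exactly as in the paper to produce the exponent $\frac1p\frac{q}{q-2}$. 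The only extra care you flag --- the qualitative $\sup_t L^{p_n}$ hypothesis for Corollary \ref{cor_iterat} when $\gamma\in(-2,0]$ --- is indeed left implicit in the paper and is handled correctly by your inductive bootstrap remark.
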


\begin{proof}
  With $p=1+\tfrac{2}{d}$ fixed, we define the sequence
  \begin{align*}
    p_n = p \left (\frac{q}{2} \right)^n,\;\;\forall\;n\in\mathbb{N},
  \end{align*}	  
  with $q$ as above. Then, for each $n\ge 0$, let $E_n$ denote the quantity,
  \begin{align*}
    E_n := \left ( \int_{T_n}^T\int \eta_n^q f^{p_n}\astar\;dvdt\right )^{\frac{1}{p_n}}.
  \end{align*}
  where the times $T_n$ and the functions $\eta_n$ are as in \eqref{eqn:iteration times and radii} and \eqref{eqn:iteration cut off functions}. 
  
  As it is standard for divergence elliptic equations, we are going to derive a recursive relation for $E_n$. To do this, first note $E_{n+1}$ may be written as,
  \begin{align*}
    E_{n+1}^{p_{n}} = \left ( \int_{T_{n+1}}^T\int_{B_R(0)}  \left ( \eta_{n+1} f^{\frac{p_{n}}{2}}\right )^{q}\astar\;dvdt \right )^{\frac{2}{q}}.
  \end{align*}
 Therefore, Theorem \ref{lem:Inequalities Sobolev weight aII} (if $\gamma\in(-2,0]$) and Theorem \ref{thm:Inequalities Sobolev weight gamma below -2} (if $\gamma\in[-d,-2]$, under Assumption \ref{Assumption:Local Doubling}) lead to
  \begin{align*}
     E_{n+1}^{p_{n}} \leq C \left \{ \sup \limits_{T_{n+1} \leq t\leq T} \left \{ \int \eta_{n+1}^2 f^{p_n}(t)\;dv \right \} + \frac{(p_n-1)}{p_n}\int_{T_{n+1}}^{T}\int a^* |\nabla (\eta_{n+1} f^{\frac{p_n}{2}})|^2\;dvdt \right \},
  \end{align*}
  where (recall $C_D$ denotes the constant in Assumption \ref{Assumption:Local Doubling})
  \begin{align*}
    C & = C(d,\gamma) \textnormal{ if } \gamma \in (-2,0], \; C = C(d,\gamma,C_D) \textnormal{ if } \gamma \in (-d,-2], and\\
    C & = C(d,\gamma,q,C_D) \textnormal{ if } \gamma=-d.
  \end{align*}
  Applying Corollary \ref{cor_iterat} with $T_1 = T_n$, $T_2 = T_{n+1}$ and $T_3 = T$ we have 
  \begin{align*}
    C^{-1}E_{n+1}^{p_{n}} & \leq \left( \frac{2^{n+2}}{T}+ p_n\Lambda_f (\tfrac{1}{2p_n})  \right) \int_{T_n}^{T}\int_{\mathbb{R}^d} \eta_{n+1}^2f^{p_n}\;dvdt\\
    & \;\;\;\;+C(d)\left ( \|\nabla \eta_{n+1}\|_{\infty}^2+\|D^2\eta_{n+1}^2\|_{\infty} \right )\int_{T_n}^{T}\int_{\textnormal{spt}(\eta_{n+1})}  f^{p_n}a \;dvdt.
  \end{align*}	
  Since $\eta_n \equiv 1$ in the support of $\eta_{n+1}$, and $\nabla \eta_n,D^2\eta_n$ are supported in $B_{R_{n+1}}\setminus B_{R_{n+2}}$, where we have $|v|\approx R$ (since $R\geq 1$), we have the pointwise inequalities
  \begin{align*}
    \eta_{n+1}^2 & \leq \eta_{n}^q,\\
    |\nabla \eta_{n+1}| & \leq C2^nR^{-1} \eta_n^q \leq C2^n\langle v\rangle^{-1}\eta_n^q,\\
    \eta_{n+1}|D^2\eta_{n+1}| & \leq C4^nR^{-2}\eta_n^q\leq C 4^n\langle v\rangle^{-2}\eta_n^q.		  
  \end{align*}	  
  Substituting these in the above inequality, we have
  \begin{align*}	
    C^{-1}E_{n+1}^{p_{n}} & \leq \left ( \frac{2^{n+2}}{T}+p_n\Lambda_f(\tfrac{1}{2p_n}) \right )\int_{T_n}^T \int_{\mathbb{R}^d}  \eta_n^q f^{p_n} \;dvdt  +C(d)4^n \int_{T_n}^T \int_{\mathbb{R}^d}  \eta_n^q f^{p_n} \langle v\rangle^{-2}a\;dvdt.
  \end{align*}	
  Now we apply Theorem \ref{thm:sufficient conditions for the Poincare inequality} for $\gamma \in (-2,0]$, and for $\gamma \in [-d,-2]$ we make use of Assumption \ref{Assumption:Epsilon Poincare} (or use Theorem \ref{thm:sufficient conditions for the Poincare inequality} with the respective extra Assumption), and obtain
  \begin{align*}	
    C^{-1}E_{n+1}^{p_{n}} & \leq \left ( \frac{2^{n+2}}{T}+C_P(2p_n)^{1+\kappa_P} \right )\int_{T_n}^T \int_{\mathbb{R}^d}  \eta_n^q f^{p_n} \;dvdt  +C(d)4^n \int_{T_n}^T \int_{\mathbb{R}^d}  \eta_n^q f^{p_n} \langle v\rangle^{-2}a\;dvdt.
  \end{align*}	  
  Let us analyze each integral on the right. First, we have $1\leq CR^{-\gamma}\astar$ in $B_{R}(0)$, so that
  \begin{align*}
    & \left ( \frac{2^{n+2}}{T}+C_P(2p)^{1+\kappa_P}(q/2)^{n(1+\kappa_P)}  \right )\int_{T_n}^T \int_{\mathbb{R}^d}  \eta_n^q f^{p_n} \;dvdt \\
    & \leq C \max\{2,(q/2)^{1+\kappa_P}\}^n\left ( \frac{1}{T}+1 \right )R^{-\gamma} \int_{T_n}^T \int_{\mathbb{R}^d}  \eta_n^q f^{p_n}\astar \;dvdt,	  
  \end{align*}	  
  with $C= C(d,\gamma,\fin,C_P,\kappa_P)$. On the other hand, by Proposition \ref{prop:a vs a star gamma larger than -2} (for $\gamma \in (-2,0]$) and Proposition \ref{prop:a vs a star gamma below -2} (for $\gamma\in [-d,-2]$) we have
  \begin{align*}
    C(d)4^n\int_{T_n}^T\int_{\mathbb{R}^d}\eta_n^q f^{p_n}\langle v\rangle^{-2}a\;dvdt \leq C4^n R^{\underline{m}} \int_{T_n}^T\int_{\mathbb{R}^d}\eta_n^q f^{p_n}\astar \;dvdt,
  \end{align*}
  where, again, we have $C=C(d,\gamma,\fin)$ or $C=C(d,\gamma,\fin,C_D)$ depending on whether $\gamma>-2$ or not, and where 
  \begin{align*}
    \underline{m} = 0 \textnormal{ if } \gamma\in (-2,0], \underline{m} = \max\{-\gamma-4,0\} \textnormal{ if } \gamma\in [-d,-2].	  
  \end{align*}	  
  Let us write
  \begin{align*}
    b := \max\{4,(q/2)^{1+\kappa_P}\}.
  \end{align*}	  
   Since $R\geq 1$, we may add up terms taking the worst factor of $R$ (note that we always have $\underline{m}\leq -\gamma$) therefore we arrive at
  \begin{align*}	
    & E_{n+1}^{p_{n}} \leq Cb^nR^{-\gamma }\left( \frac{1}{T}+1 \right)\int_{T_n}^{T}\int_{\mathbb{R}^d} \eta_n^q f^{p_n} \astar \;\;dvdt
  \end{align*}
  Taking the $1/p_n$ root from both sides of the last inequality, it follows that
  \begin{align*}
    E_{n+1} & \leq C^{\frac{1}{p}\left ( \frac{2}{q}\right)^n} \left ( b^{\frac{1}{p}} \right )^{n\left (\frac{2}{q} \right )^n}\left \{  R^{-\gamma}\left( \frac{1}{T}+1 \right) \right \}^{\frac{1}{p}\left (\frac{2}{q} \right)^n} E_{n}.
  \end{align*}
  Then, for a universal $b_0$, we have 
  \begin{align*}
    E_{n+1} \leq b_0^{n\left (\frac{2}{q}\right )^n}\left \{ R^{-\gamma}\left (\frac{1}{T}+1 \right ) \right \}^{\frac{1}{p}\left ( \frac{2}{q}\right )^n} E_n.
  \end{align*}
  This recursive relation, and a straightforward induction argument, yield that
  \begin{align*}
    E_{n} \leq b_0^{ \sum \limits_{k=0}^{n-1}k \left ( \frac{2}{q}\right )^k}\left \{  R^{-\gamma}\left ( \frac{1}{T}+1\right ) \right \}^{\frac{1}{p} \sum \limits_{k=0}^{n-1} \left (\frac{2}{q} \right )^k } E_0.
  \end{align*}
  Now, since
  \begin{align*}
    \sum \limits_{k=0}^{n-1}\left ( \frac{2}{q}\right )^k \leq \sum \limits_{k=0}^\infty  \left ( \frac{2}{q}\right )^k  \frac{q}{q-2},\;\;\; \sum \limits_{k=0}^{n-1}k \left ( \frac{2}{q}\right )^k \leq \sum \limits_{k=0}^\infty k \left ( \frac{2}{q}\right )^k = \frac{2q}{(q-2)^2},
  \end{align*}
  we conclude that
  \begin{align}\label{eqn:Moser iteration Lpn estimate for all n}
    E_n \leq C\left \{ R^{-\gamma}\left (\frac{1}{T}+1 \right ) \right \}^{\frac{1}{p}\frac{q}{q-2}}E_0,
  \end{align}
  where $C=C(d,\gamma,\fin)$ or $C=(d,\gamma,\fin,C_D,C_P,\kappa_P)$ accordingly, and
  \begin{align*}
    E_0 = \left( \int_{T/4}^T \int_{B_R(0)} f^p \astar  dvdt \right)^{1/p}.
  \end{align*}	
  Now, since $\eta_n\geq 1$ in $B_{R/2}(0)$ and $T_n\leq T/2$ for all $n$, it follows that
  \begin{align*}
    E_n \geq \left ( \int_{T/2}^T\int_{B_{R/2}(0)}   f^{p_n} \astar \;dvdt \right )^{\frac{1}{p_n}}.
  \end{align*}  
  At the same time, for each $n$ we have
  \begin{align*}
    E_n \geq \left ( \inf \limits_{B_R\times (0,T)} \astar \right )^{\frac{1}{p_n}} \left ( \int_{T/2}^T\int_{B_{R/2}(0)}  f^{p_n}\;dvdt \right )^{\frac{1}{p_n}}.
  \end{align*}  
  Since the infimum of $\astar$ over $B_R\times (0,T)$ is strictly positive for each $R$, it follows that
  \begin{align*}
    \limsup \limits_{n \to \infty} E_n \geq \| f\|_{L^\infty(B_{R/2}(0)\times (T/2,T))}.
  \end{align*}
  This, together with \eqref{eqn:Moser iteration Lpn estimate for all n} proves the Lemma.
  \end{proof}
   
\begin{proof}[Proof of Theorems \ref{thm:main_1} and \ref{thm:very soft potentials estimate}.]  
  It is elementary that if $p=1+\frac{2}{d}$ and $q=2(1+\frac{2}{d})$, then
  \begin{align*}
    \frac{q}{p(q-2)} = \frac{d}{2}.
  \end{align*}
  In this case, applying Lemma \ref{lem:weighted Moser estimate} with this selection of $p$ and $q$, and $R>0$, we have 
\begin{align*}
   \|f\|_{L^\infty(B_{R/2}(0)\times (T/2,T))} \leq C R^{-\gamma \tfrac{d}{2}}\left (\frac{1}{T}+1 \right )^{\frac{d}{2}} \left (\int_{T/4}^T \int_{B_R(0)}f^{p}\astar \;dvdt \right )^{\frac{d}{d+2}}.
\end{align*}
  Then, Proposition \ref{prop:LpLp via interpolation with weight} yields 
\begin{align*}
   \|f\|_{L^\infty({T}/{2},T; B(0, R/2))} &   \leq C(\fin,d,\gamma) R^{-\gamma \tfrac{d}{2}} \left (\frac{1}{T}+1 \right )^{\frac{d}{2}}.
\end{align*}

\end{proof}

\section{The Coulomb potential's case}\label{section:Coulomb regularization}

Throughout this section, we focus on the Coulomb case, as such we set $\gamma=-d$ and make no further references to the case $\gamma\neq-d$. At the end we will prove Theorem \ref{thm:Coulomb case good estimate}.  Since $\gamma=-d$, $h_{f,\gamma} = f$, and thus the $\varepsilon$-Poincar\'e inequality when applied to $\phi = f^p$ for some $p$, yields an inequality that may be used in place of the Sobolev embedding in an iteration procedure akin to the more standard one in Lemma \ref{lem:weighted Moser estimate}. 

\begin{rem} Related to the $\varepsilon$-Poincar\'e inequality there is a very interesting \emph{nonlinear} inequality proved by Gressman, Krieger, and Strain in  \cite{GreKriStr2012}: for each $p>0$ and any Lipschitz non-negative $f \in L^1(\mathbb{R}^d)$ we have the inequality 
\begin{align}\label{eqn:GressmannKriegerStrain}
  \int_{\mathbb{R}^d} f^{p+1}\;dv \leq \left (\frac{p+1}{p} \right )^2\int_{\mathbb{R}^d} (A_{f,\gamma}\nabla f^{\frac{p}{2}},\nabla f^{\frac{p}{2}})\;dv. 
\end{align}
As pointed out in \cite{GreKriStr2012}, the explicit constant in the inequality becomes sharp as $p\to 1^+$ (it corresponds to the $H$-theorem). The difference between the $\varepsilon$-Poincar\'e  and \eqref{eqn:GressmannKriegerStrain} is that the former involves an arbitrary Lipschitz, and integrable function $\phi$, whereas the latter one involves the function $\phi = f^{\frac{p}{2}}$. 
\end{rem}
In the next two lemmas we prove higher integrability for $f$ and for $a_{f,\gamma}$, always assuming the $\varepsilon$-Poincar\'e inequality.

\begin{prop}\label{prop:L2L2 bound for Coulomb case}
  Let $f(v,t)$ be a classical solution in $(0,T)$ for which the $\varepsilon$-Poincar\'e holds uniformly in time for some some $\varepsilon<4$ (this is guaranteed by the much stronger Assumption \ref{Assumption:Epsilon Poincare}). Then, the following estimate holds
  \begin{align*}
    \int_{T_1}^{T_2}\int_{\mathbb{R}^d}f^2\;dvdt \leq \frac{4}{4-\varepsilon}(C(\fin)+(T_2-T_1)\Lamp).
  \end{align*}
\end{prop}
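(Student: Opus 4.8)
The plan is to apply the entropy production inequality \eqref{eqn:entropy production bound} directly with the $\varepsilon$-Poincar\'e inequality, exploiting the fact that for $\gamma = -d$ one has $h_{f,\gamma} = f$, so that $\int f^2\,dv$ appears naturally on the dissipation side. First I would write the entropy production bound over $(T_1,T_2)$,
\begin{align*}
  \int_{T_1}^{T_2}\int_{\mathbb{R}^d} 4(A_{f,\gamma}\nabla\sqrt f,\nabla\sqrt f)\;dvdt - \int_{T_1}^{T_2}\int_{\mathbb{R}^d} f^2\;dvdt \leq H(\fin)-H(\rho_{\fin}) = C(\fin),
\end{align*}
which holds since $f$ is a classical solution (recall $h_{f,\gamma}f = f^2$ when $\gamma=-d$).

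Next I would apply the $\varepsilon$-Poincar\'e inequality with $\phi = \sqrt f$ (legitimate by Remark \ref{rem:compactness of supports and inequalities}, using that $\int f\,dv = 1$ and $\int(A_{f,\gamma}\nabla\sqrt f,\nabla\sqrt f)\,dv$ is finite for a classical solution), giving
\begin{align*}
  \int_{\mathbb{R}^d} h_{f,\gamma} f\;dv = \int_{\mathbb{R}^d} f^2\;dv \leq \varepsilon\int_{\mathbb{R}^d}(A_{f,\gamma}\nabla\sqrt f,\nabla\sqrt f)\;dv + \Lamp\int_{\mathbb{R}^d} f\;dv = \varepsilon\int_{\mathbb{R}^d}(A_{f,\gamma}\nabla\sqrt f,\nabla\sqrt f)\;dv + \Lamp.
\end{align*}
Integrating this in time over $(T_1,T_2)$ yields $\int_{T_1}^{T_2}(A_{f,\gamma}\nabla\sqrt f,\nabla\sqrt f)\,dv\,dt \geq \tfrac1\varepsilon\big(\int_{T_1}^{T_2}\int f^2 - (T_2-T_1)\Lamp\big)$.

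Then I would combine the two displays: from the entropy bound,
\begin{align*}
  4\int_{T_1}^{T_2}\int_{\mathbb{R}^d}(A_{f,\gamma}\nabla\sqrt f,\nabla\sqrt f)\;dvdt \leq C(\fin) + \int_{T_1}^{T_2}\int_{\mathbb{R}^d} f^2\;dvdt,
\end{align*}
and substituting the lower bound for the dissipation term gives
\begin{align*}
  \frac{4}{\varepsilon}\left(\int_{T_1}^{T_2}\int_{\mathbb{R}^d} f^2\;dvdt - (T_2-T_1)\Lamp\right) \leq C(\fin) + \int_{T_1}^{T_2}\int_{\mathbb{R}^d} f^2\;dvdt.
\end{align*}
Since $\varepsilon < 4$, the coefficient $\tfrac4\varepsilon - 1 = \tfrac{4-\varepsilon}{\varepsilon} > 0$, so one may move the $\int f^2$ term to the left and divide, obtaining $\int_{T_1}^{T_2}\int f^2\,dvdt \leq \tfrac{4}{4-\varepsilon}\big(C(\fin) + (T_2-T_1)\Lamp\big)$, which is the claimed estimate.

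The only real subtlety — not so much an obstacle as a point to check carefully — is the justification that all the quantities involved are finite and the manipulations legitimate: this is why the statement restricts to \emph{classical} solutions, for which $\nabla\sqrt f$ and the weighted Dirichlet form are genuinely well-defined and finite on the relevant time interval, so that the entropy production identity holds with equality/inequality as in \eqref{eqn:entropy production bound} and the $\varepsilon$-Poincar\'e inequality may be applied to $\phi=\sqrt f$ without a density/truncation argument. Everything else is an elementary rearrangement, and the hypothesis $\varepsilon<4$ is exactly what is needed to absorb the $\int f^2$ term.
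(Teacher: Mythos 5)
Your argument is correct and reaches the stated estimate, but it follows a genuinely different closing step from the paper. After deriving the same two inequalities — the $\varepsilon$-Poincar\'e with $\phi=f^{1/2}$ and the entropy production bound \eqref{eqn:entropy production bound} — the paper first eliminates $\iint f^2$ to obtain $(4-\varepsilon)\iint(A\nabla f^{1/2},\nabla f^{1/2})\,dvdt \leq C(\fin)+(T_2-T_1)\Lamp$, and then invokes the Gressman–Krieger–Strain inequality \eqref{eqn:GressmannKriegerStrain} with $p=1$ (that is, $\int f^2\,dv\leq 4\int(A\nabla f^{1/2},\nabla f^{1/2})\,dv$) to convert this Dirichlet-form bound back into a bound on $\iint f^2$. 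You instead avoid \eqref{eqn:GressmannKriegerStrain} entirely: you rearrange the same two inequalities so that both the lower bound on the dissipation (from the $\varepsilon$-Poincar\'e) and the upper bound on the dissipation (from entropy production) are expressed in terms of $\iint f^2$, and then solve the resulting linear inequality. This is more self-contained, needing no auxiliary functional inequality, and in fact yields the marginally sharper constant
\begin{align*}
  \int_{T_1}^{T_2}\int_{\mathbb{R}^d}f^2\;dvdt \leq \frac{\varepsilon}{4-\varepsilon}C(\fin)+\frac{4}{4-\varepsilon}(T_2-T_1)\Lamp,
\end{align*}
from which the claimed bound follows since $\varepsilon<4$. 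Your write-up silently collapses $\tfrac{\varepsilon}{4-\varepsilon}$ to $\tfrac{4}{4-\varepsilon}$ at the last line; it would be cleaner to display the intermediate bound and then weaken, but this is a cosmetic point. What the paper's use of \eqref{eqn:GressmannKriegerStrain} buys is thematic consistency: that nonlinear inequality is the engine of the subsequent lemmas in Section~\ref{section:Coulomb regularization} (notably Lemma~\ref{lem:LpLp iterative bound for Coulomb}), so introducing it here makes the exposition more uniform. Your variant buys elementarity and a slightly better constant.
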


\begin{proof}
  Applying the $\varepsilon$-Poincar\'e with $\phi=f^{1/2}$ at any time $t\in (T_1,T_2)$ leads to
  \begin{align*}
    \int_{\mathbb{R}^d} f^2\;dv \leq \varepsilon \int_{\mathbb{R}^d}(A\nabla f^{1/2},\nabla f^{1/2})\;dv+\Lamp\int_{\mathbb{R}^d}f\;dv. 
  \end{align*}
  On the other hand,
  \begin{align*}
    4\int_{T_1}^{T_2}\int_{\mathbb{R}^d}(A\nabla f^{1/2},\nabla f^{1/2})\;dvdt-\int_{T_1}^{T_2}\int_{\mathbb{R}^d}f^2\;dvdt \leq C(\fin). 
  \end{align*}
  Combining these two, always keeping in mind that $\|f(t)\|_{L^1}=1$ for all $t$, it follows that 
  \begin{align*}
    (4-\varepsilon)\int_{T_1}^{T_2}\int_{\mathbb{R}^d}(A\nabla f^{1/2},\nabla f^{1/2})\;dvdt \leq C(\fin)+(T_2-T_1)\Lamp.
  \end{align*}
  Then, we apply \eqref{eqn:GressmannKriegerStrain}, and conclude that 
  \begin{align*}
    \int_{T_1}^{T_2}\int_{\mathbb{R}^d}f^2\;dvdt \leq \frac{4}{4-\varepsilon}\left ( C(\fin)+(T_2-T_1)\Lamp\right ).
  \end{align*}
\end{proof}

Observe that in the last step of the previous proof, we could have just as well used \eqref{eqn:GressmannKriegerStrain}, however, the result is still conditional, as it relies on the $\varepsilon$-Poincar\'e to control the quadratic term in the first place. The next Lemma shows that, with a constant that grows with $p$, one can obtain $L^pL^p$ estimates for $f$ using the energy inequality (which assumes the $\varepsilon$-Poincar\'e inequality) and the inequality \eqref{eqn:GressmannKriegerStrain}.

\begin{lem}\label{lem:LpLp iterative bound for Coulomb}
  Let $f$ be as before, except we now assume the $\varepsilon$-Poincar\'e holds for any sufficiently small $\varepsilon$ (note this is still weaker than Assumption \ref{Assumption:Epsilon Poincare}). Let $n\geq 0$ be any positive integer and $p_0>1$. Then there is a a $C$ determined by $n$, $\Lambda_{f}$ (for the whole time interval), and $p_0$, such that 	
  \begin{align*}
     \sup \limits_{T/4\leq t\leq T}  \|f(t)\|_{L^{p_0+n}(\mathbb{R}^d)} \leq C\left (\frac{1}{T}+1 \right )^{\frac{n+1}{p_0+n}}\left (\int_{0}^T\int_{\mathbb{R}^d} f^{p_0}\;dvdt \right )^{\frac{1}{p_0+n}}.	  
   \end{align*}	  
\end{lem}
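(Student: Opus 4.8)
\emph{Plan.} The statement will be proved by a finite induction on $n$, raising the integrability exponent by one unit at each step. The two ingredients are the global energy inequality underlying Corollary \ref{cor_iterat} and the Gressman--Krieger--Strain inequality \eqref{eqn:GressmannKriegerStrain}; the latter is exactly what makes the Coulomb case special, since for $\gamma=-d$ it lets one bound the ``reaction'' integral $\int f^{p+1}$ by the weighted Dirichlet energy $\int (A_{f,-d}\nabla f^{p/2},\nabla f^{p/2})$, which in turn is controlled by the dissipation in the energy inequality.

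\emph{Step 1: a global energy inequality.} First I would establish that for every $p>1$ and all $0<T_1<T_2<T_3<T$,
$$\sup_{(T_2,T_3)}\int_{\mathbb{R}^d}f^p\,dv+\frac{p-1}{p}\int_{T_2}^{T_3}\!\!\int_{\mathbb{R}^d}(A_{f,-d}\nabla f^{p/2},\nabla f^{p/2})\,dvdt\ \le\ \Big(\frac{1}{T_2-T_1}+\frac{p}{2}\Lambda_f(\tfrac{1}{2p})\Big)\int_{T_1}^{T_3}\!\!\int_{\mathbb{R}^d}f^p\,dvdt .$$
This follows from Corollary \ref{cor_iterat} applied with the cutoffs $\eta_R$ of \eqref{eqn:iteration cut off functions}, upon letting $R\to\infty$: since $\gamma=-d$ and $f$ is bounded, the potential $a_{f,-d}=(-\Delta)^{-1}f$ is bounded, so the cutoff error $\big(\|\nabla\eta_R\|_\infty^2+\|D^2\eta_R^2\|_\infty\big)\int\!\!\int_{\mathrm{spt}(\eta_R)}f^p a_{f,-d}\,dvdt=O(R^{-2})$ tends to $0$; the left-hand side is recovered by monotone convergence and lower semicontinuity of the weighted Dirichlet integral; and the ``bad term'' of Lemma \ref{lem:energy inequality with bad term} vanishes identically for large $h$ because $f$ is bounded. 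Note that the proof of Corollary \ref{cor_iterat} (via Lemma \ref{lem:energy inequality with bad term}) only invokes the $\varepsilon$-Poincar\'e inequality at $\varepsilon=\tfrac{1}{2p}$, which is available for the finitely many $p$ that will occur.

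\emph{Step 2: the power-raising inequality and iteration.} Combining Step 1 with \eqref{eqn:GressmannKriegerStrain} (applied at each fixed time and integrated in $t$) gives, for every $p\ge p_0$,
$$\int_{T_2}^{T_3}\!\!\int_{\mathbb{R}^d}f^{p+1}\,dvdt\ \le\ \Big(\tfrac{p+1}{p}\Big)^{2}\frac{p}{p-1}\Big(\frac{1}{T_2-T_1}+\frac{p}{2}\Lambda_f(\tfrac{1}{2p})\Big)\int_{T_1}^{T_3}\!\!\int_{\mathbb{R}^d}f^{p}\,dvdt .$$
Now fix equally spaced times $\tfrac{T}{8}=\tau_0<\tau_1<\dots<\tau_{n+1}=\tfrac{T}{4}$, so $\tau_k-\tau_{k-1}=\tfrac{T}{8(n+1)}$ and hence $\tfrac{1}{\tau_k-\tau_{k-1}}\le 8(n+1)\big(\tfrac1T+1\big)$. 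Applying the power-raising inequality with $(T_1,T_2,T_3)=(\tau_{k-1},\tau_k,T)$ and $p=p_0+k-1$ for $k=1,\dots,n$, each constant is bounded by $C(n,p_0,\Lambda_f)\big(\tfrac1T+1\big)$, so telescoping yields
$$\int_{\tau_n}^{T}\!\!\int_{\mathbb{R}^d}f^{p_0+n}\,dvdt\ \le\ C(n,p_0,\Lambda_f)\Big(\tfrac1T+1\Big)^{\,n}\int_{T/8}^{T}\!\!\int_{\mathbb{R}^d}f^{p_0}\,dvdt\ \le\ C(n,p_0,\Lambda_f)\Big(\tfrac1T+1\Big)^{\,n}\int_{0}^{T}\!\!\int_{\mathbb{R}^d}f^{p_0}\,dvdt .$$
(Starting the times at $T/8$ rather than $0$ keeps everything on a time interval where the bounded classical solution and hence $a_{f,-d}$ are controlled, so Step 1 applies at each stage; inductively $\int\!\!\int f^{p_0+k}<\infty$ at every stage.)

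\emph{Step 3: from space-time to sup in time, and the main difficulty.} One last application of the energy inequality of Step 1 alone, with $(T_1,T_2,T_3)=(\tau_n,\tfrac T4,T)$ and $p=p_0+n$, gives
$$\sup_{(T/4,T)}\int_{\mathbb{R}^d}f^{p_0+n}\,dv\ \le\ C(n,p_0,\Lambda_f)\Big(\tfrac1T+1\Big)\int_{\tau_n}^{T}\!\!\int_{\mathbb{R}^d}f^{p_0+n}\,dvdt\ \le\ C(n,p_0,\Lambda_f)\Big(\tfrac1T+1\Big)^{\,n+1}\int_{0}^{T}\!\!\int_{\mathbb{R}^d}f^{p_0}\,dvdt ,$$
and taking the $\tfrac{1}{p_0+n}$-th root is precisely the asserted bound (with exponent $\tfrac{n+1}{p_0+n}$ on $\tfrac1T+1$). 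The only genuinely delicate point is Step 1, i.e.\ passing from the cut-off energy inequality to the global one: this rests on the boundedness of $a_{f,-d}$ (equivalently, the space-time integrability of $f^p a_{f,-d}$) and on the fact that for bounded $f$ the reaction-type bad term is eventually zero. Everything else is bookkeeping of the $p$- and $T$-dependence of the constants, the one structural input being \eqref{eqn:GressmannKriegerStrain}, which encodes that in the Coulomb case the quadratic term is absorbed by the dissipation.
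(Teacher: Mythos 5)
Your proposal is correct and follows essentially the same route as the paper: the global energy inequality (Corollary \ref{cor_iterat} with $\eta\to 1$, the cutoff error vanishing because $a_{f,-d}$ is bounded), combined with \eqref{eqn:GressmannKriegerStrain} to raise the exponent by one unit, iterated a finite number of times, followed by one more energy step to pass from space-time to sup-in-time. The only cosmetic difference is your choice of equally spaced intermediate times on $[T/8,T/4]$ in place of the paper's geometrically spaced $T_n=\tfrac14(1-2^{-n})T$, which affects only the $n$-dependent constant and not the $T$-dependence.
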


\begin{proof}
  Let us write, 
  \begin{align*}	
    p_n  := p_0+n,\;\;T_{n} = \frac{1}{4}\left ( 1- \frac{1}{2^n} \right )T.
  \end{align*}	
  We apply Corollary \ref{cor_iterat} with exponent $p_n$ and  $\eta=1$; or more precisely, we apply it to a sequence of $\eta$'s with compact support which converge locally uniformly to the constant $1$ and pass to the limit. It follows that 
\begin{align*}
    \frac{p_0-1}{2p_0}\int_{T_{n+1}}^T\int_{\mathbb{R}^d}a_*|\nabla f^{\frac{p_n}{2}}|^2\;dvdt \leq \left ( \frac{2^{n+3}}{T}+ p_n\Lambda_f(\tfrac{1}{2p_n}) \right ) \int_{T_n}^T\int_{\mathbb{R}^d}  f^{p_n}\;dvdt.	
  \end{align*}
  Applying \eqref{eqn:GressmannKriegerStrain}, it follows that
  \begin{align*}
    \int_{T_{n+1}}^T \int_{\mathbb{R}^d} f^{p_{n+1}}\;dvdt \leq C(p_0,\Lambda_f(1/2p_n),n) \left ( \frac{1}{T} +1 \right ) \int_{T_n}^T\int_{\mathbb{R}^d} f^{p_n}\;dvdt.
  \end{align*}
  Iterating this estimate, it follows that for some $C=C(p_0,\Lambda_f(\cdot),n)$
    \begin{align*}
    \int_{T_{n+1}}^T \int_{\mathbb{R}^d} f^{p_{n+1}}\;dvdt \leq C \left ( \frac{1}{T} +1 \right )^{n+1} \int_{0}^T\int_{\mathbb{R}^d} f^{p_0}\;dvdt.
  \end{align*}
  Combining this bound with the energy inequality for $f^{p_n+1}$, we have
  \begin{align*}
   \sup_{T_{n+2} \leq t\leq T} \left \{ \int_{\mathbb{R}^d} f^{p_{n+1}}(t)\;dv \right \} &\leq C\left ( \frac{1}{T} +1 \right )^{n+2} \int_{0}^T\int_{\mathbb{R}^d} f^{p_0}\;dvdt,
  \end{align*}
  for some even bigger $C=C(p_0,\Lambda_f(\cdot),n)$, which implies
  \begin{align*}
    \sup \limits_{T_{n+1}\leq t\leq T} \|f(t)\|_{L^{p_n}(\mathbb{R}^d)}   & \leq C\left (\frac{1}{T}+1 \right )^{\frac{n+1}{p_n}}\left (\int_{0}^T\int_{\mathbb{R}^d} f^{p_0}\;dvdt \right )^{\frac{1}{p_n}}.
  \end{align*}	
  Since $T_n\leq T/4$ for each $n\ge 0$ we conclude that 
    \begin{align*}
    \sup \limits_{T/4\leq t\leq T} \|f(t)\|_{L^{p_n}(\mathbb{R}^d)}   & \leq C\left (\frac{1}{T}+1 \right )^{\frac{n+1}{p_n}}\left (\int_{0}^T\int_{\mathbb{R}^d} f^{p_0}\;dvdt \right )^{\frac{1}{p_n}},
  \end{align*}	
  where $C$ is a function of $p_0$, $\Lambda_f(\cdot)$, and $n$.
\end{proof}

\begin{rem}
  The constant obtained in Lemma \ref{lem:LpLp iterative bound for Coulomb} goes to infinity as $n$ increases to $\infty$. This is different from the usual Moser type iteration, the way Lemma \ref{lem:LpLp iterative bound for Coulomb} will be used will not involve passing to the limit, instead we will take a large, but finite, $n$.
\end{rem}

A consequence of the previous lemma is highlighted in the following corollary.
\begin{cor}\label{a_bounded}
  There is a constant $C(\fin,d,C_P,\kappa_P,p)$ such that
  \begin{align*}
    \|a_{f,\gamma}\|_{L^\infty(T/4,T,L^\infty(\mathbb{R}^d))}\leq C\left (\frac{1}{T}+1 \right )^{1-\frac{2}{d}}.
  \end{align*}
  \end{cor}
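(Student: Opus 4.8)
The plan is to combine the $L^p$ bounds on $f$ from Lemma~\ref{lem:LpLp iterative bound for Coulomb} with the elementary mapping properties of the Riesz potential of order $2$, exploiting that for $\gamma=-d$ one has $a_{f,\gamma}=(-\Delta)^{-1}f=c_d\,f*|v|^{2-d}$ together with the conserved mass $\|f(t)\|_{L^1(\mathbb{R}^d)}\equiv 1$.

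\emph{Step 1: a uniform-in-$T$ bound on $\|f(t)\|_{L^p}$ for one exponent $p>d/2$.} Fix the smallest nonnegative integer $n$ with $p:=2+n>d/2$, and take $p_0=2$ in Lemma~\ref{lem:LpLp iterative bound for Coulomb}. Its hypotheses are met since Assumption~\ref{Assumption:Epsilon Poincare} forces $\Lambda_f(\varepsilon)\le C_P\varepsilon^{-\kappa_P}$ for all $\varepsilon\in(0,1)$, uniformly in $t$, so the $\varepsilon$-Poincar\'e inequality holds for all small $\varepsilon$ and the relevant ``$\Lambda_f$ over the whole time interval'' is under control. To get a bound good for all $T>0$ I would use that the homogeneous Landau equation is autonomous: for $s\in[0,T]$, $(v,\tau)\mapsto f(v,\tau+s)$ is again a solution on $[0,T-s]$. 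For $t\in[T/4,T]$, apply Lemma~\ref{lem:LpLp iterative bound for Coulomb} on $[0,t]$ if $t\le 1$, and on $[t-1,t]$ (after the time shift $s=t-1$) if $t>1$. In both cases the $L^2L^2$ quantity appearing on the right is an integral over a time interval of length at most $1$, which Proposition~\ref{prop:L2L2 bound for Coulomb case} (applied with, say, $\varepsilon=1<4$, legitimate by Assumption~\ref{Assumption:Epsilon Poincare}) bounds by a constant $C(\fin,C_P,\kappa_P)$. Using $t\ge T/4$ to replace $1/t$ by $4/T$ in the first case, and $(1/T+1)\ge 1$ to absorb a bounded constant in the second, one arrives at
\[
  \sup_{T/4\le t\le T}\|f(t)\|_{L^p(\mathbb{R}^d)}\le C\Big(\tfrac1T+1\Big)^{\frac{p-1}{p}},\qquad C=C(\fin,d,C_P,\kappa_P,p).
\]

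\emph{Step 2: from $\|f(t)\|_{L^p}$ to $\|a_{f,\gamma}(t)\|_{L^\infty}$.} Fix $t\in[T/4,T]$ and $v\in\mathbb{R}^d$, and split the defining integral of $a_{f,\gamma}(v,t)$ at a scale $\rho>0$ into the regions $|v-w|<\rho$ and $|v-w|\ge\rho$. For the inner piece, H\"older's inequality with exponents $p,p'$ and the computation $\big(\int_{|z|<\rho}|z|^{(2-d)p'}\,dz\big)^{1/p'}=c\,\rho^{2-d/p}$ (finite precisely because $p>d/2$ is equivalent to $p'<d/(d-2)$) produces a bound $c\,\|f(t)\|_{L^p}\rho^{2-d/p}$; for the outer piece, $|v-w|^{2-d}\le\rho^{2-d}$ and $\|f(t)\|_{L^1}=1$ give a bound $c_d\,\rho^{2-d}$. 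Hence $\|a_{f,\gamma}(t)\|_{L^\infty}\le C\big(\|f(t)\|_{L^p}\rho^{2-d/p}+\rho^{2-d}\big)$ for every $\rho>0$, and optimizing in $\rho$ (the two terms balancing for $\rho\sim\|f(t)\|_{L^p}^{-p'/d}$) yields
\[
  \|a_{f,\gamma}(t)\|_{L^\infty}\le C(d,p)\,\|f(t)\|_{L^p(\mathbb{R}^d)}^{\frac{(d-2)p}{d(p-1)}}.
\]
Combining with Step~1 and noting the exponents multiply to $\frac{p-1}{p}\cdot\frac{(d-2)p}{d(p-1)}=\frac{d-2}{d}=1-\frac2d$, we conclude $\|a_{f,\gamma}(t)\|_{L^\infty}\le C(\fin,d,C_P,\kappa_P,p)\,(1/T+1)^{1-\frac2d}$ for all $t\in[T/4,T]$, as claimed.

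The main obstacle is making Step~1 genuinely uniform in $T$: applying Lemma~\ref{lem:LpLp iterative bound for Coulomb} directly on $[0,T]$ carries the factor $\big(\int_0^T\!\!\int f^2\big)^{1/p}$, which Proposition~\ref{prop:L2L2 bound for Coulomb case} only controls by $C(\fin)+T\Lambda$, growing linearly in $T$; localizing to unit-length time windows via the autonomy of the equation is exactly what removes this defect. The remaining ingredients — the split-kernel Riesz estimate in Step~2 and the exponent arithmetic — are standard and elementary.
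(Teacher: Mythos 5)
Your argument is correct and follows the same two-step route as the paper's own proof: Step~2 is exactly the split-kernel interpolation \eqref{eqn:interpolation for Newtonian potential} that the paper establishes in situ, and Step~1 rests on the same pair, Lemma~\ref{lem:LpLp iterative bound for Coulomb} together with Proposition~\ref{prop:L2L2 bound for Coulomb case}, with matching exponent arithmetic (your exponent $\tfrac{(d-2)p}{d(p-1)}$ on $\|f(t)\|_{L^p}$ is the paper's $\tfrac{p}{p-1}(1-\tfrac2d)$, and $\tfrac{p-1}{p}\cdot\tfrac{(d-2)p}{d(p-1)}=1-\tfrac2d$). The one place where you do something the paper does not is the time-localization in Step~1. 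The paper applies Lemma~\ref{lem:LpLp iterative bound for Coulomb} once on $[0,T]$ and then treats $\bigl(\int_0^T\!\int f^2\bigr)^{1/(n+2)}$ as a $T$-independent constant; but Proposition~\ref{prop:L2L2 bound for Coulomb case} only yields $\int_0^T\!\int f^2\le \tfrac{4}{4-\varepsilon}\,(C(\fin)+T\Lambda)$, so that step silently acquires a factor of order $T^{1/(n+2)}$ which the claimed right-hand side $(1/T+1)^{1-2/d}$ cannot absorb when $T$ is large. Your remedy --- exploiting the autonomy of \eqref{eqn:Landau homogeneous} to run the lemma on a time window of length at most one ending at $t$, splitting into $t\le 1$ and $t>1$ --- is exactly the patch that makes the $T$-uniformity of the constant honest; it is legitimate because Assumption~\ref{Assumption:Epsilon Poincare} is uniform in time (so the $\varepsilon$-Poincar\'e inequality transfers to the shifted solution) and the mass/energy/entropy bounds also propagate. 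In short: the same proof as the paper's, with a welcome repair of a small gap in its presentation.
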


\begin{proof}
  Since $a = c_d|v|^{2-d} * f$, interpolation yields for $p\in (d/2,\infty)$
  \begin{align}\label{eqn:interpolation for Newtonian potential}
    \|a(t)\|_{L^\infty(\mathbb{R}^d)} \leq C(d,p)\|f(t)\|_{L^1(\mathbb{R}^d)}^{\frac{p}{p-1}\left ( \frac{2}{d}-\frac{1}{p} \right )}\|f(t)\|_{L^p(\mathbb{R}^d)}^{\frac{p}{p-1}\left ( 1- \frac{2}{d} \right )}.	
  \end{align}	
  Indeed, for every $r>0$ and $v\in \mathbb{R}^d$ we have
  \begin{align*}	
    |a(v)|\leq C(d)\int_{B_r(v)}f(w)|v-w|^{2-d}\;dw +C(d)\int_{\mathbb{R}^d\setminus B_r(v)}f(w)|v-w|^{2-d}\;dw	.
  \end{align*}	
  H\"older's inequality yields,	
  \begin{align*}	
    |a(v)|\leq C(d) \|f\|_{L^p(\mathbb{R}^d)}\left (\int_{B_r(v)}|v-w|^{(2-d)p'}\;dw\right )^{\frac{1}{p'}}+C(d) R^{2-d}\|f\|_{L^1(\mathbb{R}^d)}	.
  \end{align*}	
  Note that
  \begin{align*}
    \left (\int_{B_r(v)}|v-w|^{2-d}\;dw\right )^{\frac{1}{p'}} \leq C(d)^{\frac{1}{p'}}r^{\frac{d}{p'}+2-d} = C(d)^{1-\frac{1}{p}}r^{2-\frac{d}{p}}.
  \end{align*}	
  Therefore
  \begin{align*}	
    \|a\|_{L^\infty(\mathbb{R}^d)} \leq C(d) \|f\|_{L^p(\mathbb{R}^d)}r^{2-\frac{d}{p}}+C(d) R^{2-d}\|f\|_{L^1(\mathbb{R}^d)}	.
  \end{align*}
  Optimizing in the parameter $r>0$, we obtain \eqref{eqn:interpolation for Newtonian potential}. Therefore, for $p\in(d/2,\infty)$
  \begin{align}\label{a_opt}
    \|a_{f,\gamma}\|_{L^\infty(T/4,T,L^\infty(\mathbb{R}^d))}\le C(d,p,\fin)\|f\|_{L^\infty(T/4,T,L^p(\mathbb{R}^d))}^{\frac{p}{p-1}\left( 1-\frac{2}{d} \right )}.
  \end{align}
  Next, let us apply Proposition \ref{prop:L2L2 bound for Coulomb case} and Lemma \ref{lem:LpLp iterative bound for Coulomb} with $p_0=2$, yielding for $n\geq 1$ 
 \begin{align}\label{sup_f_p+n}
   \sup \limits_{T/4\leq t\leq T}  \|f(t)\|_{L^{p_0+n}(\mathbb{R}^d)} &\leq C\left (\frac{1}{T}+1 \right )^{\frac{n+1}{n+2}}\left (\int_{0}^T\int f^{2}\;dvdt \right )^{\frac{1}{n+2}} \nonumber \\
  & \leq C\left (\frac{1}{T}+1 \right )^{\frac{p-1}{p}},
  \end{align}	
   where $C = C(\fin,d,\gamma,p,C_P,\kappa_P)$, $p=2+n$, and $n$ is chosen so that $p>d/2$. For such $p$'s, \eqref{a_opt} and \eqref{sup_f_p+n} yield
  \begin{align*}
    \|a_{f,\gamma}\|_{L^\infty(T/4,T,L^\infty(\mathbb{R}^d))}\leq C(d,p,\fin,\varepsilon)\left ( 1 + \frac{1}{T} \right )^{\left( 1-\frac{2}{d} \right )}.
  \end{align*}
 
\end{proof}

\begin{proof}[Proof of Theorem \ref{thm:Coulomb case good estimate}:] Let $R>0$, noting that $\astar\leq a$, and using Corollary \ref{a_bounded}, we have for $p>1$ a constant $C=C(d,\gamma,\fin,p,C_P,\kappa_P)$ such that
\begin{align*}	
  \left ( \int_{T/4}^{T}\int f^{p}\astar\;dvdt\right )^{\frac{1}{p}} \leq C\left (1+\frac{1}{T} \right )^{\frac{1}{p}\left (1-\frac{2}{d}\right)} \left ( \int_{T/4}^{T}\int  f^{p}\;dvdt\right )^{\frac{1}{p}}.	
\end{align*}
This estimate, combined with Lemma \ref{lem:weighted Moser estimate} leads to the following bound for each $R>1$,
\begin{align*}
   \|f\|_{L^\infty({T}/{2},T; B(0,R))} & \leq CR^{-\frac{\gamma}{p}\frac{q}{q-2}}\left (\frac{1}{T}+1 \right )^{\frac{1}{p}\frac{q}{q-2}}\left (1+\frac{1}{T} \right )^{\frac{1}{p}\left ( 1-\frac{2}{d}\right ) }  \|f\|_{L^{p}(T/4,T; L^{p}(\mathbb{R}^d))},  
 \end{align*}
 where this time $C=C(d,\gamma,\fin,p,C_P,\kappa_P,C_D)$ and $q$ is as in Lemma \ref{lem:weighted Moser estimate} (it's exact value will be immaterial for what follows). On the other hand, Lemma \ref{lem:LpLp iterative bound for Coulomb} with $p_0=2$ and $p=2+n$ and Proposition \ref{prop:L2L2 bound for Coulomb case}, yield
  \begin{align*}
    \|f\|_{L^{p}(T/4,T; L^{p}(\mathbb{R}^d))}  \leq C(\fin,d,p,C_P,\kappa_P)\left (1+\frac{1}{T}\right)^{\frac{p-1}{p}}. 
  \end{align*}
  Therefore, for every $p$ of the form $p=2+n$, $n\in\mathbb{N}$, we have the estimate
   \begin{align*}
   \|f\|_{L^\infty({T}/{2},T; B(0, R))} & \leq  CR^{\tilde \delta(p)}\left (1 +\frac{1}{T} \right )^{1+\delta(p)}.
   \end{align*}
  Here, $\delta(p)$ and $\tilde \delta(p)$ denote the expressions
  \begin{align*}
    \delta(p) := \frac{1}{p}\left(\frac{q}{q-2} -\frac{2}{d}\right)  >0,\; \tilde \delta(p) = -\frac{\gamma}{p}\frac{q}{q-2} >0
  \end{align*}
 so evidently $\delta(p),\tilde \delta(p) \to 0$ as $p\to\infty$. Therefore, given any $s>0$ there exists a $p>0$ and $C$ such that $\delta(p),\tilde \delta(p)\leq s$ and thus, for some constant $C$ we have 
\begin{align*}
   \|f\|_{L^\infty({T}/{2},T; B(0, R))} & \leq C R^{s}\left (1 +\frac{1}{T} \right )^{1+s},\;\;\forall\;R>1.
\end{align*}

\end{proof}


\appendix

\section{Auxiliary computations}\label{sec:Auxiliary Computations}
	
\subsection{Some properties of the weight $|w|^{-m}$} 

\begin{lem}\label{lem:averages for powers of v}
 The following inequalities hold for any $m < d $, 
 \begin{align*}
  \fint_{B_r(v_0)} \frac{1}{|v|^{m}}\;dv \approx \left (\max\{ |v_0|,2r\}\right )^{-m}.
  \end{align*}
  The implied constants being determined by $m$ and $d$. The same result holds (with a different implied constant) if one uses cubes instead of balls.
\end{lem}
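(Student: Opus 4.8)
The plan is to estimate the average $\fint_{B_r(v_0)} |v|^{-m}\,dv$ from above and below separately, splitting into the two regimes $|v_0| \leq 4r$ and $|v_0| > 4r$.

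\textbf{Case $|v_0| \leq 4r$ (the ball contains or nearly contains the origin).} Here $\max\{|v_0|,2r\} \approx r$, so we must show $\fint_{B_r(v_0)} |v|^{-m}\,dv \approx r^{-m}$. For the upper bound, since $B_r(v_0) \subset B_{5r}(0)$, monotonicity and a direct computation in polar coordinates give
\begin{align*}
  \fint_{B_r(v_0)} |v|^{-m}\,dv \leq \frac{1}{|B_r|}\int_{B_{5r}(0)} |v|^{-m}\,dv = \frac{c_d}{|B_r|}\int_0^{5r} \rho^{d-1-m}\,d\rho = C(d,m)\, r^{-m},
\end{align*}
where finiteness uses $m < d$. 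For the lower bound, note that $B_r(v_0) \supset B_{r/2}(v_1)$ for a suitable point $v_1$ with $|v_1| \leq 5r$ (e.g. $v_1 = v_0$ if $|v_0|\le r/2$, otherwise push $v_0$ toward the origin; in any case a ball of radius $r/2$ inside $B_r(v_0)$ lying within $B_{6r}(0)$ suffices), and on $B_{6r}(0)$ we have $|v|^{-m} \geq (6r)^{-m}$ if $m \ge 0$; if $m < 0$ one instead uses that $|v|^{-m}$ is large only near $\infty$ and a crude bound $\fint_{B_r(v_0)}|v|^{-m}\,dv \geq \fint_{B_{r/2}(v_1)}|v|^{-m}\,dv$ combined with a lower bound on a fixed fraction of that ball works. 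Either way one gets $\fint_{B_r(v_0)}|v|^{-m}\,dv \geq c(d,m)\,r^{-m}$.

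\textbf{Case $|v_0| > 4r$ (the ball stays away from the origin).} Here $\max\{|v_0|,2r\} = |v_0|$ and for all $v \in B_r(v_0)$ we have $\tfrac{3}{4}|v_0| \leq |v_0| - r \leq |v| \leq |v_0| + r \leq \tfrac{5}{4}|v_0|$. Thus $|v|^{-m} \approx |v_0|^{-m}$ pointwise on $B_r(v_0)$ (with implied constants depending only on $m$), and integrating over $B_r(v_0)$ and dividing by $|B_r|$ gives $\fint_{B_r(v_0)}|v|^{-m}\,dv \approx |v_0|^{-m}$ immediately.

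Combining the two cases yields $\fint_{B_r(v_0)}|v|^{-m}\,dv \approx (\max\{|v_0|,2r\})^{-m}$ with implied constants depending only on $d$ and $m$. The cube version follows by the identical argument, using that a cube of side $r$ is comparable (sandwiched between two balls) to a ball of radius $\approx r$ with the same center, which only changes the implied constants. There is no real obstacle here; the only mild care needed is the lower bound in the first case when $m < 0$, where $|v|^{-m}$ vanishes at the origin rather than blowing up, but this is handled by restricting to a sub-ball on which $|v|$ is bounded below by a fixed multiple of $r$.
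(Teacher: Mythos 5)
Your proof is correct and takes essentially the same route as the paper's: the dichotomy according to whether the ball is near the origin (then both sides are $\approx r^{-m}$) or far from it (then $|v|^{-m}$ is pointwise comparable to $|v_0|^{-m}$ on $B_r(v_0)$). The paper first rescales via $v = r(w_0 + w)$ to reduce to unit balls and then splits on $|w_0| \lessgtr 2$, which is only a cosmetic difference from your direct split on $|v_0| \lessgtr 4r$; you are somewhat more explicit than the paper about the sub-ball argument needed for the lower bound when $m < 0$, which is a welcome addition.
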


\begin{rem}\label{rem:cubes versus balls}
  In light of the inclusions
  \begin{align*}
    B_r(v_0) \subset Q_r(v_0) \subset B_{r\sqrt{d}}(v_0),
  \end{align*}	  
  it is clear that the inequality above holds for cubes if and only if it holds balls.
\end{rem}

\begin{proof}[Proof of Lemma \ref{lem:averages for powers of v}]
  In light of Remark \ref{rem:cubes versus balls} it suffices to prove the lemma for balls $B_r(v_0)$. In the integral under consideration, take the change of variables $v=r(w_0+w)$, where $w_0 := v_0/r$, then
  \begin{align*}
    \fint_{B_{r}(v_0)}|v|^{-m}\;dv = r^{-m}\fint_{B_1(0)}|w_0+w|^{-m}\;dw
  \end{align*}
  If $|v_0|\leq 2r$, that is if $|w_0|\leq 2$, then an elementary computation shows that
  \begin{align*}
    \fint_{B_1(0)}|w_0+w|^{-m}\;dw \approx 1.
  \end{align*}
  It follows that if $|v_0|\leq 2r$,
  \begin{align*}
    \fint_{B_{r}(v_0)}|v|^{-m}\;dv \approx r^{-m}.
  \end{align*}  
  If $|v_0|\geq 2r$, that is if $|w_0|\geq 2$, then $|w_0|/2\leq |w_0+w|\leq 2|w_0|$ for all $w\in B_1(0)$, so
  \begin{align*}
    \fint_{B_1(0)}|w_0+w|^{-m}\;dw \approx |w_0|^{-m} = r^{m}|v_0|^{-m}.
  \end{align*}
  It follows that when $|v_0|\geq 2r$,
  \begin{align*}
    \fint_{B_{r}(v_0)}|v|^m\;dv \approx |v_0|^{m}.
  \end{align*}	
\end{proof}

\begin{rem} \label{rem:averages for powers of v}
  If $m\in [0,d)$, then $|v_1|\leq |v_0|+r\lesssim \max\{|v_0|,2r\}$ for all $v_1\in B_r(v_0)$. Then, Lemma \ref{lem:averages for powers of v} implies that
  \begin{align*}
    \fint_{B_r(v_0)}\frac{1}{|v|^m}\;dv \lesssim \frac{1}{|v_1|^m},\;\;\forall\;v_1\in B_r(v_0).
  \end{align*}
  The implied constant depending only on $m$ and $d$. In particular, for every $m\in [0,d)$ the function $\frac{1}{|v|^{m}}$ is a $\mathcal{A}_1$-weight. 
\end{rem}

\begin{lem}\label{lem:averages for powers of bracket v}
  For every $m \in \mathbb{R}$, $v_0 \in \mathbb{R}^d$, and $r\in(0,1)$ we have the estimate
  \begin{align*}
    \fint_{B_r(v_0)}(1+|v|)^{-m}\;dv \approx (1+\max\{|v_0|,2r\})^{-m}.
  \end{align*}
 The implied constants depending only on $d$ and $m$. The same result holds (with a different implied constant) if one uses cubes instead of balls.
\end{lem}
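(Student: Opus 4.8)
The plan is to mirror the proof of Lemma \ref{lem:averages for powers of v}, splitting into the two regimes $|v_0|\le 2r$ and $|v_0|>2r$, and to exploit the hypothesis $r<1$ so that in the first regime every quantity in sight is comparable to $1$. The starting point is the pointwise estimate $\bigl||v|-|v_0|\bigr|\le|v-v_0|\le r$ valid for $v\in B_r(v_0)$. When $|v_0|>2r$ this gives $\tfrac12|v_0|\le|v|\le\tfrac32|v_0|$, hence $\tfrac12(1+|v_0|)\le 1+|v|\le\tfrac32(1+|v_0|)$, i.e. $1+|v|\approx 1+|v_0|$ with absolute implied constants; consequently $(1+|v|)^{-m}\approx(1+|v_0|)^{-m}$ with implied constants depending only on $m$ (the dependence on the sign of $m$ being harmless, since $t\mapsto t^{-m}$ maps comparable positive quantities to comparable ones). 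When instead $|v_0|\le 2r<2$, one has $|v|\le|v_0|+r\le 3r<3$, so $1\le 1+|v|\le 4$ on $B_r(v_0)$, whence $(1+|v|)^{-m}\approx 1$ with constant depending only on $m$.

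With these pointwise bounds in hand the lemma follows by averaging. If $|v_0|>2r$ then $\max\{|v_0|,2r\}=|v_0|$ and averaging $(1+|v|)^{-m}\approx(1+|v_0|)^{-m}$ over $B_r(v_0)$ yields $\fint_{B_r(v_0)}(1+|v|)^{-m}\,dv\approx(1+|v_0|)^{-m}=(1+\max\{|v_0|,2r\})^{-m}$. If $|v_0|\le 2r$ then $\max\{|v_0|,2r\}=2r$ and $1\le 1+2r\le 3$, so $(1+\max\{|v_0|,2r\})^{-m}\approx 1$ with constant depending on $m$; since also $(1+|v|)^{-m}\approx 1$ on $B_r(v_0)$, the average is $\approx 1\approx(1+\max\{|v_0|,2r\})^{-m}$. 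This proves the ball version. For cubes I would simply repeat the same computation with $B_r(v_0)$ replaced by $Q_r(v_0)$, using $|v-v_0|\le r\sqrt d$ for $v\in Q_r(v_0)$: the threshold becomes $|v_0|\le 2r\sqrt d$ versus $|v_0|>2r\sqrt d$, and the various $O(1)$ bounds in the first regime (still bounded because $r<1$ forces $|v_0|<2\sqrt d$) acquire a dependence on $d$, so the final implied constants depend on $d$ and $m$. Alternatively one could invoke the inclusions $B_r(v_0)\subset Q_r(v_0)\subset B_{r\sqrt d}(v_0)$ from Remark \ref{rem:cubes versus balls} together with comparability of volumes, but the direct repetition is cleaner as it avoids comparing integrals of possibly large/small integrands.

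There is essentially no serious obstacle here: this is a routine variant of Lemma \ref{lem:averages for powers of v}. The only point requiring (minor) care is the regime $|v_0|\le 2r$, where the restriction $r<1$ is used in an essential way to conclude that both $1+|v|$ (on $B_r(v_0)$, resp.\ $Q_r(v_0)$) and $1+\max\{|v_0|,2r\}$ lie between absolute (resp.\ $d$-dependent) constants; without $r<1$ the asserted comparability would fail for $m\ne 0$.
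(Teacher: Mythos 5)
The proposal is correct and follows essentially the same approach as the paper's proof: the same case split ($|v_0|\le 2r$ versus $|v_0|>2r$), the same pointwise comparability $1+|v|\approx 1+\max\{|v_0|,2r\}$ in each regime, and the same essential use of $r<1$ to make the small-$|v_0|$ regime trivial. The only cosmetic difference is that the paper first rescales via $v=v_0+rw$ to work on $B_1(0)$, while you bound the integrand directly on $B_r(v_0)$ using $\bigl||v|-|v_0|\bigr|\le r$; both yield the same estimates.
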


\begin{proof}
  Again by Remark \ref{rem:cubes versus balls}, it is clear it suffices to prove the lemma for balls $B_r(v_0)$. Using the same change of variables as in the previous lemma, it follows that
  \begin{align*}
    \fint_{B_r(v_0)}(1+|v|)^{-m}\;dv = \fint_{B_1(0)}(1+r|w_0+w|)^{-m}\;dw,\;\;w_0 = v_0/r.
  \end{align*}  	
  If $|v_0|\geq 2r$, which is the same as $|w_0|\geq 2$, we have as in the proof of the previous lemma that
  $|w_0|/2\leq |w_0+w|\leq 2|w_0|$, thus
  \begin{align*}
   (1+r|w_0|)/2 \leq 1+r|w_0+w| \leq 2(1+r|w_0|),\;\;\forall\;w\in B_1(0).
  \end{align*}
  Since $r|w_0|=|v_0|$, it follows that for $|v_0|\geq 2r$,
  \begin{align*}
    \fint_{B_1(0)}(1+r|w_0+w|)^{-m}\;dw \approx (1+|v_0|)^{-m}.
  \end{align*}  	
  Next, given that $r\in(0,1)$, we have $1+2r \in (1,3)$. Therefore, for $|v_0|\leq 2r$ we have
  \begin{align*}
    (1+\max\{|v_0|,2r\})^{-m} = (1+2r)^{-m} \approx 1.	  
  \end{align*}	  
  At the same time, when $|v_0|\leq 2r$ we have $0\leq |v|\leq 3r$ for all $v\in B_r(v_0)$. Then, we can conclude that for $|v_0|\leq 2r$ and $r\in(0,1)$ we have
  \begin{align*}
    \fint_{B_r(v_0)}(1+|v|)^{-m}\;dv \approx 1 \approx (1+2r)^{-m} = (1+\max\{|v_0|,2r) )^{-m},	  
  \end{align*}	  
  with all the implied constants being determined by $d$ and $m$, and the Lemma is proved.
\end{proof}

In what follows, given $e\in \mathbb{S}^{d-1}$, we shall write $[e] := \{ r e \mid r\in\mathbb{R} \}$.

\begin{prop} \label{prop:kernel averages}
  Let $\gamma\geq -d$, $m\geq 0$ with $m|2+\gamma|<d$, $v_0 \in \mathbb{R}^d$, $e\in\mathbb{S}^{d}$, and $r>0$, then for any $v_1 \in B_r(v_0)$ we have that
  \begin{align*}
    \fint_{B_r(v_0)}|v|^{(2+\gamma)m}(\Pi(v)e,e)^m\;dv\approx \left \{ \begin{array}{rl} 
      |v_1|^{(2+\gamma)m}(\Pi(v_1)e,e)^m & \textnormal{ if } \textnormal{dist}(v_1,[e]) \geq 2r,\\
      \max\{ |v_1|,2r\}^{\gamma m}r^{2m} & \textnormal{ if } \textnormal{dist}(v_1,[e])\leq 2r.	  	
    \end{array}\right.
  \end{align*}	  
  The implied constants being determined by $d, m$, and $\gamma$. The same result holds (with different implied constants) if one uses cubes instead of balls.
\end{prop}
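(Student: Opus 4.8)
The plan is to strip off the scaling first. Substituting $v=r(w_0+w)$ with $w_0=v_0/r$ turns the left-hand side into $r^{(2+\gamma)m}\,\Psi(w_0)$, where
\begin{align*}
  \Psi(w_0):=\fint_{B_1(w_0)}|w|^{\gamma m}\big(\Pi(w)e,e\big)^m\,dw=\fint_{B_1(w_0)}|w|^{\gamma m}\,\textnormal{dist}(w,[e])^{2m}\,dw,
\end{align*}
using the identity $\big(\Pi(w)e,e\big)=\textnormal{dist}(w,[e])^2/|w|^2$, valid for $w\ne0$. Since $\big(\Pi(w)e,e\big)\le1$, the integrand is dominated by $|w|^{(2+\gamma)m}$, which is locally integrable precisely because $m|2+\gamma|<d$; this is what allows Lemma \ref{lem:averages for powers of v} to be used for the upper bounds. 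So the reduced goal is to show $\Psi(w_0)\approx|w_0|^{\gamma m}\textnormal{dist}(w_0,[e])^{2m}$ when $\textnormal{dist}(w_0,[e])\ge 2$, and $\Psi(w_0)\approx\max\{|w_0|,2\}^{\gamma m}$ when $\textnormal{dist}(w_0,[e])\le 2$. Rescaling ($w_0=v_0/r$) recovers the two lines of the statement for the particular choice $v_1=v_0$; for a general $v_1\in B_r(v_0)$ I would then observe that, since $|v_1-v_0|\le r$, each right-hand side changes only by a dimensional constant as $v_1$ moves through $B_r(v_0)$, and that the two formulas are comparable on the transitional region $\textnormal{dist}(v_1,[e])\approx2r$. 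The statement for cubes follows from the inclusions $B_r(v_0)\subset Q_r(v_0)\subset B_{r\sqrt d}(v_0)$ of Remark \ref{rem:cubes versus balls}.

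For the far case $\textnormal{dist}(w_0,[e])\ge 2$ I would note that then $|w_0|\ge\textnormal{dist}(w_0,[e])\ge 2$, so for every $w\in B_1(w_0)$ one has $|w|\in[\tfrac12|w_0|,2|w_0|]$, and, using that $w\mapsto\textnormal{dist}(w,[e])$ is $1$-Lipschitz, $\textnormal{dist}(w,[e])\in[\tfrac12\textnormal{dist}(w_0,[e]),2\textnormal{dist}(w_0,[e])]$. Hence the integrand of $\Psi$ is pointwise comparable to the constant $|w_0|^{\gamma m}\textnormal{dist}(w_0,[e])^{2m}$, and averaging gives both inequalities at once.

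For the near case $\textnormal{dist}(w_0,[e])\le 2$ I would split on the size of $|w_0|$. If $|w_0|\le4$, then $B_1(w_0)\subset B_5$ and the upper bound $\Psi(w_0)\le\fint_{B_1(w_0)}|w|^{(2+\gamma)m}\,dw\le C(d,m,\gamma)$ is immediate (by Lemma \ref{lem:averages for powers of v} when $(2+\gamma)m<0$, trivially otherwise); since $\max\{|w_0|,2\}\in[2,4]$ this is $\approx1$, as needed. For the matching lower bound I would use the elementary fact that there is a constant $c_0=c_0(d)>0$ such that every unit ball contains a set of measure at least $c_0$ on which $|w|\ge c_0$ and $\textnormal{dist}(w,[e])\ge c_0$: the set where $|w|<c_0$ has measure $\omega_dc_0^d$, and the set where $\textnormal{dist}(w,[e])<c_0$ meets $B_1(w_0)$ in a piece of a cylinder of radius $c_0$ and length at most $2$, of measure $\le C_dc_0^{d-1}$, so the exceptional set has measure $<\tfrac12|B_1|$ once $c_0$ is small; on the complementary good set the integrand is bounded below by a positive constant (using $|w|\le5$ to control $|w|^{\gamma m}$ from below regardless of the sign of $\gamma m$), so $\Psi(w_0)\gtrsim1$. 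If instead $|w_0|>4$ while still $\textnormal{dist}(w_0,[e])\le 2$, then $|w|\approx|w_0|$ and $\textnormal{dist}(w,[e])\le3$ throughout $B_1(w_0)$, so $\Psi(w_0)\approx|w_0|^{\gamma m}\fint_{B_1(w_0)}\textnormal{dist}(w,[e])^{2m}\,dw$; the remaining average is at most $3^{2m}$ and, by the same cylinder estimate, at least $c_0^{2m}$, whence $\Psi(w_0)\approx|w_0|^{\gamma m}=\max\{|w_0|,2\}^{\gamma m}$.

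The main obstacle is not any single deep step but the careful bookkeeping: organizing the argument into the far regime, the near/origin-close regime, and the near/origin-far regime, and checking that every implied constant depends only on $d$, $m$, and $\gamma$ — which forces one to keep track of the signs of the exponents $\gamma m$ and $(2+\gamma)m$, and it is exactly the hypothesis $m|2+\gamma|<d$ that keeps $|w|^{(2+\gamma)m}$ integrable near the origin and hence the upper bounds finite. The one genuinely geometric ingredient is the measure estimate above, showing that a fixed proportion of any unit ball stays uniformly away from both the origin and the line $[e]$; that, together with reconciling the two formulas on the overlap region $\textnormal{dist}(v_1,[e])\approx2r$, are the only mildly delicate points I anticipate.
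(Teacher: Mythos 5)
Your proof is correct and follows essentially the same route as the paper: the same rescaling to $r=1$, the same pointwise comparability in the regime $\textnormal{dist}(w_0,[e])\ge 2$, and the same elementary measure estimate (remove a small ball about the origin and a thin cylinder about $[e]$) to bound the near-line average from below, with the extension from $v_1=v_0$ to general $v_1\in B_r(v_0)$ handled by ball inclusions in both. The only cosmetic differences are that you split on $\textnormal{dist}(w_0,[e])$ first and $|w_0|$ second (the paper does the reverse) and that you spell out the removed-set measure estimate that the paper states as ``easy to see.''
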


\begin{proof}
  As before, we shall write the proof for the case of balls, noting the same arguments yield the respective result for cubes. Let us write $p=-(2+\gamma)m$. The change of variables $v=v_0+rw$ yields
  \begin{align*}
    \fint_{B_r(v_0)}|v|^{-p}(\Pi(v)e,e)^m\;dv & = \fint_{B_1(0)}|v_0+rw|^{-p}(\Pi(v_0+rw)e,e)^m\;dw. 
  \end{align*}	  
  Then, writing $v_0 = rw_0$, we have
  \begin{align*}
    |v_0+rw|^{-p}(\Pi(v_0+rw)e,e)^m = r^{-p} |w_0+w|^{-p}(\Pi(w_0+w)e,e)^m.	  
  \end{align*}	  
  We consider two cases, first, if $|w_0|\leq 2$, then
  \begin{align*}
    \fint_{B_1(0)}|w_0+w|^{-p}(\Pi(w_0+w)e,e)^m\;dw \approx \fint_{B_1(0)}(\Pi(w_0+w)e,e)^m\;dw \approx 1.
  \end{align*}	  
  While, if $|w_0|\geq 2$, then
  \begin{align*}
    \frac{1}{2}|w_0| \leq |w_0+w| \leq \frac{3}{2}|w_0|,\;\;\forall\;w\in B_1(0).	  
  \end{align*}	  
  These inequalities together with $(\Pi(w_0+w)e,e)\geq 0$ lead to 
  \begin{align*}
    \fint_{B_1(0)}|w_0+w|^{-p}(\Pi(w_0+w)e,e)^m\;dw \approx \max\{|v_0|,2r\}^{-p}\fint_{B_1(0)}(\Pi(w_0+w)e,e)^m\;dw.
  \end{align*}	  
  Next, we make use of the fact that
  \begin{align*}
    |v|^2(\Pi(v)e,e) = \textnormal{dist}(v, [e] )^2.
  \end{align*}
  By the triangle inequality, if $\textnormal{dist}(w_0,[e]) \geq 2$, we have for every $w\in B_1(0)$,
  \begin{align*}
    \frac{1}{2}\textnormal{dist}(w_0, [e] )\leq \textnormal{dist}(w_0+w, [e] )\leq \frac{3}{2}\textnormal{dist}(w_0, [e] ).
  \end{align*}
  In this case we also have $|w_0|\geq 2$, so
  \begin{align*}
    (\Pi(w_0+w)e,e) \approx (\Pi(w_0)e,e),\;\;\forall\;w\in B_1(0).
  \end{align*}
  Therefore, in the case $\textnormal{dis}(v_0,[e])\geq 2r$ we have
  \begin{align*}
    \fint_{B_r(v_0)}|v|^{-p}(\Pi(v)e,e)^m\;dv & \approx |v_0|^{-p}(\Pi(w_0)e,e)^m.
  \end{align*}	
  It remains to consider the case $|w_0|\geq 2$ and $\textnormal{dist}(w_0,[e])\leq 2$. It is easy to see that
  \begin{align*}
    \fint_{B_1(0)}(\Pi(w_0+w)e,e)^m\;dw \approx \min\{1,|w_0|^{-1}\}^{2m} = |w_0|^{-2m},
  \end{align*}
  from where it follows that,
  \begin{align*}
    \fint_{B_r(v_0)}|v|^{-p}(\Pi(v)e,e)^m\;dv & \approx |v_0|^{-p} (r/|v_0|)^{2m}.
  \end{align*}  
  From the definition of $p$, $ |v_0|^{-p} \min\{1,r/|v_0|\}^{2m} = |v_0|^{(2+\gamma)m} (r^2|v_0|^{-2})^m$ thus
  \begin{align*}
    \fint_{B_r(v_0)}|v|^{-p}(\Pi(v)e,e)^m\;dv & \approx r^{2m}|v_0|^{\gamma m}.
  \end{align*}  
  This proves the proposition for $v_1 = v_0$. For $v_1 \in B_r(v_0)$, we make use of the inclusions $B_r(v_0)\subset B_{2r}(v_1)$ and $B_{r}(v_1) \subset B_{2r}(v_0)$ to obtain the estimates in the general case.
\end{proof}

The estimates on averages from Proposition \ref{prop:kernel averages} yield that the function given by $a_e(v)=(A_{f,\gamma}e,e)$ ($e\in\mathbb{S}^{d-1}$ fixed) is \emph{almost} in the $\mathcal{A}_1$ class.
\begin{prop}\label{prop:weight a_e is almost A1}
  Fix $f\geq 0$, $f\in L^1(\mathbb{R}^d)$, and $v_0 \in \mathbb{R}^d, e\in \mathbb{S}^{d-1}$, and $r>0$. Define
  \begin{align*}
    S(v,r,e) = \{ w \in\mathbb{R}^d \mid\; \textnormal{dist}(w-v,[e]) \leq 2r\},\;\;v \in \mathbb{R}^d.
  \end{align*}
  Then, for any $v_1 \in B_r(v_0)$ we have the inequality	
  \begin{align*}
    \fint_{B_r(v_0)}a_e(v)\;dv  & \leq Ca_e(v_1)+r^2\int_{S(v_1,r,e)}f(w)\max\{2r,|v_1-w|\}^\gamma\;dw,
  \end{align*}
  with $C = C(d,\gamma)$. The same result holds if one uses cubes $Q_r(v_0)$ in place of balls.
\end{prop}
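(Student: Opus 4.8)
The plan is to deduce the statement directly from the averaging estimate for the kernel $v\mapsto |v|^{2+\gamma}(\Pi(v)e,e)$ established in Proposition \ref{prop:kernel averages}. First I would use Fubini's theorem — legitimate since the integrand is non-negative and $f\in L^1(\mathbb{R}^d)$ — to write
\[
  \fint_{B_r(v_0)}a_e(v)\,dv = C_\gamma\int_{\mathbb{R}^d}f(w)\Big(\fint_{B_r(v_0)}|v-w|^{2+\gamma}(\Pi(v-w)e,e)\,dv\Big)\,dw ,
\]
and then substitute $u=v-w$ in the inner average, so that it becomes $\fint_{B_r(v_0-w)}|u|^{2+\gamma}(\Pi(u)e,e)\,du$, i.e.\ an average of the kernel over the translated ball $B_r(v_0-w)$.

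Since $d\ge 3$ we have $|2+\gamma|<d$ for every $\gamma\in[-d,0]$, so Proposition \ref{prop:kernel averages} applies with $m=1$, centre $v_0-w$, and interior point $v_1-w$ (note $v_1-w\in B_r(v_0-w)$ because $v_1\in B_r(v_0)$). It produces a constant $C=C(d,\gamma)$ with
\[
  \fint_{B_r(v_0-w)}|u|^{2+\gamma}(\Pi(u)e,e)\,du \le
  \begin{cases}
    C\,|v_1-w|^{2+\gamma}(\Pi(v_1-w)e,e), & \textnormal{dist}(v_1-w,[e])\ge 2r,\\[2pt]
    C\,\max\{|v_1-w|,2r\}^{\gamma}\,r^{2}, & \textnormal{dist}(v_1-w,[e])\le 2r.
  \end{cases}
\]
Because $[e]$ is symmetric about the origin, $\textnormal{dist}(v_1-w,[e])=\textnormal{dist}(w-v_1,[e])$, so the second alternative occurs exactly for $w\in S(v_1,r,e)$ and the first holds on a subset of its complement.

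I would then split the $w$-integral into the pieces over $S(v_1,r,e)^c$ and $S(v_1,r,e)$. On $S(v_1,r,e)^c$ the estimate above bounds the contribution by $C\,C_\gamma\int_{S(v_1,r,e)^c}f(w)|v_1-w|^{2+\gamma}(\Pi(v_1-w)e,e)\,dw$; since the integrand is non-negative, enlarging the region of integration back to all of $\mathbb{R}^d$ only increases the integral, and it becomes exactly $C\,a_e(v_1)$. On $S(v_1,r,e)$ one is left with $C\,r^{2}\int_{S(v_1,r,e)}f(w)\max\{2r,|v_1-w|\}^{\gamma}\,dw$. Adding the two contributions gives the asserted inequality, with the constant in front of the second term absorbed into the single constant $C$ (harmless for every later use of this bound). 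The version with cubes $Q_r(v_0)$ follows by the identical argument, invoking the cube form of Proposition \ref{prop:kernel averages}.

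There is no serious obstacle here: the argument is a change of variables followed by Proposition \ref{prop:kernel averages}. The only points that require a little care are bookkeeping ones — identifying the region where the ``cylinder'' alternative of Proposition \ref{prop:kernel averages} applies with $S(v_1,r,e)$ via the symmetry of the line $[e]$, applying the averaging estimate at the translated centre $v_0-w$ rather than $v_0$, and using the non-negativity of the kernel to recover the full quantity $a_e(v_1)$ from an integral taken only over the complementary region.
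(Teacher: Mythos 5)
Your proof is correct and follows essentially the same route as the paper's: Fubini, the change of variables $u = v - w$ to re-centre the inner average at $v_0 - w$, applying Proposition~\ref{prop:kernel averages} with $m=1$ at the interior point $v_1 - w$, and splitting the $w$-integral over $S(v_1,r,e)$ and its complement. Your explicit checks — that $|2+\gamma| < d$ so the kernel-average proposition applies, and that the symmetry of $[e]$ identifies the ``cylinder'' regime with $w \in S(v_1,r,e)$ — are sound and are precisely the small bookkeeping details the paper handles implicitly.
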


\begin{proof}
  Throughout the proof let us write $a_e(v)=(A_{f,\gamma}e,e)$. Recall that we always have,
  \begin{align*}
    \fint_{B_r(v_0)}a_e(v)\;dv  & = C_{d,\gamma}\int_{\mathbb{R}^d}f(w)\fint_{B_r(v_0)}|v-w|^{2+\gamma}(\Pi(v-w)e,e)\;dvdw\\
	  & = C_{d,\gamma}\int_{\mathbb{R}^d}f(w)\fint_{B_r(v_0)}K_e(v-w)\;dvdw,
  \end{align*}
  where for convenience we are writing
  \begin{align*}
    K_e(v) = |v|^{2+\gamma}(\Pi(v)e,e).	  
  \end{align*}	
  Now, note that
  \begin{align*}
    \fint_{B_r(v_0)}K_e(v-w)\;dv = \fint_{B_r(v_0-w)}K_e(v)\;dv. 
  \end{align*}    
  Note that if $v_1 \in B_r(v_0)$ then $v_1-w\in B_r(v_0-w)$. Therefore, we may apply Proposition \ref{prop:kernel averages} in $B_r(v_0-w)$ with $m=1$ and with respect to the point $v_1-w\in B_r(v_0-w)$, which leads to a bound according to the location of $w$: If $w\not\in S(v_1,r,e)$, then 
  \begin{align*}
    \fint_{B_r(v_0)}K_e(v-w)\;dv \approx K_e(v_1-w),
  \end{align*}
  while for $w \in S(v_1,r,e)$,
  \begin{align*}
    \fint_{B_r(v_0)}K_e(v-w)\;dv \approx r^{2}\max \{2r,|v_1-w|\}^{\gamma}.
  \end{align*}
  Combining these two estimates, it follows that 
  \begin{align*}
    \fint_{B_r(v_0)}a_e(v)\;dv  & \approx \int_{S(v_1,r,e)^c}f(w)K_e(v_1-w)\;dw+r^2\int_{S(v_1,r,e)}f(w)\max \{2r,|v_0-w|\}^{\gamma}\;dw.
  \end{align*}  
  Since the first term is no larger than $Ca_e(v_1)$, for $C=C(d,\gamma)$, the proposition is proved.
\end{proof}

\bibliography{landaurefs}

\def\cprime{$'$}
\begin{thebibliography}{10}

\bibitem{AleLinLia2013}
R.~Alexandre, J.~Liao, and C.~Lin.
\newblock Some a priori estimates for the homogeneous {L}andau equation with
  soft potentials.
\newblock {\em KRM Volume 8, Issue 4, December 2015, 617 - 650}.

\bibitem{AMUXY08}
R.~Alexandre, Y.~Morimoto, S.~Ukai, C.-J. Xu, and T.~Yang.
\newblock Uncertainty principle and kinetic equations.
\newblock {\em Journal of Functional Analysis}, 225(8):2013--2066, 2008.

\bibitem{BarasGoldstein1984}
Pierre Baras and Jerome~A Goldstein.
\newblock The heat equation with a singular potential.
\newblock {\em Transactions of the American Mathematical Society},
  284(1):121--139, 1984.

\bibitem{CamSilSne2017}
S.~Cameron, L.~Silvestre, and S.~Snelson.
\newblock Global a priori estimates for the inhomogeneous {L}andau equation
  with moderately soft potentials.
\newblock {\em Ann. Inst. H. Poincare Anal. Non Lineaire}, (3):625–642, 2018.

\bibitem{CDH15}
K.~Carrapatoso, L.~Desvillettes, and L.~He.
\newblock Estimates for the large time behavior of the {L}andau equation in the
  {C}oulomb case.
\newblock {\em Arch. Ration. Mech. Anal.}, (2):381–420, 2017.

\bibitem{CTW15}
K.~Carrapatoso, I.~Tristani, and K.-C. Wu.
\newblock Cauchy problem and exponential stability for the inhomogeneous
  {L}andau equation.
\newblock {\em Arch. Ration. Mech. Anal. 223 (2017), no. 2, 1035-1037.}

\bibitem{ChanilloWheeden1985}
S.~Chanillo and R.~Wheeden.
\newblock L-p estimates for fractional integrals and {S}obolev inequalities
  with applications to {S}chr{\"o}dinger operators.
\newblock {\em Communications in partial differential equations},
  10(9):1077--1116, 1985.

\bibitem{ChanilloWheeden1985II}
S.~Chanillo and R.~Wheeden.
\newblock Weighted {P}oincare and {S}obolev inequalities and estimates for
  weighted {P}eano maximal functions.
\newblock {\em American Journal of Mathematics}, 107(5):1191--1226, 1985.

\bibitem{CDH09}
Y.~Chen, L.~Desvillettes, and L.~He.
\newblock Smoothing effects for classical solutions of the full {L}andau
  equation.
\newblock {\em Arch. Ration. Mech. Anal.}, 193(1):21--55, 2009.

\bibitem{Desvillettes14}
L.~Desvillettes.
\newblock {E}ntropy dissipation estimates for the {L}andau equation in the
  {C}oulomb case and applications.
\newblock {\em J. Funct. Anal. 269 (2015), no. 5, 1359-1403}.

\bibitem{DesVil2000a}
L.~Desvillettes and C.~Villani.
\newblock On the spatially homogeneous {L}andau equation for hard potentials.
  {I}. {E}xistence, uniqueness and smoothness.
\newblock {\em Comm. Partial Differential Equations}, 25(1-2):179--259, 2000.

\bibitem{DesVil2000b}
L.~Desvillettes and C.~Villani.
\newblock On the spatially homogeneous {L}andau equation for hard potentials.
  {II}. {$H$}-theorem and applications.
\newblock {\em Comm. Partial Differential Equations}, 25(1-2):261--298, 2000.

\bibitem{FKS82}
E.~Fabes, C.~Kenig, and R.~Serapioni.
\newblock The local regularity of solutions of degenerate elliptic equations.
\newblock {\em Comm. in P.D.E.}, (7):77--116, 1982.

\bibitem{Fefferman1983}
C.~Fefferman.
\newblock The uncertainty principle.
\newblock {\em American Mathematical Society}, 9(2):129--206, 1983.

\bibitem{FournierGuerin2009}
N.~Fournier and H.~Gu{\'e}rin.
\newblock Well-posedness of the spatially homogeneous {L}andau equation for
  soft potentials.
\newblock {\em Journal of Functional Analysis}, 256(8):2542--2560, 2009.

\bibitem{Fournier2010}
Nicolas Fournier.
\newblock Uniqueness of bounded solutions for the homogeneous {L}andau equation
  with a {C}oulomb potential.
\newblock {\em Communications in Mathematical Physics}, 299(3):765--782, 2010.

\bibitem{GarciaCuervaRubioDeFrancia1985}
Jos{\'e} Garc{\'\i}a-Cuerva and JL~Rubio De~Francia.
\newblock {\em Weighted norm inequalities and related topics}.
\newblock Elsevier Science Publishers, 1985.

\bibitem{GigaKohn85}
Y.~Giga and R.V. Kohn.
\newblock Asymptotically self-similar blow-up of semilinear heat equations.
\newblock {\em Communications on Pure and Applied Mathematics}, 38(3):297--319,
  1985.

\bibitem{GIMV16}
F.~Golse, C.~Imbert, C.~Mouhot, and A.~Vasseur.
\newblock {H}arnack inequality for kinetic {F}okker-{P}lanck equations with
  rough coefficients and application to the {L}andau equation.
\newblock {\em To appear in Annali della Scuola Normale Superiore di Pisa},
  2017.

\bibitem{GreKriStr2012}
P.~Gressman, J.~Krieger, and R.~Strain.
\newblock A non-local inequality and global existence.
\newblock {\em Advances in Mathematics}, 230(2):642--648, 2012.

\bibitem{GuGu15}
M.~Gualdani and N.~Guillen.
\newblock Estimates for radial solutions of the homogeneous {L}andau equation
  with {C}oulomb potential.
\newblock {\em Analysis and PDE}, 9(8):1772--1809, 2016.

\bibitem{Guo02}
Y.~Guo.
\newblock The {L}andau equation in a periodic box.
\newblock {\em Communications in mathematical physics}, 231(3):391--434, 2002.

\bibitem{GuWhe91}
C.~Gutierrez and R.~Wheeden.
\newblock {H}arnack's inequality for degenerate parabolic equations.
\newblock {\em Communications in PDEs}, 4,5(16):745--770, 1991.

\bibitem{HendersonSnelson2017}
Christopher Henderson and Stanley Snelson.
\newblock C\^{}$\backslash$infty smoothing for weak solutions of the
  inhomogeneous landau equation.
\newblock {\em arXiv preprint arXiv:1707.05710}, 2017.

\bibitem{HendersonSnelsonTarfulea2017}
Christopher Henderson, Stanley Snelson, and Andrei Tarfulea.
\newblock Local existence, lower mass bounds, and smoothing for the landau
  equation.
\newblock {\em arXiv preprint arXiv:1712.07111}, 2017.

\bibitem{Imb_Silv2017}
Cyril Imbert and Luis Silvestre.
\newblock Weak {H}arnack inequality for the {B}oltzmann equation without
  cut-off.
\newblock {\em arXiv preprint arXiv:1608.07571}, 2016.

\bibitem{KimGuoHwang2016}
Jinoh Kim, Yan Guo, and Hyung~Ju Hwang.
\newblock A ${L}^2$ to ${L}^\infty$ approach for the {L}andau equation.
\newblock {\em arXiv preprint arXiv:1610.05346}, 2016.

\bibitem{KriStr2012}
J.~Krieger and R.~Strain.
\newblock Global solutions to a non-local diffusion equation with quadratic
  non-linearity.
\newblock {\em Comm. Partial Differential Equations}, 37(4):647--689, 2012.

\bibitem{PascucciPolidoro2004}
Andrea Pascucci and Sergio Polidoro.
\newblock The moser's iterative method for a class of ultraparabolic equations.
\newblock {\em Communications in Contemporary Mathematics}, 6(03):395--417,
  2004.

\bibitem{SawWhe1992}
E.~Sawyer and R.~Wheeden.
\newblock Weighted inequalities for fractional integrals on {E}uclidean and
  homogeneous spaces.
\newblock {\em American Journal of Mathematics}, 114(4):813 -- 874, 1992.

\bibitem{Silvestre2015}
L.~Silvestre.
\newblock Upper bounds for parabolic equations and the {L}andau equation.
\newblock {\em J. Differential Equations, 262 (2017), no. 3, 3034 - 3055.}

\bibitem{Silvestre2014}
L.~Silvestre.
\newblock A new regularization mechanism for the {B}oltzmann equation without
  cut-off.
\newblock {\em Comm. Math. Phys. 348 (2016), no. 1, 69-100}, 2016.

\bibitem{Turesson}
B.O. Turesson.
\newblock Nonlinear potential theory and weighted {S}obolev spaces.
\newblock {\em Lecture Notes in Mathematics, 1736. Springer-Verlag, Berlin,
  2000}, 2000.

\bibitem{V98}
C.~Villani.
\newblock On a new class of weak solutions to the spatially homogeneous
  {B}oltzmann and {L}andau equations.
\newblock {\em Archive for rational mechanics and analysis}, 143(3):273--307,
  1998.

\bibitem{Wu13}
K-C. Wu.
\newblock Global in time estimates for the spatially homogeneous {L}andau
  equation with soft potentials.
\newblock {\em J. Funct. Anal. 266, no. 5, 3134-3155}, 2014.

\end{thebibliography}
\bibliographystyle{plain}

\end{document}